\numberwithin{equation}{section}
\newtheorem{theorem}[equation]{Theorem}
\newtheorem{lemma}[equation]{Lemma}
\newtheorem{proposition}[equation]{Proposition}
\theoremstyle{definition}
\newcommand{\Mod}[1]{\ (\mathrm{mod}\ #1)}
\DeclareMathOperator\supp{supp}
\DeclareMathOperator\id{id}
\DeclareMathOperator\poly{poly}
\DeclareMathOperator\Lip{Lip}
\DeclareMathOperator\F{F}
\DeclareMathOperator\G{G}
\begin{document}

\title{Pointwise convergence of ergodic averages along quadratic bracket polynomials}

\author{Leonidas Daskalakis} 
\address[Leonidas Daskalakis]{Institute of Mathematics,
Polish Academy of Sciences,
\'Sniadeckich 8,
00-656 Warsaw, Poland \& Institute of Mathematics, University of Wroc\l aw, Plac Grunwaldzki 2, 50-384 Wroc\l aw, Poland}
\email{ldaskalakis@impan.pl}
\maketitle\begin{abstract}
We establish a pointwise convergence result for ergodic averages modeled along orbits of the form $(n\lfloor n\sqrt{k}\rfloor)_{n\in\mathbb{N}}$, where $k$ is an arbitrary positive rational number with $\sqrt{k}\not\in\mathbb{Q}$. Namely, we prove that for every such $k$, every measure-preserving system $(X,\mathcal{B},\mu,T)$ and every $f\in L^{\infty}_{\mu}(X)$, we have that 
\[
\lim_{N\to\infty}\frac{1}{N}\sum_{n=1}^Nf(T^{n\lfloor n\sqrt{k}\rfloor}x)\quad\text{exists for $\mu$-a.e. $x\in X$.}
\]
Notably, our analysis involves a curious implementation of the circle method developed for analyzing exponential sums with phases $(\xi n \lfloor n\sqrt{k}\rfloor)_{1\le n\le N}$ exhibiting arithmetical obstructions beyond rationals with small denominators, and is based on the Green and Tao's result on the quantitative behaviour of polynomial orbits on nilmanifolds \cite{BG}. For the case $k=2$ such a circle method was firstly employed for addressing the corresponding Waring-type problem by Neale \cite{NL}, and their work constitutes the departure point of our considerations.
\end{abstract}
\section{Introduction}
The main result of the present work is establishing pointwise convergence for ergodic averages modeled along $(n\lfloor n\sqrt{k}\rfloor)_{n\in\mathbb{N}}$ for $k\in\mathbb{Q}_{>0}\footnote{where $\mathbb{Q}_{>0}\coloneqq \mathbb{Q}\cap(0,\infty)$.}$ such that $\sqrt{k}\not\in\mathbb{Q}$, see Theorem~$\ref{Main}$. Before making precise formulations, let us make some brief prefatory remarks.
\subsection{Introductory remarks}
Establishing pointwise convergence for ergodic averages along a sparse sequence of natural numbers $\mathfrak{a}=(a_n)_{n\in\mathbb{N}}$, which in its simplest formulation amounts to proving that for every probability space $(X,\mathcal{B},\mu)$ equipped with an invertible measure-preserving transformation $T\colon X\to X$ and every $f\in L^{\infty}_{\mu}(X)$ the following limit exists $\mu$-a.e.
\begin{equation}\label{anaverage}
\lim_{N\to\infty}\frac{1}{N}\sum_{n=1}^Nf(T^{a_n}x)\text{,}
\end{equation}
has been the object of intense study in recent decades. The first results in this direction appeared in the late 1980s, when Bourgain in a series of seminal works \cite{bg2,bg3,Bourgain} established pointwise convergence for the averages \eqref{anaverage} along polynomial and prime orbits, introducing numerous new ideas and laying the foundation for furthering our understanding of pointwise convergence phenomena in ergodic theory.    The now-standard approach, employed in the aforementioned works, involves establishing certain quantitative estimates (for example of the form \eqref{L2osc}) which imply the pointwise convergence of such averages. This has the advantage of allowing one to work on the integer shift system, namely $(\mathbb{Z},\mathcal{P}(\mathbb{Z}),|\cdot|,x\to x-1)$, where $|\cdot|$ denotes the counting measure, since through Calder\'on's transference principle, once one establishes such estimates on the integer shift system setting, they can immediately obtain the corresponding ones for arbitrary $\sigma$-finite dynamical systems. 

Since then, substantial progress has been made in our understanding of the pointwise limiting behaviour of ergodic averages of the form \eqref{anaverage} for a variety of sparse sequences $\mathfrak{a}=(a_n)_{n\in\mathbb{N}}$. In this context,  loosely speaking, all pointwise convergence results appearing in the literature seem to be naturally split into two categories which we describe below.
 
The operators \eqref{anaverage} considered for the integer shift system amount to the following convolution operator
\begin{equation}\label{anavop}
A^{(\mathfrak{a})}_{N}f(x)\coloneqq\frac{1}{N}\sum_{n=1}^Nf(x-a_n)\text{,}
\end{equation}
which in the frequency side becomes $\mathcal{F}_{\mathbb{Z}}[A^{(\mathfrak{a})}_{N}f](\xi)=m^{(\mathfrak{a})}_{N}(\xi)\mathcal{F}_{\mathbb{Z}}(\xi)$, where
\begin{equation}\label{genmultiplier}
m^{(\mathfrak{a})}_{N}(\xi)\coloneqq\frac{1}{N}\sum_{n=1}^Ne(\xi a_n)\text{.}
\end{equation} 
The behaviour of the above exponential sums plays a crucial role in the proof of pointwise convergence of ergodic averages modeled along $\mathfrak{a}=(a_n)_{n\in\mathbb{N}}$ and the sequences considered in the context of such problems in the literature can be naturally divided into the following two categories depending on which frequencies $\xi\in\mathbb{T}$ do not introduce sufficient cancellation in \eqref{genmultiplier} to make the exponential sum adequately small, which we will refer to as problematic frequencies from now on.

 The first category includes sequences $\mathfrak{a}$ for which the only obstruction is the trivial one, namely, the only problematic frequency is $\xi\equiv 0\Mod{1}$. Usually, in such a case, the exponential sum \eqref{genmultiplier} can be directly compared with a smoothly weighted variant of the complete exponential sum, see \cite{ncfull,mirek1,LDWT11,frprimes}.

The second category includes the sequences for which the problematic frequencies are rationals with a small denominator (or appropriately dilated variants, see for example section~8 in \cite{Bourgain}). In this case the situation is more delicate and the Hardy--Littlewood circle method is required \cite{bg2,bg3,Bourgain,primeevaluatedpolyn,JF}, which again, in the context of pointwise convergence of ergodic averages was first used by Bourgain.

In the present work we establish pointwise convergence for ergodic averages modeled along quadratic bracket polynomials of a certain form, which to the best of the author's knowledge is the first result falling into neither category.
\subsection{Statement of results}
The main result is the following.
\begin{theorem}[Pointwise ergodic theorem along $(n\lfloor n\sqrt{k}\rfloor)_{n\in\mathbb{N}}$]\label{Main}Assume $k\in\mathbb{Q}_{>0}$ is such that $\sqrt{k}\not\in\mathbb{Q}$. Let $(X,\mathcal{B},\mu)$ be a probability space and $T\colon X\to X$ an invertible $\mu$-invariant transformation. Then for every $f\in L^{\infty}_{\mu}(X)$ we have that
\begin{equation}\label{mainconv}
\lim_{N\to\infty}\frac{1}{N}\sum_{n=1}^Nf(T^{n\lfloor n\sqrt{k}\rfloor}x)\quad\text{exists for $\mu$-a.e. $x\in X$.}
\end{equation} 
\end{theorem}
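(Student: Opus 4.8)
The plan is to follow the now-standard route: establish, on the integer shift system, a quantitative regularity estimate for the averaging operators --- an oscillation inequality, or an $r$-variation inequality for some finite $r$ --- strong enough to imply convergence, and then invoke Calder\'on's transference principle to pass to an arbitrary $(X,\mathcal B,\mu,T)$; since $\mu$ is a probability measure we have $f\in L^\infty_\mu\subset L^2_\mu$, so only the $L^2$ theory is needed. Writing $\alpha=\sqrt k$ and $A_Nf(x)=\frac1N\sum_{n=1}^N f(x-n\lfloor n\alpha\rfloor)$ on $\ell^2(\mathbb Z)$, everything reduces to understanding the multipliers
\[
m_N(\xi)=\frac1N\sum_{n\le N}e\bigl(\xi\,n\lfloor n\alpha\rfloor\bigr),
\]
and in particular to locating the frequencies $\xi$ at which $|m_N(\xi)|$ fails to be suitably small.

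The core is a circle-method decomposition of $m_N$ adapted to the bracket phase. Writing $n\lfloor n\alpha\rfloor=\alpha n^2-n\{n\alpha\}$, the exponential sum $\sum_{n\le N}e(\xi n\lfloor n\alpha\rfloor)$ is, after the standard manipulations expressing the bracket phase as a nilsequence, governed by a polynomial orbit $n\mapsto g(n)\Gamma$ on a step-$2$ nilmanifold $G/\Gamma$ (such as the Heisenberg nilmanifold), with $g$ a polynomial sequence whose coefficients depend on $\xi$ and on the fixed quadratic irrational $\alpha$. Green and Tao's quantitative equidistribution theorem \cite{BG} then yields the dichotomy that drives the proof: either the orbit is $\delta$-equidistributed, in which case $|m_N(\xi)|$ obeys a Weyl-type bound (at worst a power-of-logarithm saving, which is amply summable over dyadic scales), or it concentrates along a proper subnilmanifold, forcing $\xi$ to satisfy a Diophantine condition. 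The novelty --- and here Neale's treatment \cite{NL} of $k=2$ is the departure point --- is that this condition is \emph{not} ``$\xi$ lies near a rational with small denominator'': the admissible centers form a two-parameter family, roughly of the shape $\frac aq+\frac br\cdot(\text{a linear expression in }\alpha)$, reflecting the two sources of oscillation, the term $\alpha n^2$ and the bracket correction $n\{n\alpha\}$. These play the role of ``major arcs'', and the key structural fact to extract is that at each scale $N=N_j$ only finitely many of them are active --- those whose two ``denominators'' are bounded by a slowly growing function of $N$ --- and that these are mutually well separated.

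On the minor arcs the contribution to the oscillation sum is handled by a standard high/low decomposition together with Littlewood--Paley square-function estimates and the pointwise bound on $m_N$ above; since the gain beats every dyadic scale, this part causes no difficulty. On the major arcs I would run a Bourgain-type analysis: near each center, approximate $m_N(\xi)$ by the product of an arithmetic factor (a Gauss-/Weyl-sum factor attached to the center) and an archimedean factor comparable to $\int_0^1 e(\beta N^2 t^2)\,dt$, with $\beta$ the displacement from the center; decompose dyadically in the two denominator parameters; control the archimedean part via real-variable oscillation/variation estimates (L\'epingle's inequality for the continuous averaging family); use standard bounds giving gains of the form $q^{-\delta}$ and $r^{-\delta}$ on the arithmetic factors so that the resulting double sum converges; and transfer these estimates from $\mathbb R$ to $\mathbb Z$ through a Magyar--Stein--Wainger / Ionescu--Wainger-type sampling principle for the Fourier projections onto the union of arcs. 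Assembling the minor- and major-arc contributions, together with the routine short-variation bound within each dyadic block, produces the desired oscillation inequality and hence \eqref{mainconv}.

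The main obstacle, I anticipate, is precisely the identification and management of the major arcs. Because their centers are genuine irrationals built from $\alpha$ rather than rationals, one must show scale by scale that only a polylogarithmically bounded number of them are active and that they are separated well enough for the Ionescu--Wainger-type multiplier projections to behave, while at the same time pushing the Green--Tao dichotomy through with quantitatively explicit dependence --- a power saving on the minor arcs would be ideal, but a power-of-logarithm saving suffices, being summed against the dyadic scales. Making the arithmetic factors, the separation of centers, and the sampling principle interlock so that the sum over the two denominator parameters converges is, I expect, the technically heaviest part of the argument.
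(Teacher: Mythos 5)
Your outline follows the same broad route as the paper (Calder\'on transference to $\ell^2(\mathbb{Z})$, a circle method for the phases $\xi n\lfloor n\sqrt{k}\rfloor$ run through the Green--Tao quantitative Leibman theorem on the Heisenberg nilmanifold, major arcs centered at the two-parameter family $\frac{a+b\sqrt{k}}{q}$, an approximation by an arithmetic Gauss-type factor times an archimedean factor, and a multi-frequency oscillation/maximal argument in the spirit of Bourgain). But there is a genuine gap at the junction between your dichotomy and your major-arc analysis. The Green--Tao dichotomy, applied to the orbit $g_\xi(n)\Gamma$, only forces $\xi$ to lie within $\approx \delta^{-O(1)}N^{-1}$ of some center $\frac{a+b\sqrt{k}}{q}$ (the relevant obstruction can come from the \emph{linear} coefficient of $\eta\circ g_\xi$), whereas the Gauss-sum-times-$\int_0^1 e(\beta N^2t^2)\,dt$ approximation is only valid on arcs of width $\approx N^{-2+\varepsilon'}$, as dictated by the quadratic growth of $n\lfloor n\sqrt{k}\rfloor$. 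Your sketch silently identifies these two families of arcs, leaving unaddressed the intermediate annulus $N^{-2+\gamma'}<\|\xi-\frac{a+b\sqrt{k}}{q}\|\le N^{-1+\gamma}$, where neither the minor-arc bound nor the major-arc approximation applies. In the paper this regime ($\mathfrak{m}_2$) is the heaviest single step: one factorizes $g_\xi=\sigma\, g'_\xi\,\rho$ as in Theorem~1.19 of \cite{BG}, passes to short progressions on which $\sigma$ and $\rho$ are essentially constant, and applies the quantitative Leibman theorem a second time to a two-dimensional abelian subnilmanifold to beat the trivial bound there. Without some argument for this regime your minor/major decomposition does not close.

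A second, more repairable, problem is your appeal to an Ionescu--Wainger/Magyar--Stein--Wainger sampling principle for the projections onto the arcs. Those theories rest on the rational (periodic) structure of the frequency set, and no counterpart is currently available for the irrational centers $\frac{a+b\sqrt{k}}{q}$; the paper explicitly flags developing such a theory as an open problem and deliberately avoids it, using only the separation of the centers (a Liouville-type spacing lemma), a transference to $L^2(\mathbb{R})$ in the style of Bourgain's Lemma~4.4, and Bourgain's logarithmic lemma for the multi-frequency maximal operator, with the $\log^2(\#\text{frequencies})$ loss absorbed by the $\min(q^{-1/2},|b|^{-1/2})$ decay of the arithmetic factors. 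For the $L^2$ statement you need, this spacing-only route works and is the one you should substitute; as written, the IW-based step would fail. Finally, note that the minor-arc saving obtained this way is a genuine power $N^{-\chi}$, which is what makes the square-function summation over lacunary scales painless; relying on a polylogarithmic saving would force you to be more careful in that summation.
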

We note that if $\sqrt{k}\in\mathbb{Q}$, then the pointwise convergence of the ergodic averages \eqref{mainconv} can be easily deduced by passing to arithmetic progressions and appealing to Bourgain's pointwise result along the squares \cite{Bourgain}. We deduce the above theorem by establishing certain $2$-oscillation estimates, see Theorem~$\ref{osc}$, and in order to make precise formulations, let us introduce the relevant notation. For every $Y\subseteq X\subseteq \mathbb{R}$ with $|X|>2$ and $J\in\mathbb{N}$,  let $\mathfrak{S}_J(X)\coloneqq\{\{I_0<\dots<I_J\}\subseteq X\}$, i.e.: $\mathfrak{S}_J(X)$ contains all increasing sequences of length $J+1$ taking values in $X$. For every family of complex-valued functions $(a_t(x):\,t\in X)$ let
\begin{equation}\label{oscdef}
O^2_{I,J}(a_t(x):\,t\in Y)\coloneqq\Big(\sum_{j=0}^{J-1}\sup_{t\in [I_j,I_{j+1})\cap Y}|a_t(x)-a_{I_j}(x)|^2 \Big)^{1/2}\text{.}
\end{equation}
We refer the reader to subsections~2.6 and 2.7 in \cite{MOE} for the basic properties of oscillations. 
\begin{theorem}[Oscillation estimates along lacunary scales on $L^2_{\mu}(X)$]\label{osc}
Assume $k\in\mathbb{Q}_{>0}$ is such that $\sqrt{k}\not\in\mathbb{Q}$ and let $(X,\mathcal{B},\mu)$ be a $\sigma$-finite measure space and $T\colon X\to X$ an invertible $\mu$-invariant transformation. For every $t\in[1,\infty)$ and $f\colon X\to\mathbb{C}$ let
\begin{equation}
\mathbf{A}_{t;k}f(x)=\frac{1}{\lfloor t\rfloor}\sum_{n\le t}f(T^{n\lfloor n\sqrt{k}\rfloor}x)\text{.}
\end{equation}
Then for every $\lambda\in(1,2]$ there exists a positive constant $C=C(k,\lambda)$ such that for every $f\in L^2_{\mu}(X)$ we have
\begin{equation}\label{L2osc}
\sup_{J\in\mathbb{N}}\sup_{I\in\mathfrak{S}_J(\mathbb{N}_0)}\|O^2_{I,J}(\mathbf{A}_{\lambda^n;k}f:n\in\mathbb{N}_0)\|_{L_{\mu}^2(X)}\le C\|f\|_{L_{\mu}^2(X)}\text{.}
\end{equation}
\end{theorem}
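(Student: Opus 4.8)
The plan is to transfer the problem to the integer shift system via Calderón's transference principle, so it suffices to establish \eqref{L2osc} for the convolution operators $\mathbf{A}_{t;k}f(x) = \frac{1}{\lfloor t\rfloor}\sum_{n\le t} f(x - n\lfloor n\sqrt{k}\rfloor)$ on $\ell^2(\mathbb{Z})$. Passing to the frequency side via the Fourier transform $\mathcal{F}_{\mathbb{Z}}$, the task reduces to proving an oscillation estimate for the Fourier multipliers $m_{N;k}(\xi) = \frac{1}{N}\sum_{n\le N} e(\xi\, n\lfloor n\sqrt{k}\rfloor)$ along the lacunary scales $N = \lambda^n$. The crux is a careful analysis of these exponential sums. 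Writing $\lfloor n\sqrt k\rfloor = n\sqrt k - \{n\sqrt k\}$, the phase becomes $\xi n^2\sqrt k - \xi n\{n\sqrt k\}$; the first term is a genuine quadratic while the second is a bona fide bracket-polynomial term. Following Neale's treatment of the case $k=2$ and invoking the Green--Tao quantitative equidistribution theorem for polynomial orbits on nilmanifolds \cite{BG}, I expect to obtain a decomposition of each multiplier into a ``main term'' — supported on a set of problematic frequencies $\xi$ that are well-approximable not merely by rationals with small denominator but by rationals with small denominator \emph{twisted by multiples of $\sqrt k$} (the arithmetical obstruction mentioned in the abstract) — plus an error term that is uniformly small.

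Concretely, I would run a variant of the Hardy--Littlewood circle method adapted to this twisted set of major arcs. On the minor arcs one uses the nilsequence/Weyl-type bounds derived from \cite{BG} to show that $\sup_N |m_{N;k}(\xi)|$ is summably small, which handles that portion of the oscillation via the trivial bound $O^2_{I,J}\le (\sum_j \sup_t |a_t|^2)^{1/2}$ together with Plancherel. On the major arcs, one approximates $m_{N;k}$ by a product of a ``Gauss-sum-type'' arithmetic factor and a smooth integral factor — essentially $m_{N;k}(\xi) \approx G(\xi)\,\Phi_N(\xi - \theta)$ for the relevant centers $\theta$ — and then the oscillation estimate for the model operators $\Phi_N$ is a standard consequence of a square-function bound for smooth lacunary averages (e.g. the classical argument bounding $O^2$ by a sum of single-scale differences controlled via $|\Phi_{\lambda^{n+1}} - \Phi_{\lambda^n}|$ and Plancherel). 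The arithmetic factor $G$ contributes only an $\ell^2$-bounded Fourier multiplier after summing over the arithmetic parameters, using standard Gauss sum cancellation.

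The main obstacle, and the genuinely novel point, is the structure of the problematic set of frequencies: because $\lfloor n\sqrt k\rfloor$ is neither polynomial nor smooth, the exponential sum $m_{N;k}(\xi)$ fails to be small not only near rationals $a/q$ with small $q$ but also near ``inhomogeneous'' frequencies of the form $a/q + b\sqrt k/q$ (or more precisely points where $\xi$ and $\xi\sqrt k$ are simultaneously close to rationals with a common small denominator). Identifying the correct family of major arcs, establishing that they are suitably separated so that the corresponding pieces can be summed, and proving the requisite Weyl-type cancellation off of them — which is where the full strength of the Green--Tao theorem \cite{BG} enters, since one must rule out correlation of the orbit $(n\lfloor n\sqrt k\rfloor)$ with arbitrary degree-$2$ nilsequences — is where the bulk of the work lies. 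Once the multiplier decomposition is in hand with square-summable-in-scale error terms, assembling the oscillation bound \eqref{L2osc} is routine: one splits $O^2_{I,J}$ into the main-term contribution (handled by the model-operator square function) and the error contribution (handled by Plancherel and the triangle inequality in $\ell^2$ over scales), with constants depending only on $k$ and $\lambda$ as claimed.
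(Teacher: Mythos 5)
Your overall architecture matches the paper: Calder\'on transference to $\ell^2(\mathbb{Z})$, a circle method whose major arcs are centered at the twisted points $\frac{a+b\sqrt{k}}{q}$, minor-arc cancellation via the Green--Tao quantitative Leibman theorem, and a major-arc approximation of $m_{N;k}$ by a product of a Gauss-sum-type factor and an oscillatory-integral factor. However, there is a genuine gap in your final step, precisely where you declare the assembly ``routine.'' The approximant at scale $\lambda^n$ is a sum over \emph{all} triples $(a,b,q)$ with $q,|b|\le\lambda^{\gamma n}$ of bump functions at the frequencies $\alpha_{a,b,q}$, so the number of active frequencies grows with the scale. The oscillation seminorm contains a supremum over scales inside each block, and this supremum couples the many frequencies: it is not controlled by ``a square-function bound for smooth lacunary averages'' plus the observation that the arithmetic factor is an $\ell^2$-bounded multiplier. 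A naive maximal bound over $K$ well-separated frequencies loses a factor like $K^{1/2}$; with $K\sim\lambda^{2s+t}$ frequencies at denominator height $(s,t)$ and only the Gauss-sum decay $|\G_k\F|\lesssim\min(\lambda^{-s/2},\lambda^{-t/2})$ available, the resulting bound $\lambda^{(3s+t)/4}$ is not summable, so the argument as you sketch it does not close.

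What is actually needed (and occupies Section~7 of the paper) is: (i) a decomposition of the frequency set into the families $\mathcal{P}_{s,t}$ and a factorization/projection identity that peels the arithmetic factor off as a separate multiplier of norm $\min(\lambda^{-s/2},\lambda^{-t/2})$, frozen at the scale $m_{s,t}$; (ii) the separation of the centers $\alpha_{a,b,q}$ (via Liouville's theorem for the quadratic irrational $\sqrt{k}$); and (iii) Bourgain's multi-frequency logarithmic lemma (Lemma~4.13 in \cite{Bourgain}), applied after transferring to $L^2(\mathbb{R})$, which bounds the maximal operator over dilations with only a $\log^2|\mathcal{P}_{s,t}|\sim(s^2+t^2+1)$ loss; this polynomial loss is then summable against the geometric Gauss-sum decay. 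Your proposal never invokes a multi-frequency maximal inequality, and without it the passage from the multiplier approximation to \eqref{L2osc} fails. A secondary, smaller omission: the Leibman theorem naturally yields arcs of width $N^{-1+\varepsilon}$, while the quadratic growth of $n\lfloor n\sqrt{k}\rfloor$ forces major arcs of width $N^{-2+\varepsilon'}$, so the intermediate regime (the paper's $\mathfrak{m}_2$) requires a separate argument via a factorization of $g_{\xi;k}$ and a second application of the quantitative Leibman theorem on a two-dimensional abelian nilmanifold; your sketch treats the minor arcs as a single application of the nilsequence bound.
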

An immediate consequence is the following theorem, which clearly implies Theorem~$\ref{Main}$.
\begin{theorem}\label{stronger}
Assume $k\in\mathbb{Q}_{>0}$ is such that $\sqrt{k}\not\in\mathbb{Q}$ and let $(X,\mathcal{B},\mu)$ be a $\sigma$-finite measure space and $T\colon X\to X$ an invertible $\mu$-invariant transformation. For every $p\in[2,\infty)$ and $f\in L^p_{\mu}(X)$ we have that  
\begin{equation}\label{mainconv1}
\lim_{N\to\infty}\mathbf{A}_{N;k}f(x)\quad\text{exists for $\mu$-a.e. $x\in X$.}
\end{equation} 
\end{theorem}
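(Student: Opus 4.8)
The plan is to derive Theorem~\ref{stronger} from the oscillation estimate of Theorem~\ref{osc} by standard soft arguments: all of the paper's arithmetic content is packaged into the bound \eqref{L2osc}, and what remains is the passage from a quantitative $2$-oscillation estimate along lacunary scales in $L^2_\mu$ to $\mu$-a.e.\ convergence along all scales in $L^p_\mu$ for $p\in[2,\infty)$. I would organize the deduction into three steps: (i) $\mu$-a.e.\ convergence of the lacunary subsequence $\big(\mathbf{A}_{\lambda^n;k}f\big)_{n\in\mathbb{N}_0}$ for $f\in L^2_\mu(X)$; (ii) a maximal inequality on $L^p_\mu(X)$ for every $p\in[2,\infty)$; and (iii) convergence of $\mathbf{A}_{t;k}f$ along the full parameter $t$, first in $L^2_\mu$ and then in $L^p_\mu$.

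For step (i), I would invoke the standard principle --- recorded in subsections~2.6--2.7 of \cite{MOE} --- that a uniform $2$-oscillation estimate of the form \eqref{L2osc} forces $\mu$-a.e.\ convergence of the underlying sequence; thus $\lim_{n\to\infty}\mathbf{A}_{\lambda^n;k}f(x)$ exists for $\mu$-a.e.\ $x$, for every $\lambda\in(1,2]$ and every $f\in L^2_\mu(X)$. For step (ii), I would start from the elementary two-sided comparison, valid for $f\ge0$ and $\lambda^n\le t<\lambda^{n+1}$,
\[
\frac{\lfloor\lambda^n\rfloor}{\lfloor\lambda^{n+1}\rfloor}\,\mathbf{A}_{\lambda^n;k}f(x)\;\le\;\mathbf{A}_{t;k}f(x)\;\le\;\frac{\lfloor\lambda^{n+1}\rfloor}{\lfloor\lambda^n\rfloor}\,\mathbf{A}_{\lambda^{n+1};k}f(x),
\]
whose upper half, applied to $|f|$, gives $\sup_{t\ge1}|\mathbf{A}_{t;k}f|\lesssim_\lambda\sup_{n\in\mathbb{N}_0}\mathbf{A}_{\lambda^n;k}|f|$ because $\lfloor\lambda^{n+1}\rfloor/\lfloor\lambda^n\rfloor$ is bounded (it tends to $\lambda$); meanwhile \eqref{L2osc}, applied with $J=1$ and $I=\{0,R\}$ and then with $R\to\infty$ by monotone convergence, yields $\big\|\sup_{n\in\mathbb{N}_0}\mathbf{A}_{\lambda^n;k}|f|\big\|_{L^2_\mu}\lesssim_\lambda\|f\|_{L^2_\mu}$, using that $\mathbf{A}_{1;k}$ is an $L^2_\mu$-isometry. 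Hence $\big\|\sup_{t\ge1}|\mathbf{A}_{t;k}f|\big\|_{L^2_\mu}\lesssim_\lambda\|f\|_{L^2_\mu}$, and since trivially $\big\|\sup_{t\ge1}|\mathbf{A}_{t;k}f|\big\|_{L^\infty_\mu}\le\|f\|_{L^\infty_\mu}$, Marcinkiewicz interpolation gives the maximal inequality $\big\|\sup_{t\ge1}|\mathbf{A}_{t;k}f|\big\|_{L^p_\mu}\lesssim_p\|f\|_{L^p_\mu}$ for every $p\in[2,\infty)$; in particular $\sup_{t\ge1}\mathbf{A}_{t;k}f<\infty$ $\mu$-a.e.

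For step (iii), I would first treat $f\in L^2_\mu(X)$. For nonnegative $f$, the two-sided comparison above, the convergence $\lfloor\lambda^{n+1}\rfloor/\lfloor\lambda^n\rfloor\to\lambda$, step (i), and the $\mu$-a.e.\ finiteness of $\sup_{t\ge1}\mathbf{A}_{t;k}f$ from step (ii) together force $\limsup_{t\to\infty}\mathbf{A}_{t;k}f(x)\le\lambda^2\liminf_{t\to\infty}\mathbf{A}_{t;k}f(x)$ at $\mu$-a.e.\ $x$; letting $\lambda\downarrow1$ --- legitimate because \eqref{L2osc}, and hence step (i), holds for every $\lambda\in(1,2]$ --- yields $\mu$-a.e.\ convergence of $\mathbf{A}_{t;k}f$ for $f\ge0$, and then for all $f\in L^2_\mu(X)$ by splitting into real/imaginary and positive/negative parts. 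Finally, since simple functions supported on sets of finite $\mu$-measure belong to $L^2_\mu(X)\cap L^p_\mu(X)$ and are dense in $L^p_\mu(X)$, the usual closedness argument --- the set of $f\in L^p_\mu(X)$ along which $\mathbf{A}_{t;k}f$ converges $\mu$-a.e.\ is closed in $L^p_\mu(X)$ by the maximal inequality of step (ii) and contains this dense class --- completes the proof of Theorem~\ref{stronger}; Theorem~\ref{Main} is then the case $p=2$ over a probability space.

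Since the one genuinely substantial ingredient, the oscillation bound \eqref{L2osc}, is taken as given here, this deduction presents no real obstacle. The points that demand a little care are the oscillation-implies-convergence principle of step (i) (classical, and the content of the cited subsections of \cite{MOE}), the lacunary-to-continuous-parameter comparisons --- which must accommodate the normalization of $\mathbf{A}_{t;k}$ by $\lfloor t\rfloor$ rather than $t$, a harmless discrepancy since $\lfloor\lambda^{n+1}\rfloor/\lfloor\lambda^n\rfloor\to\lambda$ --- and running the interpolation-and-density step over a possibly infinite-measure $\sigma$-finite space rather than over a probability space.
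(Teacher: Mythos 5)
Your proposal is correct and follows essentially the same route as the paper: oscillation estimate $\Rightarrow$ a.e.\ convergence along $\lambda^n$ in $L^2_\mu$, a maximal inequality from \eqref{L2osc} plus the trivial $L^\infty$ bound and Marcinkiewicz interpolation, positivity and $\lambda\downarrow 1$ (the paper uses $\lambda=2^{1/2^d}$) to pass to the full parameter, and density of $L^2_\mu\cap L^p_\mu$ with the closedness argument for general $p\in[2,\infty)$. The only cosmetic difference is that you extract the lacunary maximal bound directly from \eqref{L2osc} with $J=1$, whereas the paper cites Proposition~2.6 in \cite{MOE}; this changes nothing of substance.
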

Theorem~$\ref{osc}$ implies Theorem~$\ref{stronger}$ straightforwardly, and for the sake of completeness, we provide a brief explanation here. Firstly, the estimate \eqref{L2osc} immediately yields that for every $\lambda\in(1,2]$ and every $f\in L_{\mu}^{2}(X)$ we have that
\begin{equation}\label{mainconv1}
\lim_{n\to\infty}\mathbf{A}_{\lambda^n;k}f(x)\quad\text{exists for $\mu$-a.e. $x\in X$,}
\end{equation} 
see for example Proposition~2.8 in \cite{MOE}. By applying this for $\lambda=2^{\frac{1}{2^d}}$, $d\in\mathbb{N}_0$ and by taking into account the positivity of the operator $\mathbf{A}_{N;k}$ one can easily deduce that for every $f\in L^2_{\mu}(X)$ we have that
  \begin{equation}\label{mainconv2}
\lim_{N\to\infty}\mathbf{A}_{N;k}f(x)\quad\text{exists for $\mu$-a.e. $x\in X$.}
\end{equation}
The estimate \eqref{L2osc} also implies that the maximal function associated with the operators $\mathbf{A}_{N;k}$ is bounded on $L^2_{\mu}(X)$, since we have
\begin{multline}\label{suptoosc}
\big\|\sup_{N\in\mathbb{N}}|\mathbf{A}_{N;k}f|\big\|_{L^2_{\mu}(X)}\lesssim \big\|\sup_{n\in\mathbb{N}_0}|\mathbf{A}_{2^n;k}f|\big\|_{L^2_{\mu}(X)}
\\
\lesssim \sup_{n\in\mathbb{N}_0}\|\mathbf{A}_{2^n;k}f\|_{L^2_{\mu}(X)}+\sup_{J\in\mathbb{N}}\sup_{I\in\mathfrak{S}_J(\mathbb{N}_0)}\|O^2_{I,J}(\mathbf{A}_{2^n;k}f:n\in\mathbb{N}_0)\|_{L^2_{\mu}(X)}\lesssim_k\|f\|_{L^2_{\mu}(X)}\text{,}
\end{multline}
where for the second estimate one may use for example Proposition~2.6 in \cite{MOE}. Secondly, we trivially have $\|\sup_{N\in\mathbb{N}}|\mathbf{A}_{N;k}f|\|_{L^{\infty}_{\mu}(X)}\le \|f\|_{L^{\infty}_{\mu}(X)}$ and by Marcinkiewicz interpolation theorem we obtain the estimate $\|\sup_{N\in\mathbb{N}}|\mathbf{A}_{N;k}f|\|_{L^{p}_{\mu}(X)}\lesssim_{p,k} \|f\|_{L^{p}_{\mu}(X)}$ for $p\in[2,\infty]$. Finally, we note that such an estimate implies that the set of functions in $L^p_{\mu}(X)$, $p\in[2,\infty)$, for which the limit \eqref{mainconv2} exists is closed in $L^p_{\mu}(X)$ and since we have already established pointwise convergence on $L^2_{\mu}(X)$ and $L^2_{\mu}(X)\cap L^p_{\mu}(X)$ is dense in $L^p_{\mu}(X)$, we immediately obtain Theorem~$\ref{stronger}$.
\subsection{Strategy}
As explained above, our main theorems can be derived by establishing Theorem~$\ref{osc}$ and, by Calder\'on's transference principle, it suffices to establish the estimate \eqref{L2osc} for the integer shift system. The averaging operator becomes
\begin{equation}\label{avop}
A_{t;k}f(x)\coloneqq\frac{1}{\lfloor t\rfloor}\sum_{n\le t}f(x-n\lfloor n\sqrt{k}\rfloor)\text{,}
\end{equation}
and in the frequency side we have $\mathcal{F}_{\mathbb{Z}}[A_{t;k}f](\xi)=m_{t;k}(\xi)\mathcal{F}_{\mathbb{Z}}(\xi)$, where $m_{t;k}(\xi)\coloneqq\frac{1}{\lfloor t\rfloor}\sum_{n\le t}e(\xi n\lfloor n\sqrt{k}\rfloor)$. The first part towards establishing the estimate \eqref{L2osc} is performing a suitably adapted circle method for the above exponential sums, culminating in the proof of a key intermediate approximation result, see Proposition~$\ref{cmethod}$. This part utilizes and extends ideas appearing in \cite{NL}, where a circle method for the case $k=2$ is developed, and the reader is encouraged to compare our sections~$\ref{majsection}$ and $\ref{minsection}$ with sections 4.3.3 and 4.2.1 and from \cite{NL} respectively. The second part amounts to establishing the desired 2-oscillation estimates for the approximant. We briefly elaborate on these steps below.
\subsubsection{Minor arc estimates}\label{minarcstrat}
We argue using the $3$-dimensional Heisenberg nilmanifold, namely, we work on $G/\Gamma$ where
\[
G=\Big\{
\left(\begin{smallmatrix}1&x&z
\\
0&1&y\\
0&0&1
\end{smallmatrix}\right)
,\,\,x,y,z\in\mathbb{R}\Big\}\quad\text{and}\quad\Gamma=\Big\{\left(\begin{smallmatrix}1&x&z
\\
0&1&y\\
0&0&1
\end{smallmatrix}\right),\,\,x,y,z\in\mathbb{Z}\Big\}\text{.}
\]
Note that $\mathcal{F}=\Big\{
\left(\begin{smallmatrix}1&x&z
\\
0&1&y\\
0&0&1
\end{smallmatrix}\right)
,\,\,x,y,z\in[0,1)\Big\}$ is a fundamental domain. We will appeal to Green and Tao's quantitative Leibman theorem \cite{BG}, see Theorem~$\ref{QLT}$ for a precise formulation, by viewing the average $\mathbb{E}_{n\in[N]}e(n\lfloor n\sqrt{k}\rfloor)$ as the average value of an appropriate function $F\colon G/\Gamma\to\mathbb{C}$ sampled over a polynomial sequence in $G/\Gamma$.  More precisely, for 
\begin{equation*}\label{definitionofg1}
g_{\xi;k}(n)\coloneqq\begin{pmatrix}
1 & -\xi n & 0\\
0 & 1 & \sqrt{k} n\\
0 & 0 & 1 
\end{pmatrix}
\end{equation*}
and for the unique function $F\colon G/\Gamma\to\mathbb{C}$ such that $F\Big(\Big(\begin{smallmatrix}
1 & x & z\\
0 & 1 & y\\
0 & 0 & 1 
\end{smallmatrix}\Big)\Gamma\Big)=e(z)$ for all $x,y,z\in[0,1)$, we see that 
\begin{multline}
\mathbb{E}_{n\in[N]}F(g_{\xi;k}(n)\Gamma)=\mathbb{E}_{n\in[N]}F\bigg(\bigg(\begin{smallmatrix}
1 & -\xi n & 0\\
0 & 1 & \sqrt{k} n\\
0 & 0 & 1 
\end{smallmatrix}\bigg)\Gamma\bigg)
\\
=\mathbb{E}_{n\in[N]}F\bigg(\bigg(\begin{smallmatrix}
1 & \{-\xi n\} & \{\xi n\lfloor n\sqrt{k}\rfloor\}\\
0 & 1 & \{\sqrt{k} n\}\\
0 & 0 & 1 
\end{smallmatrix}\bigg)\Gamma\bigg)=\mathbb{E}_{n\in[N]}e(\xi n\lfloor n\sqrt{k}\rfloor)\text{,}
\end{multline}
where we used the following identity $
\Big(\begin{smallmatrix}
1 & x & z\\
0 & 1 & y\\
0 & 0 & 1
\end{smallmatrix}\Big)\Gamma=
\Big(\begin{smallmatrix}
1 & \{x\} & \{z-x\lfloor y\rfloor\}\\
0 & 1 & \{y\}\\
0 & 0 & 1
\end{smallmatrix}\Big)\Gamma$. Although $F$ is not continuous, after a suitable approximation, we will appeal to the previously mentioned theorem which will ultimately allow us show that if $|m_{N;k}(\xi)|$ is not appropriately small, then $\xi$ is of the form $\frac{a+b\sqrt{k}}{q}+t$ for $q,|b|\lesssim N^{\varepsilon}$ and $|t|\le N^{-1+\varepsilon}$ for some adequately small $\varepsilon>0$, see Lemma~$\ref{minLemma1'}$ or Lemma~$\ref{minLemma1}$. Since the sequence $(n\lfloor n\sqrt{k}\rfloor)_{n\in\mathbb{N}}$ is of quadratic growth, the major arc length must naturally be $\lesssim N^{-2+\varepsilon'}$ for some small $\varepsilon'>0$, and thus, to conclude we have to separately treat the frequencies $\xi$ which are of the form $\frac{a+b\sqrt{k}}{q}+t$, where $N^{-2+\varepsilon'}\lesssim|t|\lesssim N^{-1+\varepsilon}$. We bound $m_{N;k}$ for these frequencies by using an appropriate factorization of $g_{\xi;k}$ for such $\xi$'s in the spirit of Theorem~1.19 in \cite{BG}, see \eqref{gxifactorization}, and by carefully appealing once again to the quantitative Leibman theorem for a suitable two dimensional abelian nilmanifold. We made some effort to apply as straightforwardly as possible the quantitative Leibman theorem and although the notation and terminology in the precise formulation of this theorem is somewhat involved, he hope that the reader will be able to interpret it appropriately for the two concrete applications necessitated by the outlined approach. 
\subsubsection{Major arcs estimates}
The important first step for establishing the major arc estimates is proving a suitable approximation result for the exponential sum of $(\frac{a+b\sqrt{k}}{q})n\lfloor n\sqrt{k}\rfloor$ along short intervals in $n$, see Proposition~$\ref{t=0}$, since then one may conclude relatively straightforwardly for small perturbations, that is, for exponential sums with phases $\big((\frac{a+b\sqrt{k}}{q}+t)n\lfloor n\sqrt{k}\rfloor\big)_{n\in[N]}$ with small $|t|$, see Proposition~$\ref{MainMajPro}$. In contrast to the situation for the squares where for $\xi=a/q$ one obtains a quadratic Gauss sum with the following simple calculation
\begin{multline}
\frac{1}{N}\sum_{n=1}^{N}e(\xi n^2)=\frac{1}{N}\sum_{n=1}^{N}e(a n^2/q)=\frac{1}{N}\sum_{r=1}^q\sum_{\substack{1\le n\le N:\\n\equiv r\Mod{q}}}e(a n^2/q)=\frac{1}{q}\sum_{r=1}^qe(a r^2/q)+O(q/N)\text{,}
\end{multline}
here the problematic frequencies do not appear as effortlessly. Ultimately, we prove that for suitably small $t$, the exponential sum $m_{N;k}\big(\frac{a+b\sqrt{k}}{q}+t\big)$ can be approximated with a polynomially decaying error in $N$ by the following product of an ``arithmetic part'' and the continuous counterpart of $m_{N;k}$
\begin{equation}\label{majarcintroduction}
\bigg(\underbrace{\mathbb{E}_{r,s\in[2qk_2]}e\Big(\frac{a}{q}rs+\frac{b}{2qk_2}(k_1r^2+k_2s^2)\Big)}_{\coloneqq \G_{k}(a,b,q)\text{, where }k=k_1/k_2\text{.}}\underbrace{\int_0^1e(-(b/2q) t^2)dt}_{\coloneqq \F(b/(2q))}\bigg)\cdot\bigg(\underbrace{\frac{1}{N}\int_{0}^Ne(t\sqrt{k}x^2)dx}_{\coloneqq V_{N;k}(t)}\bigg)\text{.}
\end{equation}
To establish the ``$t=0$''-case of the estimate, see Proposition~$\ref{t=0}$, we use an effective (simultaneous) equidistribution lemma for $(n,\lfloor n\sqrt{k}\rfloor,\{n\sqrt{k}\})$ in $\big(\mathbb{Z}/q\mathbb{Z}\big)\times\big(\mathbb{Z}/q\mathbb{Z}\big)\times\mathbb{T}$, see Lemma~$\ref{easylemma2}$, which utilizes bounds for the discrepancy of the sequence $\{n\sqrt{k}\}$, while to derive the precise form of $G_k$ we do require the full assumption $k\in\mathbb{Q}$.

The final step pertaining to the major arc analysis is establishing estimates for the arithmetic part of the operator. By van der Corput-type estimates we immediately get $\big|\F\big(b/(2q)\big)\big|\lesssim \min(1,|b/q|^{-1/2})$ and with certain number-theoretic considerations we show that $|G_{k}(a,b,q)|\lesssim_k q^{-1/2}$ for $\gcd(a,b,q)=1$. Our work here deviates from \cite{NL} where the fact that $k=2$ is used in an essential manner for the proof of the bounds for $G_2$. We give a quick proof accommodating for every $k\in\mathbb{Q}$ which establishes an estimate sufficient for our needs.
\subsubsection{Establishing oscillation estimates}We collect the estimates from the circle method in section~$\ref{circleinputsection}$. The approximation result of Proposition~$\ref{cmethod}$, the bounds for the ``arithmetic part'' and continuous parts of the operator, see \eqref{majarcintroduction}, and  a separation condition for the problematic frequencies are the only inputs used for establishing the oscillation estimates \eqref{L2osc} and is carried out in section~$\ref{oscsection}$. Our proof proceeds via a number of reductions in the spirit of \cite{Bourgain}, ultimately reducing our problem to establishing a maximal estimate on $L^2(\mathbb{R})$ for a multifrequency variant of the standard maximal operator corresponding to dilated convolutions with a smooth function, and to conclude we employ Bourgain's logarithmic lemma, see Lemma~4.13 in \cite{Bourgain}. Although the proof is complicated, we made an effort to organize the various steps in subsections. Additionally, let us mention that we hope that despite the technicalities involved, our arguments will highlight the importance of projections in pointwise ergodic theory.   
\\

The paper is organized as follows. In section~$\ref{setupsection}$ we introduce the major and minor arcs and prove some preparatory results. Sections~$\ref{majsection}$ and $\ref{minsection}$ are devoted to the major and minor arc analysis, respectively, and in section~$\ref{circleinputsection}$ we combine everything to establish the approximation result in Proposition~$\ref{cmethod}$. In the final section we prove Theorem~$\ref{osc}$, which, as discussed earlier, implies Theorem~$\ref{stronger}$, which in turn, gives Theorem~$\ref{Main}$.

\subsection{Further directions}
We end our introduction with some comments on three natural questions that arise.
\subsubsection{Generalized (or bracket) polynomials.}
It is rather natural to ask whether Theorem~$\ref{Main}$ can be extended to orbits $(n\lfloor n\alpha\rfloor)_{n\in\mathbb{N}}$ for arbitrary $\alpha$. Let us note that our arguments with straightforward modifications cover orbits of the form $(n\lfloor -n\sqrt{k}\rfloor)_{n\in\mathbb{N}}$ for $k\in\mathbb{Q}_{>0}$ and thus the pointwise result along $(n\lfloor n\alpha\rfloor)_{n\in]\mathbb{N}}$ holds when $\alpha^2\in\mathbb{Q}$. The most restricting part of our approach, not allowing us to cover more general $\alpha$'s, is deriving an approximation of the form \eqref{majarcintroduction} for $t=0$, see \eqref{notfreetarc}, where we rely on the identity $n\alpha\lfloor n\alpha\rfloor=\frac{(\alpha n)^2+\lfloor \alpha n\rfloor^2-\{n\alpha\}^2}{2}$, and it is crucial that $\alpha^2\in\mathbb{Q}$ so that we can ``factorize'' the exponential sum, see the calculation in \eqref{firstmanuever}. For a discussion regarding possible implementations of the circle method for general $\alpha$'s, we refer the reader to Chapter~6 in \cite{NL}. 

In general, it would be interesting to know whether pointwise convergence holds for ergodic averages along any generalized (or bracket) polynomial, for example $\big\lfloor\lfloor n^2e\rfloor\cdot\lfloor n^3\sqrt{2}\rfloor+n\big\rfloor\cdot\sqrt{5}n+n^3$, see section~2.3 in \cite{NL} as well as \cite{Lb1,Lb2,Lb3}, for precise definitions. The aforementioned works, together with the main result in \cite{BG} present a good starting point for developing the corresponding circle method which combined with the ideas presented here could have the potential to progress our understanding of the pointwise behaviour of ergodic averages along such orbits. 

To the best of the author's knowledge, the only (genuinely) generalized bracket polynomials along which pointwise convergence has been established in this context prior to the present work are of the form $\lfloor p(n)\rfloor$, where $p(n)\in\mathbb{R}[n]$, see section~8 in \cite{Bourgain}.

\subsubsection{The $L^p$-theory.}
Finding the sharpest rage of $p\in[1,\infty]$ for which pointwise convergence holds for the corresponding ergodic averages along an integer sequence $(a_n)_{n\in\mathbb{N}}$ is an intensely studied problem. The $L^p$-theory developed in the context of such problems relies heavily on the arithmetic structure of the problematic frequencies. Both the Ionescu--Wainger multiplier theory \cite{IWT} as well as the more ad hoc method proposed already in \cite{Bourgain} for establishing $L^p$-estimates involve arguments relying on a certain kind of periodicity. We give an argument for $L^2$ in the spirit of \cite{Bourgain} which completely forsakes the specific nature of the problematic frequencies, apart from their spacing, and we note that it is unlikely that a successful treatment of the complete $L^p$-theory can afford to rely solely on the spacing of the obstructions. In fact, for general well-spaced frequencies, certain useful intermediate estimates can fail, see \cite{CD}. How one can take into account the specific form of these frequencies $\frac{a+b\sqrt{k}}{q}$ and develop the appropriate Ionescu--Waigner theory counterpart or even manage to proceed in a more ad hoc manner similar section~7 from \cite{Bourgain} seems to be a rather interesting and difficult task. Clearly, such questions become substantially more difficult for arbitrary generalized polynomials, where the problematic frequencies may be of an even more convoluted form.
\subsubsection{State-of-the-art quantitative ergodic theorems} It would be interesting to see whether one can establish the state-of-the-art quantitative ergodic theorem along such orbits, namely, the corresponding uniform variational, jump and oscillation ergodic theorem, see Theorem~1.20 (iv),(v) and (vi) in \cite{MOE}, for the natural range $p\in(1,\infty)$. Although we do not pursue this here, we believe that an elaboration of the methods of the present work, together with standard techniques of the fields can yield such estimates on $L^2_{\mu}(X)$, and certain interpolative approaches make a result near $L^2$ to not seem out of reach, but ultimately, to obtain the complete natural range $p\in(1,\infty)$ for such estimates, the first step would be to produce a robust $L^p$-theory suitable for the maximal function.

\section*{Acknowledgments}
The author would like to thank Nikos Frantzikinakis and Borys Kuca for their valuable feedback. I would also like to thank Mariusz Mirek for multiple enlightening discussions and for his constant support and encouragement.

\section{Notation}
We use the standard notation
\[
\lfloor x\rfloor=\max\{n\in\mathbb{Z}:\,n\le x\}\text{,}\quad\{x\}=x-\lfloor x\rfloor\text{,} \quad\|x\|=\min\{|x-n|:\,n\in\mathbb{Z}\}\text{.}
\]
and for every $N\in[1,\infty)$ we let 
\[
[N]\coloneqq[1,N]\cap\mathbb{Z}\text{,}\quad[\pm N]\coloneqq[-N,N]\cap\mathbb{Z}\text{.}
\]
A function $f\colon A\to \mathbb{C}$ is called $1$-bounded if $|f|\le 1$. For every function $f\colon A\to\mathbb{C}$ and every nonempty subset $B\subseteq A$ we denote the average value of $f$ over $B$ as usual by
\[
\mathbb{E}_{b\in B}f(b)\coloneqq\frac{1}{|B|}\sum_{b\in B}f(b)\text{.}
\]
For two nonnegative quantities $A,B$, we write $A\lesssim B$ or $B \gtrsim A$  to denote that there exists a positive constant $C$, possibly depending on a fixed choice of parameters such that $A\le C B$. Whenever we want to highlight the dependence of the implicit constant $C$ on a set of parameters $\tau_1,\dotsc,\tau_{n}$, we write $A\lesssim_{\tau_1,\dotsc,\tau_{n}} B$. 

In the sequel, we denote $e^{2\pi i x}$ by $e(x)$. For every finitely supported $f\colon\mathbb{Z}\to\mathbb{C}$, we define the Fourier transform of $f$ as the function $\hat{f}\colon \mathbb{T}\to\mathbb{C}$ such that 
\[
\hat{f}(\xi)=\sum_{k\in\mathbb{Z}}f(k)e(k\xi)\text{,}
\]
and we sometimes opt for the notation $\mathcal{F}_{\mathbb{Z}}[f](\xi)=\hat{f}(\xi)$. For every $g\colon L^1(\mathbb{T})\to\mathbb{C}$, we define the inverse Fourier transform $\mathcal{F}^{-1}_{\mathbb{Z}}[g]\colon \mathbb{Z}\to\mathbb{C}$ as
\[
\mathcal{F}^{-1}_{\mathbb{Z}}[g](x)=\int_{\mathbb{T}}g(\xi)e(-x\xi)d\xi\text{.}
\]
For every finitely supported function $f\colon\mathbb{Z}\to\mathbb{C}$ and every trigonometric polynomial $g\colon\mathbb{T}\to\mathbb{C}$ we have
\[
\mathcal{F}^{-1}_{\mathbb{Z}}\big[\mathcal{F}_{\mathbb{Z}}[f]\big]=f\quad\text{and}\quad
\mathcal{F}_{\mathbb{Z}}\big[\mathcal{F}^{-1}_{\mathbb{Z}}[g]\big]=g\text{.}
\]
Both operators extend boundedly to the entirety of $\ell^2(\mathbb{Z})$ and $L^2(\mathbb{T})$, respectively, and the inversion formulas above still hold. For every function $h\in\mathcal{S}(\mathbb{R},\mathbb{C})$ we define the Fourier and inverse Fourier transform as follows
\[
\mathcal{F}_{\mathbb{R}}[h](\xi)=\int_{\mathbb{R}}h(x)e(\xi x)dx\quad\text{and}\quad\mathcal{F}^{-1}_{\mathbb{R}}[h](x)=\int_{\mathbb{R}}h(\xi)e(-x\xi)dx\text{,}
\]
and we have 
\[
\mathcal{F}^{-1}_{\mathbb{R}}\big[\mathcal{F}_{\mathbb{R}}[h]\big]=h\quad\text{and}\quad
\mathcal{F}_{\mathbb{R}}\big[\mathcal{F}^{-1}_{\mathbb{R}}[h]\big]=g\text{.}
\]
Similarly, the operators extend boundedly to $L^2(\mathbb{R})$ and the same two identities hold. For any $m\in L^{\infty}(\mathbb{T})$ and finitely supported $f\colon \mathbb{Z}\to\mathbb{C}$, we define $T_{\mathbb{Z}}[m]f\colon \mathbb{Z}\to \mathbb{C}$ by 
\[
T_{\mathbb{Z}}[m]f(n)=\mathcal{F}^{-1}_{\mathbb{Z}}\big[m\mathcal{F}_{\mathbb{Z}}[f]\big](n)=\int_{\mathbb{T}}m(\xi)\mathcal{F}_{\mathbb{Z}}[f](\xi)e(-\xi n)d\xi\text{.}
\]
For any $m\in L^{\infty}(\mathbb{R})$ and $h\in\mathcal{S}(\mathbb{R},\mathbb{C})$, we define $T_{\mathbb{R}}[m]h\colon \mathbb{R}\to \mathbb{C}$ by 
\[
T_{\mathbb{R}}[m]h(x)=\mathcal{F}^{-1}_{\mathbb{R}}\big[m\mathcal{F}_{\mathbb{R}}[h]\big](x)=\int_{\mathbb{R}}m(\xi)\mathcal{F}_{\mathbb{R}}[h](\xi)e(-\xi x)d\xi\text{.}
\]
Note that with this notation the averaging operator \eqref{avop} becomes $A_{t;k}f=T_{\mathbb{Z}}[m_{t;k}]f$.
\section{Setting up the circle method}\label{setupsection} In this section we introduce some notation and prove certain preparatory results for the circle method suitably adapted to the analysis of the following exponential sums
\[
m_{N;k}(\xi)\coloneqq\frac{1}{N}\sum_{n\in[N]}e(\xi n\lfloor n\sqrt{k}\rfloor)\text{.}
\]
We fix $k\in\mathbb{Q}_{>0}$ with $\sqrt{k}\not\in\mathbb{Q}$ as well as two parameters $\gamma,\gamma'\in(0,1/10)$ to be determined. For every $q\in\mathbb{N}$, $a,b\in\mathbb{Z}$ and $N\in[1,\infty)$, let 
\begin{equation}\label{mabq}
\widetilde{\mathfrak{M}}_{a,b,q}=\widetilde{\mathfrak{M}}_{a,b,q}(N;k,\gamma)\coloneqq\Big\{\xi\in[-1/2,1/2):\,\Big\|\xi-\frac{a+b\sqrt{k}}{q}\Big\|\le N^{-1+\gamma}\Big\}\text{,}
\end{equation}
\begin{equation}\label{Mabq}
\mathfrak{M}_{a,b,q}=\mathfrak{M}_{a,b,q}(N;k,\gamma')\coloneqq\Big\{\xi\in[-1/2,1/2):\,\Big\|\xi-\frac{a+b\sqrt{k}}{q}\Big\|\le N^{-2+\gamma'}\Big\}\text{,}
\end{equation}
and note that $\mathfrak{M}_{a,b,q}\subseteq \widetilde{\mathfrak{M}}_{a,b,q}$, since $-1+\gamma>-2+\gamma'\iff 1>\gamma'-\gamma\quad\text{and}\quad\gamma,\gamma'\in(0,1/10)$. We show that for $N\gtrsim_{k,\gamma,\gamma'}1$ we have that
\begin{equation}\label{ExtMaj}
\Big\{\widetilde{\mathfrak{M}}_{a,b,q}:\,q\in[N^{\gamma}],\,b\in[\pm N^{\gamma}],\,a\in[q],\,\gcd(a,b,q)=1\Big\}
\end{equation}
contains mutually disjoint sets. This will be an immediate consequence of the following three lemmas, the first of which is elementary and we omit its proof.
\begin{lemma}[Unique representation for the major arc centers]\label{Faithful}
Assume $\alpha$ is an irrational number, $q,q'\in \mathbb{N}$, $b,b'\in\mathbb{Z}$ and $a\in[q],a'\in[q']$ are such that $\gcd(a,b,q)=\gcd(a',b',q')=1$. Then 
\begin{equation}\label{niceequiv}
\frac{a+b\alpha}{q}\equiv\frac{a'+b'\alpha}{q'}\Mod{1}\iff (a,b,q)=(a',b',q')\text{.}
\end{equation}
\end{lemma}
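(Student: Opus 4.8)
The statement to prove is Lemma~\ref{Faithful}, the unique representation property for major arc centers. Here is how I would approach it.

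\medskip

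The plan is to prove both implications directly. The direction $(a,b,q)=(a',b',q')\implies \frac{a+b\alpha}{q}\equiv\frac{a'+b'\alpha}{q'}\Mod 1$ is trivial. For the forward direction, suppose $\frac{a+b\alpha}{q}-\frac{a'+b'\alpha}{q'}=m$ for some $m\in\mathbb{Z}$. Clearing denominators gives $q'(a+b\alpha)-q(a'+b'\alpha)=mqq'$, i.e.\ $(q'a-qa'-mqq')+(q'b-qb')\alpha=0$. Since $\alpha$ is irrational, both rational coefficients must vanish: $q'b=qb'$ and $q'a-qa'=mqq'$, the latter forcing $q\mid q'a$ and $q'\mid qa'$.

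\medskip

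The heart of the matter is then a short divisibility argument. From $q'b=qb'$ and $q\mid q'a$ (since $q'a=qa'+mqq'$), together with $\gcd(a,b,q)=1$, I would argue that $q\mid q'$: indeed $q$ divides $q'a$ and $q'b$, hence $q$ divides $\gcd(q'a,q'b)=q'\gcd(a,b)$; combined with $q\mid q'a'$—wait, more cleanly, let $d=\gcd(q,q')$ and write $q=d q_1$, $q'=dq_1'$ with $\gcd(q_1,q_1')=1$. Then $q_1'b=q_1 b'$ forces $q_1\mid b$ (as $\gcd(q_1,q_1')=1$), and $q_1\mid q'a=dq_1'a$ with $\gcd(q_1,q_1')=1$ and $q_1\mid d q_1$... this needs $q_1\mid a$ after noting $q_1 \mid q$. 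So $q_1$ divides $\gcd(a,b,q)=1$, whence $q_1=1$, i.e.\ $q\mid q'$. By the symmetric argument $q'\mid q$, so $q=q'$. Then $q'b=qb'$ gives $b=b'$, and $q'a-qa'=mqq'$ with $q=q'$ gives $a-a'=mq$; since $a,a'\in[q]=\{1,\dots,q\}$ we have $|a-a'|<q$, forcing $m=0$ and $a=a'$.

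\medskip

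I do not anticipate a genuine obstacle here; this is elementary number theory. The only point requiring a little care is the coprimality bookkeeping in the step $q\mid q'$—one must use all three conditions $q\mid q'a$, $q\mid q'b$ (equivalently $q'b=qb'$), and $\gcd(a,b,q)=1$ simultaneously, and it is cleanest to pass to $q/\gcd(q,q')$ and show it divides $\gcd(a,b,q)$. The reduction modulo $1$ at the end (extracting $m=0$ from $a,a'\in[q]$) is immediate from the range restriction on the numerators. Since the authors explicitly omit this proof as elementary, I would likewise keep it to a few lines.
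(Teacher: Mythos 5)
Your argument is correct: separating the rational and irrational parts (using that $\alpha\notin\mathbb{Q}$) to get $q'b=qb'$ and $q'a-qa'=mqq'$, then passing to $q_1=q/\gcd(q,q')$ and showing $q_1\mid\gcd(a,b,q)=1$ (and symmetrically for $q'$, using $\gcd(a',b',q')=1$), and finally extracting $m=0$ from $a,a'\in[q]$, is exactly the standard elementary proof; the paper itself omits the proof of this lemma as elementary, so there is no argument of the authors to compare against. The only suggestion is editorial: the middle of your divisibility step is written as a stream of consciousness ("wait, more cleanly...") and should be tidied into the clean form $q\mid q'a$, $q\mid q'b$, hence $q_1\mid a$, $q_1\mid b$, $q_1\mid q$, so $q_1=1$.
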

\begin{lemma}[General separation condition]\label{seperationverygeneral}
Assume $\alpha$ is a real algebraic number of degree $d\ge  2$ and let $X,Y\ge 1$. Then there a positive constant $c=c(\alpha)$ such that for all $(a,b,q)\neq(a',b',q')$ with $q,q'\in[X]$, $b,b'\in[\pm Y]$, $a\in[q]$, $a'\in[q']$ and $\gcd(a,b,q)=\gcd(a',b',q')=1$ we have
\[
\Big\|\frac{a+b\alpha}{q}-\frac{a'+b'\alpha}{q'}\Big\|>c X^{-d-1}Y^{-d+1}\text{.}
\]
\end{lemma}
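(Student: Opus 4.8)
The plan is to clear denominators and reduce the estimate to an effective irrationality measure bound for $\alpha$, supplemented by the qualitative uniqueness already recorded in Lemma~\ref{Faithful}. Put $A\coloneqq q'a-qa'$ and $B\coloneqq q'b-qb'$, so that
\[
\frac{a+b\alpha}{q}-\frac{a'+b'\alpha}{q'}=\frac{A+B\alpha}{qq'},\qquad A,B\in\mathbb{Z},\quad 1\le qq'\le X^2,\quad |B|\le q'|b|+q|b'|\le 2XY.
\]
Since $\big\|\tfrac{A+B\alpha}{qq'}\big\|=\frac{1}{qq'}\min_{m\in\mathbb{Z}}|A+B\alpha-mqq'|$, it suffices to bound $|A+B\alpha-mqq'|$ from below by a quantity $\gtrsim_\alpha (XY)^{-(d-1)}$, uniformly in $m\in\mathbb{Z}$, and then to divide by $qq'\le X^2$. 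The regimes $B\neq 0$ and $B=0$ are treated separately.

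If $B\neq 0$, I would invoke the Liouville inequality: for an algebraic number $\alpha$ of degree $d$ there is $c_0=c_0(\alpha)>0$ with $|\alpha-p/r|\ge c_0 r^{-d}$ for all integers $p$ and all $r\ge 1$. (This is standard: if $f\in\mathbb{Z}[x]$ is the minimal polynomial of $\alpha$, then $r^d|f(p/r)|$ is a nonzero integer, so $|f(p/r)|\ge r^{-d}$, while the mean value theorem gives $|f(p/r)|\le |\alpha-p/r|\cdot\sup_{|t-\alpha|\le 1}|f'(t)|$ once $|\alpha-p/r|\le 1$, the bound being trivial otherwise.) Writing $A-mqq'=:A''\in\mathbb{Z}$ and applying this with denominator $r=|B|\le 2XY$ gives
\[
|A''+B\alpha|=|B|\,\big|\alpha-(-A''/B)\big|\ge c_0|B|^{-(d-1)}\ge c_0(2XY)^{-(d-1)},
\]
hence $\big\|\tfrac{A+B\alpha}{qq'}\big\|\ge c_0\,2^{-(d-1)}X^{-(d+1)}Y^{-(d-1)}$, which is the desired bound.

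If $B=0$, then $\tfrac{A+B\alpha}{qq'}=\tfrac{A}{qq'}$ is rational and cannot be an integer: otherwise $\tfrac{a+b\alpha}{q}\equiv\tfrac{a'+b'\alpha}{q'}\pmod 1$, and since $\alpha$ is irrational (as $d\ge 2$) Lemma~\ref{Faithful} would force $(a,b,q)=(a',b',q')$, contrary to hypothesis. Thus $A\not\equiv 0\pmod{qq'}$, so the nearest-integer distance is $\ge\frac{1}{qq'}\ge X^{-2}\ge X^{-(d+1)}Y^{-(d-1)}$, using $X,Y\ge 1$ and $d\ge 2$. Combining the two cases and taking, e.g., $c(\alpha)=\tfrac12\min\!\big(1,\,c_0(\alpha)2^{-(d-1)}\big)$ yields the claim with strict inequality. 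The only subtle point is precisely the degenerate case $B=0$, where the Liouville bound is vacuous and one must instead rule out a vanishing distance via the uniqueness statement of Lemma~\ref{Faithful}; everything else is bookkeeping with the exponents.
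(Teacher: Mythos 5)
Your proposal is correct and follows essentially the same route as the paper: split according to whether $q'b-qb'$ vanishes, handle the degenerate case via the uniqueness of representations (Lemma~\ref{Faithful}) and the trivial bound $1/(qq')\ge X^{-2}$, and otherwise apply Liouville's inequality with denominator $|q'b-qb'|\le 2XY$ before dividing by $qq'\le X^2$. The constants and exponents you obtain match the paper's, so no changes are needed.
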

\begin{proof}
Let $(a,b,q)\neq(a',b',q')$ with $q,q'\in[X]$, $b,b'\in[\pm Y]$, $a\in[q]$, $a'\in[q']$ and $\gcd(a,b,q)=\gcd(a',b',q')=1$. If $b/q=b'/q'$, then by Lemma~$\ref{Faithful}$, since $(a,b,q)\neq(a',b',q')$ we must have $a/q\neq a'/q'\Mod{1}$, and thus for all $m\in\mathbb{Z}$ we get that $aq'-a'q-qq'm\neq 0$. Then there exists $m_0\in\mathbb{Z}$ such that
\[
\Big\|\frac{a+b\alpha}{q}-\frac{a'+b'\alpha}{q'}\Big\|=\Big\|\frac{aq'-a'q}{qq'}\Big\|=\Big|\frac{aq'-a'q-m_0qq'}{qq'}\Big|\ge \frac{1}{qq'}\ge X^{-2}\ge X^{-d-1}Y^{-d+1}\text{.}
\] 
On the other hand, if $b/q\neq b'/q'$, then there exists $m_0\in\mathbb{Z}$ such that
\begin{multline}
\Big\|\frac{a+b\alpha}{q}-\frac{a'+b'\alpha}{q'}\Big\|=\Big|\frac{a+b\alpha}{q}-\frac{a'+b'\alpha}{q'}-m_0\Big|=\Big|\frac{aq'-a'q}{qq'}+\frac{(bq'-b'q)\alpha}{qq'}-m_0\Big|
\\
=\frac{|bq'-b'q|}{qq'}\bigg|\frac{aq'-a'q}{bq'-b'q}-\frac{m_0qq'}{bq'-b'q}+\alpha\bigg|\gtrsim_{\alpha} \frac{|bq'-b'q|}{qq'}\frac{1}{|bq'-b'q|^d}=\frac{1}{qq'|bq'-b'q|^{d-1}}
\\
\ge \frac{1}{X^2(|bq'|+|b'q|)^{d-1}}\ge\frac{1}{X^2(2XY)^{d-1}}=\frac{1}{2^{d-1}}X^{-d-1}Y^{-d+1}\text{,}
\end{multline}
where for the estimate in the second line we used Liouville's theorem \cite{LV}, namely the fact that for every real algebraic number $\alpha$ of degree $d\ge 2$, there exists a positive constant $c(\alpha)$ such that for all $p\in\mathbb{Z}$ and $q\in\mathbb{N}$ we have
\begin{equation}\label{Liouvillemanouver}
\Big|\alpha-\frac{p}{q}\Big|\ge\frac{c(\alpha)}{q^d} \text{.}
\end{equation}
\end{proof}
\begin{lemma}[Separation condition for the major arc centers]\label{seperationgeneral}
Assume $\alpha$ is a real algebraic number of degree $d\ge  2$ and $\gamma_1,\gamma_2,\gamma_3\in(0,1)$ satisfy 
\begin{equation}\label{restrgamma}
(d+1)\gamma_1+(d-1)\gamma_2+\gamma_3<1\text{.}
\end{equation}
Then there exists a positive constant $C=C(\alpha,\gamma_1,\gamma_2,\gamma_3)$ such that for all $N\ge C$ we have
\[
\Big\|\frac{a+b\alpha}{q}-\frac{a'+b'\alpha}{q'}\Big\|>2N^{-1+\gamma_3}
\]
for all $(a,b,q)\neq(a',b',q')$ with $q,q'\in[N^{\gamma_1}]$, $b,b'\in[\pm N^{\gamma_2}]$, $a\in[q]$, $a'\in[q']$ and $\gcd(a,b,q)=\gcd(a',b',q')=1$.
\end{lemma}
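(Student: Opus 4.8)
The plan is to obtain Lemma~\ref{seperationgeneral} as a direct consequence of Lemma~\ref{seperationverygeneral}, by specializing the free parameters $X,Y$ there to suitable powers of $N$ and then absorbing the resulting power of $N$ into the bound using the \emph{strict} inequality \eqref{restrgamma}.

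First I would apply Lemma~\ref{seperationverygeneral} with $X\coloneqq N^{\gamma_1}$ and $Y\coloneqq N^{\gamma_2}$; these satisfy $X,Y\ge 1$ because $N\ge 1$ and $\gamma_1,\gamma_2>0$. Given any $(a,b,q)\neq(a',b',q')$ with $q,q'\in[N^{\gamma_1}]$, $b,b'\in[\pm N^{\gamma_2}]$, $a\in[q]$, $a'\in[q']$ and $\gcd(a,b,q)=\gcd(a',b',q')=1$, we have in particular $q,q'\in[X]$ and $b,b'\in[\pm Y]$, so Lemma~\ref{seperationverygeneral} produces a constant $c=c(\alpha)>0$ with
\[
\Big\|\frac{a+b\alpha}{q}-\frac{a'+b'\alpha}{q'}\Big\|>c\,X^{-d-1}Y^{-d+1}=c\,N^{-(d+1)\gamma_1-(d-1)\gamma_2}\text{.}
\]

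Next I would compare this lower bound with the target quantity $2N^{-1+\gamma_3}$. Setting $\delta\coloneqq 1-\gamma_3-(d+1)\gamma_1-(d-1)\gamma_2$, the hypothesis \eqref{restrgamma} is precisely the statement that $\delta>0$, and the inequality $c\,N^{-(d+1)\gamma_1-(d-1)\gamma_2}>2N^{-1+\gamma_3}$ is equivalent to $c\,N^{\delta}>2$. Since $\delta>0$, this holds for all $N>(2/c)^{1/\delta}$, so the lemma follows upon taking $C\coloneqq\max\{1,(2/c)^{1/\delta}\}$; this $C$ depends only on $\alpha$ (through $c$ and through $d$) and on $\gamma_1,\gamma_2,\gamma_3$, as required.

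There is no real obstacle here: the entire arithmetic content sits in Lemma~\ref{seperationverygeneral} (hence in Liouville's theorem \eqref{Liouvillemanouver}), and the only point needing a moment's care is that the threshold $C$ be a function of the admissible parameters alone, which is automatic from the explicit expression for $\delta$ and the fact that $c=c(\alpha)$.
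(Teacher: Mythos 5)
Your proposal is correct and follows essentially the same route as the paper: both apply Lemma~\ref{seperationverygeneral} with $X=N^{\gamma_1}$, $Y=N^{\gamma_2}$ and then use the strict inequality \eqref{restrgamma} to beat $2N^{-1+\gamma_3}$ for $N$ large, the only cosmetic difference being that the paper phrases this as a contradiction while you argue directly and make the threshold $C=\max\{1,(2/c)^{1/\delta}\}$ explicit.
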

\begin{proof}Assume for the sake of a contradiction that there exists such $(a,b,q)\neq(a',b',q')$. By the previous lemma we have that
\[
2N^{-1+\gamma_3}\ge\Big\|\frac{a+b\alpha}{q}-\frac{a'+b'\alpha}{q'}\Big\|\gtrsim_{\alpha} N^{\gamma_1(-d-1)}N^{\gamma_2(-d+1)}\text{,}
\]
but then $N^{-1+(d+1)\gamma_1+(d-1)\gamma_2+\gamma_3}\gtrsim_{\alpha}1$, which is a contradiction for $N\gtrsim_{\alpha,\gamma_1,\gamma_2,\gamma_3}1$, since $-1+(d+1)\gamma_1+(d-1)\gamma_2+\gamma_3<0$, and the proof is complete.
\end{proof}
We see that for every fixed choice of $k\in\mathbb{Q}_{>0}$ with $\sqrt{k}\not\in\mathbb{Q}$ and $\gamma,\gamma'\in(0,1/10)$ the previous lemma is applicable for $\alpha=\sqrt{k}$, and thus the family \eqref{ExtMaj} comprises of mutually disjoint sets for $N\gtrsim_{k,\gamma,\gamma'}1$. Note that, since $\mathfrak{M}_{a,b,q}\subseteq \widetilde{\mathfrak{M}}_{a,b,q}$, the same holds for the analogous family of $\mathfrak{M}_{a,b,q}$'s. 

We identify the torus $\mathbb{T}$ with $[-1/2,1/2)$, and partition it into major and minor arcs, denoted by $\mathfrak{M}=\mathfrak{M}(N;k,\gamma,\gamma')$ and $\mathfrak{m}=\mathfrak{m}(N;k,\gamma,\gamma')$, respectively, as follows
\begin{equation}\label{Mandm}
\mathfrak{M}\coloneqq\bigcup_{q\in[N^{\gamma}]}\bigcup_{b\in[\pm N^{\gamma}]}\bigcup _{\substack{a\in[q]:\\\gcd(a,b,q)=1}}\mathfrak{M}_{a,b,q}\quad\text{and}\quad\mathfrak{m}\coloneqq
\mathbb{T}\setminus \mathfrak{M}\text{.}
\end{equation}
We will require a further partition of the minor arcs, see subsection~$\ref{minarcstrat}$, which we introduce below 
\begin{equation*}
\mathfrak{m}_2=\mathfrak{m}_2(N;k,\gamma,\gamma')\coloneqq\bigg(\bigcup_{q\in[ N^{\gamma}]}\bigcup_{b\in[\pm N^{\gamma}]}\bigcup _{\substack{a\in[q]:\\\gcd(a,b,q)=1}}\widetilde{\mathfrak{M}}_{a,b,q}\bigg)\setminus\mathfrak{M}\quad\text{and}\quad \mathfrak{m}_1=\mathfrak{m}_1(N;k,\gamma,\gamma')\coloneqq \mathfrak{m}\setminus\mathfrak{m}_2\text{.}
\end{equation*}
Taking into account the fact that the union above is taken over mutually disjoint sets, as well as the fact that $\mathfrak{M}_{a,b,q;k}\subseteq \widetilde{\mathfrak{M}}_{a,b,q}$, we get that
\[
\mathfrak{m}_2=\bigcup_{q\in[ N^{\gamma}]}\bigcup_{b\in[\pm N^{\gamma}]}\bigcup _{\substack{a\in[q]:\\\gcd(a,b,q)=1}}\Big\{\xi\in[-1/2,1/2):\,N^{-2+\gamma'}<\Big\|\xi-\frac{a+b\sqrt{k}}{q}\Big\|\le N^{-1+\gamma}\Big\}
\]
and 
\begin{equation}\label{m1disjoint}
\mathfrak{m}_1=\mathbb{T}\setminus \bigg(\bigcup_{q\in[ N^{\gamma}]}\bigcup_{b\in[\pm N^{\gamma}]}\bigcup _{\substack{a\in[q]:\\\gcd(a,b,q)=1}}\widetilde{\mathfrak{M}}_{a,b,q}\bigg)\quad\text{for $N\gtrsim_{k,\gamma,\gamma'}1$.}
\end{equation}
This concludes our preparation for the development of the circle method in the sequel, and we note that we will treat $k,\gamma,\gamma'$ as fixed parameters, making the major and minor arcs, as well as the further partitioning of the minor arcs, depend only on $N\in[1,\infty)$. 
\section{Major arc estimates}\label{majsection}
Here we perform the major arc analysis which in the context of our problem naturally splits into two tasks. Firstly, we establish a major arc approximation result, which should be thought of as a tool to appropriately factorize the multiplier 
\[
m_{N;k}(\xi)=\frac{1}{N}\sum_{n=1}^Ne(\xi n\lfloor n\sqrt{k}\rfloor)\quad\text{for $\xi\in\mathfrak{M}(N;k,\gamma,\gamma')$.}
\] 
More precisely, one of the main results of the present section is the following.
\begin{proposition}[Major arc approximation]\label{MainMajPro}
For every $k\in\mathbb{Q}_{>0}$ with $\sqrt{k}\not\in\mathbb{Q}$ and every $\kappa\in(0,1)$ there exists a positive constant $C=C(k,\kappa)$ such that for every $N\in [1,\infty)$, $q\in\mathbb{N}$ and $a,b\in\mathbb{Z}$, we have
\begin{multline}\label{freetarc}
\bigg|\bigg(\mathbb{E}_{n\in [N]}e\Big(\Big(\frac{a+b\sqrt{k}}{q}+t\Big)n\lfloor n\sqrt{k}\rfloor\Big)\bigg)-\G_k(a,b,q)\F\bigg(\frac{b}{2q}\bigg)V_{N;k}(t)\bigg|
\\
\le C\Big(N^{\kappa-1}+|t|N^{1+\kappa}+q^2(1+|b|)(1+\log N )N^{-\kappa/2}\Big)\text{,}
\end{multline}
where, if $k=\frac{k_1}{k_2}$ with $k_1,k_2\in\mathbb{N}$ and $\gcd(k_1,k_2)=1$, we have let
\begin{equation}\label{GFdef}
\G_{k}(a,b,q)\coloneqq \mathbb{E}_{r,s\in[2qk_2]}e\Big(\frac{a}{q}rs+\frac{b}{2qk_2}\big(k_1r^2+k_2s^2\big)\Big)\text{,}\quad\F(\xi)\coloneqq \int_0^1e(-\xi x^2)dx
\end{equation}
and
\begin{equation}\label{Vdef}
V_{N;k}(t)\coloneqq \frac{1}{N}\int_{0}^Ne(t\sqrt{k}x^2)dx\text{.}
\end{equation}
\end{proposition}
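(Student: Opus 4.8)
The plan is to reduce the oscillatory sum to a ``main term'' by exploiting the identity $n\sqrt{k}\lfloor n\sqrt{k}\rfloor = \tfrac12\big((n\sqrt{k})^2+\lfloor n\sqrt{k}\rfloor^2-\{n\sqrt{k}\}^2\big)$, which rewrites
\[
n\lfloor n\sqrt{k}\rfloor = \frac{1}{\sqrt{k}}\cdot\frac{(n\sqrt{k})^2+\lfloor n\sqrt{k}\rfloor^2-\{n\sqrt{k}\}^2}{2}
= \frac{\sqrt{k}\,n^2}{2} + \frac{\lfloor n\sqrt{k}\rfloor^2-\{n\sqrt{k}\}^2}{2\sqrt{k}}.
\]
Multiplying by $\tfrac{a+b\sqrt{k}}{q}+t$ and writing $k=k_1/k_2$, one sees that the rational/$\sqrt{k}$-part contributes a phase that depends on $n$ only through the residues of $n$ and $\lfloor n\sqrt{k}\rfloor$ modulo $2qk_2$ (together with a bounded contribution from $\{n\sqrt{k}\}^2$, which must be handled), while the ``$t$-part'' contributes $e(t\sqrt{k}n^2/2+\cdots)$, a slowly varying factor comparable to $V_{N;k}(t)$. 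This is the calculation alluded to in \eqref{firstmanuever}; carrying it out carefully is the first step. One should anticipate lower-order terms of size $O(|t|N^{1+\kappa})$ from freezing the $t$-dependent phase over short windows and from replacing sums by the integral in \eqref{Vdef}, and a term $O(N^{\kappa-1})$ from boundary/rounding effects, matching the stated error.

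The heart of the matter is the ``$t=0$'' case, i.e.\ understanding $\mathbb{E}_{n\in[N]}e\big(\tfrac{a+b\sqrt{k}}{q}\,n\lfloor n\sqrt{k}\rfloor\big)$ and identifying its value as $\G_k(a,b,q)\F(b/(2q))$ up to an error $O\big(q^2(1+|b|)(1+\log N)N^{-\kappa/2}\big)$; this is Proposition~$\ref{t=0}$ in the paper's scheme. Here I would partition $[N]$ according to the pair $(n\bmod 2qk_2,\ \lfloor n\sqrt{k}\rfloor \bmod 2qk_2)$, and on each residue class the phase $e(\tfrac{a}{q}rs+\tfrac{b}{2qk_2}(k_1r^2+k_2s^2))\cdot e(-\tfrac{b}{2q}\{n\sqrt{k}\}^2)$ factors out (up to the $\{n\sqrt{k}\}^2$ term, which varies continuously in $\{n\sqrt{k}\}$ and produces the $\F(b/(2q))$ factor after averaging over the equidistributed fractional parts). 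The key input is the effective joint equidistribution of $(n\bmod q,\lfloor n\sqrt{k}\rfloor\bmod q,\{n\sqrt{k}\})$ in $(\mathbb{Z}/q\mathbb{Z})^2\times\mathbb{T}$, which the paper isolates as Lemma~$\ref{easylemma2}$ and which rests on a discrepancy bound for $(\{n\sqrt{k}\})_{n\le N}$ (of quality $N^{-1+\varepsilon}$, valid because $\sqrt{k}$ is a quadratic irrational and hence has bounded partial quotients). Summing the equidistribution error over the $O(q^2)$ residue classes, together with the $(1+|b|)$ loss from the Lipschitz constant of the test function $x\mapsto e(-\tfrac{b}{2q}x^2)$ and a $\log N$ from a dyadic decomposition, produces precisely the claimed error term. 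That $k\in\mathbb{Q}$ (not merely $k$ algebraic) enters exactly here, in order to make the residue $\lfloor n\sqrt{k}\rfloor\bmod 2qk_2$ a genuinely periodic-type object with the clean Gauss-sum shape $\G_k$.

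Finally, I would assemble the two pieces: write the full phase as $(t\text{-part})\times(\text{arithmetic part})$, apply summation by parts (or a direct comparison) to move the slowly varying $t$-factor outside, invoke the $t=0$ estimate on each short block, and compare the resulting Riemann sum of $e(t\sqrt{k}x^2)$ with $V_{N;k}(t)$, picking up $O(|t|N^{1+\kappa})$. The main obstacle, I expect, is the bookkeeping around the $\{n\sqrt{k}\}^2$ term: it cannot be absorbed into a residue class, so one must treat it as a continuous observable and argue that averaging $e(-\tfrac{b}{2q}\{n\sqrt{k}\}^2)$ against the (nearly uniform) distribution of $\{n\sqrt{k}\}$ reproduces $\F(b/(2q))=\int_0^1 e(-\tfrac{b}{2q}x^2)\,dx$ — uniformly over residue classes and with an error that survives multiplication by the number of classes. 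Getting the quantitative dependence on $q$ and $b$ to come out as stated, rather than worse, is where the care is needed; everything else is routine manipulation of exponential sums and the cited equidistribution lemma.
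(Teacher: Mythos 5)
Your proposal follows essentially the same route as the paper: the identity $n\sqrt{k}\lfloor n\sqrt{k}\rfloor=\tfrac12\big((n\sqrt{k})^2+\lfloor n\sqrt{k}\rfloor^2-\{n\sqrt{k}\}^2\big)$ applied to the $b\sqrt{k}/q$-part of the phase, the effective joint equidistribution of $\big(n,\lfloor n\sqrt{k}\rfloor,\{n\sqrt{k}\}\big)$ modulo $2qk_2$ together with a discretization of the fractional part to produce $\G_k(a,b,q)\F\big(b/(2q)\big)$ in the $t=0$ case, and then blocks of length $\lfloor N^{\kappa}\rfloor$ on which the $t$-phase is frozen and the resulting Riemann sum is compared with $V_{N;k}(t)$, yielding exactly the stated error terms. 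One cosmetic caveat: your opening display applies the identity to all of $n\lfloor n\sqrt{k}\rfloor$, which would leave the irrational coefficient $1/(2q\sqrt{k})$ attached to $\lfloor n\sqrt{k}\rfloor^2$ in the $a/q$-part and so would not reduce to residue classes; your subsequent description (keeping $\tfrac{a}{q}rs$ intact and applying the identity only to the $\sqrt{k}$-multiple) is the correct bookkeeping and is precisely what the paper does.
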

The second task is to appropriately bound the three factors appearing in Proposition~$\ref{MainMajPro}$, namely $\G_k,\F,V_{N;k}$. This is achieved straightforwardly using van der Corput estimates for the last two, see Proposition~$\ref{frVXbounds}$, while number-theoretic considerations are naturally required for handling the double Gauss sum variant $\G_k$, see Proposition~$\ref{GaussSumstype}$. These two tasks are completed in the following two subsections.
\subsection{Major arc approximation}\label{majaproxsection}In this subsection we prove Proposition~$\ref{MainMajPro}$. A key intermediate step is to establish a variant of the result with $t=0$, see Proposition~$\ref{t=0}$, in the proof of which the expressions $\G_k$ and $\F$ naturally appear. We begin by collecting some useful lemmas.
\begin{lemma}\label{easylemma1}
Let $x\in\mathbb{R}$, $q\in\mathbb{N}$ and $r\in\{0,\dotsc,q-1\}$. Then
\[
\lfloor x \rfloor\equiv r\Mod{q}\iff \bigg\{\frac{x}{q}\bigg\}\in\bigg[\frac{r}{q},\frac{r+1}{q}\bigg)\text{.}
\]
\end{lemma}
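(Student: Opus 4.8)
The plan is to unwind both sides in terms of integer parts and reduce to a statement about a single integer $j$. First I would rewrite the right-hand condition $\{x/q\}\in[r/q,(r+1)/q)$ as the assertion that there exists $j\in\mathbb{Z}$ with $\frac{r}{q}\le \frac{x}{q}-j<\frac{r+1}{q}$. One direction is immediate by taking $j=\lfloor x/q\rfloor$; conversely, any $j\in\mathbb{Z}$ satisfying these inequalities forces $0\le \frac{x}{q}-j<1$ — here is where the hypothesis $0\le r\le q-1$ enters, ensuring $\frac{r+1}{q}\le 1$ — and hence $j=\lfloor x/q\rfloor$ and $\frac{x}{q}-j=\{x/q\}$.

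Next I would clear denominators: multiplying the inequalities by $q$, the condition becomes the existence of $j\in\mathbb{Z}$ with $r\le x-jq<r+1$. Since $r\in\mathbb{Z}$, for any real $y$ we have $r\le y<r+1\iff\lfloor y\rfloor=r$; applying this with $y=x-jq$ and using $\lfloor x-jq\rfloor=\lfloor x\rfloor-jq$ (valid because $jq\in\mathbb{Z}$), the condition is equivalent to the existence of $j\in\mathbb{Z}$ with $\lfloor x\rfloor-jq=r$, i.e.\ $\lfloor x\rfloor\equiv r\Mod{q}$, which is precisely the left-hand side. This chain of equivalences completes the proof.

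The only point that needs a little care — and it is not really an obstacle — is the interaction between the half-open interval $[r/q,(r+1)/q)$ and the constraint $r\le q-1$: together these guarantee that the intervals $\{[r/q,(r+1)/q):0\le r\le q-1\}$ partition $[0,1)$, which is what pins down $j=\lfloor x/q\rfloor$ uniquely and makes the first equivalence an honest ``iff'' rather than just one implication. Everything else is routine manipulation of floor functions.
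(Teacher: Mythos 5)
Your proposal is correct and follows essentially the same route as the paper: both sides are unwound into the existence of an integer $j$ (the paper's $m$) with $r\le x-jq<r+1$, and the equivalence with $\lfloor x\rfloor\equiv r\Mod{q}$ is obtained by the same elementary floor-function manipulations, with the hypothesis $0\le r\le q-1$ playing the same role of keeping the interval inside $[0,1)$. No gaps; nothing further is needed.
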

\begin{proof}
The left-hand side implies that there exists $m\in\mathbb{Z}$ such that $mq=\lfloor x\rfloor -r$ and thus we have
\[
x-\{x\}-mq=r\Rightarrow \frac{x}{q}-m=\frac{r+\{x\}}{q}\in\bigg[\frac{r}{q},\frac{r+1}{q}\bigg)\subseteq[0,1)\Rightarrow \bigg\{\frac{x}{q}\bigg\}\in\bigg[\frac{r}{q},\frac{r+1}{q}\bigg)\text{.}
\]
Conversely, the right-hand side implies that there exists $m\in\mathbb{Z}$ such that
\[
\frac{r}{q}\le \frac{x}{q}-m<\frac{r+1}{q}\Rightarrow r\le x-mq<r+1\Rightarrow mq+r\le x<mq+r+1\Rightarrow \lfloor x\rfloor =mq+r\text{,}
\]
and thus $\lfloor x\rfloor\equiv r\Mod{q}$.
\end{proof}
\begin{lemma}\label{easylemma3}For every $n\in\mathbb{N}$ and $\alpha\in\mathbb{R}$ we have
\[
n\alpha\lfloor n\alpha\rfloor=\frac{(\alpha n)^2+\lfloor \alpha n\rfloor^2-\{n\alpha\}^2}{2}\text{.}
\]
\end{lemma}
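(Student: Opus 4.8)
The plan is to recognize this as a direct instance of the polarization identity $ab=\tfrac12\bigl(a^2+b^2-(a-b)^2\bigr)$, which holds for all real numbers $a,b$ simply because $(a-b)^2=a^2-2ab+b^2$.

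First I would apply this identity with $a=n\alpha$ and $b=\lfloor n\alpha\rfloor$. By the very definition of the fractional part we have $a-b=n\alpha-\lfloor n\alpha\rfloor=\{n\alpha\}$, and substituting into the polarization identity gives
\[
n\alpha\lfloor n\alpha\rfloor=\frac{(n\alpha)^2+\lfloor n\alpha\rfloor^2-\{n\alpha\}^2}{2}\text{,}
\]
which is exactly the claimed formula. Alternatively, one can expand $(\alpha n)^2=(\lfloor\alpha n\rfloor+\{\alpha n\})^2=\lfloor\alpha n\rfloor^2+2\lfloor\alpha n\rfloor\{\alpha n\}+\{\alpha n\}^2$, solve for $\lfloor\alpha n\rfloor\{\alpha n\}$, and then add $\lfloor\alpha n\rfloor^2$ to recover $n\alpha\lfloor n\alpha\rfloor=\lfloor\alpha n\rfloor^2+\lfloor\alpha n\rfloor\{\alpha n\}$; both routes are a two-line computation.

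I do not expect any genuine obstacle here: the statement is a purely formal algebraic rearrangement and uses neither the integrality of $n$ nor any arithmetic property of $\alpha$. Its significance is downstream — it lets one rewrite the phase $n\alpha\lfloor n\alpha\rfloor$ as a sum of two squares plus a fractional-part correction, which is precisely what later enables the ``factorization'' of the relevant exponential sums once $\alpha^2\in\mathbb{Q}$ — but the lemma itself requires nothing beyond the identity above.
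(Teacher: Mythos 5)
Your proof is correct and follows essentially the same route as the paper, which also just rearranges the claim into $\{n\alpha\}^2=(\alpha n)^2+\lfloor \alpha n\rfloor^2-2n\alpha\lfloor n\alpha\rfloor$ and observes this is the square of $\{n\alpha\}=n\alpha-\lfloor n\alpha\rfloor$. Nothing further is needed.
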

\begin{proof}
The assertion is equivalent to $\{n\alpha\}^2=(\alpha n)^2+\lfloor \alpha n\rfloor^2-2n\alpha\lfloor n\alpha\rfloor$,
which is clearly true since $\{n\alpha\}=n\alpha-\lfloor n\alpha\rfloor$.
\end{proof}
\begin{lemma}[Effective equidistribution $(n,\lfloor n\sqrt{k}\rfloor,\{n\sqrt{k}\})$ in $\big(\mathbb{Z}/q\mathbb{Z}\big)\times\big(\mathbb{Z}/q\mathbb{Z}\big)\times\mathbb{T}$]\label{easylemma2}Assume $k\in\mathbb{Q}_{>0}$ is such that $\sqrt{k}\not\in\mathbb{Q}$. Assume that $I\subseteq\mathbb{Z}$ is a nonempty interval and $D,q,r,s,d$ are natural numbers with $d\in\{0,\dotsc, D-1\}$. Then we have 
\[
\frac{1}{|I|}\bigg|\bigg\{n\in I:\,n\equiv r\Mod{q},\,\lfloor n\sqrt{k}\rfloor\equiv s\Mod{q},\,\{n\sqrt{k}\}\in\bigg[\frac{d}{D},\frac{d+1}{D}\bigg)\bigg\}\bigg|=\frac{1}{q^2D}+O_k\bigg(\frac{1+\log |I|}{|I|}\bigg)\text{.}
\]
\end{lemma}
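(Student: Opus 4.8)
The plan is to reduce the three simultaneous constraints to a single equidistribution statement about the sequence $(\{n\sqrt{k}\})_{n\in I}$ on the circle, then invoke a quantitative discrepancy bound. First I would use Lemma~\ref{easylemma1} to rewrite the condition $\lfloor n\sqrt{k}\rfloor\equiv s\Mod{q}$ as $\{n\sqrt{k}/q\}\in[s/q,(s+1)/q)$. Combining this with the constraint $\{n\sqrt{k}\}\in[d/D,(d+1)/D)$, one sees that for fixed $n$ with $n\equiv r\Mod q$, the pair of conditions on $\lfloor n\sqrt{k}\rfloor \bmod q$ and on $\{n\sqrt{k}\}$ is equivalent to $\{n\sqrt{k}\}$ lying in a single subinterval of $[0,1)$ of length $\tfrac{1}{qD}$ (the common refinement of the partition into $q$ equal pieces coming from $\lfloor n\sqrt{k}\rfloor\bmod q$ and the partition into $D$ equal pieces): indeed $\lfloor n\sqrt{k}\rfloor = n\sqrt{k}-\{n\sqrt{k}\}$, so once $n\bmod q$ is fixed, knowing $\{n\sqrt k\}$ to within the finer partition pins down $\lfloor n\sqrt k\rfloor\bmod q$. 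So it remains to count $n\in I$ with $n\equiv r\Mod q$ and $\{n\sqrt{k}\}$ in a fixed interval $J$ of length $1/(qD)$.

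Next I would restrict to the arithmetic progression: writing $n = r + qm$ with $m$ ranging over an interval $I'$ of length $|I|/q + O(1)$, the condition becomes $\{q\sqrt{k}\,m + r\sqrt{k}\}\in J$. Since $k\in\mathbb{Q}_{>0}$ and $\sqrt k\notin\mathbb{Q}$, the number $q\sqrt{k}$ is irrational, and in fact a quadratic irrational, so its continued fraction expansion has bounded partial quotients; consequently the sequence $(q\sqrt{k}\,m)_{m}$ has discrepancy $O_k\big((1+\log|I'|)/|I'|\big)$ by the standard Erdős–Turán / three-distance estimates for badly approximable numbers (the implied constant depending on $k$ and, a priori, on $q$, but one can absorb the $q$-dependence because $q\sqrt k$ has denominators in its convergents growing at least like those of $\sqrt k$, uniformly). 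Hence the count of $m\in I'$ with $\{q\sqrt k\, m + r\sqrt k\}\in J$ equals $|I'|\cdot|J| + O_k(1+\log|I'|) = \frac{|I|}{q}\cdot\frac{1}{qD} + O_k(1+\log|I|)$. Dividing by $|I|$ gives $\frac{1}{q^2 D} + O_k\big(\frac{1+\log|I|}{|I|}\big)$, as claimed.

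The main obstacle is getting the discrepancy bound with the implied constant depending only on $k$ and not on $q$, $D$, $r$, $s$, or the interval. The translation by $r\sqrt k$ is harmless (discrepancy is translation-invariant), and the interval length $1/(qD)$ is harmless since the Erdős–Turán inequality controls the discrepancy uniformly over all subintervals. The delicate point is the dilation by $q$: one must know that $q\sqrt{k}$ is badly approximable with a quality constant uniform in $q$. This follows from Liouville's theorem (already invoked in the excerpt via \eqref{Liouvillemanouver}) applied to the degree-$2$ algebraic number $\sqrt k$: for any $m\in\mathbb{N}$ and $p\in\mathbb{Z}$ we have $|q\sqrt k - p/m| = q|\sqrt k - p/(qm)| \ge q\cdot c(\sqrt k)/(qm)^2 = c(\sqrt k)/(q m^2)$, which degrades only by a factor $q$; feeding this into the Erdős–Turán bound produces a discrepancy estimate of the shape $O\big(q(1+\log|I|)/|I|\big)$ for the progression of length $|I|/q$, i.e. $O_k(1+\log|I|)$ for the raw count, which is exactly the error term in the statement. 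Alternatively one can bound the exponential sums $\sum_{m\in I'} e(h q\sqrt k m)$ directly by $\|hq\sqrt k\|^{-1} \lesssim_k h$ using the quadratic irrationality, which is cleaner; I would carry out the argument this way and then apply Erdős–Turán to finish.
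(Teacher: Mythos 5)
There is a genuine gap in your reduction step, and it is not merely cosmetic. You claim that, for $n\equiv r\Mod{q}$, the pair of conditions $\lfloor n\sqrt{k}\rfloor\equiv s\Mod{q}$ and $\{n\sqrt{k}\}\in[d/D,(d+1)/D)$ is equivalent to $\{n\sqrt{k}\}$ lying in a single interval of length $1/(qD)$, on the grounds that knowing $n\bmod q$ and $\{n\sqrt{k}\}$ approximately pins down $\lfloor n\sqrt{k}\rfloor\bmod q$. It does not: the congruence class of $\lfloor n\sqrt{k}\rfloor$ modulo $q$ is a condition on $n\sqrt{k}$ modulo $q$, i.e.\ (by Lemma~\ref{easylemma1}) on $\{n\sqrt{k}/q\}$, and this information is invisible to $\{n\sqrt{k}\}$, which only sees $n\sqrt{k}$ modulo $1$. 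Concretely, for $k=2$, $q=2$, the integers $n=2$ and $n=72$ are both even and have $\{2\sqrt{2}\}\approx 0.8284$, $\{72\sqrt{2}\}\approx 0.8234$, yet $\lfloor 2\sqrt{2}\rfloor=2$ is even while $\lfloor 72\sqrt{2}\rfloor=101$ is odd; so localizing $\{n\sqrt{2}\}$ to an interval of length $1/(qD)$ does not determine $\lfloor n\sqrt{2}\rfloor\bmod 2$. Consequently the set you end up counting (namely $n\equiv r\Mod{q}$ with $\{n\sqrt{k}\}\in J$, $|J|=1/(qD)$, via the rotation $m\mapsto\{q\sqrt{k}\,m+r\sqrt{k}\}$) is not the set in the statement, even though its density happens to coincide with the main term $1/(q^2D)$.

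The repair is exactly the route the paper takes, and it also dissolves the uniformity-in-$q$ issue your last paragraph is devoted to. Writing $n=ql+r$, Lemma~\ref{easylemma1} turns $\lfloor n\sqrt{k}\rfloor\equiv s\Mod{q}$ into $\{l\sqrt{k}+(r\sqrt{k}-s)/q\}\in[0,1/q)$, and on this event one checks $\{n\sqrt{k}\}=q\{l\sqrt{k}+(r\sqrt{k}-s)/q\}$, so the extra condition $\{n\sqrt{k}\}\in[d/D,(d+1)/D)$ refines the above to $\{l\sqrt{k}+(r\sqrt{k}-s)/q\}\in[d/(qD),(d+1)/(qD))$. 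The relevant rotation is therefore by $\sqrt{k}$ itself (the shift is harmless), with $l$ ranging over an interval of length about $|I|/q$, and the discrepancy bound for the fixed quadratic irrational $\sqrt{k}$ (Corollary~1.65 in \cite{refequid}) gives a raw-count error $O_k(1+\log|I|)$ with a constant depending only on $k$; no Liouville argument uniform in $q$ is needed. As a secondary point, your discrepancy claim for $q\sqrt{k}$ is also overstated as written: feeding only the bound $\|hq\sqrt{k}\|\gtrsim_k 1/(qh)$ into Erd\H{o}s--Tur\'an yields a discrepancy of order $\sqrt{q/N}$, and recovering the shape $q(1+\log N)/N$ requires the bounded-partial-quotient (Ostrowski-type) estimate rather than Erd\H{o}s--Tur\'an alone; after the correct reduction this point becomes moot.
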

\begin{proof}We fix $q,D$ and without loss of generality, we let $r,s\in\{0,\dotsc,q-1\}$, $d\in\{0,\dotsc,D-1\}$. Let $I_{r,s,d}\subseteq I$ be the set defined in the left-hand side above. Firstly, note that if $|I|\le 2q$, then $|I_{r,s,d}|\le 2$, and thus
\[
\bigg|\frac{|I_{r,s,d}|}{|I|}-\frac{1}{q^2D}\bigg|\le \frac{2}{|I|}+\frac{1}{q^2D}\le \frac{4}{|I|}\text{,}
\]
so we assume from now on that $|I|>2q$.
Let $a,b\in\mathbb{Z}$ be such that $I=\{a,\dotsc,b\}$. We define
\[
I'=\{l\in\mathbb{Z}:\,a\le ql+r\le b\}=\Big\{l\in\mathbb{Z}:\,\frac{a-r}{q}\le l\le\frac{b-r}{q}\Big\}\text{.}
\]
Note that $I'$ has at least one element, and also that
\[
|I_{r,s,d}|=\bigg|\bigg\{l\in I':\,\lfloor (ql+r)\sqrt{k}\rfloor\equiv s\Mod{q},\,\{(ql+r)\sqrt{k}\}\in\bigg[\frac{d}{D},\frac{d+1}{D}\bigg)\bigg\}\bigg|\text{.}
\]
By Lemma~$\ref{easylemma1}$ we get $\lfloor (ql+r)\sqrt{k}\rfloor\equiv s\Mod{q}\iff \{(ql+r)\sqrt{k}/q\}\in[s/q,(s+1)/q)$ which, in turn, is equivalent to
\[
\bigg\{l\sqrt{k}+\frac{r\sqrt{k}}{q}\bigg\}\in\bigg[\frac{s}{q},\frac{s+1}{q}\bigg)\iff\bigg\{l\sqrt{k}+\frac{r\sqrt{k}-s}{q}\bigg\}\in\bigg[0,\frac{1}{q}\bigg)\text{.}
\]
We claim that for all $l\in\mathbb{Z}$ the following equivalence holds
\[
\bigg\{l\sqrt{k}+\frac{r\sqrt{k}-s}{q}\bigg\}\in\bigg[0,\frac{1}{q}\bigg)\text{ and }\{(ql+r)\sqrt{k}\}\in\bigg[\frac{d}{D},\frac{d+1}{D}\bigg)\iff\bigg\{l\sqrt{k}+\frac{r\sqrt{k}-s}{q}\bigg\}\in\bigg[\frac{d}{qD},\frac{d+1}{qD}\bigg)\text{.}
\]
To see this, note that if $l$ satisfies the left-hand side, then 
\begin{multline}
\{(ql+r)\sqrt{k}\}=\Big\{q\Big(l\sqrt{k}+\frac{r\sqrt{k}-s}{q}\Big)\Big\}=q\Big(l\sqrt{k}+\frac{r\sqrt{k}-s}{q}\Big)-\Big\lfloor q\Big(l\sqrt{k}+\frac{r\sqrt{k}-s}{q}\Big)\Big\rfloor
\\
=q\Big\{l\sqrt{k}+\frac{r\sqrt{k}-s}{q}\Big\}+q\Big\lfloor l\sqrt{k}+\frac{r\sqrt{k}-s}{q}\Big\rfloor-\Big\lfloor q\Big(l\sqrt{k}+\frac{r\sqrt{k}-s}{q}\Big)\Big\rfloor\text{,}
\end{multline}
and we have $q\Big\{l\sqrt{k}+\frac{r\sqrt{k}-s}{q}\Big\}\in[0,1)$ since the first condition holds. By taking fractional parts we obtain
\[
\bigg[\frac{d}{D},\frac{d+1}{D}\bigg)\ni\{(ql+r)\sqrt{k}\}=\{\{(ql+r)\sqrt{k}\}\}=q\Big\{l\sqrt{k}+\frac{r\sqrt{k}-s}{q}\Big\}\text{,}
\] 
and thus $\big\{l\sqrt{k}+\frac{r\sqrt{k}-s}{q}\big\}\in \big[\frac{d}{qD},\frac{d+1}{qD}\big)$ as desired. Conversely, if the right-hand side holds, then clearly the first condition of the left-hand side holds. But then, similarly to before, we get
\[
\{(ql+r)\sqrt{k}\}=q\Big\{l\sqrt{k}+\frac{r\sqrt{k}-s}{q}\Big\}\in\bigg[\frac{d}{D},\frac{d+1}{D}\bigg)\text{,}
\]
and the proof of the equivalence is complete. Therefore we get 
\begin{multline}
|I_{r,s,d}|=\bigg|\bigg\{l\in I':\,\Big\{l\sqrt{k}+\frac{r\sqrt{k}-s}{q}\Big\}\in\bigg[\frac{d}{qD},\frac{d+1}{qD}\bigg)\bigg\}\bigg|\\
=\bigg|\bigg\{l\in I':\,l\sqrt{k}+\frac{r\sqrt{k}-s}{q}\in\bigg[\frac{d}{qD},\frac{d+1}{qD}\bigg)+\mathbb{Z}\bigg\}\bigg|
\\
=\bigg|\bigg\{l\in I':\,l\sqrt{k}\in\bigg[\frac{d}{qD}-\frac{r\sqrt{k}-s}{q},\frac{d+1}{qD}-\frac{r\sqrt{k}-s}{q}\bigg)+\mathbb{Z}\bigg\}\bigg|\text{.}
\end{multline}
Since $\sqrt{k}$ is a quadratic irrational, the digits of its continued fraction expansion are bounded since the expansion is eventually periodic and thus, by Corollary~1.65 in \cite{refequid} for example, we get
\begin{equation}\label{discrepancy}
\Bigg|\bigg|\bigg\{l\in I':\,l\sqrt{k}\in\bigg[\frac{d}{qD}-\frac{r\sqrt{k}-s}{q},\frac{d+1}{qD}-\frac{r\sqrt{k}-s}{q}\bigg)+\mathbb{Z}\bigg\}\bigg|-\frac{|I'|}{qD}\Bigg|\lesssim_k 1+\log |I'|\le1+\log |I|\text{.}
\end{equation}
Finally, we get
\[
\bigg|\frac{|I_{r,s,d}|}{|I|}-\frac{1}{q^2D}\bigg|\le\frac{1}{|I|}\bigg||I_{r,s,d}|-\frac{|I'|}{qD}\bigg|+\frac{1}{|I|}\bigg|\frac{|I'|}{qD}-\frac{|I|}{q^2D}\bigg| \lesssim \frac{1+\log|I|}{|I|}\text{,}
\]
where for the last estimate we have used \eqref{discrepancy} and the following estimate
\[
\bigg|\frac{|I'|}{qD}-\frac{|I|}{q^2D}\bigg|=\frac{1}{q^2D}\big|q|I'|-|I|\big|\le \frac{2}{qD}\lesssim 1 \text{,}
\] 
where the penultimate estimate can be justified as follows
\[
b-a-q\le q|I'|\le b-a+q\Rightarrow -q-1\le q|I'|-|I|\le q-1\Rightarrow \big|q|I'|-|I|\big|\le q+1\le 2q
\text{.}
\]
\end{proof} 
We now formulate and prove the variant of Proposition~$\ref{MainMajPro}$ for $t=0$.
\begin{proposition}\label{t=0}Assume $k\in\mathbb{Q}_{>0}$ is such that $\sqrt{k}\not\in\mathbb{Q}$. Then there exists a positive constant $C=C(k)$ such that for every nonempty interval $I\subseteq \mathbb{Z}$, $a,b\in\mathbb{Z}$ and $q\in\mathbb{N}$, we have
\begin{equation}\label{notfreetarc}
\bigg|\mathbb{E}_{n\in I}e\Big(\frac{a+b\sqrt{k}}{q}n\lfloor n\sqrt{k}\rfloor\Big)-\G_k(a,b,q)\F\bigg(\frac{b}{2q}\bigg)\bigg|
\le Cq^2(1+|b|)(1+\log |I|)|I|^{-1/2} \text{,}
\end{equation}
where $\G_k,\F$ are defined in \eqref{GFdef}.
\end{proposition}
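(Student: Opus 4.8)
The plan is to start from Lemma~$\ref{easylemma3}$, which lets us rewrite the phase as
\[
\frac{a+b\sqrt{k}}{q}\,n\lfloor n\sqrt{k}\rfloor=\frac{a+b\sqrt{k}}{q}\cdot\frac{(\sqrt{k}n)^2+\lfloor\sqrt{k}n\rfloor^2-\{n\sqrt{k}\}^2}{2}.
\]
Writing $k=k_1/k_2$, the term $(a+b\sqrt{k})(\sqrt{k}n)^2/(2q)$ equals $(ak_1 n^2+bk_1 k_2 n^2\sqrt{k}\cdot k_2^{-1}\cdot\ldots)$ — more carefully, $(a+b\sqrt k)k n^2/(2q)=\frac{ak_1}{2qk_2}n^2+\frac{b}{2q}\sqrt{k}\,k\, n^2$; the point is that after expanding, the $\sqrt{k}$-irrational pieces collect into a single term of the shape $t'\sqrt{k}n^2$ with $t'$ a rational multiple of $b/q$, while the genuinely ``arithmetic'' pieces depend only on the residues $n\bmod 2qk_2$ and $\lfloor n\sqrt{k}\rfloor\bmod 2qk_2$, and the $\{n\sqrt{k}\}^2$ piece is small and controlled by a Riemann-sum argument. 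This is the ``factorization'' manoeuvre the introduction alludes to as \eqref{firstmanuever}, and it is exactly where $k\in\mathbb{Q}$ is essential. First I would carry out this algebraic splitting cleanly, identifying which part of the exponential depends only on $(n\bmod 2qk_2,\lfloor n\sqrt k\rfloor\bmod 2qk_2)$ and which part is the ``continuous'' factor $e(-(b/2q)\{n\sqrt k\}^2)$ up to harmless rational shifts.

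Next I would partition the sum over $n\in I$ according to the data $\big(n\bmod 2qk_2,\ \lfloor n\sqrt k\rfloor\bmod 2qk_2,\ \{n\sqrt k\}\in[d/D,(d+1)/D)\big)$ for a parameter $D$ to be optimized (expected $D\sim |I|^{1/2}$ up to the factors of $q,b$). On each such cell the arithmetic part of the exponential is constant, equal to $e\big(\frac{a}{q}rs+\frac{b}{2qk_2}(k_1r^2+k_2s^2)\big)$ for the appropriate residues $r,s$ — which after averaging over residues produces precisely $\G_k(a,b,q)$ — while the $\{n\sqrt k\}^2$-part is within $O(|b|/(qD))$ of $e(-(b/2q)(d/D)^2)$, a Riemann sum for $\int_0^1 e(-(b/2q)x^2)dx=\F(b/2q)$ with error $O(|b|/(qD))$ from the width of the intervals. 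The count of $n\in I$ in each cell is handled by Lemma~$\ref{easylemma2}$, giving $|I|/(q^2 \cdot 2qk_2)^{?}\cdot\ldots$ — precisely, the cell has $\frac{|I|}{(2qk_2)^2 D}+O_k\big(\tfrac{1+\log|I|}{|I|}\cdot|I|\big)$ elements; here we feed Lemma~$\ref{easylemma2}$ with $q$ replaced by $2qk_2$. Summing the cell counts times the (constant) arithmetic phases times the approximate continuous factor, and then collecting the main terms, reproduces $\G_k(a,b,q)\F(b/2q)$.

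The main obstacle — the step I expect to be genuinely delicate rather than routine — is bookkeeping the error terms so that they aggregate to the claimed $q^2(1+|b|)(1+\log|I|)|I|^{-1/2}$. There are three competing sources: (i) the equidistribution error from Lemma~$\ref{easylemma2}$, which is $O_k(1+\log|I|)$ per cell and there are $(2qk_2)^2 D$ cells, contributing $\lesssim_k q^2 D(1+\log|I|)$ after dividing by $|I|$ it is $\lesssim_k q^2 D(1+\log|I|)/|I|$; (ii) the Riemann-sum error $O(|b|/(qD))$ per element, i.e.\ $O(|b|/(qD))$ after normalization; (iii) boundary effects where the cell containing $\{n\sqrt k\}$ straddles an interval endpoint, and the contribution of $\{n\sqrt k\}^2$ inside a cell deviating from the constant — also $O(|b|/(qD))$. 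Balancing (i) and (ii)–(iii), i.e.\ $q^2 D(1+\log|I|)/|I|\sim |b|/(qD)$, gives $D\sim\big(|b||I|/(q^3(1+\log|I|))\big)^{1/2}$ and a combined error $\lesssim_k \big(q|b|(1+\log|I|)/|I|\big)^{1/2}$, which is comfortably below $q^2(1+|b|)(1+\log|I|)|I|^{-1/2}$; so in fact the stated bound has room, and a cruder choice such as $D=\lceil|I|^{1/2}\rceil$ already suffices. I would present the computation with $D=\lceil |I|^{1/2}\rceil$, note that the cell count is vacuous (the trivial bound $\le 4/|I|$ from Lemma~$\ref{easylemma2}$) when $|I|\lesssim_k (q k_2)^2$ so that case is absorbed into the constant, and then simply sum the per-cell errors. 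The only real care needed is making sure the ``arithmetic'' exponential genuinely depends only on residues modulo $2qk_2$ — this requires checking that the coefficients $\frac{ak_1}{2qk_2}$, $\frac{b}{2qk_2}$ etc.\ multiply $n^2$, $\lfloor n\sqrt k\rfloor^2$, $n\lfloor n\sqrt k\rfloor$ in such a way that shifting $n$ or $\lfloor n\sqrt k\rfloor$ by $2qk_2$ changes the phase by an integer, which is a short direct verification and is where the exact modulus $2qk_2$ in the definition of $\G_k$ comes from.
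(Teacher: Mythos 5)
Your overall plan is the paper's: partition $I$ according to the residues of $n$ and $\lfloor n\sqrt{k}\rfloor$ modulo $2qk_2$ and according to which interval $[d/D,(d+1)/D)$ contains $\{n\sqrt{k}\}$, evaluate the arithmetic phase on each cell, count the cells with Lemma~$\ref{easylemma2}$ (applied with $q$ replaced by $2qk_2$), treat $e(-\tfrac{b}{2q}\{n\sqrt{k}\}^2)$ as a Riemann sum for $\F(b/(2q))$, and take $D=\lfloor|I|^{1/2}\rfloor$; your error bookkeeping and the resulting bound agree with the paper's.

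However, the key algebraic step as you wrote it is false, and the output you attribute to it would break the argument. Lemma~$\ref{easylemma3}$ with $\alpha=\sqrt{k}$ gives $n\sqrt{k}\lfloor n\sqrt{k}\rfloor=\tfrac{1}{2}\big(kn^2+\lfloor n\sqrt{k}\rfloor^2-\{n\sqrt{k}\}^2\big)$, so your opening display, which equates $\frac{a+b\sqrt{k}}{q}\,n\lfloor n\sqrt{k}\rfloor$ with $\frac{a+b\sqrt{k}}{q}$ times that half-sum, is off by a factor of $\sqrt{k}$. Expanding from that (incorrect) starting point you do not get a single residual term $t'\sqrt{k}n^2$: you get irrational coefficients multiplying both $n^2$ and $\lfloor n\sqrt{k}\rfloor^2$ (for instance $\tfrac{bk\sqrt{k}}{2q}n^2$ and $\tfrac{b\sqrt{k}}{2q}\lfloor n\sqrt{k}\rfloor^2$), and such phases are neither constant on your cells nor small over all of $I$; moreover, in the $t=0$ statement no $\sqrt{k}\,n^2$ term should survive at all — that term belongs to Proposition~$\ref{MainMajPro}$, where it is handled by subdividing $[N]$ into short intervals and produces the factor $V_{N;k}(t)$. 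The correct manoeuvre, as in \eqref{firstmanuever}, is to split $\frac{a+b\sqrt{k}}{q}n\lfloor n\sqrt{k}\rfloor=\frac{a}{q}n\lfloor n\sqrt{k}\rfloor+\frac{b}{q}\,(n\sqrt{k})\lfloor n\sqrt{k}\rfloor$ and apply the identity only to the second summand; then every coefficient is rational with denominator dividing $2qk_2$, the cell-constant part is exactly $e\big(\tfrac{a}{q}rs+\tfrac{b}{2qk_2}(k_1r^2+k_2s^2)\big)$, and the only non-arithmetic factor is $e(-\tfrac{b}{2q}\{n\sqrt{k}\}^2)$. With that correction, the rest of your proposal is precisely the paper's proof.
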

\begin{proof}
Let $k_1,k_2\in\mathbb{N}$ be such that $k=k_1/k_2$ with $\gcd(k_1,k_2)=1$ and let $D$ be a natural number to be determined later. Similarly to the proof of Lemma~$\ref{easylemma2}$, for every $r,s\in[2qk_2]$ and $d\in \{0,\dotsc,D-1\}$ we define 
\[
I_{r,s,d}\coloneqq \bigg\{n\in I:\,n\equiv r\Mod{2qk_2},\,\lfloor n\sqrt{k}\rfloor\equiv s\Mod{2qk_2},\,\{n\sqrt{k}\}\in\bigg[\frac{d}{D},\frac{d+1}{D}\bigg)\bigg\}\text{,}
\]
and we partition the interval as follows $I=\bigcup_{\substack{r,s\in[2qk_2]\\0\le d<D}}I_{r,s,d}$. By Lemma~$\ref{easylemma3}$ we get
\begin{multline}\label{firstmanuever}
\sum_{n\in I}e\Big(\frac{a+b\sqrt{k}}{q}n\lfloor n\sqrt{k}\rfloor\Big)=\sum_{n\in I}e\Big(\frac{a}{q}n\lfloor n\sqrt{k}\rfloor+\frac{b}{q}n\sqrt{k}\lfloor n\sqrt{k}\rfloor \Big)
\\
=\sum_{n\in I}e\bigg(\frac{a}{q}n\lfloor n\sqrt{k}\rfloor+\frac{b}{2q}
\bigg(\frac{k_1}{k_2} n^2+\lfloor  n\sqrt{k}\rfloor^2-\{n\sqrt{k}\}^2\bigg) \bigg)
\\
=\sum_{\substack{r,s\in[2qk_2]\\0\le d<D}}\sum_{n\in I_{r,s,d}}e\bigg(\frac{a}{q}n\lfloor n\sqrt{k}\rfloor+\frac{b}{2qk_2}
\Big(k_1n^2+k_2\lfloor  n\sqrt{k}\rfloor^2\Big)\bigg)e\Big(-\frac{b}{2q}\{n\sqrt{k}\}^2\Big)
\\
=\sum_{\substack{r,s\in[2qk_2]\\0\le d<D}}\sum_{n\in I_{r,s,d}}e\Big(\frac{a}{q}rs+\frac{b}{2qk_2}
\big(k_1 r^2+k_2s^2\big) \Big)e\Big(-\frac{b}{2q}\{n\sqrt{k}\}^2\Big)
\\
=\sum_{r,s\in[2qk_2]}e\Big(\frac{a}{q}rs+\frac{b}{2qk_2}(k_1 r^2+k_2s^2)\Big)\sum_{0\le d<D}\sum_{n\in I_{r,s,d}}e\Big(-\frac{b}{2q}\{n\sqrt{k}\}^2\Big)
\\
=\sum_{r,s\in[2qk_2]}e\Big(\frac{a}{q}rs+\frac{b}{2qk_2}(k_1 r^2+k_2s^2)\Big)\sum_{0\le d<D}|I_{r,s,d}|e\Big(-\frac{bd^2}{2qD^2}\Big)
\\
+\sum_{r,s\in[2qk_2]}e\Big(\frac{a}{q}rs+\frac{b}{2qk_2}(k_1 r^2+k_2s^2)\Big)\sum_{0\le d<D}\sum_{n\in I_{r,s,d}}\bigg(e\Big(-\frac{b}{2q}\{n\sqrt{k}\}^2\Big)-e\Big(-\frac{bd^2}{2qD^2}\Big)\bigg)\eqqcolon S_1+S_2 \text{.}
\end{multline}
For $S_2$ we note that by triangle inequality and an application of Lemma~$\ref{easylemma2}$ we get
\begin{multline}\label{estforS_2}
|S_2|\lesssim\sum_{r,s\in[2qk_2]}\sum_{0\le d<D}\sum_{n\in I_{r,s,d}} \frac{|b|}{q}\Big|\{n\sqrt{k}\}^2-\frac{d^2}{D^2}\Big|\lesssim \frac{|b|}{q}\sum_{r,s\in[2qk_2]}\sum_{0\le d<D}|I_{r,s,d}|\frac{(d+1)^2-d^2}{D^2}
\\
\lesssim_k \frac{|b|q}{D^2}\sum_{0\le d<D}(2d+1)\bigg(\frac{|I|}{4q^2k_2^2D}+\log|I|+1\bigg)\lesssim_k \frac{|I||b|}{qD}+|b|q(1+\log |I|)\text{.}
\end{multline}
For $S_1$ we note
\begin{multline}
\sum_{r,s\in[2qk_2]}e\Big(\frac{a}{q}rs+\frac{b}{2qk_2}(k_1 r^2+k_2s^2)\Big)\sum_{0\le d<D}|I_{r,s,d}|e\Big(-\frac{bd^2}{2qD^2}\Big)
\\
=\sum_{r,s\in[2qk_2]}e\Big(\frac{a}{q}rs+\frac{b}{2qk_2}(k_1 r^2+k_2s^2)\Big)\frac{|I|}{4q^2k_2^2D}\sum_{0\le d<D}e\Big(-\frac{bd^2}{2qD^2}\Big)
\\
+\sum_{r,s\in[2qk_2]}e\Big(\frac{a}{q}rs+\frac{b}{2qk_2}(k_1 r^2+k_2s^2)\Big)\sum_{0\le d<D}\Big(|I_{r,s,d}|-\frac{|I|}{4q^2k_2^2D}\Big)e\Big(-\frac{bd^2}{2qD^2}\Big)
\\
=|I|\G_k(a,b,q)\bigg(\frac{1}{D}\sum_{0\le d<D}e\Big(-\frac{bd^2}{2qD^2}\Big)\bigg)
\\
+\sum_{r,s\in[2qk_2]}e\Big(\frac{a}{q}rs+\frac{b}{2qk_2}(k_1 r^2+k_2s^2)\Big)\sum_{0\le d<D}\Big(|I_{r,s,d}|-\frac{|I|}{4q^2k_2^2D}\Big)e\Big(-\frac{bd^2}{2qD^2}\Big)
\\
=|I|\G_k(a,b,q)\bigg(\frac{1}{D}\int_{0}^{D}e\Big(-\frac{bx^2}{2qD^2}\Big)dx\bigg)
\\
+|I|\G_k(a,b,q)\bigg(\frac{1}{D}\sum_{0\le d<D}e\Big(-\frac{bd^2}{2qD^2}\Big)-\frac{1}{D}\int_{0}^{D}e\Big(-\frac{bx^2}{2qD^2}\Big)dx\bigg)
\\
+\sum_{r,s\in[2qk_2]}e\Big(\frac{a}{q}rs+\frac{b}{2qk_2}(k_1 r^2+k_2s^2)\Big)\sum_{0\le d<D}\Big(|I_{r,s,d}|-\frac{|I|}{4q^2k_2^2D}\Big)e\Big(-\frac{bd^2}{2qD^2}\Big)\eqqcolon M+E_1+E_2\text{.}
\end{multline}
By a change of variables
\[
\frac{1}{D}\int_{0}^{D}e\Big(-\frac{bx^2}{2qD^2}\Big)dx=\F\bigg(\frac{b}{2q}\bigg)\text{,}
\]
and we are left with the task of bounding the error terms $E_1,E_2$.
\\\,\\
\textbf{Estimates for $E_1$.} We have
\begin{equation}\label{e1step}
|E_1|\le\frac{|I|}{D}\bigg|\sum_{d=0}^{D-1}e\Big(-\frac{bd^2}{2qD^2}\Big)-\int_{0}^{D}e\Big(-\frac{bx^2}{2qD^2}\Big)dx\bigg|\text{,}
\end{equation}
and we may estimate as follows
\begin{multline}
\Big|\sum_{d=0}^{D-1}e\Big(-\frac{bd^2}{2qD^2}\Big)-\int_{0}^{D}e\Big(-\frac{bx^2}{2qD^2}\Big)dx\Big|\le\sum_{d=0}^{D-1}\int_{d}^{d+1}\Big|e\Big(-\frac{bd^2}{2qD^2}\Big)-e\Big(-\frac{bx^2}{2qD^2}\Big)\Big|dx
\\
\lesssim \sum_{d=0}^{D-1}\int_{d}^{d+1}\Big|\frac{bd^2}{2qD^2}-\frac{bx^2}{2qD^2}\Big|dx\lesssim \frac{|b|}{qD^2}\sum_{d=0}^{D-1}(2d+1)\lesssim \frac{|b|}{q}\text{,}
\end{multline}
and thus 
\begin{equation}\label{estforE_1}
|E_1|\lesssim \frac{|I||b|}{Dq}\text{.}
\end{equation}\\
\textbf{Estimates for $E_2$.} Here we use Lemma~$\ref{easylemma2}$ to obtain
\begin{equation}\label{estforE_2}
|E_2|\le \sum_{r,s\in[2qk_2]}\sum_{0\le d<D-1}\Big||I_{r,s,d}|-\frac{|I|}{4q^2k_2^2D}\Big| \lesssim_k q^2D(1+\log|I|)\text{.}
\end{equation}
Combining \eqref{estforE_1}, \eqref{estforE_2}, as well as \eqref{estforS_2}, we get
\begin{multline}
\bigg|\mathbb{E}_{n\in I}e\Big(\frac{a+b\sqrt{k}}{q}n\lfloor n\sqrt{k}\rfloor\Big)-\G_k(a,b,q)\F\bigg(\frac{b}{2q}\bigg)\bigg|\lesssim_k \frac{\frac{|I||b|}{qD}+|b|q(1+\log |I|)}{|I|}+\frac{|E_1|+|E_2|}{|I|}
\\
\lesssim \frac{|b|}{qD}+|b|q\frac{1+\log |I|}{|I|}+q^2D\frac{1+\log|I|}{|I|}\text{.}
\end{multline} 
A convenient choice here is to let $D\coloneqq \lfloor |I|^{1/2}\rfloor$ to obtain
\begin{multline}
\bigg|\mathbb{E}_{n\in I}e\Big(\frac{a+b\sqrt{k}}{q}n\lfloor n\sqrt{k}\rfloor\Big)-\G_k(a,b,q)\F\bigg(\frac{b}{2q}\bigg)\bigg|
\\
\lesssim_k \frac{|b|}{|I|^{1/2}}+|b|q\frac{1+\log |I|}{|I|}+q^2\frac{1+\log|I|}{|I|^{1/2}}\lesssim q^2(1+|b|)(1+\log |I|)|I|^{-1/2} \text{,}\quad\text{as desired.}
\end{multline}  
\end{proof}
We are now ready to prove Proposition~$\ref{MainMajPro}$.
\begin{proof}[Proof of Proposition~$\ref{MainMajPro}$.]
Let $\kappa\in(0,1)$ and $M\coloneqq \lfloor N^{\kappa}\rfloor$. For every $j\in\mathbb{N}$ we define $I_j=[M]+(j-1)M$ so that
\[
[N]=\Big(\bigcup_{j=1}^{\lfloor N/M\rfloor}I_j\Big)\cup \Big\{M\Big\lfloor\frac{N}{M}\Big\rfloor+1,\dotsc,\lfloor N\rfloor\Big\}\text{,}
\]
and note that the last set in the union above has size at most $\lfloor N\rfloor -M\big\lfloor\frac{N}{M}\big\rfloor< N-M(N/M-1)=M$, and thus we have
\begin{equation}\label{pt1}
\bigg|\frac{1}{\lfloor N\rfloor}\sum_{n\in [N]}e\Big(\Big(\frac{a+b\sqrt{k}}{q}+t\Big)n\lfloor n\sqrt{k}\rfloor\Big)-\frac{1}{\lfloor N\rfloor}\sum_{j\in[N/M]}\sum_{n\in I_j}e\Big(\Big(\frac{a+b\sqrt{k}}{q}+t\Big)n\lfloor n\sqrt{k}\rfloor\Big)\bigg|\lesssim\frac{M}{N}\text{.}
\end{equation}
We focus on each $I_j$,  $j\in[N/M]$, and note that
\begin{multline}
\bigg|\sum_{n\in I_j}e\Big(\Big(\frac{a+b\sqrt{k}}{q}+t\Big)n\lfloor n\sqrt{k}\rfloor\Big)-\sum_{n\in I_j}e\Big(\frac{a+b\sqrt{k}}{q}n\lfloor n\sqrt{k}\rfloor\Big)e\big(t\sqrt{k}j^2M^2\big)\bigg|
\\
=\bigg|\sum_{n\in I_j}e\Big(\frac{a+b\sqrt{k}}{q}n\lfloor n\sqrt{k}\rfloor\Big)\Big(e\big(tn\lfloor n\sqrt{k}\rfloor\big)-e\big(t\sqrt{k}j^2M^2\big)\Big)\bigg|\lesssim\sum_{n\in I_j}|t|\big|n\lfloor n\sqrt{k}\rfloor-\sqrt{k}j^2M^2\big|\lesssim_k|t|NM^2\text{,} 
\end{multline}
where for the last estimate we used the fact that for all $n\in I_j=\{(j-1)M+1,\dotsc,(j-1)M+M=jM\}$ we have 
\begin{multline}
\big|n\lfloor n\sqrt{k}\rfloor-\sqrt{k}j^2M^2\big|\le \sqrt{k}j^2M^2-(j-1)M\lfloor (j-1)M\sqrt{k}\rfloor
\\
\le \sqrt{k}j^2M^2-(j-1)M\big( (j-1)M\sqrt{k}-1\big) =\sqrt{k}j^2M^2-\sqrt{k}(j-1)^2M^2+(j-1)M 
\\
\lesssim \sqrt{k}M^2j+jM\lesssim_k M^2j \lesssim NM\text{.}
\end{multline}
But then we see that 
\begin{multline}\label{pt2}
\bigg|\frac{1}{\lfloor N\rfloor}\sum_{j\in[N/M]}\sum_{n\in I_j}e\Big(\Big(\frac{a+b\sqrt{k}}{q}+t\Big)n\lfloor n\sqrt{k}\rfloor\Big)-\frac{1}{\lfloor N\rfloor}\sum_{j\in[N/M]}\sum_{n\in I_j}e\Big(\frac{a+b\sqrt{k}}{q}n\lfloor n\sqrt{k}\rfloor\Big)e\big(t\sqrt{k}j^2M^2\big)\bigg|
\\
\lesssim_k \frac{1}{N}\sum_{j\in[N/M]}|t|NM^2\lesssim|t|NM\text{.}
\end{multline}
Finally, since $|I_j|=M$, we get
\begin{multline}
\frac{1}{\lfloor N\rfloor}\sum_{j\in[N/M]}\sum_{n\in I_j}e\Big(\frac{a+b\sqrt{k}}{q}n\lfloor n\sqrt{k}\rfloor\Big)e\big(t\sqrt{k}j^2M^2\big)
\\
=\frac{M}{\lfloor N\rfloor}\sum_{j\in[N/M]}\bigg(\frac{1}{|I_j|}\sum_{n\in I_j}e\Big(\frac{a+b\sqrt{k}}{q}n\lfloor n\sqrt{k}\rfloor\Big)\bigg)e\big(t\sqrt{k}j^2M^2\big)
\\
=\frac{M}{\lfloor N\rfloor}\sum_{j\in[N/M]}\G_k(a,b,q)\F\bigg(\frac{b}{2q}\bigg)e\big(t\sqrt{k}j^2M^2\big)
\\
+\frac{M}{\lfloor N\rfloor}\sum_{j\in[N/M]}\bigg(\frac{1}{|I_j|}\sum_{n\in I_j}e\Big(\frac{a+b\sqrt{k}}{q}n\lfloor n\sqrt{k}\rfloor\Big)-\G_k(a,b,q)\F\bigg(\frac{b}{2q}\bigg)\bigg)e\big(t\sqrt{k}j^2M^2\big)
\\
=\G_k(a,b,q)\F\bigg(\frac{b}{2q}\bigg)V_{N;k}(t)
\\
+\G_k(a,b,q)\F\bigg(\frac{b}{2q}\bigg)\bigg(\frac{M}{\lfloor N\rfloor}\sum_{j\in[N/M]}e\big(t\sqrt{k}j^2M^2\big)-\frac{1}{N}\int_0^Ne\big(t\sqrt{k}x^2\big)dx\bigg)
\\
+\frac{M}{\lfloor N\rfloor}\sum_{j\in[N/M]}\bigg(\frac{1}{|I_j|}\sum_{n\in I_j}e\Big(\frac{a+b\sqrt{k}}{q}n\lfloor n\sqrt{k}\rfloor\Big)-\G_k(a,b,q)\F\bigg(\frac{b}{2q}\bigg)\bigg)e\big(t\sqrt{k}j^2M^2\big)
\\
\eqqcolon\G_k(a,b,q)\F\bigg(\frac{b}{2q}\bigg)V_{N;k}(t)+E_1+E_2 \text{,}
\end{multline}
and we remind the reader that $V_{N;k}$ is defined in \eqref{Vdef}. It remains to bound $E_1$ and $E_2$.
\\\,\\
\textbf{Estimates for $E_1$.} We begin by noting that
\begin{multline}
\Big|\sum_{j\in[N/M]}Me(t\sqrt{k}j^2M^2)-\int_{0}^{\lfloor N/M\rfloor M}e(t\sqrt{k}x^2)dx\Big|
\\
=\Big|\sum_{j\in[N/M]}\int_{(j-1)M}^{jM}\big(e(t\sqrt{k}j^2M^2)-e(t\sqrt{k}x^2)\big)dx\Big|
\lesssim_k\sum_{j\in[N/M]}|t|M^3j\lesssim |t|M^3N^2/M^2=|t|MN^2\text{.}
\end{multline}
Also, since $\lfloor N/M\rfloor M> (N/M-1) M=N-M$, we get
\[
\bigg|\frac{1}{\lfloor N\rfloor}\int_{\lfloor N/M\rfloor M}^Ne(t\sqrt{k}x^2)dx\bigg|\lesssim \frac{N-\lfloor N/M\rfloor M}{N}\le \frac{M}{N}\text{,}
\]
and thus, we can bound as follows
\begin{multline}\label{pt3}
|E_1|\lesssim \bigg|\frac{M}{\lfloor N\rfloor}\sum_{j\in[N/M]}e\big(t\sqrt{k}j^2M^2\big)-\frac{1}{N}\int_0^Ne\big(t\sqrt{k}x^2\big)dx\bigg|
\\
\le \bigg|\frac{1}{\lfloor N\rfloor}\sum_{j\in[N/M]}Me\big(t\sqrt{k}j^2M^2\big)-\frac{1}{\lfloor N\rfloor}\int_0^{\lfloor N/M\rfloor M }e\big(t\sqrt{k}x^2\big)dx\bigg|
\\
+\bigg|\frac{1}{\lfloor N\rfloor}\int_0^{\lfloor N/M\rfloor M }e\big(t\sqrt{k}x^2\big)dx-\frac{1}{\lfloor N\rfloor}\int_0^{N}e\big(t\sqrt{k}x^2\big)dx\bigg|
\\
+\bigg|\frac{1}{\lfloor N\rfloor}\int_0^{N}e\big(t\sqrt{k}x^2\big)dx-\frac{1}{N}\int_0^{N}e\big(t\sqrt{k}x^2\big)dx\bigg|
\\
\lesssim |t|MN +\frac{M}{N}+\bigg(\frac{1}{\lfloor N
\rfloor}-\frac{1}{N}\bigg)N=|t|MN +\frac{M}{N}+\frac{N-\lfloor N\rfloor}{\lfloor N
\rfloor}\lesssim|t|MN +\frac{M}{N} \text{.}
\end{multline}\\
\textbf{Estimates for $E_2$.} By Proposition~$\ref{t=0}$ we get that 
\begin{equation}\label{BforE2}
|E_2|\lesssim_k \frac{1}{N/M}\sum_{j\in[N/M]}q^2(1+|b|)(1+\log M)M^{-1/2}=q^2(1+|b|)(1+\log M)M^{-1/2}\text{.}
\end{equation}
Remembering that $M=\lfloor N^{\kappa}\rfloor$ for some $\kappa\in(0,1)$ and combining \eqref{pt1}, \eqref{pt2}, \eqref{pt3}, \eqref{BforE2} yield that the left-hand side of \eqref{freetarc} is bounded by
\[
\lesssim_{k,\kappa} N^{\kappa-1}+|t|N^{1+\kappa}+q^2(1+|b|)(1+\log N)N^{-\kappa/2}\text{,}\quad\text{and the proof is complete.}
\]
\end{proof} 
\subsection{Estimates for the factors $\G_k$, $\F$, $V_{X;k}$.}The goal of this section is to collect some bounds for $\G_k,\F,V_{X;k}$ that will be used in the sequel. 
\begin{lemma}[Estimates for $\F$ and $V_{X;k}$]\label{frVXbounds}Assume that $k\in(0,\infty)$. Then for every $\xi\in\mathbb{R}\setminus\{0\}$ and $X\in[1,\infty)$ we have that
\begin{equation}\label{VXFest}
|V_{X;k}(\xi)|\lesssim k^{-1/4}|\xi X^2|^{-1/2}\text{,}\quad|V_{X;k}(\xi)-1|\lesssim k^{1/2}|\xi X^2|\quad\text{and}\quad
|\F(\xi)|\lesssim \min\big(1,|\xi|^{-1/2}\big)\text{,}
\end{equation}
where the implied constants are absolute.
\end{lemma}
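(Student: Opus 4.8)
The plan is to reduce all three inequalities to a single oscillatory–integral bound for the quadratic phase, namely $|\F(\xi)|\lesssim\min(1,|\xi|^{-1/2})$, and then transfer it to $V_{X;k}$ by scaling. First I would record the change of variables $x=Xu$ in the definition $V_{X;k}(\xi)=\frac1X\int_0^Xe(\xi\sqrt kx^2)\,dx$, which gives $V_{X;k}(\xi)=\int_0^1e(\xi\sqrt kX^2u^2)\,du=\overline{\F(\xi\sqrt kX^2)}$, using that $\F(-\eta)=\overline{\F(\eta)}$. Hence $|V_{X;k}(\xi)|=|\F(\xi\sqrt kX^2)|$, so the first estimate follows at once from $|\F(\eta)|\le|\eta|^{-1/2}$ applied with $\eta=\xi\sqrt kX^2$, since $|\xi\sqrt kX^2|^{-1/2}=k^{-1/4}|\xi X^2|^{-1/2}$. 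The second estimate does not even need the $\F$-bound: writing $V_{X;k}(\xi)-1=\int_0^1\big(e(\xi\sqrt kX^2u^2)-1\big)\,du$ and using $|e(\theta)-1|\le2\pi|\theta|$, one gets $|V_{X;k}(\xi)-1|\le2\pi|\xi|\sqrt kX^2\int_0^1u^2\,du=\tfrac{2\pi}{3}k^{1/2}|\xi X^2|$, with an absolute constant.

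It thus remains to prove $|\F(\xi)|\lesssim\min(1,|\xi|^{-1/2})$. The bound $|\F(\xi)|\le1$ is immediate since the integrand has modulus $1$ and the domain has length $1$. For the decaying bound I would use conjugation symmetry $\F(-\xi)=\overline{\F(\xi)}$ to assume $\xi>0$, and substitute $y=\sqrt\xi\,x$ to write $\F(\xi)=\xi^{-1/2}\int_0^{\sqrt\xi}e(-y^2)\,dy$. When $\xi\le1$ this already yields $|\F(\xi)|\le\xi^{-1/2}\cdot\sqrt\xi=1$, consistent with the claim, so one may assume $\xi>1$, and then it suffices to show $\big|\int_0^Re(-y^2)\,dy\big|=O(1)$ uniformly in $R\ge1$. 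I would split at $y=1$, bound the contribution of $[0,1]$ trivially by $1$, and on $[1,R]$ integrate by parts via $e(-y^2)=\frac{1}{-4\pi iy}\frac{d}{dy}e(-y^2)$: the boundary terms are $O(1)$ since $|{-4\pi iy}|\ge4\pi$ there, and the leftover integral is dominated by $\frac1{4\pi}\int_1^Ry^{-2}\,dy\le\frac1{4\pi}$. Combining the two ranges of $\xi$ gives $|\F(\xi)|\lesssim\min(1,|\xi|^{-1/2})$ with an absolute constant, and feeding this back into the first paragraph completes the proof.

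There is no real obstacle here: the only point needing a genuine (if short) argument is the uniform boundedness of the truncated Fresnel integral $\int_0^Re(-y^2)\,dy$, which is precisely the $L^\infty$ second–derivative van der Corput estimate for the phase $x\mapsto -x^2$ and is dispatched by the integration-by-parts manoeuvre above; alternatively one could simply quote van der Corput's lemma. Throughout, all implied constants that appear are numerical, so the claimed absoluteness of the constants is evident.
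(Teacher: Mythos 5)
Your argument is correct and follows essentially the same route as the paper: the same change of variables $x=Xu$ reduces the bounds for $V_{X;k}$ to those for $\F$, the second estimate is the same elementary $|e(\theta)-1|\lesssim|\theta|$ bound, and the decay $|\F(\xi)|\lesssim|\xi|^{-1/2}$ is the same van der Corput-type input. The only difference is cosmetic: the paper simply cites van der Corput's lemma (Proposition~2.6.7(b) in \cite{Graf}), whereas you reprove it for the quadratic phase by rescaling and integration by parts, which is a perfectly valid self-contained substitute.
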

\begin{proof}We begin with the third estimate. Clearly, we have that $|\F|\le 1$, and an application of van der Corput lemma, see Proposition~2.6.7(b) in \cite{Graf}, yields that
\[
|\F(\xi)|\lesssim |\xi|^{-1/2}\text{,}\quad\text{as desired.}
\]
For the first estimate we may use the previous estimate after a change of variables as follows
\[
|V_{X;k}(\xi)|=\Big|\int_{0}^1e(\xi\sqrt{k}(Xu)^2)du\Big|=\Big|\int_{0}^1e\big((\xi\sqrt{k}X^2)u^2\big)du\Big|\lesssim |\sqrt{k}\xi X^2|^{-1/2}=k^{-1/4}|\xi X^2|^{-1/2}\text{,}
\]
and for the second one we note that
\[
|V_{X;k}(\xi)-1|\le\frac{1}{X}\int_{0}^X|e(\xi\sqrt{k}x^2)-e(0)|dx\lesssim \frac{\sqrt{k}|\xi|}{X}\int_{0}^Xx^2dx\lesssim k^{1/2}|\xi X^2|\text{.}
\]
\end{proof}
We now turn our attention to $G_k$. To establish the appropriate estimate we use some standard properties of the so-called generalized quadratic Gauss sum. For the convenience of the reader we collect them here.
\begin{lemma}[Generalized quadratic Gauss sum]\label{genquadgausssum}For every $a,b\in\mathbb{Z}$ and $q\in\mathbb{N}$ let
\[
g(a,b,q)\coloneqq \mathbb{E}_{r\in[q]}e\Big(\frac{ar^2+br}{q}\Big)\text{.}
\]
Then the following hold:
\begin{enumerate}
\item[\normalfont{(i)}] for every $a,b\in\mathbb{Z}$ and $q\in\mathbb{N}$ such that $\gcd(a,q)=1$ we have
\begin{equation*}
|g(a,b,q)|\lesssim q^{-1/2}\text{,}
\end{equation*}
where the implied constant is absolute.
\item[\normalfont{(ii)}]for every $a,b\in\mathbb{Z}$ and $c,q\in\mathbb{N}$ such that $\gcd(a,q)=1$ we have
\begin{equation*}
g(ca,b,cq)=g\big(a,b/c,q\big)1_{c|b}\text{.}
\end{equation*}
\end{enumerate}
\end{lemma}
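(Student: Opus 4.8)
The plan is to prove both parts by elementary exponential-sum manipulations; I do not expect a genuine obstacle, the point being simply to record these standard facts with the precise normalisation used later. For part (i) I would use Weyl differencing. Starting from
\[
|g(a,b,q)|^2=\frac{1}{q^2}\sum_{r,r'}e\Big(\frac{a(r^2-r'^2)+b(r-r')}{q}\Big),
\]
with $r,r'$ running over a complete residue system modulo $q$, the substitution $r=r'+h$ gives $r^2-r'^2=2r'h+h^2$ and $r-r'=h$, so that
\[
|g(a,b,q)|^2=\frac{1}{q^2}\sum_{h}e\Big(\frac{ah^2+bh}{q}\Big)\sum_{r'}e\Big(\frac{2ahr'}{q}\Big).
\]
The inner sum equals $q$ if $q\mid 2ah$ and $0$ otherwise; since $\gcd(a,q)=1$ this happens precisely when $h$ is a multiple of $q/\gcd(q,2)$, i.e.\ for $\gcd(q,2)\le 2$ residues $h$ modulo $q$. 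Hence $|g(a,b,q)|^2\le 2/q$, which gives (i) with absolute constant $\sqrt 2$. The only point requiring care is the parity of $q$, which costs merely the factor $2$.

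For part (ii) the key observation is that $r\mapsto e(ar^2/q)$ is periodic modulo $q$, since replacing $r$ by $r+q$ alters the phase by the integer $2ar+aq$. I would then write a residue $r$ modulo $cq$ uniquely as $r=v+qt$ with $v$ modulo $q$ and $t$ modulo $c$ (this ranges over a complete residue system modulo $cq$), so that in
\[
g(ca,b,cq)=\frac{1}{cq}\sum_{r}e\Big(\frac{ar^2}{q}+\frac{br}{cq}\Big)
\]
the quadratic phase reduces to $e(av^2/q)$, independent of $t$, while the linear phase factors as $e(bv/(cq))\,e(bt/c)$. Summing over $t$ first produces $c\cdot\ind{c\mid b}$; when $c\mid b$ the surviving factor $e(bv/(cq))$ equals $e((b/c)v/q)$, so after the normalisation one recovers exactly $g(a,b/c,q)$, while if $c\nmid b$ the sum vanishes. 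I note in passing that coprimality of $a$ and $q$ is not actually needed for (ii), but we keep the hypothesis since that is the form in which the lemma will be applied.

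The only thing that could be called the main obstacle is bookkeeping: in both parts one must make sure the substitutions $r=r'+h$ and $r=v+qt$ are taken over the correct complete residue systems and that the periodicity $e(ar^2/q)=e(a(r+q)^2/q)$ is invoked at the right moment. I would carry out both computations carefully but tersely, as none of the individual steps is deep.
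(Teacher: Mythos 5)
Your proof is correct. Note that the paper does not actually write out an argument for this lemma: it simply refers to Lemma~2.5 and Lemma~2.6 in \cite{NL}, and the computations you give — Weyl differencing of $|g(a,b,q)|^2$ with the orthogonality of the characters $r'\mapsto e(2ahr'/q)$ and the count of the at most $\gcd(2,q)\le 2$ surviving residues $h$ for (i), and the decomposition $r=v+qt$ together with the mod-$q$ periodicity of $e(ar^2/q)$ and the sum $\sum_{t}e(bt/c)=c\,\ind{c\mid b}$ for (ii) — are exactly the standard arguments behind that citation, so your write-up is essentially the same approach, just made self-contained; your observation that $\gcd(a,q)=1$ is not needed for (ii) is also accurate.
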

\begin{proof}
The proof is standard, see for example Lemma~2.5 and Lemma~2.6 in \cite{NL}. 
\end{proof}
\begin{proposition}[Bounds for $\G_k$]\label{GaussSumstype}
Assume $k_1,k_2\in\mathbb{N}$ are such that $\gcd(k_1,k_2)=1$. Then there exists a positive constant $C=C(k_1,k_2)$ such that for all $q\in\mathbb{N}$ and $a,b\in\mathbb{Z}$ such that $\gcd(a,b,q)=1$ we have
\begin{equation}\label{GaussBound}
|\G_{k_1/k_2}(a,b,q)|\le C q^{-1/2}\text{.}
\end{equation}
\end{proposition}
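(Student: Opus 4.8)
The plan is to reduce the double sum $\G_{k_1/k_2}(a,b,q)$ to a product of one-dimensional generalized quadratic Gauss sums of the type $g(\cdot,\cdot,\cdot)$ controlled by Lemma~$\ref{genquadgausssum}$. Writing $Q\coloneqq 2qk_2$, we have
\[
\G_{k_1/k_2}(a,b,q)=\mathbb{E}_{r,s\in[Q]}e\Big(\tfrac{a}{q}rs+\tfrac{b}{2qk_2}(k_1r^2+k_2s^2)\Big)
=\mathbb{E}_{r,s\in[Q]}e\Big(\tfrac{2k_2 a\, rs + b k_1 r^2 + b k_2 s^2}{Q}\Big),
\]
so we are looking at a genuine binary quadratic Gauss sum with matrix $\big(\begin{smallmatrix} bk_1 & k_2a\\ k_2a & bk_2\end{smallmatrix}\big)$ modulo $Q$. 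First I would perform the sum over $s$ for fixed $r$: the $s$-sum is $Q\cdot g(bk_2,\,2k_2 a r,\,Q)$, a one-dimensional quadratic Gauss sum in $s$ with linear coefficient depending on $r$. Then one is left with an $r$-sum of these inner Gauss sums weighted by $e(bk_1r^2/Q)$; after inserting the classical closed form for $g(bk_2,2k_2ar,Q)$ (completing the square in $s$, which introduces a factor $e(-\,\overline{(\text{stuff})}\,(k_2 a r)^2/Q)$ times a Gauss sum independent of $r$), the $r$-sum becomes itself a one-dimensional quadratic Gauss sum, and part (i) of Lemma~$\ref{genquadgausssum}$ finishes the estimate — provided the leading coefficients that appear are coprime to the relevant moduli.

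The technical heart, and the step I expect to be the main obstacle, is handling the common-factor structure so that part (i) of Lemma~$\ref{genquadgausssum}$ actually applies. The hypothesis is only $\gcd(a,b,q)=1$, not $\gcd(a,q)=1$ or $\gcd(b,q)=1$, so neither inner nor outer Gauss sum is automatically nondegenerate; moreover the fixed factors $k_1,k_2$ and the artificial $2$ in $Q=2qk_2$ contribute their own gcd's with $q$. The clean way to deal with this is to use part (ii) of Lemma~$\ref{genquadgausssum}$, which lets one strip off a common factor $c$ from the coefficient and the modulus of a Gauss sum at the cost of an indicator $1_{c\mid b}$ and a rescaling. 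Concretely, I would factor $q=q_1q_2$ where $q_2$ collects the part of $q$ sharing factors with $b$ (and with $2k_1k_2$), so that on the complementary part the relevant leading coefficient is a unit; the indicator constraints forced by Lemma~$\ref{genquadgausssum}$(ii) will then either be satisfied — giving genuine square-root cancellation of size $\lesssim_{k_1,k_2} Q^{-1/2}\asymp_{k_1,k_2} q^{-1/2}$ — or violated, forcing the sum to vanish. The condition $\gcd(a,b,q)=1$ is exactly what prevents both the inner coefficient $bk_2$ and, after completing the square, the outer coefficient (which ends up being a fixed nonzero multiple of $a^2$ or of $ab$ divided out appropriately) from simultaneously sharing a large factor with $q$.

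Since we only need the bound $\lesssim_k q^{-1/2}$ and not an exact evaluation, I would avoid computing the precise Gauss-sum constants: it suffices to track moduli and gcd's, apply Lemma~$\ref{genquadgausssum}$(i)–(ii) twice (once for the $s$-sum, once for the resulting $r$-sum), and absorb all the $q$-independent losses (powers of $k_1$, $k_2$, and $2$, and the finitely many ``bad prime'' local factors) into the constant $C=C(k_1,k_2)$. A small amount of care is needed when $2\mid q$ because quadratic Gauss sums modulo powers of $2$ behave slightly differently, but this only affects the $2$-part of $q$, which is bounded independently of the square-root saving, so it is harmless. This is precisely the point where the author's approach departs from \cite{NL}: rather than exploiting the specific value $k=2$ to identify $\G_2$ with a known Gauss sum, one argues structurally for arbitrary $k_1/k_2$, at the cost of obtaining only the size bound \eqref{GaussBound} rather than an identity.
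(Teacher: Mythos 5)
Your overall strategy (reduce $\G_{k_1/k_2}$ to one-dimensional sums handled by Lemma~\ref{genquadgausssum}) starts the same way as the paper, which performs the inner sum in $r$ with coefficient $bk_1$, strips $c_k=\gcd(bk_1,2qk_2)$ via part (ii), and bounds the resulting sum by $(q')^{-1/2}$ via part (i). But the second half of your plan has a genuine gap. After Lemma~\ref{genquadgausssum}(ii) the indicator you pick up is $1_{c_k\mid 2k_2 a s}$ (in your set-up, $1_{d\mid 2k_2 a r}$ with $d=\gcd(bk_2,2qk_2)$): it depends on the \emph{outer} summation variable, so your dichotomy ``either the constraint is satisfied, giving square-root cancellation, or it is violated, forcing the sum to vanish'' is false at the level of the whole sum. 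The constraint is satisfied on a sparse arithmetic progression of outer values and violated off it, and the needed extra saving in the hard case $\gcd(b,q)$ large is exactly the density $\lesssim 1/\gcd(b,q)$ of that progression --- a counting step which uses $\gcd(a,\gcd(b,q))=1$ (hence $\gcd(a,b,q)=1$) and which your proposal never carries out. This is the crux of the paper's argument: one application of part (i) giving $(q')^{-1/2}$, multiplied by the count of surviving outer values, already yields $q^{-1/2}$; no completion of the square and no second Gauss sum are needed.

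Moreover, the second application of part (i) that you rely on is not available in general: after completing the square, the outer quadratic coefficient is governed by the determinant $k_2(b^2k_1-k_2a^2)$ of the binary form, and this can share with $q$ a factor that is \emph{not} bounded in terms of $k_1,k_2$, so it cannot be ``absorbed into $C(k_1,k_2)$ as a bad local factor.'' For instance, $k_1=2$, $k_2=1$, $(a,b,q)=(3,1,7)$ has $\gcd(a,b,q)=1$ but $b^2k_1-k_2a^2=-7$, divisible by $q$; there the outer sum exhibits no square-root cancellation (in that example the inner sum is nondegenerate and saves everything, so the proposition is safe, but your accounting, which expects cancellation from the second Gauss sum, breaks). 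The statement that $\gcd(a,b,q)=1$ ``prevents the inner and outer coefficients from simultaneously sharing a large factor with $q$'' is the right intuition, but to turn it into a proof you must either do the local/CRT analysis prime by prime, or --- much more simply, as the paper does --- skip the evaluation of the inner Gauss sum altogether and replace outer cancellation by the divisor-counting estimate for the surviving outer indices. As written, your argument does not close; with that counting step inserted (and the fallback to the trivial outer bound when the determinant is degenerate) it can be repaired, at which point it essentially collapses onto the paper's proof with the roles of $r$ and $s$ interchanged.
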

\begin{proof}We allow here all implicit constants to depend on $k_1,k_2$. We have
\begin{multline}\label{reductiontosingleaverage}\G_k(a,b,q)=\mathbb{E}_{r,s\in[2qk_2]}e\Big(\frac{a}{q}rs+\frac{b}{2qk_2}(k_1r^2+k_2s^2)\Big)
\\
=\frac{1}{2qk_2}\sum_{s\in[2qk_2]}e\Big(\frac{bs^2}{2q}\Big)\frac{1}{2qk_2}\sum_{r\in[2qk_2]}e\Big(\frac{2k_2as}{2qk_2}r+\frac{bk_1}{2qk_2}r^2\Big)=\frac{1}{2qk_2}\sum_{s\in[2qk_2]}e\Big(\frac{bs^2}{2q}\Big)g(bk_1,2k_2as,2qk_2)\text{.}
\end{multline}
Let $c_k=c_k(b,q)\coloneqq \gcd(bk_1,2qk_2)$, and then note that there exists $b'$ and $q'$ with $\gcd(b',q')=1$ such that $bk_1=c_kb'$ and $ 2qk_2=c_kq'$. Also, note that if we let $c\coloneqq \gcd(b,q)$, then $\gcd(c,a)=1$, $c|c_k$ and
\[
c_k=\gcd(bk_1,2qk_2)\le\gcd(2k_1k_2b,2k_1k_2q)=2k_1k_2\gcd(b,q)=2k_1k_2c\text{,}
\]
and thus $c\ge\frac{c_k}{2k_1k_2}$. We now split the analysis to two cases.
\\
\textbf{Case 1: $a\neq 0$.} Returning to \eqref{reductiontosingleaverage} and applying Lemma~$\ref{genquadgausssum}$ yield
\begin{multline}|\G_k(a,b,q)|
\lesssim\frac{1}{q}\sum_{s\in[2qk_2]}|g(c_kb',2k_2as,c_kq')|=\frac{1}{q}\sum_{s\in[2qk_2]}\Big|g\Big(b',\frac{2k_2as}{c_k},q'\Big)\Big|1_{c_k|2k_2as}
\\
\lesssim\frac{1}{q}\sum_{s\in[2qk_2]}(q')^{-1/2}1_{c|a2sk_2}=\frac{(q')^{-1/2}}{q}\sum_{s\in[2qk_2]}1_{c|2sk_2}
\\
\lesssim \frac{(q')^{-1/2}}{c}\le\frac{(q')^{-1/2}}{c_k/(2k_1k_2)}\lesssim c_k^{-1/2}(q'c_k)^{-1/2}\le q^{-1/2}\text{,}
\end{multline}
as desired. Note that we have used the fact that $a\neq 0$ for the equality in the second line.
\\
\textbf{Case 2: $a=0$.} We will now have that $c=\gcd(b,q)=1$, and thus $c_k\le 2k_1k_2$. But then in a similar manner we may estimate as follows
\begin{equation}|\G_k(a,b,q)|
\lesssim\frac{1}{q}\sum_{s\in[2qk_2]}\Big|g\Big(b',\frac{k_22as}{c_k},q'\Big)\Big|1_{c_k|2k_2as}\lesssim (q')^{-1/2}
=c_k^{1/2}(c_kq')^{-1/2}\lesssim q^{-1/2}\text{,}
\end{equation}
and the proof is complete.
\end{proof}
\section{Minor arc estimates}\label{minsection}
This section is devoted to the proof of the minor arc estimates. In contrast to the situation for the major arcs, it is more convenient here to immediately express our estimates using the notation introduced in section~3. The main result of this section is the following.
\begin{proposition}[Minor arc estimate]\label{minarcnew}
Assume $k\in\mathbb{Q}_{>0}$ is such that $\sqrt{k}\not\in\mathbb{Q}$. Then there exists an absolute positive constant $c_0$ such that for all $\gamma,\gamma'\in(0,1/10)$ with $\frac{\gamma}{\gamma'}<c_0$ the following holds. There exist positive constants $C=C(k,\gamma,\gamma')$ and $\chi=\chi(\gamma,\gamma')$ such that for every $N\in[1,\infty)$ and $\xi\in\mathfrak{m}(N;k,\gamma,\gamma')$ we have
\begin{equation}\label{notequidistribution}
\big|\mathbb{E}_{n\in[N]}e(\xi n\lfloor n\sqrt{k}\rfloor)\big|\le CN^{-\chi}\text{.}
\end{equation}
\end{proposition}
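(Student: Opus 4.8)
The plan is to exploit the partition $\mathfrak m=\mathfrak m_1\sqcup\mathfrak m_2$ from Section~\ref{setupsection} and to treat the two pieces by separate applications of Green and Tao's quantitative Leibman theorem (Theorem~\ref{QLT}), in both cases through the nilmanifold identity $\mathbb E_{n\in[N]}e(\xi n\lfloor n\sqrt{k}\rfloor)=\mathbb E_{n\in[N]}F(g_{\xi;k}(n)\Gamma)$ derived above. Since $F$ is only piecewise Lipschitz, with discontinuities on the locus where $\{n\sqrt{k}\}$ is close to $0$, I would first reduce to a genuinely Lipschitz function: write $F=\widetilde F+R$ with $\|\widetilde F\|_{\mathrm{Lip}}\lesssim\delta^{-O(1)}$, $\int_{G/\Gamma}\widetilde F=O(\delta)$, and $R$ $1$-bounded and supported in an $O(\delta)$-neighbourhood of the discontinuity set. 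The quantitative equidistribution of $(n\sqrt{k}\bmod1)_n$ — valid because $\sqrt{k}$, a real quadratic irrational, has bounded partial quotients, i.e.\ the discrepancy input already used in Lemma~\ref{easylemma2} — then gives $|\mathbb E_{n\in[N]}R(g_{\xi;k}(n)\Gamma)|\lesssim_k\delta+N^{-1}\log N$ uniformly in $\xi$, so that it suffices to bound $\mathbb E_{n\in[N]}\widetilde F(g_{\xi;k}(n)\Gamma)$, and one sets $\delta=N^{-\chi}$ at the end.

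For $\xi\in\mathfrak m_1$ I would argue by contradiction. If $|\mathbb E_{n\in[N]}\widetilde F(g_{\xi;k}(n)\Gamma)|$ is not $\lesssim N^{-\chi}$ then, since $\int_{G/\Gamma}F=\int_0^1e(z)\,dz=0$, Theorem~\ref{QLT} forces $(g_{\xi;k}(n)\Gamma)_{n\in[N]}$ to be non-$\delta$-equidistributed and hence produces a nontrivial horizontal character $\eta$ with $0<|\eta|\lesssim N^{O(\chi)}$ and $\|\eta\circ g_{\xi;k}\|_{C^\infty[N]}\lesssim N^{O(\chi)}$. Horizontal characters of the Heisenberg group factor through the $(x,y)$-coordinates, so $\eta\circ g_{\xi;k}(n)=(\ell_2\sqrt{k}-\ell_1\xi)n$ for some $(\ell_1,\ell_2)\in\mathbb Z^2\setminus\{0\}$ with $|\ell_1|,|\ell_2|\lesssim N^{O(\chi)}$, and the smoothness bound reads $\|\ell_2\sqrt{k}-\ell_1\xi\|\lesssim N^{-1+O(\chi)}$. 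The case $\ell_1=0$ is excluded because $\sqrt{k}$ is badly approximable, so setting $q=|\ell_1|$, picking $a,b$ with $|b|=|\ell_2|$ and $a\in[q]$ minimising the distance, and dividing out $\gcd(a,b,q)$, one obtains a reduced triple with $q\le N^{\gamma}$, $|b|\le N^{\gamma}$ and $\|\xi-\tfrac{a+b\sqrt{k}}{q}\|\le N^{-1+\gamma}$ (fixing $\chi$ so small that $O(\chi)<\gamma$), i.e.\ $\xi\in\widetilde{\mathfrak M}_{a,b,q}$, contradicting \eqref{m1disjoint}. This yields the bound on $\mathfrak m_1$ and is the content of Lemmas~\ref{minLemma1} and \ref{minLemma1'}.

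For $\xi\in\mathfrak m_2$ one has $\xi=\tfrac{a+b\sqrt{k}}{q}+t$ with $\gcd(a,b,q)=1$, $1\le q\le N^{\gamma}$, $|b|\le N^{\gamma}$ and $N^{-2+\gamma'}<|t|\le N^{-1+\gamma}$, and the argument above cannot close because $\xi$ already lies in $\widetilde{\mathfrak M}_{a,b,q}$; the plan is to resolve $\widetilde{\mathfrak M}_{a,b,q}$ more finely by factorizing $g_{\xi;k}$ in the spirit of Theorem~1.19 of \cite{BG}. Writing $g_{\xi;k}(n)=\varepsilon(n)g'(n)\gamma(n)$ with $\gamma$ rational of period $O(qk_2)$ carrying the $\tfrac{a+b\sqrt{k}}{q}$-structure, $\varepsilon$ slowly varying, and $g'$ a polynomial sequence on a two-dimensional abelian nilmanifold, the quadratic datum surviving in $g'$ is the $n^2$-coefficient produced by $t\sqrt{k}$ (using Lemma~\ref{easylemma3} to write $n\lfloor n\sqrt{k}\rfloor=\tfrac12\big((\sqrt{k}n)^2+\lfloor\sqrt{k}n\rfloor^2-\{\sqrt{k}n\}^2\big)$). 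Because $|t|N^2\in[N^{\gamma'},N^{1+\gamma}]$ is polynomially large while $\sqrt{k}$ is badly approximable — so that no character of height $\le N^{O(\chi)}$ can trivialise this frequency once $\chi$ is small relative to $\gamma'$ — the sequence $g'$ equidistributes on the torus, and a second application of Theorem~\ref{QLT} to the (smoothed) pushforward of $F$ gives cancellation $\lesssim N^{-\chi}$; the errors from freezing $\varepsilon$ and $\{n\sqrt{k}\}$ on sub-progressions modulo $2k_2q$ are controlled once more by Lemma~\ref{easylemma2}. Chasing exponents — the worst case being $a=0$ with $q$ near $N^{\gamma}$, where the effective quadratic frequency over a residue class has size only $\asymp N^{-2+\gamma'}q^2$ acting on $\sim N/q$ points and must still be polynomially large — forces the stated coupling $\gamma/\gamma'<c_0$ and yields the desired estimate on $\mathfrak m_2$.

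The hard part will be precisely the $\mathfrak m_2$ range. These ``second-tier'' frequencies are invisible to the major-arc approximation of Proposition~\ref{MainMajPro}, whose $|t|N^{1+\kappa}$ error term is $\gg1$ for $|t|$ as large as $N^{-1+\gamma}$, so they genuinely require a dedicated equidistribution argument: one must make the factorization of $g_{\xi;k}$ explicit enough to isolate the quadratic frequency $t\sqrt{k}$, verify the height and size hypotheses of the quantitative Leibman theorem on the two-dimensional torus uniformly over the admissible $(a,b,q,t)$, and absorb the errors from replacing $\lfloor n\sqrt{k}\rfloor$ by $n\sqrt{k}-\{n\sqrt{k}\}$ and from freezing the slowly varying data. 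It is this balancing act, rather than the essentially one-shot Heisenberg computation on $\mathfrak m_1$, that constitutes the core of the argument and that produces the constraint $\gamma/\gamma'<c_0$.
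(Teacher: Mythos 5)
Your proposal is correct and follows essentially the same route as the paper: the same splitting of $\mathfrak m$ into $\mathfrak m_1$ and $\mathfrak m_2$, the Heisenberg-nilmanifold encoding of $e(\xi n\lfloor n\sqrt{k}\rfloor)$ with a Lipschitz truncation of $F$ and Theorem~\ref{QLT} for $\mathfrak m_1$, and for $\mathfrak m_2$ the factorization $g_{\xi;k}=\sigma g'_{\xi}\rho$ followed by a second application of Theorem~\ref{QLT} on the two-dimensional abelian nilmanifold, where the quadratic coefficient $\asymp t q^2\sqrt{k}$ (large because $|t|>N^{-2+\gamma'}$) and the bad approximability of $\sqrt{k}$ rule out small horizontal characters and generate the constraint $\gamma/\gamma'<c_0$. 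The only deviations are cosmetic: you control the contribution near the discontinuity of $F$ via the discrepancy of $(n\sqrt{k})_{n}$ rather than by testing the equidistribution hypothesis against the cutoff itself, and your accounting over residue classes of length $\sim N/q$ leaves implicit the further decomposition into blocks of length $\approx N^{1-\gamma'/6}$ that the paper uses to freeze the slowly varying factor $\sigma(n)$ (needed since $|nt|$ can be as large as $N^{\gamma}$ over all of $[N]$).
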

The proposition is an immediate consequence of the the following two lemmas.
\begin{lemma}[Estimate for $\mathfrak{m}_1$]\label{minLemma1}
Assume $k\in\mathbb{Q}_{>0}$ is such that $\sqrt{k}\not\in\mathbb{Q}$ and let $\gamma,\gamma'\in(0,1/10)$. Then there exists positive constant $C=C(k,\gamma,\gamma')$ and $\chi=\chi(\gamma)$ such that for every $N\in[1,\infty)$ and $\xi\in\mathfrak{m}_1(N;k,\gamma,\gamma')$ we have
\begin{equation}\label{notequidistribution1}
\big|\mathbb{E}_{n\in[N]}e(\xi n\lfloor n\sqrt{k}\rfloor)\big|\le CN^{-\chi}\text{.}
\end{equation}
\end{lemma}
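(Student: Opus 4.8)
The plan is to recast the exponential sum as the average of a (discontinuous) nilsequence and apply Green and Tao's quantitative Leibman theorem, Theorem~\ref{QLT}, following the strategy of Neale's minor arc analysis for $k=2$ (section~4.2.1 of \cite{NL}). As recalled in subsection~\ref{minarcstrat}, with $g_{\xi;k}(n)=\bigl(\begin{smallmatrix}1&-\xi n&0\\0&1&\sqrt k\,n\\0&0&1\end{smallmatrix}\bigr)$ a polynomial sequence on the Heisenberg nilmanifold $G/\Gamma$ and $F\colon G/\Gamma\to\mathbb{C}$ the function equal to $e(z)$ on the fundamental domain, one has $\mathbb{E}_{n\in[N]}e(\xi n\lfloor n\sqrt k\rfloor)=\mathbb{E}_{n\in[N]}F(g_{\xi;k}(n)\Gamma)$, while $\int_{G/\Gamma}F=\int_0^1 e(z)\,dz=0$. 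Thus the average is automatically small whenever the orbit $(g_{\xi;k}(n)\Gamma)_{n\le N}$ is well equidistributed. I would fix $\delta=\delta(N)$, a small negative power of $N$ to be chosen, and split according to whether the orbit is $\delta$-equidistributed in the sense of Theorem~\ref{QLT}.

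\emph{Equidistributed case.} The only nuisance is that $F$ is not continuous. I would sandwich $F$ between Lipschitz functions $\psi^{\pm}$ with $\|\psi^{\pm}\|_{\Lip}\lesssim\varepsilon^{-1}$ and $\int_{G/\Gamma}|F-\psi^{\pm}|\lesssim\varepsilon$, and likewise dominate the indicator of the $\varepsilon$-neighbourhood of the discontinuity set of $F$ by a Lipschitz function of norm $\lesssim\varepsilon^{-1}$ and integral $\lesssim\varepsilon$. Applying the equidistribution hypothesis to these Lipschitz functions and using $\int_{G/\Gamma}F=0$ gives $|\mathbb{E}_{n\in[N]}F(g_{\xi;k}(n)\Gamma)|\lesssim\varepsilon+\delta/\varepsilon$, hence $\lesssim\sqrt\delta$ on taking $\varepsilon=\sqrt\delta$.

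\emph{Non-equidistributed case.} Then Theorem~\ref{QLT} yields a nontrivial horizontal character $\eta$ of $G/\Gamma$ with $\|\eta\|\lesssim\delta^{-O(1)}$ and $\|\eta\circ g_{\xi;k}\|_{C^{\infty}[N]}\lesssim\delta^{-O(1)}$, with absolute implied exponents. Horizontal characters of the Heisenberg nilmanifold are $\bigl(\begin{smallmatrix}1&x&z\\0&1&y\\0&0&1\end{smallmatrix}\bigr)\mapsto e(k_1 x+k_2 y)$ for $(k_1,k_2)\in\mathbb{Z}^2\setminus\{0\}$ with $|k_1|+|k_2|\asymp\|\eta\|$, so $\eta\circ g_{\xi;k}(n)=(-k_1\xi+k_2\sqrt k)\,n$ is genuinely linear and the smoothness bound reads $\|{-}k_1\xi+k_2\sqrt k\|\lesssim\delta^{-O(1)}N^{-1}$. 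If $k_1=0$ then $k_2\neq0$ and $\|k_2\sqrt k\|\lesssim\delta^{-O(1)}N^{-1}$; but $\sqrt k$ is a quadratic irrational (bounded partial quotients; alternatively use Liouville's theorem \cite{LV}), so $\|k_2\sqrt k\|\gtrsim_k|k_2|^{-1}\gtrsim\delta^{O(1)}$, which is incompatible with the previous bound once $\delta^{-O(1)}\le N^{1/2}$ and $N\gtrsim_k 1$. Hence $k_1\neq0$; writing $-k_1\xi+k_2\sqrt k=m+\beta$ with $m\in\mathbb{Z}$, $|\beta|\lesssim\delta^{-O(1)}N^{-1}$, then dividing numerator and denominator of $\frac{-m+k_2\sqrt k}{k_1}$ (with the sign absorbed so the denominator is $|k_1|$) by $\gcd$ and reducing the rational part modulo the denominator places $\xi$ in some $\widetilde{\mathfrak{M}}_{a,b,q}$ with $q\le Q$, $|b|\le Q$, $a\in[q]$, $\gcd(a,b,q)=1$, where $Q\lesssim\delta^{-O(1)}$ and the radius is $\lesssim\delta^{-O(1)}N^{-1}$.

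\emph{Conclusion.} Choosing $\delta=N^{-c\gamma}$ with $c$ a small absolute constant forces $\delta^{-O(1)}\le N^{\gamma}$, so in the non-equidistributed case $\xi$ lies in an admissible $\widetilde{\mathfrak{M}}_{a,b,q}$, i.e.\ $\xi\notin\mathfrak{m}_1$ by \eqref{m1disjoint}, contradicting the hypothesis (for $N\gtrsim_{k,\gamma}1$). Therefore the equidistributed case holds and $|\mathbb{E}_{n\in[N]}e(\xi n\lfloor n\sqrt k\rfloor)|\lesssim_{k,\gamma}\sqrt\delta=N^{-c\gamma/2}$, giving the claim with $\chi=\chi(\gamma)$ a suitable multiple of $\gamma$ after enlarging $C$ to absorb the finitely many small $N$. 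The delicate points are the bookkeeping of the absolute exponents emerging from Theorem~\ref{QLT} and the Lipschitz regularisation of the discontinuous $F$; the conceptual heart is the elementary observation that the abelian-but-nontrivial direction $k_1=0$ is excluded precisely because $\sqrt k$ is algebraic of degree exactly $2$.
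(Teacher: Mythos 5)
Your proposal is correct and follows essentially the same route as the paper: the same Heisenberg nilmanifold representation of the phase, a Lipschitz regularisation of the discontinuous $F$ to exploit $\delta$-equidistribution (the paper's Lemma~\ref{liptogether} and Lemma~\ref{Nilinput}, where the real work is verifying the Lipschitz bounds for the metric $d_{G/\Gamma}$), the quantitative Leibman theorem in the non-equidistributed case, and Liouville's theorem to rule out the degenerate character, with $\delta$ a small power of $N^{-\gamma}$. The only cosmetic difference is that you use the (correct) fact that horizontal characters annihilate the centre, so $\eta\circ g_{\xi;k}$ is linear in $n$, whereas the paper conservatively allows a third coordinate $l_3\neq 0$ and treats the resulting quadratic case separately; both versions land $\xi$ in an admissible $\widetilde{\mathfrak{M}}_{a,b,q}$, contradicting $\xi\in\mathfrak{m}_1$.
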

\begin{lemma}[Estimate for $\mathfrak{m}_2$]\label{minLemma2}
Assume $k\in\mathbb{Q}_{>0}$ is such that $\sqrt{k}\not\in\mathbb{Q}$. Then there exists an absolute positive constant $c_0$ such that for all $\gamma,\gamma'\in(0,1/10)$ with $\frac{\gamma}{\gamma'}<c_0$ the following holds. There exists positive constant $C=C(k,\gamma,\gamma')$ and $\chi=\chi(\gamma,\gamma')$ such that for every $N\in[1,\infty)$ and $\xi\in\mathfrak{m}_2(N;k,\gamma,\gamma')$ we have
\begin{equation}\label{notequidistribution1}
\big|\mathbb{E}_{n\in[N]}e(\xi n\lfloor n\sqrt{k}\rfloor)\big|\le CN^{-\chi}\text{.}
\end{equation}
\end{lemma}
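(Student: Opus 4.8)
The plan is to run the nilmanifold argument of Subsection~\ref{minarcstrat} a second time, but now \emph{after} dividing out the arithmetic obstruction that $\xi$ already carries, so that the residual equidistribution question takes place on a two dimensional abelian nilmanifold (a torus $\mathbb{T}^2$) and is governed purely by the size of $t\sqrt{k}$. First I would reduce to an explicit shape: since the sets $\widetilde{\mathfrak{M}}_{a,b,q}$ with $q,|b|\le N^{\gamma}$, $a\in[q]$, $\gcd(a,b,q)=1$ are mutually disjoint for $N\gtrsim_{k,\gamma,\gamma'}1$ and each $\mathfrak{M}_{a,b,q}\subseteq\widetilde{\mathfrak{M}}_{a,b,q}$, every $\xi\in\mathfrak{m}_2$ lies in exactly one $\widetilde{\mathfrak{M}}_{a,b,q}\setminus\mathfrak{M}_{a,b,q}$, i.e. $\xi=\frac{a+b\sqrt{k}}{q}+t$ with $N^{-2+\gamma'}<|t|\le N^{-1+\gamma}$ and $\gcd(a,b,q)=1$, $q,|b|\le N^{\gamma}$ (the estimate is trivial for bounded $N$, so assume $N$ large). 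Then, as in Subsection~\ref{minarcstrat}, write $\mathbb{E}_{n\in[N]}e(\xi n\lfloor n\sqrt{k}\rfloor)=\mathbb{E}_{n\in[N]}F(g_{\xi;k}(n)\Gamma)$ with $F$ the central character; since $F$ is discontinuous only along $\{y\in\mathbb{Z}\}$, fix a scale $\delta$ (to be a small negative power of $N$) and replace $F$ by a Lipschitz $F_\delta$ with $\|F-F_\delta\|_{L^1(G/\Gamma)}\lesssim\delta$ and Lipschitz constant $\lesssim\delta^{-1}$; as $\{n\sqrt{k}\}$ has discrepancy $O_k(N^{-1}\log N)$ (the continued fraction of the quadratic irrational $\sqrt{k}$ has bounded partial quotients), this swap costs $\lesssim_k\delta+N^{-1}\log N$, and it remains to bound $\mathbb{E}_{n\in[N]}F_\delta(g_{\xi;k}(n)\Gamma)$.

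The key step, which I expect to be the main obstacle, is to produce the factorization \eqref{gxifactorization} of $g_{\xi;k}$ in the spirit of Theorem~1.19 of \cite{BG}, namely $g_{\xi;k}(n)=\varepsilon(n)\gamma(n)g'(n)$, where $\gamma$ is $Q$-rational for some $Q\lesssim_k N^{\gamma}$ (carrying the $\tfrac{a}{q}$- and $\tfrac{b}{q}$-data), $\varepsilon$ is $(M,N)$-smooth with $M\lesssim_k N^{\gamma}$ (here the smallness $|t|\le N^{-1+\gamma}$ is essential: the perturbation $-tn$ of the top-left entry is absorbed into $\varepsilon$), and $g'$ is a polynomial sequence valued in a closed connected abelian subgroup $G'\le G$ that contains the center, with $\dim G'/(G'\cap\Gamma)\le 2$. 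Tracking the central coordinate and invoking Lemma~\ref{easylemma3}, the induced orbit $g'(n)(G'\cap\Gamma)$ on $\mathbb{T}^2$ has degree-two coefficient equal to a nonzero absolute multiple of $t\sqrt{k}$ (from the term $\xi\sqrt{k}n^2$) and degree-one coefficient an integer multiple of $\sqrt{k}$; the remaining data is rational/smooth. Pinning down $G'$ and verifying these quantitative scales is the delicate part.

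Granting this, I would apply the quantitative Leibman theorem (Theorem~\ref{QLT}) to $g'$ on $\mathbb{T}^2$ at accuracy $\delta_0\coloneqq N^{-\rho}$, with $\rho>0$ to be chosen. If the orbit is not $\delta_0$-equidistributed, there is a nontrivial horizontal character $\eta$ with $\|\eta\|\lesssim\delta_0^{-A}$ and $\|\eta\circ g'\|_{C^{\infty}[N]}\lesssim\delta_0^{-A}$ for an absolute $A$. If $\eta$ is nontrivial on the quadratic direction, its $n^2$-coefficient is $\asymp_k j t\sqrt{k}$ for an integer $1\le|j|\lesssim\delta_0^{-A}$; since $|j t\sqrt{k}|<1/2$ for $N$ large, the conclusion forces $|t|\lesssim_k\delta_0^{-A}N^{-2}=N^{A\rho-2}$. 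If $\eta$ is trivial on the quadratic direction it is nontrivial on the degree-one direction, with $n$-coefficient a nonzero integer multiple $j\sqrt{k}$, $1\le|j|\lesssim\delta_0^{-A}$, so $\|j\sqrt{k}\|\lesssim\delta_0^{-A}N^{-1}$, contradicting $\|j\sqrt{k}\|\gtrsim_k|j|^{-1}$ (Lemma~\ref{seperationverygeneral}, equivalently Liouville's theorem) as soon as $\rho<1/(2A)$. Choosing $\rho$ with $C_0\gamma<\rho<\gamma'/A$ — possible precisely because of the hypothesis $\gamma/\gamma'<c_0$ for a suitable absolute $c_0$, with $C_0$ the constant appearing below — the quadratic case yields $|t|\lesssim_k N^{A\rho-2}<N^{-2+\gamma'}$, contradicting $|t|>N^{-2+\gamma'}$; hence $g'$ is $\delta_0$-equidistributed.

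Finally I would run the standard Green--Tao bookkeeping. Partition $[N]$ into $\lesssim_k MQ\lesssim_k N^{2\gamma}$ arithmetic progressions on which $\varepsilon$ is essentially constant and $\gamma$ is constant; each such progression $P$ has length $\gtrsim N^{1-2\gamma}$, and there $g'$ is $\delta_1$-equidistributed with $\delta_1\lesssim_k\delta_0 N^{C_0\gamma}=N^{C_0\gamma-\rho}$. On $P$, the function $F_\delta(\varepsilon(n_P)\gamma_P\,\cdot\,)$ restricted to the relevant coset of $G'/(G'\cap\Gamma)$ is Lipschitz with constant $\lesssim\delta^{-1}$ and has mean $O(\delta)$, because $G'$ contains the center and $e(z)$ has zero mean on the central circle; hence $|\mathbb{E}_{n\in P}F_\delta(g_{\xi;k}(n)\Gamma)|\lesssim_k\delta^{-1}N^{C_0\gamma-\rho}+\delta$. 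Taking the maximum over $P$, adding the earlier swap error $\lesssim_k\delta+N^{-1}\log N$, and optimizing $\delta$ as a small power of $N$ gives $|\mathbb{E}_{n\in[N]}e(\xi n\lfloor n\sqrt{k}\rfloor)|\lesssim_k N^{-\chi}$ for some $\chi=\chi(\gamma,\gamma')>0$, which together with Lemma~\ref{minLemma1} for $\mathfrak{m}_1$ completes the minor arc analysis.
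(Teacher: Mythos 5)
Your overall strategy coincides with the paper's proof: write $\xi=\frac{a+b\sqrt k}{q}+t$ with $N^{-2+\gamma'}<|t|\le N^{-1+\gamma}$, factor $g_{\xi;k}$ into a smooth part carrying $t$, a rational part, and a polynomial sequence in a two-dimensional abelian subgroup containing the centre, apply Theorem~\ref{QLT} there with the dichotomy (a character seeing the quadratic direction forces $|t|$ below $N^{-2+\gamma'}$, a character seeing only the linear direction contradicts Liouville), and finish with a Lipschitz approximation of the discontinuous $F$ and block bookkeeping. One structural detail you assign to the wrong factor: the $\tfrac bq$-data cannot live in the rational part, since the entry $-nb\sqrt k/q$ is irrational and no $Q$-rational sequence absorbs it. In the paper it is built into the subgroup itself, $G'=\big\{\big(\begin{smallmatrix}1&-bx&y\\0&1&qx\\0&0&1\end{smallmatrix}\big)\big\}$, see \eqref{gxifactorization} and \eqref{subnil}; consequently the reduced orbit's quadratic Mal'cev coefficient is $t\sqrt k+\tfrac{bk}{2q}$ and its linear data involve $\sqrt k/q$ and $b\sqrt k/2$, not the clean ``$jt\sqrt k$ and $j\sqrt k$'' you posit. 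Your dichotomy survives, but only after restricting to residue classes modulo $q$ (and the denominator of $k$) so that the rational contamination contributes integers — exactly why the paper works with the restricted sequences $g'_{\xi}(q\cdot+r')$.

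The genuine gap is in the bookkeeping step: you claim that $\delta_0$-equidistribution of $g'$ on $[N]$ yields $\delta_0N^{C_0\gamma}$-equidistribution on every progression of length $\gtrsim N^{1-2\gamma}$. That is not a formal consequence — equidistribution on the full interval does not pass to subprogressions of relative density $N^{-2\gamma}$ with only a polynomial loss; this is precisely the gap between equidistribution and \emph{total} equidistribution in \cite{BG}, and it is why Theorem~1.19 there outputs a totally equidistributed piece. The repair is the one the paper implements: apply Theorem~\ref{QLT} directly to each restricted polynomial sequence $g'_{\xi}(qn+r')$ on the blocks on which the smooth part is essentially constant (the paper takes block length $N^{1-\gamma'/6}$, chosen so that the lower bound $|t|>N^{-2+\gamma'}$ still beats the reciprocal square of the block length), and rerun your coefficient dichotomy for those restricted sequences; your numerology ($\rho$ between a multiple of $\gamma$ and a fraction of $\gamma'$) survives this change. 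A second, related omission: the test function on the coset is $F_\delta(\varepsilon(n_P)\gamma_P\,\cdot\,)$, and its Lipschitz constant with respect to the metric of $G'/\Gamma'$ is not $\delta^{-1}$ but $\delta^{-1}N^{C\gamma}$, because left multiplication by $\varepsilon(n_P)$ (of coordinate size up to $N^{\gamma}$) and conjugation by the rational part distort the metric; controlling this via Appendix~A of \cite{BG} is a substantial part of the paper's Step~4, and it is this constant $C$, together with $C_0$, that actually determines the admissible $c_0$. Since these are exactly the points you label ``the delicate part,'' the proposal as written does not yet constitute a proof, though its skeleton is the right one.
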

We give the proofs of lemmas~$\ref{minLemma1}$ and $\ref{minLemma2}$ in subsections~$\ref{m1subsection}$ and $\ref{m2subsection}$ respectively. As discussed earlier, see subsection~$\ref{minarcstrat}$, we will appeal to the quantitative Leibman theorem from \cite{BG} and in the following subsection we introduce some notation, make the precise formulation of the theorem and establish some results used in the sequel. 
\subsection{Quantitative behaviour of polynomial orbits on the Heisenberg nilmanifold}\label{HNil}
The minor arc analysis is based on the following result on the quantitative equidistribution of polynomial orbits on nilmanifolds which we state below, see Theorem~2.9 in \cite{BG}. The definitions of the relevant terms appearing can be found in sections~1 and 2 in \cite{BG} and we chose not to repeat the discussion here.
\begin{theorem}[Quantitative Leibman theorem \cite{BG}]\label{QLT} Let $m,d\in\mathbb{N}_0$, $\delta\in(0,1/2)$ and $N\ge 1$. Assume $G/\Gamma$ is an $m$-dimensional nilmanifold equipped with a filtration $G_{\bullet}$ of degree $d$ and a $\frac{1}{\delta}$-rational Mal'cev basis $\mathcal{X}$ adapted to the filtration and let $g\in \poly(\mathbb{Z},G_{\bullet})$. Then there exists a constant $C_0=C_0(m,d)$ such that the following holds. If $\big(g(n)\Gamma\big)_{n\in[N]}$ is not $\delta$-equidistributed in $G/\Gamma$, then there exists a nontrivial horizontal character $\eta\colon G\to \mathbb{T}$ with $|\eta|\lesssim\delta^{-C_0}$ and such that 
\[
\|\eta\circ g\|_{C^{\infty}[N]}\lesssim\delta^{-C_0}\text{,}
\]
and the implied constants are absolute.
\end{theorem}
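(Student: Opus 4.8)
The statement is Green and Tao's quantitative Leibman theorem, so the plan is to reproduce the skeleton of its proof from \cite{BG}. First I would reduce from a general polynomial orbit to a \emph{linear} one $n\mapsto a^n$: a sequence in $\poly(\mathbb{Z},G_\bullet)$ is realized as a genuine geometric progression on an auxiliary nilmanifold built functorially from the filtration (with controlled dimension, degree, and Mal'cev rationality), and a non-equidistribution witness on that nilmanifold pulls down from one on $G/\Gamma$. So assume $g(n)=a^n$ (after translating, $g(0)=\id_G$, and normalizing $a$ to lie in a bounded fundamental domain), fix a $1$-bounded Lipschitz $F\colon G/\Gamma\to\mathbb{C}$ with $\int_{G/\Gamma}F=0$, $\|F\|_{\Lip}\le 1$ and $|\mathbb{E}_{n\in[N]}F(a^n\Gamma)|>\delta$, and proceed by induction on $\dim G$, the case $\dim G=0$ being vacuous; note that the top group $G_d$ is automatically central, since $[G,G_d]\subseteq G_{d+1}=\{e\}$.

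The first dichotomy is on the horizontal torus $T\coloneqq G/([G,G]\Gamma)\cong(\mathbb{R}/\mathbb{Z})^{m_1}$, with $\pi\colon G\to T$ the projection. The abelian sub-case is a quantitative Weyl equidistribution theorem on tori: if $n\mapsto\pi(a)^n$ fails to be $\delta'$-equidistributed on $T$, then there is a nonzero character $\kappa$ of $T$ with $|\kappa|\lesssim(\delta')^{-O(1)}$ and $\|\kappa\circ\pi\circ g\|_{C^{\infty}[N]}\lesssim(\delta')^{-O(1)}$ --- proved by Weyl differencing (to recurse on the degree of the polynomial $\kappa\circ\pi\circ g$) combined with a geometry-of-numbers lemma converting ``the orbit spends $\gtrsim\delta'$ of its time near a proper subtorus'' into an honest small-height linear relation. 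If this applies for a suitable $\delta'=\delta^{O(1)}$ we take $\eta=\kappa\circ\pi$ and are done.

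Otherwise $n\mapsto\pi(a)^n$ is $\delta'$-equidistributed on $T$: this is the lifting step, where linearity of $g$ is essential. Expand $F=\sum_\xi F_\xi$ into vertical characters of the central torus $G_d/(\Gamma\cap G_d)$ and pigeonhole to a single $\xi$ with $|\xi|\lesssim\delta^{-O(1)}$ and $|\mathbb{E}_n F_\xi(a^n\Gamma)|\gtrsim\delta^{O(1)}$. If $\xi=0$ then $F_0$ descends to $G/(G_d\Gamma)$, of strictly smaller dimension, and the inductive hypothesis gives a nontrivial horizontal character there that composes with $G\to G/(G_d\Gamma)$ to one on $G$ of controlled size with the required $C^{\infty}[N]$ bound. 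If $\xi\ne0$, van der Corput in a shift parameter $h$ gives, for $\gtrsim\delta^{O(1)}H$ values of $|h|\le H$, that $|\mathbb{E}_n\Psi_h(a^n\Gamma)|\gtrsim\delta^{O(1)}$ with $\Psi_h(x)\coloneqq F_\xi(a^hx)\overline{F_\xi(x)}$ a \emph{fixed} Lipschitz function (with $\|\Psi_h\|_{\Lip}\lesssim\delta^{-O(1)}$, using that $a$ is bounded and $|h|\le H$); since the $\xi$-twist of $F_\xi$ under the central $G_d$ cancels in $\Psi_h$, the function $\Psi_h$ descends to the strictly lower-dimensional $G/(G_d\Gamma)$, and if $|\int\Psi_h|<\tfrac12\delta^{O(1)}$ for even one good $h$ then $\Psi_h-\int\Psi_h$ is mean-zero and still correlates $\gtrsim\delta^{O(1)}$ with the (linear) orbit on $G/(G_d\Gamma)$, so the inductive hypothesis again finishes. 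The residual case --- $|\int\Psi_h|\gtrsim\delta^{O(1)}$, i.e.\ $F_\xi$ correlates with many of its own $a^h$-translates, for $\gtrsim\delta^{O(1)}H$ values of $h$ --- is the delicate one: one recognizes the autocorrelation $h\mapsto\int F_\xi(a^hx)\overline{F_\xi(x)}\,dx$ as itself a nilsequence of strictly smaller complexity, applies the theorem recursively to it, and extracts from its non-triviality on a positive-density set of $h$ a small-height frequency which unwinds into the desired horizontal character of $G$.

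The real difficulty sits precisely in this residual autocorrelation case, together with the quantitative torus theorem underpinning the first dichotomy: both carry the bulk of the quantitative work --- geometry-of-numbers / Vinogradov-type lemmas for the torus case, a careful recursion on complexity for the autocorrelation case --- and both are where the exponent bookkeeping is most demanding. Once they are in hand, propagating the exponents hidden in the various $\delta^{-O(1)}$'s through the $\dim G$ (and, internally, $d$) steps of the induction yields an admissible constant $C_0=C_0(m,d)$, and transporting the conclusion back through the polynomial-to-linear reduction (which changes $m,d$ by bounded amounts) gives the theorem as stated, with the final implied constants taken absolute after absorbing the rest into $C_0$.
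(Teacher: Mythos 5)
The paper offers no proof of this statement to compare against: Theorem~\ref{QLT} is imported verbatim from Green and Tao (\cite{BG}, Theorem~2.9) and used as a black box, with the reader referred to \cite{BG} even for the definitions of the terms appearing in it. Judged as a standalone argument, your proposal is a table of contents for (a variant of) the Green--Tao proof rather than a proof. The two components you yourself flag as carrying ``the bulk of the quantitative work'' --- the quantitative Weyl/Kronecker theorem on the horizontal torus, with its geometry-of-numbers input converting ``the orbit lingers near a subtorus'' into a small-height character, and the residual autocorrelation case, which you dispatch with ``one recognizes the autocorrelation as a nilsequence of strictly smaller complexity, applies the theorem recursively\dots which unwinds into the desired horizontal character'' --- are precisely the substance of the theorem; stating that they can be done is not doing them. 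Likewise, the uniformity of $C_0=C_0(m,d)$ across the nested inductions (dimension, degree, and the internal recursion inside the autocorrelation step) is exactly the exponent bookkeeping that makes \cite{BG} long, and it is nowhere carried out.

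Your opening move is also not the route of \cite{BG} and is itself a gap: you reduce a polynomial orbit to a linear one on an auxiliary nilmanifold ``built functorially from the filtration, with controlled dimension, degree, and Mal'cev rationality,'' but this reduction is a nontrivial quantitative lemma that you do not prove --- one must exhibit the auxiliary group, a Mal'cev basis of rationality $\delta^{-O_{m,d}(1)}$ adapted to a filtration of bounded degree, and a correspondence both of horizontal characters and of the $C^{\infty}[N]$ norms that loses only $\delta^{-O_{m,d}(1)}$ factors, so that the conclusion can be transported back. Green and Tao avoid this by running the induction directly on polynomial sequences (the van der Corput step there again produces polynomial sequences, with the degree controlled through the smoothness norms), precisely so that all rationality and height losses stay explicit. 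In short: the sketch correctly reflects the shape of the known argument, but every step where the theorem's difficulty actually resides is asserted rather than proved, so as a blind proof it does not stand; in the context of this paper the correct ``proof'' is the citation to \cite{BG}.
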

 We will use this in two rather concrete setups, and in this subsection we simply wish to give a brief idea on how the above theorem will translate for our considerations. For the proof of Lemma~$\ref{minLemma1}$ we apply the theorem above for the  for the 3-dimensional Heisenberg nilmanifold, namely for $G/\Gamma$ where
\[
G=\Big\{
\left(\begin{smallmatrix}1&x&z
\\
0&1&y\\
0&0&1
\end{smallmatrix}\right)
,\,\,x,y,z\in\mathbb{R}\Big\}\quad\text{and}\quad\Gamma=\Big\{\left(\begin{smallmatrix}1&x&z
\\
0&1&y\\
0&0&1
\end{smallmatrix}\right),\,\,x,y,z\in\mathbb{Z}\Big\}\text{,}
\] with its standard Mal'cev basis, see section~5 in \cite{BG}, and the coordinate map $\psi: G \to \mathbb R^3$ with
\[
\psi\Big(\left(\begin{smallmatrix}
1 & x & z\\
0 & 1 & y\\
0 & 0 & 1
\end{smallmatrix}\right)\Big) = (x,y, z-xy)\text{.}
\]
For convenience, whenever $g\in G$, we denote $x_g=g_{1,2}$, $y_g=g_{2,3}$ and $z_g=g_{1,3}$. Let 
\begin{equation}\label{funddom}
\mathcal{F}\coloneqq\{g\in G:\,x_g,y_g,z_g\in[0,1)\}\text{,}
\end{equation}
and note that it is a fundamental domain in the sense that for each $g
\in G$ there exists a unique $g'\in \mathcal{F}$ such that $g\Gamma=g'\Gamma$. Establishing that this is indeed a fundamental domain is standard but, nevertheless, we give a short proof below to give an idea on why the Heisenberg nilmanifold is the suitable object for understanding exponential sums with phases $\big(\xi n\lfloor n\sqrt{k}\rfloor\big)_{n\in[N]}$. On the one hand, we have
\begin{equation}\label{calctoshow}
\begin{pmatrix}
1 & x & z\\
0 & 1 & y\\
0 & 0 & 1
\end{pmatrix}\Gamma=\begin{pmatrix}
1 & x & z\\
0 & 1 & y\\
0 & 0 & 1
\end{pmatrix}\begin{pmatrix}
1 & -\lfloor x\rfloor & -\lfloor z-x\lfloor y\rfloor\rfloor\\
0 & 1 & -\lfloor y\rfloor\\
0 & 0 & 1
\end{pmatrix}\Gamma=
\begin{pmatrix}
1 & \{x\} & \{z-x\lfloor y\rfloor\}\\
0 & 1 & \{y\}\\
0 & 0 & 1
\end{pmatrix}\Gamma\text{.}
\end{equation}
On the other hand, if $f,f'\in \mathcal{F}$ are such that $f\Gamma= f'\Gamma$, then there exists $m,n,l\in\mathbb{Z}$ such that
\[
\begin{pmatrix}
1 & x_{f} & z_{f}\\
0 & 1 & y_{f}\\
0 & 0 & 1
\end{pmatrix}=\begin{pmatrix}
1 & x_{f'} & z_{f'}\\
0 & 1 & y_{f'}\\
0 & 0 & 1
\end{pmatrix}\begin{pmatrix}
1 & m & n\\
0 & 1 & l\\
0 & 0 & 1
\end{pmatrix}=\begin{pmatrix}
1 & x_{f'}+m & z_{f'}+n+x_{f'}l\\
0 & 1 & y_{f'}+l\\
0 & 0 & 1
\end{pmatrix}\text{,}
\]
and since $f,f'\in\mathcal{F}$, we must have that $m,l=0$ and thus $n=0$. Thus $f=f'$ as desired.

We equip the nilmanifold with a metric $d_{G/\Gamma}$, the definition of which is given with respect to $\psi$ and can be found in \cite{BG}, see Definition~2.2 from the aforementioned paper.  

Finally, we equip $G/\Gamma$ with its unique normalized Haar measure, which in our case corresponds to the Lebesgue measure. 

For every $N\in\mathbb{N}$, $\delta>0$, we say that $\big(g(n)\Gamma\big)_{n\in[N]}$ is $\delta$-equidistributed in $G/\Gamma$ if we have
\[
\Big|\mathbb{E}_{n\in[N]}F\big(g(n)\Gamma\big)-\int_{G/\Gamma}F\Big|\le \delta\|F\|_{\Lip}\text{,}
\] 
for all Lipschitz functions $F\colon G/\Gamma\to
\mathbb{C}$, where
\[
\|F\|_{\Lip}\coloneqq \|F\|_{L^{\infty}}+\sup_{\substack{x,y\in G/\Gamma\\x\neq y}}\frac{|F(x)-F(y)|}{d_{G/\Gamma}(x,y)}\text{.}
\]
We also note that for every nontrivial horizontal character $\eta\colon G\to\mathbb{T}$ there exists a unique $l\in\mathbb{Z}^3\setminus\{(0,0,0)\}$ such that $\eta(g)=l\cdot \psi(g)$ for all $g\in G$ and we have $|\eta|\coloneqq \|l\|_{\ell^{\infty}}$, see Definition~2.6 in \cite{BG}.

In view of the calculation in \eqref{calctoshow} and given our aim to understand phases of the form $(\xi n\lfloor n\sqrt{k}\rfloor)_{n\in[N]}$ it is natural to define the following function
\[
F(g\Gamma)=e(z_g-x_g\lfloor y_g\rfloor)\text{,}
\]
which can be equivalently defined as the unique function $F\colon G/\Gamma\to \mathbb{C}$ such that \begin{equation}\label{Fdefinition2}
F(f\Gamma)=e(z_f)\text{, for all $f\in\mathcal{F}$.}
\end{equation}
It is not continuous and we will require a suitable approximation of the function which we formulate below.
\begin{lemma}\label{liptogether}Let $G/\Gamma$ be the $3$-dimensional Heisenberg nilmanifold\footnote{Here and in the sequel, unless we specify otherwise, we equip the Heisenberg nilmanifold with its standard Mal'cev basis, which is adapted to
 the lower central series filtration of $G$, the standard coordinate map $\psi$ and its corresponding metric $d_{G/\Gamma}$, and the Haar measure, which is the Lebesgue measure, see section~2 and section~5 in \cite{BG} .}.  For every $\tau\in(0,1/100)$, let 
$\widetilde{\chi}_{\tau}\colon \mathcal{F}\to[0,1]$ be such that
\begin{equation}\label{defchitilde}
\widetilde{\chi}_{\tau}(g)=\left\{
\begin{array}{ll}
1, &\text{if}\quad \|x_g \|>\tau/5\text{ and }\|y_g \|>\tau/5 \\
      0, &\text{if}\quad \|x_g \|\le\tau/10\text{ or }\|y_g \|\le\tau/10\\
\end{array} 
\right. 
\end{equation}
and such that \begin{equation}\label{chielip}|\widetilde{\chi}_{\tau}(g)-\widetilde{\chi}_{\tau}(g')|\lesssim \tau^{-1}\max(\|x_g-x_{g'}\|,\|y_g-y_{g'}\|)\text{ for all }g,g'\in\mathcal{F}\text{,}
\end{equation}
where the implied constant is absolute, and define $\widetilde{F}_{\tau}\colon\mathcal{F}\to\mathbb{C}$ by $\widetilde{F}_{\tau}(g)=\widetilde{\chi}_{\tau}(g)e(z_g)$.

Define $F_{\tau}\colon G/\Gamma\to\mathbb{C}$ and $\chi_{\tau}\colon G/\Gamma\to\mathbb{C}$ such that 
\begin{equation}\label{ftdef}
F_{\tau}(g\Gamma)=\widetilde{F}_{\tau}(f)=e(z_{f})\widetilde{\chi}_{\tau}(f)\quad\text{and}\quad \chi_{\tau}(g\Gamma)=\widetilde{\chi}_{\tau}(f)\text{,}
\end{equation}
where $f$ is the unique element in $\mathcal{F}$ such that $g\Gamma=f\Gamma$. Then we have that 
\begin{equation}\label{liptechnical}
\|F_{\tau}\|_{\Lip}\lesssim\tau^{-1}\quad\text{and}\quad\|\chi_{\tau}\|_{\Lip}\lesssim \tau^{-1}\text{,}
\end{equation}
where the implied constant is absolute.
\end{lemma}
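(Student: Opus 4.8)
The plan is to obtain both bounds directly from the two defining properties \eqref{defchitilde} and \eqref{chielip} of $\widetilde{\chi}_{\tau}$, together with the structure of the metric $d_{G/\Gamma}$ on the Heisenberg nilmanifold. The $L^{\infty}$ parts are trivial, since $|F_{\tau}|\le|\widetilde{\chi}_{\tau}|\le1$ and $|\chi_{\tau}|\le1$, and because $\tau<1/100$ it then suffices to show that $|F_{\tau}(f\Gamma)-F_{\tau}(f'\Gamma)|\lesssim\tau^{-1}d_{G/\Gamma}(f\Gamma,f'\Gamma)$, and the analogous estimate with $F_{\tau}$ replaced by $\chi_{\tau}$, where $f,f'\in\mathcal{F}$ denote the unique representatives. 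Writing $\rho\coloneqq d_{G/\Gamma}(f\Gamma,f'\Gamma)$, we may assume $\rho$ is smaller than a suitable absolute constant times $\tau$, since otherwise both differences are at most $2\lesssim\tau^{-1}\rho$.

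For such small $\rho$ I would invoke the basic properties of the metric $d_{G/\Gamma}$ from \cite{BG} (Definition~2.2 and the accompanying lemmas, via a minimizing chain in $G$) to produce a lattice element $\gamma=\left(\begin{smallmatrix}1&m&n\\0&1&\ell\\0&0&1\end{smallmatrix}\right)\in\Gamma$ with $|\psi(f^{-1}f'\gamma)|\lesssim\rho$. Reading off the first two coordinates of $\psi(f^{-1}f'\gamma)$ gives $|x_{f'}-x_f+m|\lesssim\rho$ and $|y_{f'}-y_f+\ell|\lesssim\rho$; since $x_f,x_{f'},y_f,y_{f'}\in[0,1)$ and $\rho$ is small, this forces $|m|\le1$ and $|\ell|\le1$, and it yields the dichotomy that either $m=0$ and $\|x_f-x_{f'}\|=|x_f-x_{f'}|\lesssim\rho$, or $m=\pm1$ and both $\|x_f\|$ and $\|x_{f'}\|$ are $\lesssim\rho$, with the entirely analogous statement for $\ell$ in place of $m$ and $y$ in place of $x$.

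Now I would split into two cases. If $m\ne0$ or $\ell\ne0$, then the corresponding seam estimate together with $\rho\lesssim\tau$ and \eqref{defchitilde} forces $\widetilde{\chi}_{\tau}(f)=\widetilde{\chi}_{\tau}(f')=0$, hence $F_{\tau}(f\Gamma)=F_{\tau}(f'\Gamma)=0$ and $\chi_{\tau}(f\Gamma)=\chi_{\tau}(f'\Gamma)=0$, and there is nothing to prove. If $m=\ell=0$, then $|x_f-x_{f'}|\lesssim\rho$ and $|y_f-y_{f'}|\lesssim\rho$, and substituting these into the (explicitly computable) third coordinate of $\psi(f^{-1}f'\gamma)$ and using $x_{f'}\in[0,1)$ yields $\|z_f-z_{f'}\|\lesssim\rho$ as well. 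Then \eqref{chielip} gives $|\chi_{\tau}(f\Gamma)-\chi_{\tau}(f'\Gamma)|=|\widetilde{\chi}_{\tau}(f)-\widetilde{\chi}_{\tau}(f')|\lesssim\tau^{-1}\rho$, and since $F_{\tau}(f\Gamma)=e(z_f)\widetilde{\chi}_{\tau}(f)$ we get $|F_{\tau}(f\Gamma)-F_{\tau}(f'\Gamma)|\le|\widetilde{\chi}_{\tau}(f)-\widetilde{\chi}_{\tau}(f')|+|e(z_f)-e(z_{f'})|\lesssim\tau^{-1}\rho+\|z_f-z_{f'}\|\lesssim\tau^{-1}\rho$, which finishes both estimates. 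I will also note in passing that cutoffs $\widetilde{\chi}_{\tau}$ obeying \eqref{defchitilde} and \eqref{chielip} do exist, e.g.\ $\widetilde{\chi}_{\tau}(g)=\phi(\|x_g\|)\,\phi(\|y_g\|)$ where $\phi\colon[0,1/2]\to[0,1]$ satisfies $\phi\equiv1$ on $[\tau/5,1/2]$, $\phi\equiv0$ on $[0,\tau/10]$ and $\|\phi'\|_{\infty}\lesssim\tau^{-1}$, and that for $\chi_{\tau}$ alone one may shortcut the above by observing that $\chi_{\tau}$ descends through the horizontal projection $G/\Gamma\to\mathbb{T}^{2}$, on which it is plainly $\lesssim\tau^{-1}$-Lipschitz.

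The only genuinely delicate point I anticipate is the first step of the second paragraph: passing from the bound on $d_{G/\Gamma}(f\Gamma,f'\Gamma)$ to the existence of a lattice point $\gamma$ with $|\psi(f^{-1}f'\gamma)|\lesssim\rho$, i.e.\ unwinding the minimizing-chain definition of the nilmanifold metric of \cite{BG} and controlling the resulting representative. Once that is in hand, the remainder is the elementary coordinate bookkeeping above, whose whole purpose is to confirm that the only seam of $\mathcal{F}$ at which $e(z_f)$ genuinely fails to glue continuously on $G/\Gamma$ — the locus $\|y_g\|=0$, coming from the term $\lfloor y_g\rfloor$ in $F(g\Gamma)=e(z_g-x_g\lfloor y_g\rfloor)$ — is contained in the region where $\widetilde{\chi}_{\tau}$, and hence $F_{\tau}$, vanishes.
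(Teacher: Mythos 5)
Your argument is correct, and its architecture is the same as the paper's: reduce to $d_{G/\Gamma}(f\Gamma,f'\Gamma)=\rho\lesssim\tau$, pick a near-minimizing lattice element $\gamma$, observe that if $\gamma$ has a nontrivial horizontal part then both points lie within $O(\rho)$ of the seam so the cutoff \eqref{defchitilde} makes everything vanish, and otherwise control $\|x_f-x_{f'}\|$, $\|y_f-y_{f'}\|$, $\|z_f-z_{f'}\|$ by $\rho$ and conclude via \eqref{chielip} and the Lipschitz property of $e(\cdot)$. The one substantive difference is how the metric-to-coordinates comparison is justified: you outsource it to the Appendix~A machinery of \cite{BG} (near-attainment of the infimum plus comparability of $d$ with the Mal'cev coordinates on bounded sets), whereas the paper keeps this lemma self-contained by proving two bespoke claims directly from the chain definition of $d$ — Claim~1, that the horizontal coordinates mod $1$ are $1$-Lipschitz under $d_{G/\Gamma}$ (which also gives your ``horizontal projection'' shortcut for $\chi_{\tau}$ at once, without choosing $\gamma$), and Claim~2, that $\|z_t-z_{t'}\|\le 4\,d(t,t')$ under the constraint $y_t,y_{t'}\in(-1,1)$, which plays exactly the role of your $\ell=0$ case. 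Your citation route is legitimate — the paper itself invokes Lemmas~A.4 and A.16 of \cite{BG} in just this way in the proof of Lemma~\ref{minLemma2}, and the boundedness hypothesis needed there is available since $f\in\mathcal{F}$ and $d(f,f'\gamma)\lesssim 1$ — so the step you flag as delicate does go through; what the paper's hands-on version buys is explicit absolute constants and independence from the appendix at this point, while yours is shorter and makes the seam mechanism (only the $\lfloor y\rfloor$-seam threatens $e(z)$, and the cutoff removes it) more transparent.
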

\begin{proof}
Note that $\|F_{\tau}\|_{\infty},\|\chi_{\tau}\|_{\infty}\le 1\le \tau^{-1}$ and thus we may focus on the differences. Moreover, for $x,y\in G/\Gamma$ such that $d_{G/\Gamma}(x,y)\ge\tau/50$ we immediately get that 
\[
|\chi_{\tau}(x)-\chi_{\tau}(y)|,|F_{\tau}(x)-F_{\tau}(y)|\le 2\lesssim \tau^{-1}d_{G/\Gamma}(x,y)\text{,}
\]
so we may assume from now on that $x,y\in G/\Gamma$ are such that $d_{G/\Gamma}(x,y)<\tau/50$. There exists a unique $(g,g')\in\mathcal{F}^2$ such that $x=g\Gamma$ and $y=g'\Gamma$. Thus
\begin{equation}\label{chilipproof}
|\chi_{\tau}(x)-\chi_{\tau}(y)|=|\widetilde{\chi}_{\tau}(g)-\widetilde{\chi}_{\tau}(g')|
\end{equation}
and
\begin{multline}\label{bSS}
|F_{\tau}(x)-F_{\tau}(y)|=|\widetilde{F}_{\tau}(g)-\widetilde{F}_{\tau}(g')|=|\widetilde{\chi}_{\tau}(g)e(z_g)-\widetilde{\chi}_{\tau}(g')e(z_{g'})|
\\
\le|\widetilde{\chi}_{\tau}(g)e(z_g)-\widetilde{\chi}_{\tau}(g)e(z_{g'})|+|\widetilde{\chi}_{\tau}(g)e(z_{g'})-\widetilde{\chi}_{\tau}(g')e(z_{g'})|\le \widetilde{\chi}_{\tau}(g)|e(z_g)-e(z_{g'})|+|\widetilde{\chi}_{\tau}(g)-\widetilde{\chi}_{\tau}(g')|\text{.}
\end{multline}
\\
\textbf{Claim 1:} For every $t,t'\in G$ we have that  $d(t,t')\ge \max(|x_t-x_{t'}|,|y_t-y_{t'}|)$ and $d_{G/\Gamma}(t\Gamma,t'\Gamma)\ge \max(\|x_t-x_{t'}\|,\|y_t-y_{t'}\|)$, see Definition~2.2 from \cite{BG} for the definition of $d$ and $d_{G/\Gamma}$.
\begin{proof}
Note that for every $s,l\in G$ we have 
\[
\min(\|\psi(sl^{-1})\|_{\infty},\|\psi(ls^{-1})\|_{\infty})\ge \max(|x_s-x_l|,|y_s-y_l|)\text{.}
\]
Then for any $t,t'\in G$ and any $t=t_0,\dotsc,t_{N+1}=t'\in G$, we have that 
\[
\sum_{i=0}^N \min(\|\psi(t_it_{i+1}^{-1})\|_{\infty},\|\psi(t_{i+1}t_i^{-1})\|_{\infty})\ge \sum_{i=0}^N\max(|x_{t_i}-x_{t_{i+1}}|,|y_{t_i}-y_{t_{i+1}}|)\ge \max(|x_t-x_{t'}|,|y_t-y_{t'}|)\text{,}
\]
and the first assertion immediately follows. For the second one we note that by our first bound we get
\begin{multline}
d_{G/\Gamma}(t\Gamma,t'\Gamma)=\inf\{d(s,s'):\,s\equiv_{\Gamma} t\text{ and }s'\equiv_{\Gamma} t'\}
\\
\ge\inf\{\max(|x_s-x_{s'}|,|y_s-y_{s'}|):\,s\equiv_{\Gamma} t\text{ and }s'\equiv_{\Gamma} t'\}
=\max(\|x_t-x_{t'}\|,\|y_t-y_{t'}\|)\text{,}\quad\text{as desired.}
\end{multline}
\end{proof}
Using \eqref{chielip} and \eqref{chilipproof}, the claim immediately implies that $|\chi_{\tau}(x)-\chi_{\tau}(y)|\lesssim \tau^{-1}d_{G/\Gamma}(x,y)$, since
\[
|\chi_{\tau}(x)-\chi_{\tau}(y)|=|\widetilde{\chi}_{\tau}(g)-\widetilde{\chi}_{\tau}(g')|\lesssim \tau^{-1}\max(\|x_g-x_{g'}\|,\|y_g-y_{g'}\|)\le \tau^{-1}d_{G/\Gamma}(x,y)\text{.}
\]
Note that one the one hand, this concludes the proof of $\|\chi_{\tau}\|_{\Lip}\lesssim \tau^{-1}$, and on the other hand, it handles the second summand in \eqref{bSS}. For the first summand in the last expression in \eqref{bSS} we will use a similar claim.\\
\textbf{Claim 2:} For every $t,t'\in G$ such that $y_t,y_{t'}\in(-1,1)$ we have that $d(t,t')\ge \frac{1}{4}\|z_t-z_t'\|$.
\begin{proof}
Firstly, we note that for every $s,l\in G$ we have 
\begin{multline}
\min(\|\psi(sl^{-1})\|_{\infty},\|\psi(ls^{-1})\|_{\infty})\ge
\min(|z_s-z_l+y_s(x_l-x_s)|,|z_l-z_s+y_l(x_s-x_l)|)
\\
\ge \min(|z_s-z_l|-|y_s||x_l-x_s|,|z_l-z_s|-|y_l||x_l-x_s||)\ge |z_s-z_l|-\max(|y_s|,|y_l|)|x_s-x_l|\text{.}
\end{multline}
Let $t=t_0,\dotsc,t_{N+1}=t'$ in $G$, with $y_t,y_t'\in(-1,1)$. If we assume that $|y_{t_i}|\le 2$ for every $i\in \{0,\dotsc,N+1\}$ then we get 
\[
\sum_{i=0}^N \min\{\|\psi(t_it_{i+1}^{-1})\|_{\infty},\|\psi(t_{i}t_{i+1}^{-1})\|_{\infty}\}\ge \sum_{i=0}^N|z_{t_i}-z_{t_{i+1}}|-2|x_{t_i}-x_{t_{i+1}}|\text{.} 
\]
Now we distinguish two cases.
\\
Case 1: $\sum_{i=0}^N|x_{t_i}-x_{t_{i+1}}|\le \frac{1}{4}\sum_{i=0}^N|z_{t_i}-z_{t_{i+1}}|$. Here we will have that  
\[
\sum_{i=0}^N \min(\|\psi(t_it_{i+1}^{-1})\|_{\infty},\|\psi(t_{i}t_{i+1}^{-1})\|_{\infty})\ge \frac{1}{2}\sum_{i=0}^N|z_{t_i}-z_{t_{i+1}}|\ge \frac{1}{4}|z_t-z_{t'}|\ge \frac{1}{4}\|z_t-z_{t'}\|\text{.}
\]
Case 2: $\sum_{i=0}^N|x_{t_i}-x_{t_{i+1}}|> \frac{1}{4}\sum_{i=0}^N|z_{t_i}-z_{t_{i+1}}|$. Here from the proof of the previous claim we have that
\[
\sum_{i=0}^N \min(\|\psi(t_it_{i+1}^{-1})\|_{\infty},\|\psi(t_{i+1}t_i^{-1})\|_{\infty})\ge \sum_{i=0}^N|x_{t_i}-x_{t_{i+1}}|\ge \frac{1}{4}\sum_{i=0}^N|z_{t_i}-z_{t_{i+1}}|\ge \frac{1}{4}|z_t-z_{t'}|\ge \frac{1}{4}\|z_t-z_{t'}\|\text{,}
\]
and the proof is complete for the case $|y_{t_i}|\le 2$ for all $i\in\{0,\dotsc,N+1\}$.

For the other case, namely, for the case where there exists $i_0\in\{0,\dotsc,N+1\}$ such that $|y_{t_{i_0}}|>2$, we note that
\[
\sum_{i=0}^N \min(\|\psi(t_it_{i+1}^{-1})\|_{\infty},\|\psi(t_{i+1}t_i^{-1})\|_{\infty})\ge \sum_{i=0}^N|y_{t_i}-y_{t_{i+1}}|\ge |y_{t}-y_{t_{i_0}}|+|y_{t_{i_0}}-y_{t'}|\ge 2\ge \frac{1}{4}\|z_t-z_{t'}\|
\]
since $|y_t|,|y_{t'}|\le 1$.
\end{proof}
To conclude it suffices to show that for all $g,g'\in\mathcal{F}$ with $d_{G/\Gamma}(g\Gamma,g'\Gamma)<\tau/50$, we have that $\widetilde{\chi}_{\tau}(g)|e(z_g)-e(z_{g'})|\lesssim d_{G/\Gamma}(g\Gamma,g'\Gamma)$. We may assume that $\|x_g\|>\tau/10$ and $\|y_g\|>\tau/10$ since otherwise the estimate trivially holds by \eqref{defchitilde}. There exists $\gamma_0$ such that $d(g,g'\gamma_0)\le 2d_{G/\Gamma}(g\Gamma,g'\Gamma)<\tau/25$, and note that  $\tau/25>d(g,g'\gamma_0)\ge \max(|x_g-x_{g'\gamma_0}|,|y_g-y_{g'\gamma_0}|)$, by our first claim. Since $\|x_g\|>\tau/10$ and $\|y_g\|>\tau/10$ we have $x_{\gamma_0}=y_{\gamma_0}=0$ and thus $y_{g'\gamma_0}=y_{g'}\in[0,1)$, and the previous claim is applicable, yielding $d(g,g'\gamma_0)\ge \frac{1}{4}\|z_g-z_{g'\gamma_0}\|$. But then, since $y_{\gamma_0}=0$, we get
\[
\widetilde{\chi}_{\tau}(g)|e(z_g)-e(z_{g'})|\lesssim \|z_g-z_{g'}\|=\|z_g-z_{g'\gamma_0}\|\lesssim d(g,g'\gamma_0)\lesssim d_{G/\Gamma}(g\Gamma,g'\Gamma)\text{,}
\]
and the proof is complete.
\end{proof}
We may use the definition of $\delta$-equidistribution and the previous lemma to prove the following.
\begin{lemma}\label{Nilinput}
Let $G/\Gamma$ be the $3$-dimensional Heisenberg nilmanifold. If $a,b\in\mathbb{R}$, $\delta\in(0,1/2)$ and $N\in[1,\infty)$ are such that 
\[
\Big(\Big(\begin{smallmatrix}
1 & a n & 0\\
0 & 1 & b n\\
0 & 0 & 1 
\end{smallmatrix}\Big)\Gamma\Big)_{n\in[N]}
\quad\text{is $\delta$-equidistributed  in $G/\Gamma$,}
\]
then we have
\begin{equation}\label{delta0equid}
\big|\mathbb{E}_{n\in[N]}e(-a n\lfloor nb\rfloor)\big|\lesssim  \delta^{1/2}\text{,}
\end{equation}
where the implied constant is absolute.
\end{lemma}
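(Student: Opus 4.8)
The plan is to realize $\mathbb{E}_{n\in[N]}e(-an\lfloor nb\rfloor)$ as an average of the (discontinuous) function $F$ from \eqref{Fdefinition2} along the orbit $\big(g(n)\Gamma\big)_{n\in[N]}$, where $g(n)=\Big(\begin{smallmatrix}1 & an & 0\\0&1&bn\\0&0&1\end{smallmatrix}\Big)$, and then to exploit the $\delta$-equidistribution hypothesis. Specializing the reduction \eqref{calctoshow} to $x=an$, $y=bn$, $z=0$ gives $g(n)\Gamma=\Big(\begin{smallmatrix}1 & \{an\} & \{-an\lfloor nb\rfloor\}\\0&1&\{nb\}\\0&0&1\end{smallmatrix}\Big)\Gamma$, and hence $F(g(n)\Gamma)=e(-an\lfloor nb\rfloor)$ by \eqref{Fdefinition2}, so that $\mathbb{E}_{n\in[N]}e(-an\lfloor nb\rfloor)=\mathbb{E}_{n\in[N]}F(g(n)\Gamma)$. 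Since $F$ is not Lipschitz, the definition of $\delta$-equidistribution cannot be applied to it directly; instead I would replace $F$ by the Lipschitz approximant $F_\tau$ from Lemma~\ref{liptogether}, for a parameter $\tau\in(0,1/100)$ to be optimized at the end (one may assume $\delta$ is smaller than an absolute constant, for otherwise the desired bound is trivial). The pointwise bound $|F(g\Gamma)-F_\tau(g\Gamma)|=|1-\widetilde\chi_\tau(f)|\le\ind{\|x_f\|\le\tau/5}+\ind{\|y_f\|\le\tau/5}$, where $f\in\mathcal F$ is the representative of $g\Gamma$, together with $x_{f(n)}=\{an\}$ and $y_{f(n)}=\{nb\}$, then reduces the task to estimating $\big|\mathbb{E}_{n\in[N]}F_\tau(g(n)\Gamma)\big|$ and the two averages $\mathbb{E}_{n\in[N]}\ind{\|an\|\le\tau/5}$, $\mathbb{E}_{n\in[N]}\ind{\|nb\|\le\tau/5}$.

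For the first quantity, the $\delta$-equidistribution hypothesis and the estimate $\|F_\tau\|_{\Lip}\lesssim\tau^{-1}$ from Lemma~\ref{liptogether} give $\big|\mathbb{E}_{n\in[N]}F_\tau(g(n)\Gamma)-\int_{G/\Gamma}F_\tau\big|\lesssim\delta\tau^{-1}$, and crucially $\int_{G/\Gamma}F_\tau=0$: integrating over $\mathcal F$ in the coordinates $(x,y,z)\in[0,1)^3$, the cutoff $\widetilde\chi_\tau$ depends only on $(x,y)$ while $\int_0^1 e(z)\,dz=0$. For the indicator averages I would majorize $\ind{\|an\|\le\tau/5}$ by $\Phi(g(n)\Gamma)$, where $\Phi(g\Gamma):=\phi(x_f)$ and $\phi\colon\mathbb{R}/\mathbb{Z}\to[0,1]$ is a fixed $1$-bounded Lipschitz bump equal to $1$ on $\{\|t\|\le\tau/5\}$, supported in $\{\|t\|\le 2\tau/5\}$, with $\|\phi\|_{\Lip}\lesssim\tau^{-1}$. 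By the first claim established in the proof of Lemma~\ref{liptogether} one has $d_{G/\Gamma}(t\Gamma,t'\Gamma)\ge\|x_t-x_{t'}\|$, so $\Phi$ is genuinely Lipschitz on $G/\Gamma$ with $\|\Phi\|_{\Lip}\lesssim\tau^{-1}$, and $\int_{G/\Gamma}\Phi=\int_0^1\phi(x)\,dx\lesssim\tau$; applying $\delta$-equidistribution once more yields $\mathbb{E}_{n\in[N]}\ind{\|an\|\le\tau/5}\lesssim\tau+\delta\tau^{-1}$, and symmetrically for the $\|nb\|$-term via the $y$-coordinate. Collecting the three bounds gives $\big|\mathbb{E}_{n\in[N]}e(-an\lfloor nb\rfloor)\big|\lesssim\tau+\delta\tau^{-1}$, and choosing $\tau\sim\delta^{1/2}$ finishes the proof.

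The only genuine obstacle is the discontinuity of $F$. Everything rests on two points: first, the smoothed function $F_\tau$ still has vanishing integral, which comes for free from the $e(z_f)$ factor; second, the smoothing error is not automatically negligible (for instance $\ind{\|an\|\le\tau/5}\equiv 1$ when $a\in\mathbb{Z}$), and must itself be absorbed by a second application of the equidistribution hypothesis, using Lipschitz test functions that depend only on the abelianized coordinates and invoking the metric lower bound $d_{G/\Gamma}\ge\|\Delta x\|$ already isolated in the proof of Lemma~\ref{liptogether}.
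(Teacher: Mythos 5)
Your proposal is correct and follows essentially the same route as the paper: reduce to $\mathbb{E}_{n\in[N]}F(g(n)\Gamma)$, replace $F$ by the Lipschitz approximant $F_\tau$ of Lemma~\ref{liptogether}, apply the $\delta$-equidistribution hypothesis a second time to a Lipschitz majorant of the smoothing error, and optimize $\tau\sim\delta^{1/2}$. The only (harmless) difference is that you control the error via two one-variable bumps in the horizontal coordinates $\|an\|$, $\|bn\|$ (made Lipschitz on $G/\Gamma$ by the same metric lower bound as in Claim~1), whereas the paper simply reuses the cutoff $\chi_\tau$ from Lemma~\ref{liptogether} and bounds $\mathbb{E}_{n\in[N]}\big(1-\chi_\tau(g(n)\Gamma)\big)$ together with $\big|\int F_\tau-\int F\big|\le\int(1-\chi_\tau)\lesssim\tau$ instead of invoking $\int F_\tau=0$.
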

\begin{proof}
Let $g(n)=\Big(\begin{smallmatrix}
1 & a n & 0\\
0 & 1 & b n\\
0 & 0 & 1 
\end{smallmatrix}\Big)$ and $\tau=\delta^{1/2}$. We use the definition of $\delta$-equidistribution for $\big(g(n)\Gamma\big)_{n\in[N]}$ and the functions $F_{\tau},\chi_{\tau}$ from the previous lemma, see \eqref{ftdef}, to conclude that
\[
\bigg|\mathbb{E}_{n\in[N]}F_{\tau}(g(n)\Gamma)-\int_{G/\Gamma}F_{\tau}\bigg|\le \delta\|F_{\tau}\|_{\Lip}\lesssim \delta\tau^{-1}=\delta^{1/2}
\]
and
\[
\bigg|\mathbb{E}_{n\in[N]}\chi_{\tau}(g(n)\Gamma)-\int_{G/\Gamma}\chi_{\tau}\bigg|\le \delta\|\chi_{\tau}\|_{\Lip}\lesssim \delta\tau^{-1}=\delta^{1/2}\text{,}
\]
where we have used \eqref{liptechnical}.
We also note that the function $F$ defined in \eqref{Fdefinition2} has the following two properties:
\[
F(g(n)\Gamma)=F\Big(\Big(\begin{smallmatrix}
1 & \{a n\} & \{0-a n\lfloor b n\rfloor \}\\
0 & 1 & \{b n\}\\
0 & 0 & 1 
\end{smallmatrix}\Big)\Gamma\Big)=e(-a n\lfloor b n\rfloor )\text{,}
\]
and 
\[
\int_{G/\Gamma}F=\int_{x,y,z\in[0,1)}e(z)dxdydz=0\text{.}
\]
Combining everything we get
\begin{multline}\label{delta0equid}
\big|\mathbb{E}_{n\in[N]}e(-a n\lfloor nb\rfloor)\big|=\bigg|\mathbb{E}_{n\in[N]}F(g(n)\Gamma)-\int_{G/\Gamma}F\bigg|
\\
\le \bigg|\mathbb{E}_{n\in[N]}F(g(n)\Gamma)-\mathbb{E}_{n\in[N]}F_{\tau}(g(n)\Gamma)\bigg|+\bigg|\mathbb{E}_{n\in[N]}F_{\tau}(g(n)\Gamma)-\int_{G/\Gamma}F_{\tau}\bigg|+\bigg|\int_{G/\Gamma}F_{\tau}-\int_{G/\Gamma}F\bigg|
\\
\lesssim \mathbb{E}_{n\in[N]}\big|F(g(n)\Gamma)-F_{\tau}(g(n)\Gamma)\big|+\delta^{1/2}+\int_{G/\Gamma}|F_{\tau}-F|
\\
\le\mathbb{E}_{n\in[N]}\big(1-\chi_{\tau}(g(n)\Gamma)\big)+\delta^{1/2}+\int_{G/\Gamma}\big(1-\chi_{\tau}\big)
\\
\le\bigg|\mathbb{E}_{n\in[N]}\big(1-\chi_{\tau}(g(n)\Gamma)\big)-\int_{G/\Gamma}\big(1-\chi_{\tau}\big)\bigg|+\delta^{1/2}+2\int_{G/\Gamma}\big(1-\chi_{\tau}\big)
\\
\le\bigg|\mathbb{E}_{n\in[N]}\chi_{\tau}(g(n)\Gamma)-\int_{G/\Gamma}\chi_{\tau}\bigg|+\delta^{1/2}+2\int_{G/\Gamma}\big(1-\chi_{\tau}\big)\lesssim \delta^{1/2}+\int_{G/\Gamma}\big(1-\chi_{\tau}\big)\lesssim\delta^{1/2}+\tau\lesssim \delta^{1/2}\text{,}
\end{multline}
where we have used the fact that
\[
\int_{G/\Gamma}(1-\chi_{\tau})\lesssim \tau\text{,} 
\] 
which is a immediate from \eqref{defchitilde} and \eqref{ftdef}, and the proof is complete. 
\end{proof}
\subsection{Proof of Lemma~$\ref{minLemma1}$.}\label{m1subsection} Here we will use the tools introduced in the previous subsection to prove the following lemma, which immediately implies Lemma~$\ref{minLemma1}$.
\begin{lemma}\label{minLemma1'}
Assume $k\in\mathbb{Q}_{>0}$ is such that $\sqrt{k}\not\in\mathbb{Q}$. Then there exist two absolute constant $C_1,C_2$ and a positive constant $C_3(k)$ such that for every $N\in[1,\infty)$, $\delta\in(0,1/2)$ and $\xi\in[-1/2,1/2)$ such that
\begin{equation}\label{biggerdelta}
\big|\mathbb{E}_{n\in[N]}e(\xi n\lfloor n\sqrt{k}\rfloor)\big|>\delta\text{,}
\end{equation}
we have that either $N\le C_3\delta^{-2C_1}$ or there exists $q\in\mathbb{N}$ and $a,b\in\mathbb{Z}$ with $q\le C_3\delta^{-C_1} $, $|b|\le C_3\delta^{-C_1}$ and 
\[
\Big|\xi-\frac{a+b\sqrt{k}}{q}\Big|< C_2\delta^{-C_1}N^{-1}\text{.}
\]
\end{lemma}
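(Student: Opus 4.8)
The plan is to obtain the dichotomy from the quantitative Leibman theorem (Theorem~\ref{QLT}) applied on the $3$-dimensional Heisenberg nilmanifold $G/\Gamma$ to the polynomial orbit $n\mapsto g_{\xi;k}(n)\Gamma$, where $g_{\xi;k}(n)=\big(\begin{smallmatrix}1&-\xi n&0\\0&1&\sqrt{k}\,n\\0&0&1\end{smallmatrix}\big)$, with Lemma~\ref{Nilinput} serving as the bridge from a large exponential sum to a failure of equidistribution. First I would record the routine preliminaries: one checks directly that $g_{\xi;k}\in\poly(\mathbb{Z},G_{\bullet})$ for the lower central series filtration $G_{\bullet}$ of degree $2$ --- indeed $\partial_{h_1}\partial_{h_2}g_{\xi;k}(n)=\big(\begin{smallmatrix}1&0&\xi\sqrt{k}\,h_1h_2\\0&1&0\\0&0&1\end{smallmatrix}\big)\in[G,G]$ is constant in $n$ and third derivatives vanish --- and that the standard Mal'cev basis is $O(1)$-rational, so Theorem~\ref{QLT} applies once its accuracy parameter lies below an absolute threshold. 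Assuming $\big|\mathbb{E}_{n\in[N]}e(\xi n\lfloor n\sqrt{k}\rfloor)\big|>\delta$, the contrapositive of Lemma~\ref{Nilinput} (used with $a=-\xi$ and $b=\sqrt{k}$) shows that for a suitable absolute $c_1\in(0,1)$ the orbit $\big(g_{\xi;k}(n)\Gamma\big)_{n\in[N]}$ is \emph{not} $c_1\delta^2$-equidistributed in $G/\Gamma$ --- otherwise that lemma would force $\big|\mathbb{E}_{n\in[N]}e(\xi n\lfloor n\sqrt{k}\rfloor)\big|\le C(c_1\delta^2)^{1/2}\le\delta$ --- and we fix $c_1$ small enough that $c_1\delta^2$ is admissible for Theorem~\ref{QLT}. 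That theorem then supplies a nontrivial horizontal character $\eta\colon G\to\mathbb{T}$ with $|\eta|\lesssim(c_1\delta^2)^{-C_0}\lesssim\delta^{-2C_0}$ and $\|\eta\circ g_{\xi;k}\|_{C^{\infty}[N]}\lesssim\delta^{-2C_0}$, where $C_0=C_0(3,2)$ is absolute.

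The second step is to decode $\eta$. On the Heisenberg group every horizontal character has the form $\eta(g)=l_1x_g+l_2y_g$ for a unique $(l_1,l_2)\in\mathbb{Z}^2\setminus\{(0,0)\}$ with $|\eta|=\max(|l_1|,|l_2|)$, so $\eta\circ g_{\xi;k}(n)=(-l_1\xi+l_2\sqrt{k})\,n$ is a linear polynomial in $n$; comparing with the definition of the smoothness norm yields $N\,\|l_1\xi-l_2\sqrt{k}\|\lesssim\|\eta\circ g_{\xi;k}\|_{C^{\infty}[N]}\lesssim\delta^{-2C_0}$, that is, $\|l_1\xi-l_2\sqrt{k}\|\lesssim\delta^{-2C_0}N^{-1}$. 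If $l_1\neq0$, dividing by $|l_1|$ and taking $q\coloneqq|l_1|\in\mathbb{N}$ together with suitable $a,b\in\mathbb{Z}$ gives $\big|\xi-\tfrac{a+b\sqrt{k}}{q}\big|=\tfrac{1}{|l_1|}\|l_1\xi-l_2\sqrt{k}\|\lesssim\delta^{-2C_0}N^{-1}$, while $q\le|\eta|\lesssim\delta^{-2C_0}$ and $|b|=|l_2|\le|\eta|\lesssim\delta^{-2C_0}$; this is exactly the second alternative, with $C_1=2C_0$ and $C_2,C_3$ absorbing the absolute implied constants.

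It remains to deal with the degenerate case $l_1=0$, which is where arithmetic genuinely enters. Here $l_2\neq0$ and the displayed bound reads $\|l_2\sqrt{k}\|\lesssim\delta^{-2C_0}N^{-1}$; since $\sqrt{k}$ is a quadratic irrational, Liouville's theorem \eqref{Liouvillemanouver} gives $\|l_2\sqrt{k}\|\gtrsim_k|l_2|^{-1}$, hence $N\lesssim_k|l_2|\delta^{-2C_0}\le|\eta|\delta^{-2C_0}\lesssim\delta^{-4C_0}=\delta^{-2C_1}$, which is the first alternative once $C_3$ is allowed to depend on $k$ --- and this is the only source of $k$-dependence. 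As usual one may assume at the outset that $\delta$ is below any fixed absolute constant, since for $\delta$ bounded below both alternatives follow from the small-$\delta$ case after enlarging the constants.

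I expect the main obstacle to be not any single computation but the correct orchestration of the equidistribution machinery: confirming that $g_{\xi;k}$ is a polynomial sequence for the degree-$2$ filtration, routing through Lemma~\ref{Nilinput} so that the discontinuity of $F$ does no harm, and --- most delicately --- recognizing that the horizontal-character obstruction produced by Theorem~\ref{QLT} forces $\xi$ to lie near some $\tfrac{a+b\sqrt{k}}{q}$ except in the degenerate direction $l_1=0$, which is ruled out precisely because $\sqrt{k}$ is badly approximable by rationals. The genuinely arithmetic content is thereby isolated in the single Liouville estimate.
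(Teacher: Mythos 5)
Your proposal is correct and takes essentially the same route as the paper: Lemma~\ref{Nilinput} turns the large exponential sum into a failure of $\approx\delta^{2}$-equidistribution of $(g_{\xi;k}(n)\Gamma)_{n\in[N]}$, Theorem~\ref{QLT} yields a nontrivial horizontal character with $|\eta|\lesssim\delta^{-2C_0}$ and small smoothness norm, and the dichotomy $l_1\neq 0$ versus $l_1=0$ produces, respectively, the approximation $\big|\xi-\frac{a+b\sqrt{k}}{q}\big|\lesssim\delta^{-C_1}N^{-1}$ and the Liouville bound $N\lesssim_k\delta^{-2C_1}$, exactly as in the paper's proof. The only cosmetic difference is that the paper uses a degree-$3$ filtration and additionally treats a case $l_3\neq 0$ (a pairing with the third Mal'cev coordinate), which your observation that horizontal characters are homomorphisms and hence annihilate $[G,G]$ legitimately eliminates.
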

\begin{proof}Fix $k\in\mathbb{Q}_{>0}$ such that $\sqrt{k}\not\in \mathbb{Q}$ and note that without loss of generality we may assume that $N\in\mathbb{N}$. Since \eqref{biggerdelta} holds, we have by Lemma~$\ref{Nilinput}$ that there exists an absolute positive constant $C$ such that the sequence $(g_{\xi}(n)\Gamma)_{n\in[N]}$ with
\begin{equation}\label{definitionofg}
g_{\xi}(n)=g_{\xi;k}(n)\coloneqq\begin{pmatrix}
1 & -\xi n & 0\\
0 & 1 & \sqrt{k} n\\
0 & 0 & 1 
\end{pmatrix}=\begin{pmatrix}
1 & -\xi  & -\xi \sqrt{k}\\
0 & 1 & 2\sqrt{k}\\
0 & 0 & 1 
\end{pmatrix}^n\begin{pmatrix}
1 & 0  & 0\\
0 & 1 & -\sqrt{k}\\
0 & 0 & 1
\end{pmatrix}^n
\end{equation}
will not be $\delta^{2}/C$-equidistributed in $G/\Gamma$. We wish to appropriately apply Theorem~$\ref{QLT}$ for the 3-dimensional Heisenberg nilmanifold $G/\Gamma$. It is not difficult to see using the Definition~1.8 from \cite{BG} that $g_{\xi}$  is a polynomial sequence with coefficients in the following filtration
\[
G=G_0=G_1\supseteq G_2\coloneqq \Big\{\Big(\begin{smallmatrix}
1 & 0 & x\\
0 & 1 & 0\\
0 & 0 & 1 
\end{smallmatrix}\Big),\,x\in\mathbb{R}\Big\}\supseteq G_3\coloneqq G_2\supseteq G_4\coloneqq\{\id_G\}\text{,}
\]
and one can check that the standard Mal'cev basis is adapted to this filtration, see Definition~2.1 in \cite{BG}. Thus, since 
$(g_{\xi}(n)\Gamma)_{n\in[N]}$ is not $\delta^2/C$-equidistributed in $G/\Gamma$, we may apply Theorem~$\ref{QLT}$. If we let $C_0=C_0(3,3)$ be the positive constant from Theorem~$\ref{QLT}$ we get that there exists a nontrivial horizontal character $\eta\colon G\to \mathbb{T}$ with $|\eta|\lesssim \delta^{-2C_0}$ and such that $\|\eta\circ g_{\xi}\|_{C^{\infty}[N]}\lesssim\delta^{-2C_0}$, where the implied constants are absolute. Thus there exists $l=(l_1,l_2,l_3)\in\mathbb{Z}^3\setminus\{(0,0,0)\}$ with $\|l\|_{\infty}\lesssim\delta^{-2C_0}$ such that
\[
\|l\cdot\psi(g_{\xi})\|_{C^{\infty}[N]}\lesssim\delta^{-2C_0}\text{, see Definition~2.7 for the smoothness norms $\|\cdot\|_{C^{\infty}[N]}$.}
\] 
But note that
\begin{multline}
l\cdot\psi(g_{\xi}(n))=(l_1,l_2,l_3)\cdot\big(-\xi n,\sqrt{k}n,0-(-\xi n)\sqrt{k} n\big)=-l_1\xi n+l_2\sqrt{k} n+l_3\xi \sqrt{k} n^2
\\
=l_3\xi \sqrt{k} n^2+(l_2\sqrt{k}-l_1\xi)n=(-l_1\xi+l_2\sqrt{k}+l_3\xi \sqrt{k})\binom{n}{1}+2l_3\xi\sqrt{k}\binom{n}{2}\text{,}
\end{multline}
and thus since $\|l\cdot\psi(g_{\xi})\|_{C^{\infty}[N]}\lesssim \delta^{-2C_0}$ we have 
\begin{equation}\label{condls}
N\|-l_1\xi+l_2\sqrt{k}+l_3\xi \sqrt{k}\|\lesssim \delta^{-2C_0}\quad\text{and}\quad N^2\|2l_3\xi\sqrt{k}\|\lesssim\delta^{-2C_0}\text{.}
\end{equation}
We choose $C_1\coloneqq 2C_0$,  break the analysis into several cases, and show that in every case, the conclusion of Lemma~$\ref{minLemma1'}$ holds.

If $l_3\neq 0$, then the second condition implies that there exists
$m\in\mathbb{Z}$ such that
\begin{equation*}
|2l_3\xi\sqrt{k}-m|\lesssim N^{-2}\delta^{-C_1}\Rightarrow \bigg|\xi-\frac{m}{2l_3\sqrt{k}}\bigg|\lesssim \frac{N^{-2}\delta^{-C_1}}{2|l_3|\sqrt{k}}\lesssim N^{-2}\delta^{-C_1} \Rightarrow \bigg|\xi-\frac{m\sqrt{k}}{2kl_3}\bigg|\lesssim N^{-2}\delta^{-C_1}\text{.}
\end{equation*}
Note that since $|l_3|\lesssim \delta^{-C_1}$ and $\xi\in[-1/2,1/2)$ we must have that 
\[
|m|\le |m-2l_3\xi\sqrt{k}|+|2l_3\xi\sqrt{k}|\lesssim N^{-2}\delta^{-C_1}+\sqrt{k}\delta^{-C_1}\lesssim_k\delta^{-C_1}\text{.}
\]
Thus, there exists a number of the form $\frac{a+b\sqrt{k}}{q}=\frac{m\sqrt{k}}{2kl_3}$ such that 
\[
\Big|\xi-\frac{a+b\sqrt{k}}{q}\Big|\lesssim \delta^{-C_1}N^{-2}\le\delta^{-C_1}N^{-1}\text{,}
\]
with $|b|\le |m|\lesssim_k\delta^{-C_1}$ and $|q|\lesssim_k|l_3|\lesssim\delta^{-C_1}$, and the conclusion of the theorem holds.

If $l_3=0$, we begin by showing that if we also have $l_1= 0$, then $N\lesssim_k \delta^{-2C_1}$ . To see this, note that in such a case we have that  $l_2\neq 0$ since $l=(l_1,l_2,l_3)\neq (0,0,0)$, and thus the first condition in \eqref{condls} yields that there exists $m\in\mathbb{Z}$ such that
\[
|l_2\sqrt{k}-m|\lesssim N^{-1}\delta^{-C_1}\Rightarrow \Big|\sqrt{k}-\frac{m}{l_2}\Big|\lesssim \frac{N^{-1}\delta^{-C_1}}{|l_2|}\text{.}
\]
But note that 
\begin{equation}\label{algnumber}
|l_2|^{-2}\lesssim_k \Big|\sqrt{k}-\frac{m}{l_2}\Big|\text{,}
\end{equation}
where we have used Liouville's theorem \cite{LV} as in \eqref{Liouvillemanouver}. Therefore we get
\[
|l_2|^{-2}\lesssim_k \frac{N^{-1}\delta^{-C_1}}{|l_2|}\Rightarrow N\delta^{C_1}\lesssim_k |l_2|\lesssim \delta^{-C_1} \Rightarrow N\lesssim_k \delta^{-2C_1}\text{,}\quad\text{as desired.}
\]
If $l_3=0$ and $l_1\neq 0$, then the first condition in \eqref{condls} gives that there exists $m\in\mathbb{Z}$ such that
\[
|-l_1\xi+l_2\sqrt{k}-m|\lesssim \delta^{-C_1}N^{-1}\Rightarrow\Big|\xi-\frac{-m+l_2\sqrt{k}}{l_1}\Big|\lesssim \frac{\delta^{-C_1}N^{-1}}{|l_1|}\le\delta^{-C_1}N^{-1}\text{.}
\]
Thus, for this final case, we see that there exists a number of the form $\frac{a+b\sqrt{k}}{q}=\frac{-m+l_2\sqrt{k}}{l_1}$ such that 
\[
\Big|\xi-\frac{a+b\sqrt{k}}{q}\Big|\lesssim\delta^{-C_1}N^{-1}\text{,}
\]
with $|b|\le|l_2|\lesssim\delta^{-C_1}$ and $|q|\le |l_1|\lesssim \delta^{-C_1}$, and thus the conclusion of the theorem holds. The proof is complete.
\end{proof}
We end this subsection with a quick proof of how the previous lemma implies Lemma~$\ref{minLemma1}$.
\begin{proof}[Proof of Lemma~$\ref{minLemma1}$.]Let $k\in\mathbb{Q}_{>0}$ be such that $k\not\in\mathbb{Q}$ and let $C_1,C_2,C_3(k)$ be the constants guaranteed by Lemma~$\ref{minLemma1'}$. Let $\gamma,\gamma'\in(0,1/10)$ and $\chi=\gamma/(C_1+1)$. Let $N\ge 1$ and $\xi\in\mathfrak{m}_1(N;k,\gamma,\gamma')$. We claim that there exists a constant $C=C(\gamma,\gamma',k)$ such that for all $N\ge C$ we have 
\[
|\mathbb{E}_{n\in[N]}e(\xi n\lfloor n\sqrt{k})|\le N^{-\chi}\text{,}
\]
and this clearly would imply the desired result. If we assume for the sake of a contradiction that this is not the case, then we may apply Lemma~$\ref{minLemma1'}$ with $\delta=N^{-\chi}$ for sufficiently large $N$ to obtain that either $N\le C_3 N^{2C_1\chi}$ or there exist $q\in\mathbb{N}$ and $a,b\in\mathbb{Z}$ with $|q|\le C_3N^{C_1\chi} $, $|b|\le C_3N^{C_1\chi}$ and 
\[
\Big|\xi-\frac{a+b\sqrt{k}}{q}\Big|< C_2N^{C_1\chi-1}\text{.}
\]
The condition $N\le C_3 N^{2C_1\chi}$ becomes impossible for $N\gtrsim_{k,\gamma}1$ since $2C_1\chi<2\gamma<1$. Moreover, since $C_1\chi<\gamma$, the second case is also impossible for $N\gtrsim_{k,\gamma,\gamma'}1$ since $\xi\in\mathfrak{m}_1(N;k,\gamma,\gamma')$ and thus such $(a,b,q)$ cannot exist. The proof is complete.
\end{proof}
\subsection{Proof of Lemma~$\ref{minLemma2}$.} This subsection is devoted to the proof of Lemma~$\ref{minLemma2}$. Although we made some effort to organise our arguments, the proof is quite long, partly because we provide a great deal of details, and partly because we avoid introducing the concept of total $\delta$-equidistribution and opt for an approach essentially relying exclusively on Theorem~$\ref{QLT}$. 
\begin{proof}[{Proof of Lemma~$\ref{minLemma2}$}]\label{m2subsection}Let $k\in\mathbb{Q}_{>0}$ be such that $k\not\in\mathbb{Q}$ and $\gamma,\gamma'\in(0,1/10)$, and note that it suffices to establish the estimate for $N\gtrsim_{k,\gamma,\gamma'}1$. Similarly to before, let $G/\Gamma$ be the $3$-dimensional nilmanifold as discussed in subsection~$\ref{HNil}$, and let $F\colon G/\Gamma\to\mathbb{C}$, $g_{\xi}\colon\mathbb{Z}\to G$ be the same as in \eqref{Fdefinition2} and \eqref{definitionofg}, respectively, i.e.:
\begin{equation}\label{defFuseful}
F\Big(\Big(\begin{smallmatrix}
1 & x & z\\
0 & 1 & y\\
0 & 0 & 1
\end{smallmatrix}\Big)\Gamma\Big)=e(z-x\lfloor y\rfloor)\quad\text{and}\quad
g_{\xi}(n)=g_{\xi;k}(n)\coloneqq\bigg(\begin{smallmatrix}
1 & -\xi n & 0\\
0 & 1 & \sqrt{k} n\\
0 & 0 & 1 
\end{smallmatrix}\bigg)\text{.}
\end{equation}
Since $\xi\in\mathfrak{m}_2(N;k,\gamma,\gamma')$, there exist $a\in\mathbb{Z}$, $b\in[\pm N^{\gamma}]$, $q\in[N^{\gamma}]$ and $t\in\mathbb{R}$ with $N^{-2+\gamma'}<|t|\le N^{-1+\gamma}$, such that
\begin{equation}\label{formxi3}
\xi=\frac{a+b\sqrt{k}}{q}+t\text{,}
\end{equation}
and thus we may factorise $g_{\xi}$ as follows
\begin{multline}\label{gxifactorization}
g_{\xi}(n)=\begin{pmatrix}
1 & -\xi n & 0\\
0 & 1 & n\sqrt{k}\\
0 & 0 & 1 
\end{pmatrix}=\begin{pmatrix}
1 & -na/q-nb\sqrt{k}/q-nt & 0\\
0 & 1 & n\sqrt{k} \\
0 & 0 & 1 
\end{pmatrix}
\\
=\begin{pmatrix}
1 & -nt & 0\\
0 & 1 & 0\\
0 & 0 & 1 
\end{pmatrix}
\begin{pmatrix}
1 & -nb\sqrt{k}/q & n^2t\sqrt{k}\\
0 & 1 & n\sqrt{k}\\
0 & 0 & 1 
\end{pmatrix}
\begin{pmatrix}
1 & -na/q & 0\\
0 & 1 & 0\\
0 & 0 & 1 
\end{pmatrix}\eqqcolon \sigma(n)g'_{\xi}(n)\rho(n)\text{,}
\end{multline}
where for the sake of the exposition we choose to suppress the dependence on $a,b,q,t$ for $\sigma,g'_{\xi},\rho$. We wish to partition $[N]$ into subprogressions where $\rho(n)\Gamma$ is constant and $\sigma(n)$ can be treated as a constant. To do this, let $\kappa\coloneqq1-\gamma'/6\in(0,1)$ and $M\coloneqq \lfloor N^{\kappa}\rfloor$, and for every $r\in\{0,\dotsc,q-1\}$ and every $m\in[N/M+1]$ we define 
\[
I_{r,m}\coloneqq \big\{n\in[N]:\,1+(m-1)M\le n\le mM\text{, }n\equiv r\Mod{q}\big\}\text{.}
\]
Note that $[N]=\bigcup_{\substack{r\in\{0,\dotsc,q-1\}\\m\in[N/M+1]}}I_{r,m}$, and thus
\begin{multline}\label{splitsmall1}
\mathbb{E}_{n\in[N]}e(\xi n\lfloor n\sqrt{k} \rfloor)=\mathbb{E}_{n\in[N]}F(g_{\xi}(n)\Gamma)=\mathbb{E}_{n\in[N]}F(\sigma(n)g'_{\xi}(n)\rho(n)\Gamma)
\\
=\frac{1}{\lfloor N\rfloor}\sum_{r=0}^{q-1}\sum_{m=1}^{\lfloor N/M\rfloor +1}\sum_{n\in I_{r,m} }F(\sigma(n)g'_{\xi}(n)\rho(n)\Gamma)=\frac{1}{\lfloor N\rfloor}\sum_{r=0}^{q-1}\sum_{m=1}^{\lfloor N/M\rfloor }\sum_{n\in I_{r,m} }F(\sigma(n)g'_{\xi}(n)\rho(n)\Gamma)+O\big(MN^{-1}\big)\text{.}
\end{multline}\\
\textbf{Step 1: Replacing $F(\sigma(n)g'_{\xi}(n)\rho(n)\Gamma)$ with $F(\sigma(mM)g'_{\xi}(n)\rho(r)\Gamma)$ for $n\in I_{r,m}$.} We claim that for all $n\in I_{r,m}$ we have
\[
\big|F(\sigma(n)g'_{\xi}(n)\rho(n)\Gamma)-F(\sigma(mM)g'_{\xi}(n)\rho(r)\Gamma)\big|\lesssim N^{-1+\kappa+\gamma}\text{.}
\]
To see this note that on the one hand, by \eqref{formxi3} and \eqref{gxifactorization} we get 
\[
F(\sigma(n)g'_{\xi}(n)\rho(n)\Gamma)=F(g_{\xi}(n)\Gamma)=e\bigg(\Big(\frac{na}{q}+\frac{nb\sqrt{k}}{q}+nt\Big)\lfloor n\sqrt{k}\rfloor\bigg)\text{,}
\]
and on the other hand, a simple computation shows that
\[
\sigma(mM)g'_{\xi}(n)\rho(r)=\begin{pmatrix}
1 & -mMt-\frac{nb\sqrt{k}}{q}-\frac{ra}{q} & n^2t\sqrt{k}-mMtn\sqrt{k}\\
0 & 1 & n\sqrt{k}\\
0 & 0 & 1 
\end{pmatrix}
\text{,}
\]
and thus
\[
F(\sigma(mM)g'_{\xi}(n)\rho(r)\Gamma)=e\bigg(n^2t\sqrt{k}-mMtn\sqrt{k}+\Big(mMt+\frac{nb\sqrt{k}}{q}+\frac{ra}{q}\Big)\lfloor n\sqrt{k}\rfloor\bigg)\text{.}
\]
Therefore for every $n\in I_{r,m}$ we get
\begin{multline}
\big|F(\sigma(n)g'_{\xi}(n)\rho(n)\Gamma)-F(\sigma(mM)g'_{\xi}(n)\rho(r)\Gamma)\big|
\\
=\bigg|e\bigg(\Big(\frac{na}{q}+\frac{nb\sqrt{k}}{q}+nt\Big)\lfloor n\sqrt{k}\rfloor\bigg)-e\bigg(n^2t\sqrt{k}-mMtn\sqrt{k}+\Big(mMt+\frac{nb\sqrt{k}}{q}+\frac{ra}{q}\Big)\lfloor n\sqrt{k}\rfloor\bigg)\bigg|
\\
=\bigg|e\bigg(\Big(\frac{ra}{q}+\frac{nb\sqrt{k}}{q}\Big)\lfloor n\sqrt{k}\rfloor\bigg)\bigg|\cdot\Big|e\big(nt\lfloor n\sqrt{k}\rfloor\big)-e\big(n^2t\sqrt{k}-mMtn\sqrt{k}+mMt\lfloor n\sqrt{k}\rfloor\big)\Big|
\\
\lesssim\big|nt\lfloor n\sqrt{k}\rfloor-n^2t\sqrt{k}+mMtn\sqrt{k}-mMt\lfloor n\sqrt{k}\rfloor\big|=|t|\big|n-mM\big|\big|n\sqrt{k}-\lfloor n \sqrt{k}\rfloor\big|\lesssim N^{-1+\kappa+\gamma}\text{.}
\end{multline}
Thus returning to \eqref{splitsmall1} and using the estimate above we get
\begin{multline}\label{concludeafter}
\mathbb{E}_{n\in[N]}e(\xi n\lfloor n\sqrt{k} \rfloor)=\frac{1}{\lfloor N\rfloor}\sum_{r=0}^{q-1}\sum_{m=1}^{\lfloor N/M\rfloor }\sum_{n\in I_{r,m} }F(\sigma(mM)g'_{\xi}(n)\rho(r)\Gamma)
\\
+\frac{1}{\lfloor N\rfloor}\sum_{r=0}^{q-1}\sum_{m=1}^{\lfloor N/M\rfloor}\sum_{n\in I_{r,m} }F(\sigma(n)g'_{\xi}(n)\rho(n)\Gamma)-F(\sigma(mM)g'_{\xi}(n)\rho(r)\Gamma)+O\big(MN^{-1}\big)
\\
=\frac{1}{\lfloor N\rfloor}\sum_{r=0}^{q-1}\sum_{m=1}^{\lfloor N/M\rfloor }\sum_{n\in I_{r,m} }F(\sigma(mM)g'_{\xi}(n)\rho(r)\Gamma)+O(N^{-1+\kappa+\gamma})
\text{.}
\end{multline}\\
\textbf{Step 2: Introducing a lower dimensional nilmanifold.} Now we focus on the study of $\big(g_{\xi}'(n)\big)_{n\in [N]}$, the form of which motivates the following definitions. Let 
\begin{equation}\label{subnil}
G'\coloneqq\Big\{\Big(\begin{smallmatrix}
1 & -bx & y\\
0 & 1 & qx\\
0 & 0 & 1 
\end{smallmatrix}\Big),\quad x,y\in\mathbb{R}\Big\}\text{,}\quad \Gamma'\coloneqq\Big\{\Big(\begin{smallmatrix}
1 & -bn & m\\
0 & 1 & qn\\
0 & 0 & 1 
\end{smallmatrix}\Big),\quad n,m\in\mathbb{Z}\Big\}\text{,}
\end{equation}
and one may check that $G'/\Gamma'$ is a $2$-dimensional $1$-step nilmanifold with fundamental domain
\[
\mathcal{F}'\coloneqq\Big\{\Big(\begin{smallmatrix}
1 & -bx & y\\
0 & 1 & qx\\
0 & 0 & 1 
\end{smallmatrix}\Big),\quad x,y\in[0,1)\Big\}\text{,}
\]
and the Haar measure for the nilmanifold is the Lebesgue measure. Note that $g'_{\xi}(n)\in G'$. We take as our Mal'cev basis $\mathcal{X}'=\{X_1',X_2'\}$, where
\[
X_1'=\begin{pmatrix}
0 & -b & -bq/2\\
0 & 0 & q\\
0 & 0 & 0 
\end{pmatrix}\quad\text{and}\quad X_2'=\begin{pmatrix}
0 & 0 & 1\\
0 & 0 & 0\\
0 & 0 & 0 
\end{pmatrix}\text{,}
\]
which is trivially $1$-rational since $G'$ is abelian, for the definition of rationality of a Mal'cev basis we refer the reader to Definition~2.1 and Definition~2.4 in \cite{BG}. We note that every $g\in G'$ can be written uniquely as $\exp(t_1X_1')\exp(t_2X_2')$, $t_1,t_2\in\mathbb{R}$, where 
\[
\exp\Big(\Big(\begin{smallmatrix}
0 & x & z\\
0 & 0 & y\\
0 & 0 & 0 
\end{smallmatrix}\Big)\Big)\coloneqq \begin{pmatrix}
1 & x & z+\frac{1}{2}xy\\
0 & 1 & y\\
0 & 0 & 1 
\end{pmatrix}\text{.}
\]
To see this note that 
\begin{multline}
\exp(t_1X_1')\exp(t_2X_2')=\exp\Big(\Big(\begin{smallmatrix}
0 & -t_1b & -t_1bq/2\\
0 & 0 & qt_1\\
0 & 0 & 0 
\end{smallmatrix}\Big)\Big)\exp\Big(\Big(\begin{smallmatrix}
0 & 0 & t_2\\
0 & 0 & 0\\
0 & 0 & 0 
\end{smallmatrix}\Big)\Big)
\\
=\begin{pmatrix}
1 & -t_1b & -t_1bq/2-t_1^2bq/2\\
0 & 1 & qt_1\\
0 & 0 & 1 
\end{pmatrix}\begin{pmatrix}
1 & 0 & t_2\\
0 & 1 & 0\\
0 & 0 & 1 
\end{pmatrix}=\begin{pmatrix}
1 & -bt_1 & t_2-t_1bq/2-t_1^2bq/2\\
0 & 1 & qt_1\\
0 & 0 & 1 
\end{pmatrix}
\end{multline}
and thus for $y,z\in\mathbb{R}$ and $x=-by/q$,  the equation $\exp(t_1X_1')\exp(t_2X_2')=\Big(\begin{smallmatrix}
1 & x & z\\
0 & 1 & y\\
0 & 0 & 1 
\end{smallmatrix}\Big)$ has the unique solution
\[
t_1=y/q\quad\text{and}\quad t_2=z+yb/2+by^2/(2q)\text{.}
\]
Moreover, we have that $g'\in\Gamma'$ if and only if the corresponding $t_1,t_2$ are integers, since on the hand, we have that if $x=-bn$, $y=qn$ and $z=m$ for some $(n,m)\in\mathbb{Z}^2$, then
\[
t_1=qn/q=n\in\mathbb{Z}\quad\text{and}\quad t_2=m+qnb/2+b(qn)^2/(2q)=m+qb\frac{n^2+n}{2}=m+qb\frac{n(n+1)}{2}\in\mathbb{Z}
\]
and conversely, if $t_1=n,t_2=m\in\mathbb{Z}$ then
\[
y=qn\text{,}\quad x=-bn\quad\text{and}\quad z=m-nbq/2-n^2bq/2=m-qb\frac{n(n+1)}{2}\in\mathbb{Z}\text{.}
\]
Thus the Mal'cev coordinate
 map $\psi'=\psi'_{\mathcal{X}'}\colon G'\to\mathbb{R}^2$ is such that
\begin{equation}\label{psi'def}
\psi'\Big(\Big(\begin{smallmatrix}
1 & x & z\\
0 & 1 & y\\
0 & 0 & 1 
\end{smallmatrix}\Big)\Big)=(y/q,z+yb/2+by^2/(2q))\text{.}
\end{equation}
Now we note that 
\begin{equation}\label{polyproof}
g'_{\xi}(n)=\begin{pmatrix}
1 & -nb\sqrt{k}/q & n^2t\sqrt{k}\\
0 & 1 & n\sqrt{k}\\
0 & 0 & 1 
\end{pmatrix}=\begin{pmatrix}
1 & -b\sqrt{k}/q & 0\\
0 & 1 & \sqrt{k}\\
0 & 0 & 1 
\end{pmatrix}^n\begin{pmatrix}
1 & 0 & \frac{bk}{2q}\\
0 & 1 & 0\\
0 & 0 & 1 
\end{pmatrix}^{n^2-n}\begin{pmatrix}
1 & 0 & t\sqrt{k}\\
0 & 1 & 0\\
0 & 0 & 1 
\end{pmatrix}^{n^2}\text{,}
\end{equation}
and one may check this using the identity
\[
\Big(\begin{smallmatrix}
1 & L_1 & 0\\
0 & 1 & L_2\\
0 & 0 & 1 
\end{smallmatrix}\Big)^n=\Big(\begin{smallmatrix}
1 & nL_1 & \frac{L_1L_2}{2}(n^2-n)\\
0 & 1 & nL_2\\
0 & 0 & 1 
\end{smallmatrix}\Big)\text{.}
\]
All matrices in \eqref{polyproof} are in $G'$ and thus $g_{\xi}'(n)$ is indeed a polynomial sequence in $G'$, see Definition~1.8 and the comments following it in \cite{BG}. More precisely, one may check that $g'_{\xi}\in \poly(\mathbb{Z},G'_{\bullet})$ for the filtration $G'_{\bullet}$ given below
\[
G'=G'_0=G'_1\supseteq G'_2\coloneqq \Big\{\Big(\begin{smallmatrix}
1 & 0 & x\\
0 & 1 & 0\\
0 & 0 & 1 
\end{smallmatrix}\Big)\,,x\in\mathbb{R}\Big\}\supseteq G'_3\coloneqq\{id_{G'}\}\text{,}
\]
It is also not difficult to check that for every $r'\in\mathbb{Z}$ we have that $g'_{\xi}(qn+r'),\,n\in\mathbb{Z}$ is a polynomial sequence with coefficients in $G'_{\bullet}$. Finally, the Mal'cev basis $\mathcal{X}'$ is adapted to $G'_{\bullet}$ and we are now ready to apply Theorem~$\ref{QLT}$.
\\\,\\
\textbf{Step 3: Equidistribution of $(g'_{\xi}(qn+r')\Gamma')_{n\in[M/q]}$ in $G'/\Gamma'$.} Let $C_0=C_0(2,2)$, be the constant appearing in aforementioned theorem, and without loss of generality, we assume that $C_0>1$. We claim that for every $r'\in\mathbb{Z}$  we have that $(g'_{\xi}(qn+r'))_{n\in[M/q]}$ is $N^{-\gamma'/(2C_0)}$-equidistributed in $G'/\Gamma'$ for $N\gtrsim_{k,\gamma,\gamma'} 1$. If we assume for the sake of a contradiction that $(g'_{\xi}(qn+r'))_{n\in[M/q]}$ is not $N^{-\gamma'/(2C_0)}$-equidistributed in $G'/\Gamma'$, then by Theorem~$\ref{QLT}$ there exists a nontrivial horizontal character $\eta$ with $|\eta|\lesssim \big(N^{-\gamma'/(2C_0)}\big)^{-C_0}=N^{\gamma'/2}$ and such that 
\[
\|\eta\circ g'_{\xi}(q(\cdot)+r')\|_{C^{\infty}[M/q]}\lesssim N^{\gamma'/2}\text{.}
\]
Then there exists  $l=(l_1,l_2)\in\mathbb{Z}^2\setminus\{(0,0)\}$ with $\|l\|_{\ell^{\infty}}\lesssim N^{\gamma'/2}$ such that $\eta(g_{\xi}'(qn+r'))=l\cdot \psi'(g_{\xi}'(qn+r'))$, and note that
\begin{multline}
l\cdot \psi'(g'_{\xi}(qn+r'))=(l_1,l_2)\cdot\psi'(\begin{pmatrix}
1 & -(qn+r')b\sqrt{k}/q & (qn+r')^2t\sqrt{k}\\
0 & 1 & (qn+r')\sqrt{k}\\
0 & 0 & 1 
\end{pmatrix}
)
\\
=l_1(qn+r')\sqrt{k}/q+l_2(qn+r')^2t\sqrt{k}+l_2(qn+r')\sqrt{k}b/2+l_2(qn+r')^2kb/( 2q)
\\
=\binom{n}{2}(2l_2q^2t\sqrt{k}+kbl_2q)+\binom{n}{1}(l_1\sqrt{k}+l_22qr't\sqrt{k}+l_2q\sqrt{k}b/2+l_2r'kb+l_2q^2t\sqrt{k}+kbl_2q/2)+C\text{,}
\end{multline}
where $C$ does not depend on $n$. Since $\|l\cdot \psi'(g'_{\xi}(q(\cdot)+r'))\|_{C^{\infty}[M/q]}\lesssim N^{\gamma'/2}$, we get
\begin{multline}\label{Restrictionsonl's}
\frac{M^2}{q^2}\|2l_2q^2t\sqrt{k}+kbl_2q\|=\frac{M^2}{q^2}\|2l_2q^2t\sqrt{k}\|\lesssim N^{\gamma'/2}\quad\text{and}
\\
\frac{M}{q}\|l_1\sqrt{k}+l_22qr't\sqrt{k}+l_2q\sqrt{k}b/2+l_2r'kb+l_2q^2t\sqrt{k}+kbl_2q/2\|\lesssim N^{\gamma'/2}\text{.}
\end{multline}

We begin by showing that for $N\gtrsim_{k,\gamma,\gamma'}1$ we have that $l_2\neq 0$. If not, then we have that $l_1\neq 0$ and the second estimate in \eqref{Restrictionsonl's} yields that $\|l_1\sqrt{k}\|\lesssim \frac{qN^{\gamma'/2}}{M}$, and thus there exists $m\in\mathbb{Z}$ such that 
\[
|l_1\sqrt{k}-m|\lesssim\frac{qN^{\gamma'/2}}{M}\Rightarrow |l_1|^{-2}\lesssim_k\Big|\sqrt{k}-\frac{m}{l_1}\Big|\lesssim\frac{qN^{\gamma'/2}}{|l_1|M}\Rightarrow N^{\kappa-\gamma}\lesssim \frac{M}{q}\lesssim_k |l_1|N^{\gamma'/2}\lesssim N^{\gamma'}\text{,}
\]
where we have used the fact that $\sqrt{k}$ is an irrational algebraic number of degree $2$, in an identical manner to \eqref{algnumber}. The last estimate leads to a contradiction for $N\gtrsim_{k,\gamma,\gamma'} 1$ because we have that $\kappa-\gamma>\gamma'$, since $\kappa=1-\gamma'/6$ and $\gamma,\gamma'<1/10$.

Thus we have that $l_2\neq 0$ and now we use the first estimate in \eqref{Restrictionsonl's}. Firstly, let us note that for $N\gtrsim_{k,\gamma,\gamma'} 1$ we have that $\|2l_2q^2t\sqrt{k}\|=|2l_2q^2t\sqrt{k}|$, since $|2l_2q^2t\sqrt{k}|\lesssim_k N^{\gamma'/2+2\gamma-1+\gamma}=N^{3\gamma+\gamma'/2-1}$ and we have $\gamma,\gamma'<1/10$. Thus by the first estimate in \eqref{Restrictionsonl's} we get 
\[
|2l_2q^2t\sqrt{k}|\lesssim q^2M^{-2}N^{\gamma'/2}\Rightarrow|t|\lesssim_{k} M^{-2}N^{\gamma'/2}\lesssim N^{-2\kappa+\gamma'/2}\text{,}
\]
which will yield a contradiction for $N\gtrsim_{k,\gamma,\gamma'} 1$ since we will have that
$N^{-2+\gamma'}\le|t|\lesssim_k N^{-2\kappa+\gamma'/2}$ and
\[
-2+\gamma'\le-2\kappa+\gamma'/2\iff \kappa\le 1-\gamma'/4\text{,}
\]
but we have chosen $\kappa=1-\gamma'/6$. 

In either case we reached a contradiction and thus we have shown that for every $r'\in\mathbb{Z}$  we have that $(g'_{\xi}(qn+r'))_{n\in[M/q]}$ is $N^{-\gamma'/(2C_0)}$-equidistributed in $G'/\Gamma'$ for $N\gtrsim_{k,\gamma,\gamma'} 1$, and we will use that to conclude by applying the definition of $\delta$-equidistribution to appropriate functions defined below.
\\\,\\
\textbf{ Step 4: Defining approximants of the functions $x\Gamma'\to F(\sigma(mM)x\rho(r)\Gamma)$.} For every $m\in[N/M]$, $r\in\{0,\dotsc,q-1\}$ and $\tau\in(0,1/100)$, we define the functions 
\[
F_{m,r},F_{\tau,m,r},\chi'_{{\tau,m,r}}\colon G'/\Gamma'\to\mathbb{C}\quad\text{such that}
\]
\[
F_{m,r}(x\Gamma')=F(\sigma(mM)x\rho(r)\Gamma)\text{,}\quad F_{\tau,m,r}(x\Gamma')=F_{\tau}(\sigma(mM)x\rho(r)\Gamma)\text{,}\quad \chi'_{{\tau,m,r}}(x\Gamma')=\chi_{\tau}(\sigma(mM)x\rho(r)\Gamma)
\]
and we note that they are well-defined since for every $x,x\in G'$ we have
\begin{equation*}
x\Gamma'=x'\Gamma'\Rightarrow \exists \gamma\in\Gamma':\,x=x'\gamma\Rightarrow\exists \gamma\in\Gamma':\,\sigma(mM)x\rho(r)=\sigma(mM)x'\rho(r)\big(\rho(r)^{-1}\gamma\rho(r)\big)\text{,}
\end{equation*}
and $\rho(r)^{-1}\gamma\rho(r)\in\Gamma$ since there exist $n,m\in\mathbb{Z}$ such that 
\begin{multline}\label{gammawellbehaved}
\rho(r)^{-1}\gamma\rho(r)=\begin{pmatrix}
1 & ra/q & 0\\
0 & 1 & 0\\
0 & 0 & 1 
\end{pmatrix}
\begin{pmatrix}
1 & -bn & m\\
0 & 1 & qn\\
0 & 0 & 1 
\end{pmatrix}
\begin{pmatrix}
1 & -ra/q & 0\\
0 & 1 & 0\\
0 & 0 & 1 
\end{pmatrix}
\\
=\begin{pmatrix}
1 & ra/q & 0\\
0 & 1 & 0\\
0 & 0 & 1 
\end{pmatrix}\begin{pmatrix}
1 & -bn-ra/q & m\\
0 & 1 & qn\\
0 & 0 & 1 
\end{pmatrix}=\begin{pmatrix}
1 & -bn & m+ran\\
0 & 1 & qn\\
0 & 0 & 1 
\end{pmatrix}\in\Gamma'\subseteq \Gamma\text{.}
\end{multline}
Thus we have
\begin{equation*}
x\Gamma'=x'\Gamma'\Rightarrow \sigma(mM)x\rho(r)\Gamma=\sigma(mM)x'\rho(r)\Gamma\text{,}
\end{equation*}
and thus all functions above are well-defined.

Since for every $r'\in\mathbb{Z}$  we have that $(g'_{\xi}(qn+r'))_{n\in[M/q]}$ is $N^{-\gamma'/(2C_0)}$-equidistributed in $G'/\Gamma'$ for $N\gtrsim_{k,\gamma,\gamma'} 1$, we will have that for all $r\in\{0,\dotsc,q-1\}$ and $m\in[N/M]$ the following estimates hold
\begin{equation}\label{Ftequid}
\big|\mathbb{E}_{n\in I_{r,m}}F_{\tau,m,r}(g'_\xi(n)\Gamma')-\int_{G'/\Gamma'}F_{\tau,m,r}\big|\lesssim N^{-\gamma'/(2C_0)} \|F_{\tau,m,r}\|_{\Lip}
\end{equation}
and
\begin{equation}\label{chitequid}
\big|\mathbb{E}_{n\in I_{r,m}}\chi_{\tau,m,r}(g'_\xi(n)\Gamma')-\int_{G'/\Gamma'}\chi_{\tau,m,r}\big|\lesssim N^{-\gamma'/(2C_0)} \|\chi_{\tau,m,r}\|_{\Lip}\text{.}
\end{equation}
In the next and final step of the proof we will use an argument similar to the one given for Lemma~$\ref{Nilinput}$, so let us estimate the above Lipschitz constants. Note that $\max\big(\|\chi_{\tau,m,r}\|_{\infty},\|F_{\tau,m,r}\|_{\infty}\big)\le 1$ and we will show that there exists a positive constant $C$ such that
\begin{equation}\label{goallipnew}
\max\big(|\chi_{\tau,m,r}(g\Gamma')-\chi_{\tau,m,r}(h\Gamma')|, |F_{\tau,m,r}(g\Gamma')-F_{\tau,m,r}(h\Gamma')|\big)\lesssim \tau^{-1}N^{C\gamma} d_{G'/\Gamma'}(g\Gamma',h\Gamma')
\text{,}
\end{equation}
where the implicit constant is absolute.
Firstly, we have that
\begin{multline}\label{lipargumentnew}
\max\big(|\chi_{\tau,m,r}(g\Gamma')-\chi_{\tau,m,r}(h\Gamma')|, |F_{\tau,m,r}(g\Gamma')-F_{\tau,m,r}(h\Gamma')|\big)
\\
=\max\big(|\chi_{\tau}(\sigma(mM)g\rho(r)\Gamma)-\chi_{\tau}(\sigma(mM)h\rho(r)\Gamma)|,|F_{\tau}(\sigma(mM)g\rho(r)\Gamma)-F_{\tau}(\sigma(mM)h\rho(r)\Gamma)|\big)
\\
\lesssim \tau^{-1}d_{G/\Gamma}(\sigma(mM)g\rho(r)\Gamma,\sigma(mM)h\rho(r)\Gamma)\text{,}
\end{multline}
where we have used that $\max\big(\|\chi_{\tau}\|_{\Lip},\|F_{\tau}\|_{\Lip}\big)\lesssim\tau^{-1}$, see \eqref{liptechnical}, and thus to establish \eqref{goallipnew} it suffices to prove that there exists an absolute positive constant $C$ such that
\begin{equation}\label{goalfinallip}
d_{G/\Gamma}(\sigma(mM)g\rho(r)\Gamma,\sigma(mM)h\rho(r)\Gamma)\lesssim N^{C\gamma}d_{G'/\Gamma'}(g\Gamma',h\Gamma')\text{.}
\end{equation}
To prove this we will use the results from the Appendix~A in \cite{BG} and the first step is to show that
\begin{equation}\label{goalfinallip2}
d_{G/\Gamma}(\sigma(mM)g\rho(r)\Gamma,\sigma(mM)h\rho(r)\Gamma)\lesssim N^{C'\gamma}d_{G/\Gamma}(g\rho(r) \Gamma,h\rho(r)\Gamma)\text{,}
\end{equation}for some absolute constant $C'$. There exists $\gamma_1,\gamma_2\in\Gamma$ such that $g\rho(r)\gamma_1,h\rho(r)\gamma_2\in\mathcal{F}$ and we have
\[
d_{G/\Gamma}(g\rho(r)\Gamma,h\rho(r)\Gamma)=d_{G/\Gamma}\big((g\rho(r)\gamma_1)\Gamma,(h\rho(r)\gamma_2)\Gamma\big)
\] and there exists $\gamma_0\in\Gamma$ such that
\[
1\gtrsim d_{G/\Gamma}(g\rho(r)\Gamma,h\rho(r)\Gamma)\gtrsim d\big(g\rho(r)\gamma_1,(h\rho(r)\gamma_2)\gamma_0\big)\text{,}
\]
by Lemma~A.16 in \cite{BG}. Note that $\big\|\psi(g\rho(r)\gamma_1)\big\|_{\ell^{\infty}},\|\psi(\sigma(mM))\|_{\ell^{\infty}}\lesssim 1+|mMt|\lesssim NN^{-1+\gamma}=N^{\gamma}$ and $\big\|\psi(h\rho(r)\gamma_2)\big\|_{\ell^{\infty}}\lesssim 1$. We wish to apply Lemma~A.5 in \cite{BG} and thus we need to bound $\big\|\psi(h\rho(r)\gamma_2\gamma_0)\big\|_{\ell^{\infty}}$. By Lemma~A.4 in \cite{BG} we have that
\begin{multline}
\big\|\psi(h\rho(r)\gamma_2\gamma_0)\big\|_{\ell^{\infty}}\le \big\|\psi(h\rho(r)\gamma_2\gamma_0)-\psi(g\rho(r)\gamma_1)\big\|_{\ell^{\infty}}+\big\|\psi(g\rho(r)\gamma_1)\big\|_{\ell^{\infty}}
\\
\lesssim d(h\rho(r)\gamma_2\gamma_0,g\rho(r)\gamma_1)+1\lesssim 1\text{,}
\end{multline}
since $d(g\rho(r)\gamma_1,1_G)\le \big\|\psi(g\rho(r)\gamma_1)\big\|_{\ell^{\infty}}\lesssim 1$ and 
\[
d(h\rho(r)\gamma_2\gamma_0,1_G)\le d(h\rho(r)\gamma_2\gamma_0,g\rho(r)\gamma_1)+d(g\rho\gamma_1,1_G)\lesssim 1\text{.}
\] 
Thus we may apply Lemma~A.5 in \cite{BG} and obtain that
\begin{multline}
d_{G/\Gamma}(\sigma(mM)g\rho(r)\Gamma,\sigma(mM)h\rho(r)\Gamma)\le d(\sigma(mM)g\rho(r)\gamma_1,\sigma(mM)h\rho(r)\gamma_2\gamma_0)
\\
\lesssim N^{C'\gamma}d(g\rho(r)\gamma_1,h\rho(r)\gamma_2\gamma_0)\lesssim N^{C'\gamma} d_{G/\Gamma}(g\rho(r)\Gamma,h\rho(r)\Gamma)\text{,}
\end{multline}
and we have established the desired initial estimate \eqref{goalfinallip2}. Now we wish to show that there exists an absolute constant $C''$ such that 
\begin{equation}\label{lipextra}
d_{G/\Gamma}(g\rho(r)\Gamma,h\rho(r)\Gamma)\lesssim N^{C''\gamma}d_{G'/\Gamma'}(g\Gamma',h\Gamma')
\end{equation}
which combined with \eqref{goalfinallip2}  yields the desired estimate \eqref{goalfinallip}. We begin similarly to before and note that using Lemma~A.14 in \cite{BG} (or even directly using \eqref{psi'def}), we get that there exists $\gamma_1',\gamma_2'\in\Gamma'$ such that $\|\psi'(g\gamma_1')\|,\|\psi'(h\gamma_2')\|_{\ell^{\infty}}\lesssim 1$, and also there exists $\gamma'_0\in \Gamma'$ such that
\[
1\gtrsim d_{G'/\Gamma'}(g\Gamma',h\Gamma')\gtrsim d'(g\gamma_1',h\gamma_2'\gamma'_0)
\]
where we have used Lemma~A.16 in \cite{BG} for the first estimate. In an identical manner to before we have
\begin{equation}
\big\|\psi'(h\gamma_2'\gamma'_0)\big\|_{\ell^{\infty}}\le \big\|\psi'(h\gamma_2'\gamma'_0)-\psi'(g\gamma_1')\big\|_{\ell^{\infty}}+\big\|\psi'(g\gamma_1')\big\|_{\ell^{\infty}}\lesssim d'(h\gamma_2'\gamma'_0,g\gamma_1')+1\lesssim 1\text{,}
\end{equation}
since $d'(g\gamma_1',1_{G'})\le \big\|\psi'(g\gamma_1')\big\|_{\ell^{\infty}}\lesssim 1$ and 
\[
d'(h\gamma_2'\gamma'_0,1_{G'})\le d'(h\gamma_2'\gamma'_0,g\gamma_1')+d'(g\gamma_1',1_{G'})\lesssim 1\text{.}
\]
By Lemma~A.6 in \cite{BG} we have
\[
d(g\gamma_1',h\gamma_2'\gamma'_0)\lesssim \big(N^{2\gamma }\big)^{C'''}d'(g\gamma_1',h\gamma_2'\gamma'_0)\text{.}
\]
By the right invariance and the fact that $\rho(r)^{-1}\gamma'\rho(r)\in\Gamma$, whenever $\gamma'\in\Gamma'$, see \eqref{gammawellbehaved}, we have that there exists $\gamma''\in\Gamma$ such that 
\begin{multline}
N^{2C'''\gamma}d_{G'/\Gamma'}(g\Gamma',h\Gamma')\gtrsim d(g\gamma_1',h\gamma_2'\gamma'_0)=d(g,h\gamma_2'\gamma'_0(\gamma_1')^{-1})=
d(g\rho(r),h\gamma_2'\gamma'_0(\gamma_1')^{-1}\rho(r))
\\ 
=d(g\rho(r),h\rho(r)\gamma'')\ge d_{G/\Gamma}(g\rho(r)\Gamma,h\rho(r)\Gamma)\text{,}
\end{multline}
and this establishes \eqref{lipextra} for $C''\coloneq2C'''$. Thus we have established that there exists an absolute positive constant $C$ such that
\begin{equation}\label{Lipfinalconstants}
\|\chi_{\tau,m,r}\|_{\Lip},\|F_{\tau,m,r}\|_{\Lip}\lesssim 1+\tau^{-1}N^{C\gamma}\text{.}
\end{equation}
\\
\textbf{Step 5: Concluding the argument.} We have gathered all the elements to conclude. Returning to \eqref{concludeafter} we have
\begin{multline}\label{firsttoconclude}
\mathbb{E}_{n\in[N]}e(\xi n\lfloor n\sqrt{k} \rfloor)=\frac{1}{\lfloor N\rfloor}\sum_{r=0}^{q-1}\sum_{m=1}^{\lfloor N/M\rfloor }\sum_{n\in I_{r,m} }F(\sigma(mM)g'_{\xi}(n)\rho(r)\Gamma)+O(N^{-1+\kappa+\gamma})
\\
=\frac{1}{\lfloor N\rfloor}\sum_{r=0}^{q-1}\sum_{m=1}^{\lfloor N/M\rfloor}\sum_{n\in I_{r,m} }F_{\tau,m,r}(g'_{\xi}(n)\Gamma')
\\
+\frac{1}{\lfloor N\rfloor}\sum_{r=0}^{q-1}\sum_{m=1}^{\lfloor N/M\rfloor}\sum_{n\in I_{r,m} }F(\sigma(mM)g'_{\xi}(n)\rho(r)\Gamma)-F_{\tau}(\sigma(mM)g'_{\xi}(n)\rho(r)\Gamma)+O(N^{-1+\kappa+\gamma})\text{,}
\end{multline}
but note that by \eqref{chitequid} and \eqref{Lipfinalconstants} we have
\begin{multline}\label{errorimpchitau1}
\bigg|\frac{1}{\lfloor N\rfloor}\sum_{r=0}^{q-1}\sum_{m=1}^{\lfloor N/M\rfloor}\sum_{n\in I_{r,m} }F(\sigma(mM)g'_{\xi}(n)\rho(r)\Gamma)-F_{\tau}(\sigma(mM)g'_{\xi}(n)\rho(r)\Gamma)\bigg|
\\
\lesssim\frac{1}{N}\sum_{r=0}^{q-1}\sum_{m=1}^{\lfloor N/M\rfloor}\sum_{n\in I_{r,m} }\big(1-\chi_{\tau}(\sigma(mM)g'_{\xi}(n)\rho(r)\Gamma)\big)=\frac{1}{N}\sum_{r=0}^{q-1}\sum_{m=1}^{\lfloor N/M\rfloor}\sum_{n\in I_{r,m} }\big(1-\chi_{\tau,m,r}(g'_{\xi}(n)\Gamma')\big)
\\
\le\frac{1}{N}\sum_{r=0}^{q-1}\sum_{m=1}^{\lfloor N/M\rfloor}\Big(|I_{r,m}|\Big|\mathbb{E}_{n\in I_{r,m} }\big(1-\chi_{\tau,m,r}(g'_{\xi}(n)\Gamma')\big)-\int_{G'/\Gamma'}\big(1-\chi_{\tau,m,r}\big)\Big|+|I_{r,m}|\int_{G'/\Gamma'}\big(1-\chi_{\tau,m,r}\big)\Big)
\\
\lesssim N^{-\gamma'/(2C_0)}\|\chi_{\tau,m,r}\|_{\Lip}+\frac{1}{N}\sum_{r=0}^{q-1}\sum_{m=1}^{\lfloor N/M\rfloor}|I_{r,m}|m_{[0,1)^2}\bigg(\bigg\{(x,y)\in[0,1)^2:\,\chi_{\tau,m,r}\bigg(\bigg(\begin{smallmatrix}
1 & -bx & y\\
0 &1 & qx\\
0 & 0 & 1 
\end{smallmatrix}\bigg)\Gamma'\bigg)\neq 1\bigg\}\bigg)
\\
\lesssim N^{-\gamma'/(2C_0)}(1+\tau^{-1}N^{C\gamma})+\tau\text{,}
\end{multline}
where to establish the estimate we used for the measure of the sets in the second summand of the penultimate line one may argue as follows. We note that
\begin{multline}\label{integralevaluationhelp}
\chi_{\tau,m,r}\bigg(\bigg(\begin{smallmatrix}
1 & -bx & y\\
0 & 1 & qx\\
0 & 0 & 1 
\end{smallmatrix}\bigg)\Gamma'\Big)=\chi_{\tau}\bigg(\sigma(mM)\bigg(\begin{smallmatrix}
1 & -bx & y\\
0 & 1 & qx\\
0 & 0 & 1 
\end{smallmatrix}\bigg)\rho(r)\Gamma\bigg)
=\chi_{\tau}\bigg(\bigg(\begin{smallmatrix}
1 & -bx-raq-mMt & y-mMtqx\\
0 & 1 & qx\\
0 & 0 & 1 
\end{smallmatrix}\bigg)
\Gamma\bigg)
\\
=\chi_{\tau}\Bigg(\begin{pmatrix}
1 & \{-bx-raq-mMt\} & \{y-mMtqx+(bx+raq+mMt)\lfloor qx \rfloor\}\\
0 & 1 & \{qx\}\\
0 & 0 & 1 
\end{pmatrix}\Gamma\Bigg)\text{,}
\end{multline}
and thus 
\begin{multline}\label{tauforrm}
m_{[0,1)^2}\bigg(\bigg\{(x,y)\in[0,1)^2:\,\chi_{\tau,m,r}\bigg(\bigg(\begin{smallmatrix}
1 & -bx & y\\
0 &1 & qx\\
0 & 0 & 1 
\end{smallmatrix}\bigg)\Gamma'\bigg)\neq 1\bigg\}\bigg)
\\
\le m_{[0,1)^2}\big(\big\{(x,y)\in[0,1)^2:\,\{-bx-raq-mMt\}\text{ or }\{qx\}\in[0,\tau/5]\cup[1-\tau/5,1]\big\}\big)\lesssim \tau\text{.}
\end{multline}
Using \eqref{errorimpchitau1} in \eqref{firsttoconclude} yields
\begin{multline}\label{firsttoconclude2}
\mathbb{E}_{n\in[N]}e(\xi n\lfloor n\sqrt{k} \rfloor)=\frac{1}{\lfloor N\rfloor}\sum_{r=0}^{q-1}\sum_{m=1}^{\lfloor N/M\rfloor}\sum_{n\in I_{r,m} }F_{\tau,m,r}(g'_{\xi}(n)\Gamma')
\\
+O(N^{-\gamma'/(2C_0)}(1+\tau^{-1}N^{C\gamma})+\tau+N^{-1+\kappa+\gamma})\text{,}
\end{multline}
and for the first summand we note that by \eqref{Ftequid} and \eqref{Lipfinalconstants} we get
\begin{multline}
\frac{1}{\lfloor N\rfloor}\bigg|\sum_{r=0}^{q-1}\sum_{m=1}^{\lfloor N/M\rfloor}\sum_{n\in I_{r,m} }F_{\tau,m,r}(g'_{\xi}(n)\Gamma')\bigg|
\\
\lesssim\frac{1}{N}\sum_{r=0}^{q-1}\sum_{m=1}^{\lfloor N/M\rfloor}|I_{r,m}|\big|\mathbb{E}_{n\in I_{r,m}}F_{\tau,m,r}(g'_\xi(n)\Gamma')-\int_{G'/\Gamma'}F_{\tau,m,r}\big|+\frac{1}{N}\sum_{r=0}^{q-1}\sum_{m=1}^{\lfloor N/M\rfloor}|I_{r,m}|\Big|\int_{G'/\Gamma'}F_{\tau,m,r}\Big|
\\
\lesssim N^{-\gamma'/(2C_0)}(1+\tau^{-1}N^{C\gamma})+
\frac{1}{N}\sum_{r=0}^{q-1}\sum_{m=1}^{\lfloor N/M\rfloor}|I_{r,m}|\Big|\int_{G'/\Gamma'}F_{m,r}\Big|+\frac{1}{N}\sum_{r=0}^{q-1}\sum_{m=1}^{\lfloor N/M\rfloor}|I_{r,m}|\int_{G'/\Gamma'}|F_{\tau,m,r}-F_{m,r}|\text{.}
\end{multline}
The second summand above vanishes since with a calculation identical to \eqref{integralevaluationhelp} we get
\begin{multline}
\int_{G'/\Gamma'}F_{m,r}=\int_{(x,y)\in[0,1)^2}F_{m,r}\bigg(\bigg(\begin{smallmatrix}
1 & -bx & y\\
0 & 1 & qx\\
0 & 0 & 1 
\end{smallmatrix}\bigg)\Gamma'\bigg)dxdy
\\
=\int_{(x,y)\in[0,1)^2}F\Bigg(\begin{pmatrix}
1 & \{-bx-ra{q}-mMt\} & \{y-mMtqx+(bx+ra{q}+mMt)\lfloor qx \rfloor\}\\
0 & 1 & \{qx\}\\
0 & 0 & 1 
\end{pmatrix}\Gamma\Bigg)
\\
=\int_{(x,y)\in[0,1)^2}e(y-mMtqx+(bx+ra{q}+mMt)\lfloor qx \rfloor\})dxdy=0\text{.}
\end{multline}
For the third summand we note that by \eqref{tauforrm} we get
\begin{multline}
\frac{1}{N}\sum_{r=0}^{q-1}\sum_{m=1}^{\lfloor N/M\rfloor }|I_{r,m}|\int_{G'/\Gamma'}|F_{\tau,m,r}-F_{m,r}|\le\frac{1}{N}\sum_{r=0}^{q-1}\sum_{m=1}^{\lfloor N/M\rfloor}|I_{r,m}|\int_{G'/\Gamma'}(1-\chi_{\tau,m,r})
\\
\le \frac{1}{N}\sum_{r=0}^{q-1}\sum_{m=1}^{\lfloor N/M\rfloor}|I_{r,m}|m_{[0,1)^2}\bigg(\bigg\{(x,y)\in[0,1)^2:\,\chi_{\tau,m,r}\bigg(\bigg(\begin{smallmatrix}
1 & -bx & y\\
0 &1 & qx\\
0 & 0 & 1 
\end{smallmatrix}\bigg)\Gamma'\bigg)\neq 1\bigg\}\bigg)\le \tau\text{.}
\end{multline}
Finally, by combining everything we get that for all $\xi\in\mathfrak{m}_2(N,\gamma,\gamma',k)$ we have
\[
\Big|\mathbb{E}_{n\in[N]}e(\xi n\lfloor n\sqrt{k} \rfloor)\Big|\lesssim N^{-\gamma'/(2C_0)}(1+\tau^{-1}N^{C\gamma})+\tau+N^{-1+\kappa+\gamma}\text{.}
\]
By letting $\tau=N^{-\gamma}$, and by remembering that $\kappa=1-\gamma'/6$, the bound becomes
\[
N^{-\gamma'/(2C_0)}+N^{(C+1)\gamma-\gamma'/(2C_0)}+N^{-\gamma}+N^{\gamma-\gamma'/6}\text{,}
\]
and it is easy to see that it suffices to choose $\gamma'>\max(4C_0(C+1),12) \gamma$ to have that 
\[
\Big|\mathbb{E}_{n\in[N]}e(\xi n\lfloor n\sqrt{k} \rfloor)\Big|\lesssim N^{-\chi}\text{,}
\]
for some $\chi=\chi(\gamma,\gamma')>0$. We see that we can choose $c_0\coloneqq \max(4C_0(C+1),12)^{-1}$, and the proof is complete.
\end{proof}
\section{The circle method input}\label{circleinputsection}
Here we collect the previous results and state the input from the circle method used in the sequel. Throughout the last two sections we fix
\begin{equation}\label{choicesgamma}
\gamma'=\frac{1}{20}\text{ and }\gamma=\min\Big(\frac{1}{20},\frac{c_0}{40}\Big)\text{, where $c_0$ is the absolute constant guaranteed by Proposition~$\ref{minarcnew}$,}
\end{equation}
Let $k\in\mathbb{Q}_{>0}$ be such that $\sqrt{k}\not\in\mathbb{Q}$ and $\lambda\in(1,2]$. For every $s,t\in\mathbb{N}_0$ define
\[
\mathcal{P}_{s,t}\coloneqq\big\{(a,b,q)\in\mathbb{N}_0\times\mathbb{Z}\times \mathbb{N}:\,q\in(\lambda^{s-1},\lambda^s],\,b\in A(t),\,a\in\{0,\cdots,q-1\},\,\gcd(a,b,q)=1\big\}\text{,}
\]
where $A(t)=[-\lambda^t,-\lambda^{t-1})\cup(\lambda^{t-1},\lambda^t]$ for $t\ge 1$ and $A(0)=[-1,1]$, and for convenience we let 
$l(a,b,q;k)$ be the unique integer such that $\frac{a+b\sqrt{k}}{q}-l(a,b,q;k)\in[-1/2,1/2)$ and
\[
\alpha^{(k)}_{a,b,q}\coloneq\frac{a-l(a,b,q)q+b\sqrt{k}}{q}\in[-1/2,1/2)\text{.}
\]
We fix a real-valued function $\eta=\eta_{\lambda}\in\mathcal{C}^{\infty}(\mathbb{R})$ such that 
\begin{equation}\label{etalambda}
1_{[-1,1]}\le \eta\le 1_{[-\lambda,\lambda]}\text{.}
\end{equation}
Finally, we define the localized periodization of $V_{X;k}$, namely, we define $\widetilde{V}_{X;k}\colon \mathbb{T}\to \mathbb{C}$
\begin{equation}\label{defVtilde}
\widetilde{V}_{X;k}(t)=\sum_{m\in\mathbb{Z}}V_{X;k}(t+m)1_{[-1/2,1/2)}(t+m)\text{.}
\end{equation}
The circle method carried out in the previous two sections, and specifically Proposition~$\ref{MainMajPro}$ and Proposition~$\ref{minarcnew}$, yields the following.
\begin{proposition}\label{cmethod}
Fix $k\in\mathbb{Q}_{>0}$ such that $\sqrt{k}\not\in\mathbb{Q}$ and $\lambda\in(1,2]$. Then there exist positive constants $C=C(k,\lambda)$ and $\chi$ such that for all $f\in\ell^2(\mathbb{Z})$ we have
\begin{multline}\label{nonotationapproximation}
\bigg\|A_{\lambda^n;k}f-T_{\mathbb{Z}}\bigg[\sum_{(a,b,q)\in\bigcup_{0\le s,t\le \gamma n}\mathcal{P}_{s,t}}\G_k(a,b,q)\F\bigg(\frac{b}{2q}\bigg)\widetilde{V}_{\lambda^n;k}(\cdot-\alpha^{(k)}_{a,b,q})\eta\big(\lambda^{(2-\gamma')n}\|\cdot-\alpha^{(k)}_{a,b,q}\|\big)\bigg]f\bigg\|_{\ell^2(\mathbb{Z})}
\\
\le C\lambda^{-\chi n}\|f\|_{\ell^2(\mathbb{Z})}\text{,}
\end{multline}
where $A_{t;k}$ is defined in \eqref{avop} and $\gamma,\gamma'$ in \eqref{choicesgamma}.
\end{proposition}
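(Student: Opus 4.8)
The plan is to pass to the Fourier side. Since $A_{\lambda^n;k}=T_{\mathbb{Z}}[m_{\lambda^n;k}]$ and the operator inside \eqref{nonotationapproximation} equals $T_{\mathbb{Z}}[M_n]$ with $M_n$ the displayed multiplier, Plancherel's theorem on $\mathbb{Z}$ reduces the proposition to the pointwise bound $\sup_{\xi\in\mathbb{T}}|m_{N;k}(\xi)-M_n(\xi)|\lesssim_{k,\lambda}N^{-\chi}$, where $N:=\lambda^n$. The starting observation is that $\bigcup_{0\le s,t\le\gamma n}\mathcal{P}_{s,t}$ is exactly the set $\{(a,b,q):\,1\le q\le N^{\gamma},\ |b|\le N^{\gamma},\ 0\le a<q,\ \gcd(a,b,q)=1\}$ of triples indexing the major arcs $\mathfrak{M}_{a,b,q}=\mathfrak{M}_{a,b,q}(N;k,\gamma')$ from section~\ref{setupsection}, so the proof is a matter of matching $M_n$ to $m_{N;k}$ arc by arc.

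First I would record a disjointness fact for the cutoffs in $M_n$: by Lemma~\ref{seperationgeneral} applied to $\alpha=\sqrt{k}$, $d=2$ and $\gamma_1=\gamma_2=\gamma_3=\gamma$ (the hypothesis $5\gamma<1$ is guaranteed by \eqref{choicesgamma}), the centers $\alpha^{(k)}_{a,b,q}$ over the index set above are $2N^{-1+\gamma}$-separated on $\mathbb{T}$ for $N\gtrsim_{k}1$. The cutoff $\eta\big(\lambda^{(2-\gamma')n}\|\cdot-\alpha^{(k)}_{a,b,q}\|\big)$ is supported in $\{\|\xi-\alpha^{(k)}_{a,b,q}\|\le\lambda N^{-2+\gamma'}\}$ and equals $1$ on $\mathfrak{M}_{a,b,q}=\{\|\xi-\alpha^{(k)}_{a,b,q}\|\le N^{-2+\gamma'}\}$ by \eqref{etalambda}; since $\lambda N^{-2+\gamma'}<N^{-1+\gamma}$ for $N\gtrsim_{k,\lambda}1$ (here $\gamma'-\gamma<1$ by \eqref{choicesgamma}), these supports are pairwise disjoint and each contains its major arc. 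Consequently $M_n$ has at most one nonzero summand at every $\xi$, and on $\mathfrak{M}_{a,b,q}$ one has $M_n(\xi)=\G_k(a,b,q)\F(b/2q)\,V_{N;k}(t)$, where $t$ is the lift of $\xi-\alpha^{(k)}_{a,b,q}$ with $|t|\le N^{-2+\gamma'}$, using that $\widetilde{V}_{N;k}$ agrees with $V_{N;k}$ near $0\in\mathbb{T}$; here we also use $m_{N;k}(\xi)=\mathbb{E}_{n\in[N]}e\big((\tfrac{a+b\sqrt{k}}{q}+t)n\lfloor n\sqrt{k}\rfloor\big)$, valid because $n\lfloor n\sqrt{k}\rfloor\in\mathbb{Z}$ and $\alpha^{(k)}_{a,b,q}\equiv\tfrac{a+b\sqrt{k}}{q}\Mod 1$.

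With this in hand I would partition $\mathbb{T}$ into three regions. On each major arc $\mathfrak{M}_{a,b,q}$, apply Proposition~\ref{MainMajPro} with $\kappa=\tfrac12$: since $|t|\le N^{-2+\gamma'}$ and $q^2(1+|b|)\le 2N^{3\gamma}$, the error in \eqref{freetarc} is $\lesssim_k N^{\kappa-1}+N^{-1+\gamma'+\kappa}+N^{3\gamma-\kappa/2}(1+\log N)\lesssim_k N^{-\chi_1}$ for some $\chi_1>0$, all three exponents being negative under \eqref{choicesgamma} (since $\gamma'=1/20$ and $6\gamma<\kappa<1-\gamma'$), and by the previous paragraph this is precisely $|m_{N;k}(\xi)-M_n(\xi)|$. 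On the ``collars'' $\{N^{-2+\gamma'}<\|\xi-\alpha^{(k)}_{a,b,q}\|\le\lambda N^{-2+\gamma'}\}$, which lie in $\mathfrak{m}$ by the disjointness above, Proposition~\ref{minarcnew} gives $|m_{N;k}(\xi)|\lesssim_{k}N^{-\chi_2}$, while the single nonzero summand of $M_n$ is bounded by $|\G_k(a,b,q)|\,|\F(b/2q)|\,|V_{N;k}(t)|\lesssim_k(|t|N^2)^{-1/2}\lesssim N^{-\gamma'/2}$, using the trivial bounds $|\G_k|\le 1$, $|\F|\le 1$, the first estimate of \eqref{VXFest}, and $|t|>N^{-2+\gamma'}$. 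On the remaining set, $\xi\in\mathfrak{m}$ and $M_n(\xi)=0$, so $|m_{N;k}(\xi)-M_n(\xi)|\lesssim_{k}N^{-\chi_2}$ once more. Taking $\chi=\min(\chi_1,\chi_2,\gamma'/2)$ and absorbing the finitely many small $n$ into $C=C(k,\lambda)$ finishes the argument.

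I expect the only genuine friction to be the parameter bookkeeping: one must check that, for the fixed $\gamma,\gamma'$ of \eqref{choicesgamma}, the exponent $\kappa$ can be chosen in $(6\gamma,1-\gamma')$ so that all three error terms of Proposition~\ref{MainMajPro} — in particular $q^2(1+|b|)(1+\log N)N^{-\kappa/2}$ — become negative powers of $N$, and that the collar radius $\lambda N^{-2+\gamma'}$ stays below the center-separation $N^{-1+\gamma}$; both hold automatically because $\gamma$ was taken sufficiently small. Beyond that, the proof is a direct assembly of Propositions~\ref{MainMajPro} and \ref{minarcnew} together with the elementary bounds $|\G_k|\le 1$, $|\F|\le 1$ and Lemma~\ref{frVXbounds}.
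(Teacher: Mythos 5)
Your proposal is correct and follows essentially the same route as the paper: Plancherel reduction to a uniform multiplier bound, disjointness of the cutoffs via Lemma~\ref{seperationgeneral}, Proposition~\ref{MainMajPro} on the major arcs, and Proposition~\ref{minarcnew} together with the $V_{N;k}$ decay from Lemma~\ref{frVXbounds} on the minor arcs (your ``collar'' region is exactly where the paper's approximant has a nonvanishing summand on $\mathfrak{m}$). The only difference is the concrete choice $\kappa=1/2$ instead of the paper's balanced $\kappa=\tfrac{2}{3}(3\gamma-\gamma'+1)$, which is immaterial for the fixed parameters in \eqref{choicesgamma}.
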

\begin{proof}
Firstly, we note that in this proof all implicit constant will be allowed to depend additionally on $k,\lambda$ without further mention, and secondly, it is enough to establish the estimate for $n\gtrsim 1$. By Plancherel theorem it suffices to prove that for all $\xi\in[-1/2,1/2)$ we have
\begin{multline}\label{xinonotation}
\bigg|\frac{1}{\lfloor \lambda^n\rfloor}\sum_{n\le\lambda^n}e(\xi n\lfloor n\sqrt{k} \rfloor)-\sum_{(a,b,q)\in\bigcup_{0\le s,t\le \gamma n}\mathcal{P}_{s,t}}\G_k(a,b,q)\F\Big(\frac{b}{2q}\Big)\widetilde{V}_{\lambda^n;k}(\xi-\alpha^{(k)}_{a,b,q})\eta\big(\lambda^{(2-\gamma')n}\|\xi-\alpha^{(k)}_{a,b,q}\|\big)\bigg|\\
\lesssim \lambda^{-\chi n}
\text{.}
\end{multline}
We split $[-1/2,1/2)=\mathfrak{M}(\lambda^n)\cup\mathfrak{m}(\lambda^n)$. And using Lemma~$\ref{seperationgeneral}$ one may easily show that supports of 
\[
\Big(\eta\big(\lambda^{(2-\gamma')n}\|\cdot-\alpha^{(k)}_{a,b,q}\|\big)\Big)_{(a,b,q)\in\bigcup_{0\le s,t\le \gamma n}\mathcal{P}_{s,t}}
\]
are mutually disjoint for $n\gtrsim 1$.

For $\xi\in\mathfrak{M}(\lambda^n)$ there exists a unique $(a,b,q)\in\mathbb{N}_0\times\mathbb{Z}\times \mathbb{N}$ with $q\le\lambda^{\gamma n}$, $|b|\le \lambda^{\gamma n}$, $a\in\{0,\cdots,q-1\}$ and $\gcd(a,b,q)=1$ such that
$\|\xi-\alpha^{(k)}_{a,b,q}\|\le \lambda^{(-2+\gamma')n}$. This  guarantees that at most one summand in the approximant multliplier does not vanish and we have
\begin{multline}
\bigg|\frac{1}{\lfloor \lambda^n\rfloor}\sum_{n\le\lambda^n}e(\xi n\lfloor n\sqrt{k} \rfloor)-\sum_{(a,b,q)\in\bigcup_{0\le s,t\le \gamma n}\mathcal{P}_{s,t}}\G_k(a,b,q)\F\Big(\frac{b}{2q}\Big)\widetilde{V}_{\lambda^n;k}(\xi-\alpha^{(k)}_{a,b,q})\eta\big(\lambda^{(2-\gamma')n}\|\xi-\alpha^{(k)}_{a,b,q}\|\big)\bigg|
\\
=\bigg|\frac{1}{\lfloor \lambda^n\rfloor}\sum_{n\le\lambda^n}e(\xi n\lfloor n\sqrt{k} \rfloor)-\G_k(a,b,q)\F\Big(\frac{b}{2q}\Big)\widetilde{V}_{\lambda^n;k}(\xi-\alpha^{(k)}_{a,b,q})\eta\big(\lambda^{(2-\gamma')n}\|\xi-\alpha^{(k)}_{a,b,q}\|\big)\bigg|\text{,}
\end{multline}
and $\xi=\alpha^{(k)}_{a,b,q}+t+m\in[-1,2/1,2)$, where $m\in\mathbb{Z}$ and $|t|\le \lambda^{(-2+\gamma')n}$, and thus the expression above becomes
\begin{equation}\label{goodformmaj}
\bigg|\frac{1}{\lfloor \lambda^n\rfloor}\sum_{n\le\lambda^n}e((\alpha^{(k)}_{a,b,q}+t) n\lfloor n\sqrt{k} \rfloor)-\G_k(a,b,q)\F\Big(\frac{b}{2q}\Big)V_{\lambda^n;k}(t)\bigg|\text{,}
\end{equation}
where one the one hand we used \eqref{etalambda} and the range of $t$, as well as the fact that for $n\gtrsim 1$ we have
\begin{equation}\label{periodicarg}
\widetilde{V}_{\lambda^n}(m+t)=\widetilde{V}_{\lambda^n}(t)=V_{\lambda^n}(t)\text{.}
\end{equation}
We bound \eqref{goodformmaj} by applying Proposition~$\ref{MainMajPro}$ for some $\kappa\in(0,1)$ as follows \begin{multline}
\bigg|\frac{1}{\lfloor \lambda^n\rfloor}\sum_{n\le\lambda^n}e((\alpha^{(k)}_{a,b,q}+t) n\lfloor n\sqrt{k} \rfloor)-\G_k(a,b,q)\F\Big(\frac{b}{2q}\Big)V_{\lambda^n;k}(t)\bigg|
\\
\lesssim_{\kappa}  \lambda^{n(\kappa-1)}+|t|\lambda^{n(1+\kappa)}+q^2(|b|+1)(\log(\lambda^{n})+1)\lambda^{-n\kappa/2}
\\
\lesssim \lambda^{n(\kappa-1)}+\lambda^{n(1+\kappa-2+\gamma')}+n\lambda^{2\gamma n}\lambda^{\gamma n}\lambda^{-n\kappa/2}\lesssim\lambda^{n(\kappa-1+\gamma')}+n\lambda^{n(3\gamma-\kappa/2)}\text{,}
\end{multline}
and we choose $\kappa=\frac{2}{3}(3\gamma-\gamma'+1)\in(0,1)$, so that 
\[
3\gamma-\kappa/2=\kappa-1+\gamma'=\frac{6\gamma+\gamma'-1}{3}\text{,}
\]  
which is negative since $6\gamma+\gamma'<1$ by \eqref{choicesgamma}. Thus we have shown that there exists $\chi>0$ such that for every $\xi\in\mathfrak{M}(\lambda^n)$ \eqref{xinonotation} holds.

For $\xi\in\mathfrak{m}(\lambda^n)$ we note that one the one hand by Proposition~$\ref{minarcnew}$ we have that there exists $\chi>0$ such that
\[
\bigg|\frac{1}{\lfloor \lambda^n\rfloor}\sum_{n\le\lambda^n}e(\xi n\lfloor n\sqrt{k} \rfloor)\bigg|\lesssim \lambda^{-\chi n}\text{,}
\]
and on the other hand
\begin{equation}\label{minorarcform}
\bigg|\sum_{(a,b,q)\in\bigcup_{0\le s,t\le \gamma n}\mathcal{P}_{s,t}}\G_k(a,b,q)\F\Big(\frac{b}{2q}\Big)\widetilde{V}_{\lambda^n;k}(\xi-\alpha^{(k)}_{a,b,q})\eta\big(\lambda^{(2-\gamma')n}\|\xi-\alpha^{(k)}_{a,b,q}\|\big)\bigg|=0\text{,}
\end{equation}
unless there exists a unique $(a,b,q)\in\mathbb{N}_0\times\mathbb{Z}\times \mathbb{N}$ with $q\le\lambda^{\gamma n}$, $|b|\le \lambda^{\gamma n}$, $a\in\{0,\cdots,q-1\}$ and $\gcd(a,b,q)=1$ such that
$\|\xi-\alpha^{(k)}_{a,b,q}\|\le \lambda^{(-2+\gamma')n+1}$. Since $\xi\in\mathfrak{m}(\lambda^n)$ we get that $\|\xi-\alpha^{(k)}_{a,b,q}\|>\lambda^{(-2+\gamma')n}$, and thus the expression in the left-hand side of \eqref{minorarcform} becomes for $n\gtrsim 1$
\begin{multline}
\big|\G_k(a,b,q)\F\Big(\frac{b}{2q}\Big)\widetilde{V}_{\lambda^n;k}(\xi-\alpha^{(k)}_{a,b,q})\eta\big(\lambda^{(2-\gamma')n}\|\xi-\alpha^{(k)}_{a,b,q}\|\big)\big|
\\
\le |\widetilde{V}_{\lambda^n;k}(\xi-\alpha^{(k)}_{a,b,q})|\lesssim (\lambda^{(-2+\gamma')n}\lambda^{2n})^{-1/2}=\lambda^{-\gamma'n/2}\text{,}
\end{multline}
where for the last estimate one may use an argument as in \eqref{periodicarg} and Lemma~$\ref{frVXbounds}$. The proof is complete.
\end{proof}
\section{Proof of Theorem~$\ref{osc}$: Oscillation Estimates on $L^2$ for lacunary scales}\label{oscsection}
The goal of this section is to establish Theorem~$\ref{osc}$ and through Calder\'on's transference principle it suffices to prove the following.
\begin{proposition}[Oscillation estimates along lacunary scales on $\ell^2(\mathbb{Z})$]\label{goalosc}Let $k\in\mathbb{Q}_{>0}$ with $\sqrt{k}\not\in\mathbb{Q}$, $\lambda\in(1,2]$. Then there exist a positive constant $C=C(k,\lambda)$ such that for all $f\in\ell^2(\mathbb{Z})$ we have
\begin{equation}\label{ell2osc}
\sup_{J\in\mathbb{N}}\sup_{I\in\mathfrak{S}_J(\mathbb{N}_0)}\|O^2_{I,J}(A_{\lambda^n;k}f:n\in\mathbb{N}_0)\|_{\ell^2(\mathbb{Z})}\le C\|f\|_{\ell^2(\mathbb{Z})}\text{,}
\end{equation}
where $A_{\lambda^n;k}$ is defined in\eqref{avop}.
\end{proposition}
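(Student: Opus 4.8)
The plan is to use Proposition~$\ref{cmethod}$ to replace the arithmetic averages $A_{\lambda^n;k}$ by the explicit approximating multiplier operators $B_n\coloneqq T_{\mathbb{Z}}[\mathfrak{m}_n]$, where
\[
\mathfrak{m}_n(\xi)=\sum_{(a,b,q)\in\bigcup_{0\le s,t\le \gamma n}\mathcal{P}_{s,t}}\G_k(a,b,q)\F\Big(\tfrac{b}{2q}\Big)\widetilde{V}_{\lambda^n;k}(\xi-\alpha^{(k)}_{a,b,q})\eta\big(\lambda^{(2-\gamma')n}\|\xi-\alpha^{(k)}_{a,b,q}\|\big)\text{.}
\]
Since the oscillation seminorm $O^2_{I,J}$ obeys a triangle-type inequality, and since $\|O^2_{I,J}(A_{\lambda^n;k}f-B_nf:n)\|_{\ell^2}^2\le\sum_{n\ge0}\|(A_{\lambda^n;k}-B_n)f\|_{\ell^2}^2\lesssim\sum_n\lambda^{-2\chi n}\|f\|_{\ell^2}^2\lesssim\|f\|_{\ell^2}^2$ by Proposition~$\ref{cmethod}$, it suffices to prove \eqref{ell2osc} with $A_{\lambda^n;k}$ replaced by $B_n$. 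So from now on the problem is purely a statement about the Fourier multipliers $\mathfrak{m}_n$.

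The next stage is a sequence of reductions in the spirit of Bourgain's argument in section~4 of \cite{Bourgain}, exploiting the spatial separation of the centers $\alpha^{(k)}_{a,b,q}$ afforded by Lemma~$\ref{seperationgeneral}$ (for the fixed choices \eqref{choicesgamma} of $\gamma,\gamma'$). First one localizes in the denominator: write $\mathfrak{m}_n=\sum_{s,t\ge0}\mathfrak{m}_{n,s,t}$ where $\mathfrak{m}_{n,s,t}$ collects only the $(a,b,q)\in\mathcal{P}_{s,t}$ with $s,t\le\gamma n$, and use the decay $|\G_k(a,b,q)|\lesssim_k q^{-1/2}\lesssim\lambda^{-s/2}$ from Proposition~$\ref{GaussSumstype}$ together with $|\F(b/2q)|\lesssim\min(1,|b/q|^{-1/2})$ from Lemma~$\ref{frVXbounds}$ to sum a geometric series in $(s,t)$, reducing matters to a uniform-in-$(s,t)$ oscillation bound $\lesssim 2^{-c(s+t)}$ for the operators with symbol $\mathfrak{m}_{n,s,t}$, for those $n\ge(s\vee t)/\gamma$. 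At a fixed $(s,t)$, the centers $\alpha^{(k)}_{a,b,q}$, $(a,b,q)\in\mathcal{P}_{s,t}$, are $\gtrsim\lambda^{-(d+1)s-(d-1)t}$-separated (with $d=2$), so after choosing a partition of unity, the operator $\mathfrak{m}_{n,s,t}$ is a sum of modulated copies of a single operator built from $\widetilde{V}_{\lambda^n;k}(\cdot)\eta(\lambda^{(2-\gamma')n}\|\cdot\|)$, centered at well-spaced frequencies. Because $V_{X;k}$ is (a periodization of) the Fourier transform of the smooth density $\frac{1}{N}\int_0^N e(t\sqrt k x^2)\,dx$, each such piece is, after rescaling, a smooth-bump dilation operator on $\mathbb{R}$, and the whole family is a \emph{multifrequency} version of the standard dyadic averaging operator. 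The target is then a maximal/oscillation estimate on $L^2(\mathbb{R})$ for this multifrequency operator; here one transfers from $\ell^2(\mathbb{Z})$ to $L^2(\mathbb{R})$ via the separation, and invokes Bourgain's logarithmic lemma (Lemma~4.13 in \cite{Bourgain}) to control the contribution of $R$ distinct frequencies by a factor $\lesssim(\log R)^{O(1)}$ — which is harmless against the geometric gain $2^{-c(s+t)}$ and against the fact that $R\lesssim\lambda^{Cs+Ct}$.

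To turn the maximal bound into the stronger \emph{oscillation} bound one splits each oscillation jump $|B_nf-B_{I_j}f|$ on $[I_j,I_{j+1})$ into a ``long jump'' part $|B_{I_{j+1}}f-B_{I_j}f|$ (handled by a square-function/telescoping estimate using the smoothness-in-$n$ of $\widetilde V_{\lambda^n;k}$, i.e.\ $\|\widetilde V_{\lambda^{n+1};k}-\widetilde V_{\lambda^n;k}\|$ is summable in an $L^2$-sense after localization) and a ``short oscillation'' part controlled by the maximal function of the telescoped differences; this is exactly the scheme of Proposition~2.8 and the surrounding discussion in \cite{MOE}, combined with the multifrequency maximal estimate just described. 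The main obstacle I expect is the last step: producing a \emph{clean} maximal inequality on $L^2(\mathbb{R})$ for the multifrequency smooth-dilation operator with only polynomial-in-$(\log\,\#\text{frequencies})$ loss, uniformly in the scale cutoff $\lambda^{(2-\gamma')n}$ appearing in $\eta$, and checking that the number of relevant frequencies at level $(s,t)$ genuinely is at most $\lambda^{O(s+t)}$ so that Bourgain's logarithmic lemma beats it — this is where the precise spacing from Lemma~$\ref{seperationgeneral}$ and the decay of $\G_k,\F$ must be balanced against each other, and where essentially all the analytic work of section~$\ref{oscsection}$ is concentrated.
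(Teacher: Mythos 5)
Your proposal follows essentially the same route as the paper: pass to the circle-method approximant via a square-function estimate from Proposition~\ref{cmethod}, decompose over the $(s,t)$-families $\mathcal{P}_{s,t}$ and beat the frequency count $|\mathcal{P}_{s,t}|\lesssim\lambda^{2s+t}$ with the decay of $\G_k$ and $\F$, reduce the oscillation seminorm to a multifrequency maximal estimate via smooth cutoffs/telescoping, and conclude on $L^2(\mathbb{R})$ by transference and Bourgain's logarithmic Lemma~4.13 — which is exactly how section~\ref{oscsection} proceeds (the paper organizes the extraction of the $\min(\lambda^{-s/2},\lambda^{-t/2})$ gain through the factorization Lemma~\ref{factorizationlemma1} and replaces $\widetilde V_{\lambda^n;k}$ by smooth bumps via square-summable differences, but these are implementations of the steps you describe). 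The plan is sound and matches the paper's argument in structure and key inputs.
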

Proposition~$\ref{goalosc}$ immediately implies Theorem~$\ref{osc}$, which as explained in the introduction yields Theorem~$\ref{stronger}$ and thus Theorem~$\ref{Main}$.

Apart from Proposition~$\ref{cmethod}$, we will also be requiring the following estimate
\begin{equation}\label{easygfnow}
\bigg|G_k(a,b,q)F\bigg(\frac{b}{2q}\bigg)\bigg|\lesssim_k q^{-1/2}\min\big(1,|b|^{-1/2}q^{1/2}\big)=\min\big(q^{-1/2},|b|^{-1/2}\big)\text{,}
\end{equation}
for all $(a,b,q)\in\mathbb{Z}\times\mathbb{Z}\times\mathbb{N}$ with $\gcd(a,b,q)=1$, which is immediate from Proposition~$\ref{GaussSumstype}$ and Lemma~$\ref{frVXbounds}$.

The proof of Proposition~$\ref{goalosc}$ will be carried out in steps which are performed in the following subsections. Before we proceed we introduce some notation. For every $s,t,n\in\mathbb{N}_0$ and a pair of functions $g\colon \mathbb{Z}\times \mathbb{Z}\times \mathbb{N}$ and $h\colon \mathbb{T}\to\mathbb{C}$, we define the function $\Pi_{s,t,\le n}^{g,h}\colon \mathbb{T}\to\mathbb{C}$ by
\[
\Pi_{s,t,\le n}^{g,h}(\xi)\coloneqq \sum_{(a,b,q)\in\mathcal{P}_{s,t}}g(a,b,q)h(\xi-\alpha^{(k)}_{a,b,q})\eta(\lambda^{(2-\gamma')n}\|\xi-\alpha^{(k)}_{a,b,q}\|)\text{,}
\]
we remind the reader that we have fixed $\gamma,\gamma'$ in \eqref{choicesgamma}. Note that $\Pi_{s,t,\le n}^{g,h}$ also depends on $k, \lambda$ but we choose to suppress the dependence here. 
Choosing $g_k(a,b,q)\coloneqq \G_k(a,b,q)\F\big(\frac{b}{2q}\big)$, equation \eqref{nonotationapproximation} becomes
\[
\Big\|A_{\lambda^n;k}f-T_{\mathbb{Z}}\Big[\sum_{0\le s,t\le \gamma n}\Pi_{s,t,\le n}^{g_k,\widetilde{V}_{\lambda^n;k}}\Big]f\Big\|_{\ell^2(\mathbb{Z})}\le C\lambda^{-\chi n}\|f\|_{\ell^2(\mathbb{Z})}\text{.}
\]
\subsection{Passing to our first approximant}
A standard square function estimate utilizing Proposition~$\ref{cmethod}$ allows us to deduce the estimate \eqref{ell2osc} from the analogous one for the approximant derived from the circle method. More precisely, we prove the following lemma.
\begin{lemma}\label{tidying}Assume $k\in\mathbb{Q}_{>0}$ is such that $\sqrt{k}\not\in\mathbb{Q}$ and $\lambda\in(1,2]$. Then there exist a positive constant $C=C(k,\lambda)$ such that for all $f\in\ell^2(\mathbb{Z})$ we have
\begin{multline}\label{tidyingest}
\sup_{J\in\mathbb{N}}\sup_{I\in\mathfrak{S}_J(\mathbb{N}_0)}\|O^2_{I,J}(A_{\lambda^n;k}f:n\in\mathbb{N}_0)\|_{\ell^2(\mathbb{Z})}
\\
\le C\sup_{J\in\mathbb{N}}\sup_{I\in\mathfrak{S}_J(\mathbb{N}_0)}\Big\|O^2_{I,J}\Big(T_{\mathbb{Z}}\Big[\sum_{0\le s,t\le \gamma n}\Pi_{s,t,\le n}^{g_k,\widetilde{V}_{\lambda^n;k}}\Big]f:n\in\mathbb{N}_0\Big)\Big\|_{\ell^2(\mathbb{Z})}+C\|f\|_{\ell^2(\mathbb{Z})}\text{.}
\end{multline}
\end{lemma}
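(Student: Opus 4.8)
The plan is to reduce \eqref{tidyingest} to a square-function bound using the triangle inequality for $2$-oscillations together with the $\ell^2$-estimate from Proposition~\ref{cmethod}. Write $D_n f \coloneqq A_{\lambda^n;k}f - T_{\mathbb{Z}}\big[\sum_{0\le s,t\le\gamma n}\Pi_{s,t,\le n}^{g_k,\widetilde V_{\lambda^n;k}}\big]f$ for the ``error'' operator. By the subadditivity of the oscillation seminorm (see subsections~2.6--2.7 in \cite{MOE}), for any $J\in\mathbb N$ and $I\in\mathfrak S_J(\mathbb N_0)$ we have pointwise
\[
O^2_{I,J}(A_{\lambda^n;k}f:n\in\mathbb N_0)\le O^2_{I,J}\Big(T_{\mathbb{Z}}\Big[\textstyle\sum_{0\le s,t\le\gamma n}\Pi_{s,t,\le n}^{g_k,\widetilde V_{\lambda^n;k}}\Big]f:n\in\mathbb N_0\Big)+O^2_{I,J}(D_nf:n\in\mathbb N_0).
\]
After taking $L^2_\mu$ norms and the supremum over $I,J$, it remains to bound the contribution of the error term by $C\|f\|_{\ell^2(\mathbb Z)}$.

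First I would dominate the error oscillation by the full square function: for every choice of $I=\{I_0<\dots<I_J\}$ and every $x$,
\[
O^2_{I,J}(D_nf:n\in\mathbb N_0)^2=\sum_{j=0}^{J-1}\sup_{I_j\le n<I_{j+1}}|D_nf(x)-D_{I_j}f(x)|^2\le 4\sum_{n=0}^\infty |D_nf(x)|^2,
\]
where the last inequality uses $|D_nf-D_{I_j}f|^2\le 2|D_nf|^2+2|D_{I_j}f|^2$, the fact that the intervals $[I_j,I_{j+1})$ are disjoint, and that each index $I_j$ is counted at most once. Hence
\[
\sup_{J\in\mathbb N}\sup_{I\in\mathfrak S_J(\mathbb N_0)}\big\|O^2_{I,J}(D_nf:n\in\mathbb N_0)\big\|_{\ell^2(\mathbb Z)}\le 2\Big\|\Big(\sum_{n=0}^\infty |D_nf|^2\Big)^{1/2}\Big\|_{\ell^2(\mathbb Z)}=2\Big(\sum_{n=0}^\infty\|D_nf\|_{\ell^2(\mathbb Z)}^2\Big)^{1/2}.
\]

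Now I would invoke Proposition~\ref{cmethod}: there are constants $C_0=C_0(k,\lambda)$ and $\chi>0$ with $\|D_nf\|_{\ell^2(\mathbb Z)}\le C_0\lambda^{-\chi n}\|f\|_{\ell^2(\mathbb Z)}$ for all $n\in\mathbb N_0$. Summing the geometric series $\sum_{n\ge 0}\lambda^{-2\chi n}<\infty$ (here is where lacunarity of the scales $\lambda^n$ is essential — the same argument would fail for the full scale $N$), we obtain
\[
\Big(\sum_{n=0}^\infty\|D_nf\|_{\ell^2(\mathbb Z)}^2\Big)^{1/2}\le C_0\Big(\sum_{n=0}^\infty\lambda^{-2\chi n}\Big)^{1/2}\|f\|_{\ell^2(\mathbb Z)}=C_1\|f\|_{\ell^2(\mathbb Z)},
\]
with $C_1=C_1(k,\lambda)$. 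Combining this with the oscillation triangle inequality above yields \eqref{tidyingest} with $C=\max(1,2C_1)$.

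There is no real obstacle here: the argument is the standard device of trading an oscillation estimate for the approximant plus a summable $\ell^2$-error, and every ingredient (subadditivity of $O^2_{I,J}$, the crude square-function bound on the error oscillation, and the geometric summation of the Proposition~\ref{cmethod} estimates) is routine. The only point worth flagging is that the reduction is valid precisely because we work along the lacunary scales $(\lambda^n)_{n\in\mathbb N_0}$, which makes the decay $\lambda^{-\chi n}$ summable; handling the transition from lacunary to all scales is deferred to the arguments following Theorem~\ref{osc} in the introduction and is not part of this lemma.
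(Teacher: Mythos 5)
Your proposal is correct and follows essentially the same route as the paper: split off the approximant via subadditivity of the oscillation seminorm, dominate the oscillation of the error $D_nf$ by the square function $\big(\sum_n|D_nf|^2\big)^{1/2}$, and sum the geometrically decaying $\ell^2$-bounds from Proposition~\ref{cmethod}. No gaps.
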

\begin{proof}
For the sake of exposition, we write
\begin{equation}\label{notationsimpler}
A_{\lambda^n}=A_{\lambda^n;k}\text{,}\quad g=g_k\text{,}\quad \widetilde{V}_{\lambda^n}=\widetilde{V}_{\lambda^n;k}\quad\text{and we allow all implicit constants to depend on $k,\lambda$.} 
\end{equation}
We have that
\begin{multline}
\sup_{J\in\mathbb{N}}\sup_{I\in\mathfrak{S}_J(\mathbb{N}_0)}\|O^2_{I,J}(A_{\lambda^n}f:n\in\mathbb{N}_0)\|_{\ell^2(\mathbb{Z})}
\\
\lesssim\sup_{J\in\mathbb{N}}\sup_{I\in\mathfrak{S}_J(\mathbb{N}_0)}\Big\|O^2_{I,J}\Big(T_{\mathbb{Z}}\Big[\sum_{0\le s,t\le \gamma n}\Pi_{s,t,\le n}^{g,\widetilde{V}_{\lambda^n}}\Big]f:n\in\mathbb{N}_0\Big)\Big\|_{\ell^2(\mathbb{Z})}
\\
+\sup_{J\in\mathbb{N}}\sup_{I\in\mathfrak{S}_J(\mathbb{N}_0)}\Big\|O^2_{I,J}\Big(A_{\lambda^n}f-T_{\mathbb{Z}}\Big[\sum_{0\le s,t\le \gamma n}\Pi_{s,t,\le n}^{g,\widetilde{V}_{\lambda^n}}\Big]f:n\in\mathbb{N}_0\Big)\Big\|_{\ell^2(\mathbb{Z})}\text{.}
\end{multline}
The second summand can be bounded as follows
\begin{multline}
\sup_{J\in\mathbb{N}}\sup_{I\in\mathfrak{S}_J(\mathbb{N}_0)}\Big\|O^2_{I,J}\Big(A_{\lambda^n}f-T_{\mathbb{Z}}\Big[\sum_{0\le s,t\le \gamma n}\Pi_{s,t,\le n}^{g,\widetilde{V}_{\lambda^n}}\Big]f:n\in\mathbb{N}_0\Big)\Big\|_{\ell^2(\mathbb{Z})}
\\
\lesssim \Big\|\Big(\sum_{n\in\mathbb{N}_0}\Big|A_{\lambda^n}f-T_{\mathbb{Z}}\Big[\sum_{0\le s,t\le \gamma n}\Pi_{s,t,\le n}^{g,\widetilde{V}_{\lambda^n}}\Big]f\Big|^2\Big)^{1/2}\Big\|_{\ell^2(\mathbb{Z})}=\bigg(\sum_{n\in\mathbb{N}_0}\Big\|A_{\lambda^n}f-T_{\mathbb{Z}}\Big[\sum_{0\le s,t\le \gamma n}\Pi_{s,t,\le n}^{g,\widetilde{V}_{\lambda^n}}\Big]f\Big\|_{\ell^2(\mathbb{Z})}^2\bigg)^{1/2}
\\
\lesssim \Big(\sum_{n\in\mathbb{N}_0}\lambda^{-2\chi n}\|f\|^2_{\ell^2(\mathbb{Z})}\Big)^{1/2}\lesssim \|f\|_{\ell^2(\mathbb{Z})}\text{,}
\end{multline}
where $\chi>0$ is the constant from Proposition~$\ref{cmethod}$ which we have used for the penultimate estimate.
\end{proof}
\subsection{Fixing the scale of the $(s,t)$-parameters} From the previous proposition it is clear that one may focus on establishing the oscillation estimates for the operator derived by the approximant multiplier. We formulate a proposition below which will immediately imply this estimate. For convenience for every $s,t\in\mathbb{N}_0$ we let
\begin{equation}\label{mstdef}
m_{s,t}\coloneqq \max(s,t)/\gamma\text{.} 
\end{equation}
\begin{proposition}\label{beginningofthedance2}
Assume $k\in\mathbb{Q}_{>0}$ is such that $\sqrt{k}\not\in\mathbb{Q}$ and $\lambda\in(1,2]$. Then there exist a positive constant $C=C(k,\lambda)$ and a function $\varphi_{\lambda}\colon \mathbb{N}_0\times\mathbb{N}_0\to[0,\infty)$ such that 
\begin{equation}\label{phiadmissible}
\sum_{s,t\in\mathbb{N}_0}\varphi_{\lambda}(s,t)<\infty\text{,}
\end{equation}
and such that for all $s,t\in\mathbb{N}_0$ and $f\in\ell^2(\mathbb{Z})$ we have
\begin{equation}\label{goaleststfixed}
\sup_{J\in\mathbb{N}}\sup_{I\in\mathfrak{S}(\mathbb{N}_0)}\Big\|O_{I,J}^2\Big(T_{\mathbb{Z}}\big[\Pi_{s,t,\le n}^{g_k,\widetilde{V}_{\lambda^n;k}}1_{n\ge m_{s,t}}\big]f:\,n\in\mathbb{N}_{0}\Big)\Big\|_{\ell^2(\mathbb{Z})}\le C\varphi_{\lambda}(s,t)\|f\|_{\ell^2(\mathbb{Z})}\text{.}
\end{equation}
\end{proposition}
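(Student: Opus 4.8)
The plan is to exploit the arithmetic separation of the centres $\alpha^{(k)}_{a,b,q}$ together with the decay of the coefficients $\G_k(a,b,q)\F(b/(2q))$ in order to reduce \eqref{goaleststfixed}, for a fixed pair $(s,t)$, to a multifrequency oscillation estimate on $L^2(\mathbb{R})$ for dilated convolutions, and then to conclude by Bourgain's logarithmic lemma. Write $g_k(a,b,q)=\G_k(a,b,q)\F(b/(2q))$; by \eqref{easygfnow} we have $|g_k(a,b,q)|\lesssim_k\lambda^{-\max(s,t)/2}$ for every $(a,b,q)\in\mathcal{P}_{s,t}$. Applying Lemma~\ref{seperationverygeneral} with $\alpha=\sqrt{k}$ (so $d=2$), $X=\lambda^s$, $Y=\lambda^t$ shows that the points $\{\alpha^{(k)}_{a,b,q}:(a,b,q)\in\mathcal{P}_{s,t}\}$ are $\gtrsim_k\lambda^{-3s-t}$-separated. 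Since $n\ge m_{s,t}=\max(s,t)/\gamma$ and $(2-\gamma')/\gamma>4$ by \eqref{choicesgamma}, there is a threshold $c_1=c_1(k,\lambda)$ such that whenever $\max(s,t)\ge c_1$ the arcs $\{\|\xi-\alpha^{(k)}_{a,b,q}\|\le\lambda^{1-(2-\gamma')n}\}$, $(a,b,q)\in\mathcal{P}_{s,t}$, are pairwise disjoint and all contained in the fixed, $n$-independent disjoint union $\bigcup_{(a,b,q)\in\mathcal{P}_{s,t}}\{\|\xi-\alpha^{(k)}_{a,b,q}\|<\lambda^{-3s-t}\}$; hence for such $(s,t)$ and $n\ge m_{s,t}$ the multiplier $\Pi_{s,t,\le n}^{g_k,\widetilde{V}_{\lambda^n;k}}$ is a sum of functions with pairwise disjoint, $n$-uniform supports, and in particular $\|T_{\mathbb{Z}}[\Pi_{s,t,\le n}^{g_k,\widetilde{V}_{\lambda^n;k}}]\|_{\ell^2\to\ell^2}\lesssim_k\lambda^{-\max(s,t)/2}$ uniformly in $n$. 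The finitely many pairs with $\max(s,t)<c_1$ will be disposed of at the very end by the triangle inequality applied to the $\lesssim\lambda^{2s+t}$ single-frequency summands, which yields a finite value of $\varphi_\lambda(s,t)$ for each such pair, harmless for \eqref{phiadmissible}.

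Assume from now on that $\max(s,t)\ge c_1$. Factoring the $(a,b,q)$-piece of $T_{\mathbb{Z}}[\Pi_{s,t,\le n}^{g_k,\widetilde{V}_{\lambda^n;k}}1_{n\ge m_{s,t}}]f$ as $g_k(a,b,q)$ times $\mathrm{Mod}_{\alpha^{(k)}_{a,b,q}}T_{\mathbb{Z}}[\mathfrak{n}_n]\mathrm{Mod}_{-\alpha^{(k)}_{a,b,q}}$ applied to the Fourier-restriction $P_{a,b,q}f$ of $f$ to the arc about $\alpha^{(k)}_{a,b,q}$ — where $\mathrm{Mod}_\theta h(x)=e(\theta x)h(x)$ and $\mathfrak{n}_n(\zeta)=\widetilde{V}_{\lambda^n;k}(\zeta)\eta(\lambda^{(2-\gamma')n}\|\zeta\|)$ — the estimate \eqref{goaleststfixed} becomes an oscillation bound for the superposition $\sum_{(a,b,q)}g_k(a,b,q)\,\mathrm{Mod}_{\alpha^{(k)}_{a,b,q}}T_{\mathbb{Z}}[\mathfrak{n}_n]\mathrm{Mod}_{-\alpha^{(k)}_{a,b,q}}f$, in which $\sum_{(a,b,q)}\|P_{a,b,q}f\|_{\ell^2}^2\le\|f\|_{\ell^2}^2$, the frequencies $\alpha^{(k)}_{a,b,q}$ are well separated, and the coefficients are uniformly $\lesssim_k\lambda^{-\max(s,t)/2}$. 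On each tiny arc $\widetilde{V}_{\lambda^n;k}$ coincides with $V_{\lambda^n;k}$ and $\|\cdot\|$ with $|\cdot|$, so by a standard transference in the spirit of \cite{Bourgain} one may work on $L^2(\mathbb{R})$ with the symbols $V_{\lambda^n;k}(\zeta)\eta(\lambda^{(2-\gamma')n}|\zeta|)$; and since $V_{\lambda^n;k}(\zeta)=\Psi(\sqrt{k}\lambda^{2n}\zeta)$ with $\Psi(\sigma)=\int_0^1 e(\sigma u^2)\,du=\mathcal{F}_{\mathbb{R}}[\rho](\sigma)$ and $\rho(v)=\tfrac12 v^{-1/2}1_{(0,1)}(v)$ a fixed density of total mass $1$, the operator $T_{\mathbb{R}}[\Psi(\sqrt{k}\lambda^{2n}\cdot)]$ is convolution against the $\sqrt{k}\lambda^{2n}$-dilate of $\rho$, that is, a lacunary dilation of a single averaging operator. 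The cutoff is removed up to an acceptable error: writing $\eta(\lambda^{(2-\gamma')n}|\cdot|)=1-(1-\eta(\lambda^{(2-\gamma')n}|\cdot|))$, on the support of the second factor one has $|\zeta|\gtrsim\lambda^{-(2-\gamma')n}$, whence $|V_{\lambda^n;k}(\zeta)|\lesssim_k\lambda^{-\gamma'n/2}$ by Lemma~\ref{frVXbounds}, which is summable in $n$, so the associated contribution to the oscillation is $\lesssim\|f\|_{\ell^2}$ by the elementary bound $O^2_{I,J}(a_n)\lesssim(\sum_n|a_n|^2)^{1/2}$.

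It remains to control, on $L^2(\mathbb{R})$ and uniformly in $I,J$, the oscillation along lacunary scales of the modulated superposition $\sum_{(a,b,q)\in\mathcal{P}_{s,t}}g_k(a,b,q)\,T_{\mathbb{R}}\big[\Psi\big(\sqrt{k}\lambda^{2n}(\cdot-\alpha^{(k)}_{a,b,q})\big)\big]f$ of lacunary dilated averages against the fixed kernel $\rho$, whose Fourier transform obeys $|\mathcal{F}_{\mathbb{R}}[\rho](\zeta)|\lesssim\min(1,|\zeta|^{-1/2})$, with $\gtrsim_k\lambda^{-3s-t}$-separated frequencies and coefficients $\lesssim_k\lambda^{-\max(s,t)/2}$. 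This is precisely the kind of object addressed by Bourgain's logarithmic lemma (Lemma~4.13 in \cite{Bourgain}): the separation of the frequencies decouples their contributions at the price of at most a power $\big(1+\log(\#\mathcal{P}_{s,t})\big)^{A}\lesssim\big(1+\log(2+\max(s,t))\big)^{A}$ (with $A$ absolute), and the single-frequency case is the classical oscillation estimate for lacunary dilated averages. One thus obtains a bound $\lesssim_{k,\lambda}\big(1+\log(2+\max(s,t))\big)^{A}\lambda^{-\max(s,t)/2}\|f\|_{\ell^2(\mathbb{Z})}$ for the $(s,t)$-block, so that $\varphi_\lambda(s,t):=C(k,\lambda)\big(1+\log(2+\max(s,t))\big)^{A}\lambda^{-\max(s,t)/2}$, with the obvious modification to a finite value on the finitely many pairs with $\max(s,t)<c_1$, gives \eqref{goaleststfixed}, while $\sum_{s,t}\varphi_\lambda(s,t)\le C(k,\lambda)\sum_{m\ge0}(2m+1)\big(1+\log(2+m)\big)^{A}\lambda^{-m/2}<\infty$ is \eqref{phiadmissible}, the geometric gain $\lambda^{-\max(s,t)/2}$ absorbing any subexponential factor.

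The main obstacle is this last step: effecting the decoupling of the block of separated frequencies with only a logarithmic loss in $\#\mathcal{P}_{s,t}$, transferring the frequency-localised problem to $L^2(\mathbb{R})$, and controlling the several error terms — above all the mismatch between the cutoff scale $\lambda^{-(2-\gamma')n}$ and the natural scale $\lambda^{-2n}$ of $V_{\lambda^n;k}$ — by square-function estimates fed by the decay in Lemma~\ref{frVXbounds}. The remaining ingredients (the coefficient and separation bounds, the factorisation $V_{\lambda^n;k}(\zeta)=\Psi(\sqrt{k}\lambda^{2n}\zeta)$, and the identification of the model as lacunary dilated averages) are routine.
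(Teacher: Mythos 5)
Your overall architecture (coefficient decay from \eqref{easygfnow}, frequency separation from Lemma~\ref{seperationverygeneral}, square-function control of the various approximation errors, and Bourgain's logarithmic lemma for the multifrequency block) is the same as the paper's, but the step you dismiss as "precisely the kind of object addressed by Bourgain's logarithmic lemma" contains a genuine gap: Lemma~4.13 in \cite{Bourgain} is a \emph{maximal} inequality for $\sup_n$ of multifrequency dilated smooth convolutions, not an oscillation inequality, and there is no off-the-shelf statement that decouples a $2$-oscillation $O^2_{I,J}$ across $\gtrsim\lambda^{\max(s,t)}$ separated frequencies at only a logarithmic price. The passage from \eqref{goaleststfixed} to a maximal estimate of the form \eqref{maxestmst} is exactly the hard part of the paper's argument: one first factors out the arithmetic coefficients (Lemma~\ref{factorizationlemma1}), freezes the arc cutoff at the $n$-independent scale $m_{s,t}$ (Lemma~\ref{freezethelen}), replaces $\widetilde{V}_{\lambda^n;k}$ by cutoffs $\psi^{(\lambda)}_n$ enjoying the projection property $\psi^{(\lambda)}_n\psi^{(\lambda)}_m=\psi^{(\lambda)}_{\max(n,m)}$, and then uses this property, together with telescoping bounds such as $\sum_n|\psi^{(\lambda)}_{n+1}-\psi^{(\lambda)}_n|\lesssim1$ and orthogonality of the disjointly supported pieces, to dominate the oscillation by one multifrequency maximal function applied to the differences along the endpoints $I_j$ plus square functions (Proposition~\ref{projpassagetomax}); only after a further approximation by $\chi^{(\lambda)}_n$ built from a $\phi$ with compactly supported $\mathcal{F}_{\mathbb{R}}[\phi]$ does Lemma~4.13 become applicable (Propositions~\ref{projpassagetomaxnew} and \ref{lastestimate}). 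Your proposal asserts the conclusion of this whole chain ("the single-frequency case is the classical oscillation estimate" plus a claimed logarithmic decoupling) without supplying the mechanism, and in particular your kernel $\rho(v)=\tfrac12 v^{-1/2}1_{(0,1)}$ is not an admissible bump for the logarithmic lemma, so even the maximal part would need the extra smoothing step.

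Two smaller points. First, your quantitative claim $\big(1+\log\#\mathcal{P}_{s,t}\big)^A\lesssim\big(1+\log(2+\max(s,t))\big)^A$ is wrong: $\#\mathcal{P}_{s,t}\lesssim\lambda^{2s+t}$, so the logarithm is of size $\sim(s+t)\log\lambda$, i.e.\ polynomial in $\max(s,t)$; this is harmless for summability (the factor $\lambda^{-\max(s,t)/2}$ absorbs it, as in the paper's choice $\varphi_\lambda(s,t)=(s^2+t^2+1)\min(\lambda^{-s/2},\lambda^{-t/2})$) but should be corrected. Second, structurally the paper does not feed the coefficient decay into the multifrequency estimate as you do; it factors $\Pi^{g_k,\widetilde{V}_{\lambda^n;k}}_{s,t,\le n}=\Pi^{1,\widetilde{V}_{\lambda^n;k}}_{s,t,\le n}\Pi^{g_k,1}_{s,t,\le m_{s,t}-1}$ and pulls out the coefficient multiplier as a single bounded operator of norm $\lesssim\min(\lambda^{-s/2},\lambda^{-t/2})$, proving the coefficient-free oscillation bound with loss $(s^2+t^2+1)$; your variant would be fine if the multifrequency oscillation estimate were actually established, which is precisely what is missing.
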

The following subsections are devoted to proving Proposition~$\ref{beginningofthedance2}$ and here we simply show that it immediately yields Proposition~$\ref{goalosc}$.
\begin{proof}[Proof of Proposition~$\ref{goalosc}$ using Proposition~$\ref{beginningofthedance2}$]
We again consider $k,\lambda$ fixed and we proceed suppressing the dependence in those parameters as in \eqref{notationsimpler}. Assuming that Proposition~$\ref{beginningofthedance2}$ holds, for every $J\in\mathbb{N}$ and $I=\{I_0<\dotsc<I_{J}\}\in\mathfrak{S}_{J}(\mathbb{N}_0)$ we have
\begin{multline}\label{longestimate}
\Big\|O^2_{I,J}\Big(T_{\mathbb{Z}}\Big[\sum_{0\le s,t\le \gamma n}\Pi_{s,t,\le n}^{g,\widetilde{V}_{\lambda^n}}\Big]f:n\in\mathbb{N}_0\Big)\Big\|_{\ell^2(\mathbb{Z})}
\\
=\bigg\|\bigg(\sum_{j=0}^{J-1}\sup_{I_{j}\le n<I_{j+1}}\Big|T_{\mathbb{Z}}\Big[\sum_{s,t\in\mathbb{N}_0}1_{\gamma n\ge \max(s,t)}\Pi_{s,t,\le n}^{g,\widetilde{V}_{\lambda^n}}\Big]f-T_{\mathbb{Z}}\Big[\sum_{s,t\in\mathbb{N}_0}1_{\gamma I_j\ge \max(s,t)}\Pi_{s,t,\le I_j}^{g,\widetilde{V}_{\lambda^{I_j}}}\Big]f\Big|^2\bigg)^{1/2}\bigg\|_{\ell^2(\mathbb{Z})}
\\
=\bigg\|\bigg(\sum_{j=0}^{J-1}\sup_{I_{j}\le n<I_{j+1}}\Big|\sum_{s,t\in\mathbb{N}_0}T_{\mathbb{Z}}\big[1_{n\ge m_{s,t}}\Pi_{s,t,\le n}^{g,\widetilde{V}_{\lambda^n}}\big]f-T_{\mathbb{Z}}\big[1_{I_j\ge m_{s,t}}\Pi_{s,t,\le I_j}^{g,\widetilde{V}_{\lambda^{I_j}}}\big]f\Big|^2\bigg)^{1/2}\bigg\|_{\ell^2(\mathbb{Z})}
\\
\le\bigg\|\bigg(\sum_{j=0}^{J-1}\Big(\sum_{s,t\in\mathbb{N}_0}\sup_{I_{j}\le n<I_{j+1}}\big|T_{\mathbb{Z}}\big[1_{n\ge m_{s,t}}\Pi_{s,t,\le n}^{g,\widetilde{V}_{\lambda^n}}\big]f-T_{\mathbb{Z}}\big[1_{I_j\ge\max(s,t)/\gamma}\Pi_{s,t,\le I_j}^{g,\widetilde{V}_{\lambda^{I_j}}}\big]f\big|\Big)^2\bigg)^{1/2}\bigg\|_{\ell^2(\mathbb{Z})}
\\
\le\bigg\|\sum_{s,t\in\mathbb{N}_0}\Big(\sum_{j=0}^{J-1}\sup_{I_{j}\le n<I_{j+1}}\big|T_{\mathbb{Z}}\big[1_{n\ge m_{s,t}}\Pi_{s,t,\le n}^{g,\widetilde{V}_{\lambda^n}}\big]f-T_{\mathbb{Z}}\big[1_{I_j\ge m_{s,t}}\Pi_{s,t,\le I_j}^{g,\widetilde{V}_{\lambda^{I_j}}}\big]f\big|^2\Big)^{1/2}\bigg\|_{\ell^2(\mathbb{Z})}
\\
\le\sum_{s,t\in\mathbb{N}_0}\bigg\|\Big(\sum_{j=0}^{J-1}\sup_{I_{j}\le n<I_{j+1}}\big|T_{\mathbb{Z}}\big[1_{n\ge m_{s,t}}\Pi_{s,t,\le n}^{g,\widetilde{V}_{\lambda^n}}\big]f-T_{\mathbb{Z}}\big[1_{I_j\ge m_{s,t}}\Pi_{s,t,\le I_j}^{g,\widetilde{V}_{\lambda^{I_j}}}\big]f\big|^2\Big)^{1/2}\bigg\|_{\ell^2(\mathbb{Z})}
\\
=\sum_{s,t\in\mathbb{N}_0}\Big\|O_{I,J}^2\Big(T_{\mathbb{Z}}\big[\Pi_{s,t,\le n}^{g,\widetilde{V}_{\lambda^n}}1_{n\ge m_{s,t}}\big]f:\,n\in\mathbb{N}_{0}\Big)\Big\|_{\ell^2(\mathbb{Z})}\lesssim\sum_{s,t\in\mathbb{N}_0}\varphi_{\lambda}(s,t)\|f\|_{\ell^2(\mathbb{Z})}\lesssim\|f\|_{\ell^2(\mathbb{Z})}\text{.}
\end{multline}
Taking suprema in $I\in\mathfrak{S}_J(\mathbb{N}_0)$ and $J\in\mathbb{N}$, and taking into account Lemma~$\ref{tidying}$, we immediately obtain the estimate \eqref{ell2osc} and the proof is complete.
\end{proof}
\subsection{The factorization}For the next reduction we will prove and exploit the fact that for every $n\ge m_{s,t}$ we have
\begin{equation}\label{projpure}
\Pi^{g_k,\widetilde{V}_{\lambda^n;k}}_{s,t,\le n}(\xi)=\Pi^{1,\widetilde{V}_{\lambda^n;k}}_{s,t,\le n}(\xi)\Pi^{g_k,1}_{s,t,\le m_{s,t}-1}(\xi)\text{,}
\end{equation}
for all $s,t$ such that $\max(s,t)\gtrsim1$. We formulate the factorization result in the following lemma.
\begin{lemma}\label{factorizationlemma1}
Let $k\in\mathbb{Q}_{>0}$ with $\sqrt{k}\not\in\mathbb{Q}$ and $\lambda\in(1,2]$. Then there exist a positive constant $C=C(k,\lambda)$
such that for all $s,t\in\mathbb{N}_0$ with $m_{s,t}\ge C$, $n\in\mathbb{N}$ with $n\ge m_{s,t}$ and $\xi\in[-1/2,1/2)$ we have
\begin{equation}\label{projpurelemma}
\Pi^{g_k,\widetilde{V}_{\lambda^n;k}}_{s,t,\le n}(\xi)=\Pi^{1,\widetilde{V}_{\lambda^n;k}}_{s,t,\le n}(\xi)\Pi^{g_k,1}_{s,t,\le m_{s,t}-1}(\xi)\text{.}
\end{equation}
\end{lemma}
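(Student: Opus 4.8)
The plan is to prove the pointwise factorization \eqref{projpurelemma} by expanding the definition of $\Pi^{g_k,\widetilde V_{\lambda^n;k}}_{s,t,\le n}$ and carefully matching the index sets $\mathcal P_{s,t}$ and $\mathcal P_{s',t'}$ appearing on the two sides. Recall
\[
\Pi^{g_k,\widetilde V_{\lambda^n;k}}_{s,t,\le n}(\xi)=\sum_{(a,b,q)\in\mathcal P_{s,t}}\G_k(a,b,q)\F\!\Big(\tfrac{b}{2q}\Big)\widetilde V_{\lambda^n;k}(\xi-\alpha^{(k)}_{a,b,q})\,\eta\big(\lambda^{(2-\gamma')n}\|\xi-\alpha^{(k)}_{a,b,q}\|\big),
\]
so the left-hand side is a sum of "bumps" localized near the points $\alpha^{(k)}_{a,b,q}$, $(a,b,q)\in\mathcal P_{s,t}$, each of width $\asymp\lambda^{-(2-\gamma')n}$. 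On the right, $\Pi^{1,\widetilde V_{\lambda^n;k}}_{s,t,\le n}$ is supported on the same union of bumps, while $\Pi^{g_k,1}_{s,t,\le m_{s,t}-1}$ is a sum over $(a',b',q')\in\mathcal P_{s,t}$ of the constants $\G_k(a',b',q')\F(b'/(2q'))$ times $\eta\big(\lambda^{(2-\gamma')(m_{s,t}-1)}\|\xi-\alpha^{(k)}_{a',b',q'}\|\big)$, which are wider bumps of width $\asymp\lambda^{-(2-\gamma')(m_{s,t}-1)}$. The key point is that $\mathcal P_{s,t}$ is the \emph{same} index set in all three factors: $(a,b,q)\in\mathcal P_{s,t}$ means precisely $q\in(\lambda^{s-1},\lambda^s]$, $b\in A(t)$, $a\in\{0,\dots,q-1\}$, $\gcd(a,b,q)=1$.

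First I would invoke the disjointness of supports. Applying Lemma~\ref{seperationgeneral} with $\alpha=\sqrt k$, $d=2$, $\gamma_1=\gamma_2=\gamma$ and $\gamma_3$ chosen so that $3\gamma+(2-\gamma')<$ the relevant threshold — which holds by the choice \eqref{choicesgamma}, since $6\gamma+\gamma'<1$ — one gets that for $m_{s,t}\ge C$ the centers $\{\alpha^{(k)}_{a,b,q}:(a,b,q)\in\mathcal P_{s,t}\}$ are pairwise separated by more than, say, $2\lambda^{-(2-\gamma')(m_{s,t}-1)}$ on $\mathbb T$ (note $q,|b|\le\lambda^{\gamma m_{s,t}}\le \lambda^{\gamma n}$ for all $(a,b,q)\in\mathcal P_{s,t}$ since $s,t\le\gamma m_{s,t}$). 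Hence at most one term in each of the three sums is nonzero at a given $\xi$, and moreover the \emph{wider} bump $\eta\big(\lambda^{(2-\gamma')(m_{s,t}-1)}\|\cdot-\alpha^{(k)}_{a,b,q}\|\big)$ still only overlaps the single center $\alpha^{(k)}_{a,b,q}$. Thus fix $\xi$ and consider two cases. If $\xi$ lies in none of the bumps $\{\eta(\lambda^{(2-\gamma')(m_{s,t}-1)}\|\xi-\alpha^{(k)}_{a,b,q}\|)\ne 0\}$, then $\Pi^{g_k,1}_{s,t,\le m_{s,t}-1}(\xi)=0$; and since $\lambda^{-(2-\gamma')n}\le\lambda^{-(2-\gamma')(m_{s,t}-1)}$ the narrower bump vanishes too, so $\Pi^{g_k,\widetilde V_{\lambda^n;k}}_{s,t,\le n}(\xi)=0=\Pi^{1,\widetilde V_{\lambda^n;k}}_{s,t,\le n}(\xi)$, and \eqref{projpurelemma} reads $0=0$. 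Otherwise, exactly one triple $(a_0,b_0,q_0)\in\mathcal P_{s,t}$ has $\xi$ in its wide bump; all three sums collapse to their $(a_0,b_0,q_0)$-term, and \eqref{projpurelemma} becomes the pointwise identity
\[
\G_k(a_0,b_0,q_0)\F\!\Big(\tfrac{b_0}{2q_0}\Big)\widetilde V_{\lambda^n;k}(\xi-\alpha^{(k)}_{a_0,b_0,q_0})\eta\big(\lambda^{(2-\gamma')n}\|\cdot\|\big)
=\widetilde V_{\lambda^n;k}(\xi-\alpha^{(k)}_{a_0,b_0,q_0})\eta\big(\lambda^{(2-\gamma')n}\|\cdot\|\big)\cdot\G_k(a_0,b_0,q_0)\F\!\Big(\tfrac{b_0}{2q_0}\Big)\eta\big(\lambda^{(2-\gamma')(m_{s,t}-1)}\|\cdot\|\big),
\]
which holds because $\eta\equiv 1$ on $[-1,1]$ (see \eqref{etalambda}) and $\|\xi-\alpha^{(k)}_{a_0,b_0,q_0}\|\le\lambda^{-(2-\gamma')n}\le\lambda^{-(2-\gamma')(m_{s,t}-1)}$ forces $\eta\big(\lambda^{(2-\gamma')(m_{s,t}-1)}\|\xi-\alpha^{(k)}_{a_0,b_0,q_0}\|\big)=1$ whenever the left side is nonzero.

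The only genuine point requiring care — the \textbf{main obstacle} — is verifying that the separation afforded by Lemma~\ref{seperationgeneral} is strong enough to handle the \emph{wider} bumps of scale $\lambda^{-(2-\gamma')(m_{s,t}-1)}$ rather than $\lambda^{-(2-\gamma')n}$; this is where the hypothesis $m_{s,t}\ge C$ and the quantitative relation $6\gamma+\gamma'<1$ from \eqref{choicesgamma} enter, since the separation $\gtrsim_k \lambda^{-(d+1)\gamma m_{s,t}}\lambda^{-(d-1)\gamma m_{s,t}}=\lambda^{-4\gamma m_{s,t}}$ must dominate $\lambda^{-(2-\gamma')(m_{s,t}-1)}$, i.e. one needs $4\gamma<2-\gamma'$, which is immediate from $\gamma,\gamma'<1/10$. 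All remaining steps are bookkeeping on the index set $\mathcal P_{s,t}$ and the elementary properties of $\eta$ and $\widetilde V_{\lambda^n;k}$.
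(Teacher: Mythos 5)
Your proposal is correct and follows essentially the same route as the paper: both arguments rest on the two facts that the wide bumps at scale $\lambda^{-(2-\gamma')(m_{s,t}-1)}$ have pairwise disjoint supports (via the separation estimate of Lemma~\ref{seperationverygeneral}, using $q,|b|\le\lambda^{\gamma m_{s,t}}$ and $4\gamma<2-\gamma'$ for $m_{s,t}\ge C$), and that $\eta\big(\lambda^{(2-\gamma')(m_{s,t}-1)}\|\xi-\alpha^{(k)}_{a,b,q}\|\big)=1$ on the support of the narrow bump since $(2-\gamma')n-1\ge(2-\gamma')(m_{s,t}-1)$. The only difference is organizational (you collapse the sums pointwise at a fixed $\xi$, the paper expands the product and treats diagonal and off-diagonal terms), and your minor imprecision about the bump radius (a factor $\lambda$ from $\supp\eta\subseteq[-\lambda,\lambda]$) is harmless given the much stronger separation.
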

\begin{proof}
We suppress the dependences similarly to \eqref{notationsimpler}. Note that for every $n\ge m_{s,t}\gtrsim 1$ we have
\begin{multline}\label{onlydiag1}
\Pi^{1,\widetilde{V}_{\lambda^n}}_{s,t,\le n}(\xi)\Pi^{g,1}_{s,t,\le m_{s,t}-1}(\xi)
\\
=\Big(\sum_{(a,b,q)\in\mathcal{P}_{s,t}}\widetilde{V}_{\lambda^n}(\xi-\alpha_{a,b,q})\eta(\lambda^{(2-\gamma')n}\|\xi-\alpha_{a,b,q}\|)\Big)\cdot
\\
\cdot\Big(\sum_{(a',b',q')\in\mathcal{P}_{s,t}}g(a',b',q')\eta(\lambda^{(2-\gamma')(m_{s,t}-1)}\|\xi-\alpha_{a',b',q'}\|)\Big)
\\
=\sum_{(a,b,q)\in\mathcal{P}_{s,t}}g(a,b,q)\widetilde{V}_{\lambda^n}(\xi-\alpha_{a,b,q})\eta(\lambda^{(2-\gamma')n}\|\xi-\alpha_{a,b,q}\|)=\Pi^{g,\widetilde{V}_{\lambda^n}}_{s,t,\le n}(\xi)\text{.}
\end{multline}
To see this, note that on the one hand whenever $(a,b,q)=(a',b',q')\in\mathcal{P}_{s,t}$ we have
\begin{equation}\label{projeq}
\eta(\lambda^{(2-\gamma')n}\|\xi-\alpha_{a,b,q}\|)\eta(\lambda^{(2-\gamma')(m_{s,t}-1)}\|\xi-\alpha_{a',b',q'}\|)=\eta(\lambda^{(2-\gamma')n}\|\xi-\alpha_{a,b,q}\|)
\end{equation}
because if the right hand side equals $0$, then clearly this is true, and if not, then by \eqref{etalambda} we get
\[
\lambda^{(2-\gamma')n}\|\xi-\alpha_{a,b,q}\|<\lambda\Rightarrow\lambda^{(2-\gamma')n-1}\|\xi-\alpha_{a,b,q}\|<1\Rightarrow\lambda^{(2-\gamma')(m_{s,t}-1)}\|\xi-\alpha_{a,b,q}\|<1\text{,}
\]
since $(2-\gamma')n-1\ge(2-\gamma')m_{s,t}-1\ge(2-\gamma')(m_{s,t}-1)$
and thus by \eqref{etalambda} we get
\[
\eta(\lambda^{(2-\gamma')(m_{s,t}-1)}\|\xi-\alpha_{a',b',q'}\|)=1\text{,}
\]
making \eqref{projeq} true. On the other hand, whenever $(a,b,q)\neq (a',b',q')$ are elements of $\mathcal{P}_{s,t}$ we have
\[
\eta(\lambda^{(2-\gamma')n}\|\xi-\alpha_{a,b,q}\|)\eta(\lambda^{(2-\gamma')(m_{s,t}-1)}\|\xi-\alpha_{a',b',q'}\|)=0\text{,}
\]
because for $m_{s,t}\gtrsim 1$ the supports of $\big(\eta(\lambda^{(2-\gamma')(m_{s,t}-1)}\|\cdot-\alpha_{a,b,q}\|)\big)_{(a,b,q)\in\mathcal{P}_{s,t}}$ are disjoint. We give a short proof here for the sake of completeness. If this is not the case, then there exists $\xi\in[-1/2,1/2)$ such that
\[
\|\xi-\alpha_{a,b,q}\|\le\lambda^{(-2+\gamma')(m_{s,t}-1)+1} \quad\text{and}\quad\|\xi-\alpha_{a',b',q'}\|\le\lambda^{(-2+\gamma')(m_{s,t}-1)+1}
\] 
for some $(a,b,q),(a',b',q')\in\mathcal{P}_{s,t}$ with $(a,b,q)\neq (a',b',q')$, and thus by Lemma~$\ref{seperationverygeneral}$ we get
\begin{equation}\label{easycase11}
2\lambda^{(-2+\gamma')(m_{s,t}-1)+1}\ge \|\alpha_{a,b,q}-\alpha_{a',b',q'}\|\gtrsim \lambda^{-3s-t}\text{.}
\end{equation}
Thus  
\[
\lambda^{(2-\gamma')(m_{s,t}-1)-3s-t-1}\lesssim 1 \Rightarrow\lambda^{m_{s,t}-4\gamma m_{s,t}-2}\lesssim 1\Rightarrow\lambda^{(1-4\gamma)m_{s,t}-2}\lesssim 1\text{,}
\]
which does yield a contradiction for $m_{s,t}\gtrsim 1$. The proof is complete.
\end{proof}
\begin{proposition}\label{projpro}
Let $k\in\mathbb{Q}_{>0}$ with $\sqrt{k}\not\in\mathbb{Q}$ and $\lambda\in(1,2]$. Then there exist a positive constant $C=C(k,\lambda)$
such that for all $s,t\in\mathbb{N}_0$ with $m_{s,t}\ge C$ and $f\in\ell^2(\mathbb{Z})$ we have
\begin{equation}\label{goaleststfixed1}
\sup_{J\in\mathbb{N}}\sup_{I\in\mathfrak{S}(\mathbb{N}_0)}\Big\|O_{I,J}^2\Big(T_{\mathbb{Z}}\big[\Pi_{s,t,\le n}^{1,\widetilde{V}_{\lambda^n;k}}1_{n\ge m_{s,t}}\big]f:\,n\in\mathbb{N}_0\Big)\Big\|_{\ell^2(\mathbb{Z})}\le C(s^2+t^2+1)\|f\|_{\ell^2(\mathbb{Z})}\text{.}
\end{equation}
Also, for every $s,t\in\mathbb{N}_0$ there exists a positive constant $C'=C'(s,t,k,\lambda)$ such that
\begin{equation}\label{00case}
\sup_{J\in\mathbb{N}}\sup_{I\in\mathfrak{S}(\mathbb{N}_0)}\Big\|O_{I,J}^2\Big(T_{\mathbb{Z}}\big[\Pi_{s,t,\le n}^{g_k,\widetilde{V}_{\lambda^n;k}}1_{n\ge m_{s,t}}\big]f:\,n\in\mathbb{N}_0\Big)\Big\|_{\ell^2(\mathbb{Z})}\le C'\|f\|_{\ell^2(\mathbb{Z})}\text{.}
\end{equation}
\end{proposition}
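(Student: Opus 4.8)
The plan is to prove the two estimates in Proposition~\ref{projpro} separately, with \eqref{00case} being a soft consequence of \eqref{goaleststfixed1} together with the factorization Lemma~\ref{factorizationlemma1}, and \eqref{goaleststfixed1} being the heart of the matter. For \eqref{00case}, fix $s,t\in\mathbb{N}_0$. If $m_{s,t}<C$ (with $C=C(k,\lambda)$ the constant from Lemma~\ref{factorizationlemma1}), then the oscillation sum involves only the finitely many scales $n<C$ plus the tail $n\ge m_{s,t}$; but for the small values of $m_{s,t}$ there is nothing to factor, and one simply uses that $\Pi^{g_k,\widetilde V_{\lambda^n;k}}_{s,t,\le n}$ is a bounded multiplier uniformly in $n$ (by \eqref{easygfnow} and Lemma~\ref{frVXbounds}), so the oscillation norm is trivially $\lesssim_{s,t}\|f\|_{\ell^2}$. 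If $m_{s,t}\ge C$, then for all $n\ge m_{s,t}$ Lemma~\ref{factorizationlemma1} gives $\Pi^{g_k,\widetilde V_{\lambda^n;k}}_{s,t,\le n}=\Pi^{1,\widetilde V_{\lambda^n;k}}_{s,t,\le n}\cdot\Pi^{g_k,1}_{s,t,\le m_{s,t}-1}$, and the second factor does \emph{not depend on $n$}. Hence $T_{\mathbb{Z}}[\Pi^{g_k,\widetilde V_{\lambda^n;k}}_{s,t,\le n}1_{n\ge m_{s,t}}]f=T_{\mathbb{Z}}[\Pi^{1,\widetilde V_{\lambda^n;k}}_{s,t,\le n}1_{n\ge m_{s,t}}]\big(T_{\mathbb{Z}}[\Pi^{g_k,1}_{s,t,\le m_{s,t}-1}]f\big)$, so, using that oscillations contract under composition with a fixed operator, \eqref{00case} follows from \eqref{goaleststfixed1} applied to $g:=T_{\mathbb{Z}}[\Pi^{g_k,1}_{s,t,\le m_{s,t}-1}]f$, provided $\|T_{\mathbb{Z}}[\Pi^{g_k,1}_{s,t,\le m_{s,t}-1}]f\|_{\ell^2}\lesssim_{s,t}\|f\|_{\ell^2}$; the latter is an $\ell^1$-type bound on the multiplier obtained by orthogonality of the disjointly supported bumps (Lemma~\ref{seperationverygeneral}) together with $|g_k(a,b,q)|\lesssim q^{-1/2}$ from Proposition~\ref{GaussSumstype}.

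The substance is therefore \eqref{goaleststfixed1}, the oscillation estimate for the ``purely analytic'' multiplier $\Pi^{1,\widetilde V_{\lambda^n;k}}_{s,t,\le n}$, whose coefficients are all $1$. The strategy here follows the scheme advertised in the introduction: reduce to a continuous model on $L^2(\mathbb{R})$. First, because the frequency centers $\alpha^{(k)}_{a,b,q}$ with $(a,b,q)\in\mathcal{P}_{s,t}$ are $\gtrsim\lambda^{-3s-t}$-separated (Lemma~\ref{seperationverygeneral}) while each cutoff $\eta(\lambda^{(2-\gamma')n}\|\cdot-\alpha^{(k)}_{a,b,q}\|)$ is supported in a ball of radius $\lesssim\lambda^{-(2-\gamma')n+1}\le\lambda^{-(2-\gamma')m_{s,t}+1}$, and since $(2-\gamma')m_{s,t}=(2-\gamma')\max(s,t)/\gamma\gg 3s+t$, the bumps attached to distinct $(a,b,q)$ have disjoint support and the multiplier is a direct sum; thus by almost-orthogonality it suffices to prove the oscillation estimate for a \emph{single} center, say $\alpha=0$, for the multiplier $\widetilde V_{\lambda^n;k}(\xi)\eta(\lambda^{(2-\gamma')n}\|\xi\|)$, and then reinsert the multifrequency structure paying only the number of centers in $\mathcal{P}_{s,t}$, which is $\lesssim(\text{something polynomial in }\lambda^{s},\lambda^{t})$ — but crucially this loss is absorbed because the single-center estimate will come with a gain $\lambda^{-cm_{s,t}}$ from the tiny localization (or, alternatively, one runs the multifrequency version of Bourgain's logarithmic lemma directly, paying only a factor $\log(\#\text{centers})\lesssim s+t$, which is why the target constant in \eqref{goaleststfixed1} is $s^2+t^2+1$ rather than exponential). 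Second, on each such localized piece, $\widetilde V_{\lambda^n;k}$ near $0$ agrees with $V_{\lambda^n;k}$, which is $\frac1{\lambda^n}\int_0^{\lambda^n}e(t\sqrt k x^2)\,dx$ — precisely the multiplier of the standard dilated averaging operator on $\mathbb{R}$ along the parabola. One transfers from $\mathbb{Z}$ to $\mathbb{R}$ (the localization to scale $\lambda^{-(2-\gamma')n}\ll\lambda^{-n}$ makes this lossless: on such a small frequency window $\ell^2(\mathbb{Z})$ and $L^2(\mathbb{R})$ are interchangeable), reducing everything to an oscillation/maximal estimate on $L^2(\mathbb{R})$ for the family of operators $f\mapsto\mathcal{F}^{-1}_{\mathbb{R}}[V_{\lambda^n;k}(\cdot)\widehat f]$ restricted to frequencies $\lesssim\lambda^{-(2-\gamma')n}$, which after rescaling is a multifrequency variant of a smooth dilated convolution.

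The final step is to invoke Bourgain's logarithmic lemma (Lemma~4.13 in \cite{Bourgain}) to control the $L^2(\mathbb{R})$-oscillations of this multifrequency smooth maximal operator: the lemma converts the oscillation norm over arbitrary increasing sequences of scales into $\sup_n\|\cdot\|_{L^2}$ plus a logarithmic loss in the number of frequencies, which here is $\lesssim\log(\#\mathcal{P}_{s,t})\lesssim s+t$, squared after Cauchy--Schwarz over the two sums (in $s$ and $t$), producing the admissible $\varphi_\lambda$-free bound $C(s^2+t^2+1)$; summed against the decay $\lambda^{-cm_{s,t}}$ harvested from the scale gap $n\ge m_{s,t}$ this gives a summable $\varphi_\lambda(s,t)$ in Proposition~\ref{beginningofthedance2}. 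I expect the main obstacle to be making the transference to $\mathbb{R}$ and the application of the logarithmic lemma genuinely \emph{uniform in the multifrequency data} — i.e. checking that the implied constants depend only on the ambient dimension and not on $s,t$ or on the arithmetic of the centers — together with verifying that $\widetilde V_{\lambda^n;k}$ (a periodization) really does coincide with $V_{\lambda^n;k}$ on the relevant tiny windows for $n\gtrsim 1$, so that the continuous model is exact and not merely approximate. Everything else is a careful but routine bookkeeping of the van der Corput bounds for $V$ from Lemma~\ref{frVXbounds} and the separation estimates from Lemma~\ref{seperationverygeneral}.
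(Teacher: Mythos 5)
Your endpoint (transference to $L^2(\mathbb{R})$ and Bourgain's multifrequency logarithmic lemma, plus the factorization of Lemma~\ref{factorizationlemma1} for the large-$m_{s,t}$ part of \eqref{00case}) matches the paper, but there are two genuine gaps. First, for the main estimate \eqref{goaleststfixed1} you never actually reduce the $2$-oscillation to a maximal inequality: Bourgain's Lemma~4.13 is a maximal estimate for $\sup_n$ of a multifrequency smooth average and does not ``convert the oscillation norm into $\sup_n$''. The paper spends most of the proof on exactly this missing step: the $n$-dependent cutoff $\eta(\lambda^{(2-\gamma')n}\|\cdot-\alpha^{(k)}_{a,b,q}\|)$ is first frozen at the scale $m_{s,t}$ (Lemma~\ref{freezethelen}, using the decay of $\widetilde V_{\lambda^n;k}$ off the tight arc); then $\widetilde V_{\lambda^n;k}$ is replaced by smooth bumps $\psi^{(\lambda)}_n$ through the square-function bound $\sum_n|\widetilde V_{\lambda^n;k}-\psi^{(\lambda)}_n|^2\lesssim1$; and only because of the projection identity $\psi^{(\lambda)}_n\psi^{(\lambda)}_m=\psi^{(\lambda)}_{\max(n,m)}$ (implemented on the arcs via the refined symbols $\Pi^{1,\psi^{(\lambda)}_n,\sqrt{\eta}}_{s,t,\le m_{s,t}}$) does the oscillation collapse to the maximal estimate \eqref{maxestmst} plus telescoping square functions controlled by $\sum_n|\psi^{(\lambda)}_{n+1}-\psi^{(\lambda)}_n|\lesssim1$ (Proposition~\ref{projpassagetomax}); Lemma~4.13 enters only afterwards, following a further swap $\psi^{(\lambda)}_n\to\chi^{(\lambda)}_n$ and the transference of Proposition~\ref{lastestimate}. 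Relatedly, your first alternative --- prove a single-center estimate and reinsert the $|\mathcal{P}_{s,t}|\approx\lambda^{2s+t}$ centers, absorbing the loss by a gain $\lambda^{-cm_{s,t}}$ --- cannot work: with coefficients $h\equiv 1$ the single-center multiplier is essentially $1$ near its center (by Lemma~\ref{frVXbounds}, $|V_{\lambda^n;k}(t)-1|\lesssim|t|\lambda^{2n}$), so no such gain exists, and paying $\lambda^{2s+t}$ destroys the $(s^2+t^2+1)$ bound; the only viable route is the multifrequency maximal lemma with its purely logarithmic loss, which is what the paper does.

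Second, your treatment of \eqref{00case} when $m_{s,t}<C$ is wrong: uniform boundedness of the multipliers $\Pi^{g_k,\widetilde V_{\lambda^n;k}}_{s,t,\le n}$ does not control the oscillation seminorm, since $O^2_{I,J}$ involves an $\ell^2$ sum over arbitrarily many blocks $j$, so a bound uniform in $n$ gives nothing (controlling this is the entire point of the proposition). Note also that \eqref{00case} is invoked in the paper precisely for the finitely many pairs with $m_{s,t}\lesssim 1$, where the factorization and the disjointness of the arcs fail; thus your reduction of \eqref{00case} to \eqref{goaleststfixed1} via Lemma~\ref{factorizationlemma1} covers only the regime in which \eqref{00case} is never needed. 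The paper instead proves \eqref{00case} by the triangle inequality over the finitely many elements of $\mathcal{P}_{s,t}$ (the constant may depend on $s,t$) and then reruns the whole chain --- freezing the arc tightness, smoothing, the oscillation-to-maximal reduction --- for a single frequency, where the resulting maximal estimate follows from the $L^2$ bound for the Hardy--Littlewood maximal function rather than from Bourgain's lemma.
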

Again, establishing these estimates will be achieved in the following subsections and we simply show here that it implies Proposition~$\ref{beginningofthedance2}$, which, in turn, establishes Proposition~$\ref{goalosc}$.
\begin{proof}[Proof of Proposition~$\ref{beginningofthedance2}$ using Proposition~$\ref{projpro}$]
As always, we fix $k,\lambda$ and suppress the dependence. If we let $C=C(k,\lambda)$ be the constant from Lemma~$\ref{factorizationlemma1}$ we get that for all $s,t\in\mathbb{N}_0$ such that $m_{s,t}\ge C$ we have
\[
T_{\mathbb{Z}}\big[\Pi^{g,\widetilde{V}_{\lambda^n}}_{s,t,\le n}1_{n\ge m_{s,t}}\big]f(x)=T_{\mathbb{Z}}[\Pi^{1,\widetilde{V}_{\lambda^n}}_{s,t,\le n}1_{n\ge m_{s,t}}](T_{\mathbb{Z}}[\Pi^{g,1}_{s,t,m_{s,t}-1}]f)(x)\text{,}
\]
and we note that
\[
\|T_{\mathbb{Z}}[\Pi^{g,1}_{s,t,m_{s,t}-1}]f\|_{\ell^2(\mathbb{Z})}\lesssim \|\Pi^{g,1}_{s,t,m_{s,t}-1}\|_{L^{\infty}(\mathbb{T})}\|f\|_{\ell^2(\mathbb{Z})}\text{.}
\]
Also, for $m_{s,t}\gtrsim 1$ we have that for all $\xi\in[-1/2,1/2)$ the following sum
\[
\Pi^{g,1}_{s,t,m_{s,t}-1}(\xi)=\sum_{(a,b,q)\in\mathcal{P}_{s,t}}g(a,b,q)\eta(\lambda^{(2-\gamma')(m_{s,t}-1)}\|\xi-\alpha_{a,b,q}\|)
\]
is such that the supports of its summands are disjoint, see the proof of the previous lemma, and thus using \eqref{easygfnow} we get
\[
|\Pi^{g,1}_{s,t,m_{s,t}-1}(\xi)| \lesssim\min(\lambda^{-s/2},\lambda^{-t/2})\text{,}
\]
yielding the bound
\begin{equation}\label{minlslt}
\|T_{\mathbb{Z}}[\Pi^{g,1}_{s,t,m_{s,t}-1}]f\|_{\ell^2(\mathbb{Z})}\lesssim\min(\lambda^{-s/2},\lambda^{-t/2})\|f\|_{\ell^2(\mathbb{Z})}\text{.}
\end{equation}
Then for all $s,t$ with $m_{s,t}\gtrsim 1$, $J\in\mathbb{N}$ and $I\in\mathfrak{S}_J(\mathbb{N}_0)$ we have
\begin{multline}\label{stbigcalc}
\Big\|O_{I,J}^2\Big(T_{\mathbb{Z}}\big[\Pi_{s,t,\le n}^{g,\widetilde{V}_{\lambda^n}}1_{n\ge m_{s,t}}\big]f:\,n\in\mathbb{N}_0\Big)\Big\|_{\ell^2(\mathbb{Z})}
\\
=\Big\|O_{I,J}^2\Big(T_{\mathbb{Z}}[\Pi^{1,\widetilde{V}_{\lambda^n}}_{s,t,\le n}1_{n\ge m_{s,t}}](T_{\mathbb{Z}}[\Pi^{g,1}_{s,t,m_{s,t}-1}]f):\,n\in\mathbb{N}_0\Big)\Big\|_{\ell^2(\mathbb{Z})}
\\
\lesssim(s^2+t^2+1)\|T_{\mathbb{Z}}[\Pi^{g,1}_{s,t,m_{s,t}-1}]f\|_{\ell^2(\mathbb{Z})}\lesssim(s^2+t^2+1)\min(\lambda^{-s/2},\lambda^{-t/2})\|f\|_{\ell^2(\mathbb{Z})}
\end{multline}
by \eqref{goaleststfixed1} and \eqref{minlslt}. It is easy to check that
\begin{equation}\label{phibounds}
\sum_{s,t\in\mathbb{N}_0}(s^2+t^2+1)\min(\lambda^{-s/2},\lambda^{-t/2})<\infty\text{.}
\end{equation}
Thus to conclude the proof of Proposition~$\ref{beginningofthedance2}$ we may take $\varphi_{\lambda}(s,t)=(s^2+t^2+1)\min(\lambda^{-s/2},\lambda^{-t/2})$ which is an admissible choice since \eqref{phibounds} holds. On the one hand for all $s,t$ with $\max(s,t)\gtrsim 1$, we have shown that \eqref{stbigcalc} holds. On the other hand, one may easily find an appropriate constant for \eqref{goaleststfixed} suitable for the finitely many values of $s,t$ with $m_{s,t}\lesssim 1$, using \eqref{00case} for all such $s,t$.
\end{proof}
\subsection{Freezing the arc tightness} We have shown that it suffices to prove Proposition~$\ref{projpro}$, and, in fact, the only difficult estimate is the first one. The first step in establishing it is a comparison argument relying on the following lemma.
\begin{lemma}\label{freezethelen}
Assume $h\colon \mathbb{Z}\times\mathbb{Z}\times\mathbb{N}\to\mathbb{C}$ is $1$-bounded, $k\in\mathbb{Q}_{>0}$ with $\sqrt{k}\not\in\mathbb{Q}$ and $\lambda\in(1,2]$. Then there exist a positive constant $C=C(k,\lambda)$
such that for all $s,t\in\mathbb{N}_0$ with $m_{s,t}\ge C$, $n\ge m_{s,t}$ and $f\in\ell^2(\mathbb{Z})$ we have
\begin{equation}
\|T_{\mathbb{Z}}[\Pi^{h,\widetilde{V}_{\lambda^n;k}}_{s,t,\le n}]f-T_{\mathbb{Z}}[\Pi^{h,\widetilde{V}_{\lambda^n;k}}_{s,t,\le m_{s,t}}]f\|_{\ell^2(\mathbb{Z})}\le C\lambda^{-\frac{\gamma'n}{2}}\|f\|_{\ell^2(\mathbb{Z})}\text{,}
\end{equation}
and
\begin{multline}\label{freezethearctightness}
\sup_{J\in\mathbb{N}}\sup_{I\in\mathfrak{S}(\mathbb{N}_0)}\Big\|O_{I,J}^2\Big(T_{\mathbb{Z}}\big[\Pi_{s,t,\le n}^{h,\widetilde{V}_{\lambda^n;k}}1_{n\ge m_{s,t}}\big]f:\,n\in\mathbb{N}_0\Big)\Big\|_{\ell^2(\mathbb{Z})}
\\
\le C\sup_{J\in\mathbb{N}}\sup_{I\in\mathfrak{S}(\mathbb{N}_0)}\Big\|O_{I,J}^2\Big(T_{\mathbb{Z}}\big[\Pi_{s,t,\le m_{s,t}}^{h,\widetilde{V}_{\lambda^n;k}}1_{n\ge m_{s,t}}\big]f:\,n\in\mathbb{N}_0\Big)\Big\|_{\ell^2(\mathbb{Z})}+C\|f\|_{\ell^2(\mathbb{Z})}\text{.}
\end{multline}
\end{lemma}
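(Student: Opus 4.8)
The plan is to treat this as a routine comparison (``freezing'') lemma: both multipliers $\Pi^{h,\widetilde{V}_{\lambda^n;k}}_{s,t,\le n}$ and $\Pi^{h,\widetilde{V}_{\lambda^n;k}}_{s,t,\le m_{s,t}}$ are built from the same coefficients $h(a,b,q)$ and the same factor $\widetilde{V}_{\lambda^n;k}(\xi-\alpha^{(k)}_{a,b,q})$, and differ only in the tightness of the bump localizing near the centers, namely $\eta(\lambda^{(2-\gamma')n}\|\xi-\alpha^{(k)}_{a,b,q}\|)$ versus $\eta(\lambda^{(2-\gamma')m_{s,t}}\|\xi-\alpha^{(k)}_{a,b,q}\|)$. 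First I would record that their difference,
\[
\sum_{(a,b,q)\in\mathcal{P}_{s,t}}h(a,b,q)\,\widetilde{V}_{\lambda^n;k}(\xi-\alpha^{(k)}_{a,b,q})\Big(\eta\big(\lambda^{(2-\gamma')n}\|\xi-\alpha^{(k)}_{a,b,q}\|\big)-\eta\big(\lambda^{(2-\gamma')m_{s,t}}\|\xi-\alpha^{(k)}_{a,b,q}\|\big)\Big),
\]
vanishes whenever $\lambda^{(2-\gamma')n}\|\xi-\alpha^{(k)}_{a,b,q}\|\le 1$: then, using $n\ge m_{s,t}$ and \eqref{etalambda}, both occurrences of $\eta$ equal $1$. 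Hence each summand is supported on $\{\|\xi-\alpha^{(k)}_{a,b,q}\|>\lambda^{-(2-\gamma')n}\}$, and, exactly as in the proof of Lemma~$\ref{factorizationlemma1}$ (cf.\ \eqref{easycase11}), these supports are pairwise disjoint for $m_{s,t}\gtrsim_{k,\lambda}1$ by the separation estimate of Lemma~$\ref{seperationverygeneral}$ applied with $\alpha=\sqrt{k}$, $X=\lambda^s$, $Y=\lambda^t$.

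Next I would bound the difference in $L^{\infty}(\mathbb{T})$. On the support of the $(a,b,q)$-summand the point $\xi-\alpha^{(k)}_{a,b,q}$ lies in a $\lambda^{1-(2-\gamma')m_{s,t}}$-neighborhood of $0$ modulo $1$, so for $m_{s,t}$ (hence $n$) large the periodization defining $\widetilde V_{\lambda^n;k}$ collapses and $|\widetilde V_{\lambda^n;k}(\xi-\alpha^{(k)}_{a,b,q})|$ equals $|V_{\lambda^n;k}(\theta)|$ for $|\theta|=\|\xi-\alpha^{(k)}_{a,b,q}\|>\lambda^{-(2-\gamma')n}$; the decay bound in Lemma~$\ref{frVXbounds}$ with $X=\lambda^n$ then gives $|\widetilde V_{\lambda^n;k}(\xi-\alpha^{(k)}_{a,b,q})|\lesssim_k (\lambda^{-(2-\gamma')n}\lambda^{2n})^{-1/2}=\lambda^{-\gamma'n/2}$, exactly as in the closing lines of the proof of Proposition~$\ref{cmethod}$ (cf.\ \eqref{periodicarg}). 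Combining this with $|h|\le 1$, $0\le\eta\le 1$ and the disjointness of the supports yields $\|\Pi^{h,\widetilde V_{\lambda^n;k}}_{s,t,\le n}-\Pi^{h,\widetilde V_{\lambda^n;k}}_{s,t,\le m_{s,t}}\|_{L^{\infty}(\mathbb{T})}\lesssim_k\lambda^{-\gamma'n/2}$ for $n\ge m_{s,t}\gtrsim_{k,\lambda}1$, and Plancherel's theorem gives the first claimed estimate.

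Finally, for the oscillation inequality I would set $c_n\coloneqq T_{\mathbb{Z}}\big[(\Pi^{h,\widetilde V_{\lambda^n;k}}_{s,t,\le n}-\Pi^{h,\widetilde V_{\lambda^n;k}}_{s,t,\le m_{s,t}})1_{n\ge m_{s,t}}\big]f$ and $a_n\coloneqq T_{\mathbb{Z}}\big[\Pi^{h,\widetilde V_{\lambda^n;k}}_{s,t,\le m_{s,t}}1_{n\ge m_{s,t}}\big]f$, so that $T_{\mathbb{Z}}[\Pi^{h,\widetilde V_{\lambda^n;k}}_{s,t,\le n}1_{n\ge m_{s,t}}]f=a_n+c_n$; by the subadditivity of $2$-oscillations, $O^2_{I,J}(a_n+c_n:n)\le O^2_{I,J}(a_n:n)+O^2_{I,J}(c_n:n)$ (subsections~2.6 and~2.7 in \cite{MOE}), it suffices to control the tail term. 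For that I would use the elementary bound $O^2_{I,J}(c_n:n\in\mathbb{N}_0)\le 2\big(\sum_{n\in\mathbb{N}_0}|c_n|^2\big)^{1/2}$, valid because the left endpoints $I_0,\dots,I_{J-1}$ and a choice of near-maximizers $n_j\in[I_j,I_{j+1})\cap\mathbb{N}_0$ are each pairwise distinct (the intervals partitioning $[I_0,I_J)$); then, taking $\ell^2(\mathbb{Z})$ norms and invoking the $L^{\infty}$ bound just proved, $\sum_{n\ge m_{s,t}}\|c_n\|_{\ell^2(\mathbb{Z})}^2\lesssim_k\sum_{n\ge 0}\lambda^{-\gamma'n}\|f\|_{\ell^2(\mathbb{Z})}^2\lesssim_{k,\lambda}\|f\|_{\ell^2(\mathbb{Z})}^2$, so $\|O^2_{I,J}(c_n:n)\|_{\ell^2(\mathbb{Z})}\lesssim_{k,\lambda}\|f\|_{\ell^2(\mathbb{Z})}$ uniformly in $I,J$; taking suprema over $J\in\mathbb{N}$ and $I\in\mathfrak{S}_J(\mathbb{N}_0)$ gives \eqref{freezethearctightness}. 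The only non-formal point of the whole argument is the support analysis together with the collapse of the periodization of $\widetilde V$, i.e.\ verifying that off the tightest arc $\widetilde V_{\lambda^n;k}$ already decays like $\lambda^{-\gamma'n/2}$ — this is where the separation of the centers and the stationary-phase bound for $V$ enter — while everything else (Plancherel, Minkowski, the oscillation triangle inequality) is soft.
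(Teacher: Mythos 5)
Your argument is correct and follows essentially the same route as the paper: Plancherel plus the observation that the difference of the two multipliers is supported where $\lambda^{-(2-\gamma')n}<\|\xi-\alpha^{(k)}_{a,b,q}\|\lesssim\lambda^{-(2-\gamma')m_{s,t}}$, on which the decay of $\widetilde V_{\lambda^n;k}$ from Lemma~\ref{frVXbounds} gives the $\lambda^{-\gamma'n/2}$ bound (using disjointness of the loose arcs for $m_{s,t}\gtrsim_{k,\lambda}1$), and then the oscillation triangle inequality together with the square-function bound and summing $\lambda^{-\gamma'n}$ yields \eqref{freezethearctightness}. No gaps.
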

\begin{proof}
We fix $k,\lambda$ and suppress the dependence. Note that
\begin{equation}
\|T_{\mathbb{Z}}[\Pi^{h,\widetilde{V}_{\lambda^n}}_{s,t,\le n}]f-T_{\mathbb{Z}}[\Pi^{h,\widetilde{V}_{\lambda^n}}_{s,t,\le m_{s,t}}]f\|_{\ell^2(\mathbb{Z})}
\le \|\Pi^{h,\widetilde{V}_{\lambda^n}}_{s,t,\le n}-\Pi^{h,\widetilde{V}_{\lambda^n}}_{s,t,\le m_{s,t}}\|_{L^{\infty}(\mathbb{T})}\|f\|_{\ell^2(\mathbb{Z})}\text{,}
\end{equation}
and for $n\ge m_{s,t}\gtrsim 1$ we have that for every $\xi\in[-1/2,1/2)$, if $\big|\Pi^{h,\widetilde{V}_{\lambda^n}}_{s,t,\le n}(\xi)-\Pi^{h,\widetilde{V}_{\lambda^n}}_{s,t,\le m_{s,t}}(\xi)\big|\neq 0$, then there exists $(a,b,q)\in\mathcal{P}_{s,t}$ such that $\lambda^{-(2-\gamma')n}\le \|\xi-\alpha_{a,b,q}\|\le \lambda^{-(2-\gamma')m_{s,t}+1}< 1/2 $, thus there exists $m_0\in\mathbb{Z}$ such that $\lambda^{-(2-\gamma')n}\le |\xi-\alpha_{a,b,q}+m_0|<1/2$, but then for any such $\xi$ we have
\begin{multline}
|\widetilde{V}_{\lambda^n}(\xi-\alpha_{a,b,q})|\le\sum_{m\in\mathbb{Z}}|V_{\lambda^n}(\xi-\alpha_{a,b,q}+m)|1_{[-1/2,1/2)}(\xi-\alpha_{a,b,q}+m)
\\
\le \lambda^{-n}|\xi-\alpha_{a,b,q})+m_0|^{-1/2}\lesssim \lambda^{-n}\lambda^{(1-\gamma'/2)n}\le\lambda^{-(\gamma'/2)n}\text{,}
\end{multline}
by Lemma~$\ref{frVXbounds}$. Since $h$ is $1$-bounded, we get that for all such $\xi$
\[
\big|\Pi^{h,\widetilde{V}_{\lambda^n}}_{s,t,\le n}(\xi)-\Pi^{h,\widetilde{V}_{\lambda^n}}_{s,t,\le m_{s,t}}(\xi)\big|\lesssim_{\lambda} \lambda^{-(\gamma'/2)n}\text{,}
\]
where we have used that the supports of the summands are disjoint for $m_{s,t}\gtrsim1$. For the rest of $\xi$'s there is nothing to prove. Thus we have shown that for all $n\ge m_{s,t}\gtrsim 1$ we have
\[
\|T_{\mathbb{Z}}[\Pi^{h,V_{\lambda^n}}_{s,t,\le n}]f-T_{\mathbb{Z}}[\Pi^{h,V_{\lambda^n}}_{s,t,\le m_{s,t}}]f\|_{\ell^2(\mathbb{Z})}
\lesssim_{\lambda} \lambda^{-(\gamma'/2)n}\|f\|_{\ell^2(\mathbb{Z})}\text{,}
\]
as desired. The second assertion clearly follows by noting that 
\begin{multline*}
\sup_{J\in\mathbb{N}}\sup_{I\in\mathfrak{S}(\mathbb{N}_0)}\Big\|O_{I,J}^2\Big(T_{\mathbb{Z}}\big[\Pi_{s,t,\le n}^{h,\widetilde{V}_{\lambda^n}}1_{n\ge m_{s,t}}\big]f:\,n\in\mathbb{N}_0\Big)\Big\|_{\ell^2(\mathbb{Z})}
\\
\lesssim \sup_{J\in\mathbb{N}}\sup_{I\in\mathfrak{S}(\mathbb{N}_0)}\Big\|O_{I,J}^2\Big(T_{\mathbb{Z}}\big[\Pi_{s,t,\le m_{s,t}}^{h,\widetilde{V}_{\lambda^n}}1_{n\ge m_{s,t}}\big]f:\,n\in\mathbb{N}_0\Big)\Big\|_{\ell^2(\mathbb{Z})}
\\
+\Big\|\Big(\sum_{n\ge m_{s,t}}|T_{\mathbb{Z}}[\Pi^{h,\widetilde{V}_{\lambda^n}}_{s,t,\le n}]f-T_{\mathbb{Z}}[\Pi^{h,\widetilde{V}_{\lambda^n}}_{s,t,\le m_{s,t}}]f|^2\Big)^{1/2}\Big\|_{\ell^2(\mathbb{Z})}\text{,}
\end{multline*}
and bounding the second summand as follows
\begin{multline}
\Big\|\Big(\sum_{n\ge m_{s,t}}|T_{\mathbb{Z}}[\Pi^{h,\widetilde{V}_{\lambda^n}}_{s,t,\le n}]f-T_{\mathbb{Z}}[\Pi^{h,\widetilde{V}_{\lambda^n}}_{s,t,\le m_{s,t}}]f|^2\Big)^{1/2}\Big\|_{\ell^2(\mathbb{Z})}
\\
=
\Big(\sum_{n\ge m_{s,t}}\|T_{\mathbb{Z}}[\Pi^{h,\widetilde{V}_{\lambda^n}}_{s,t,\le n}]f-T_{\mathbb{Z}}[\Pi^{h,\widetilde{V}_{\lambda^n}}_{s,t,\le m_{s,t}}]f\|_{\ell^2(\mathbb{Z})}^2\Big)^{1/2}\lesssim\Big(\sum_{n\ge m_{s,t}}\lambda^{-\gamma' n}\Big)^{1/2}\|f\|_{\ell^2(\mathbb{Z})}
\lesssim \|f\|_{\ell^2(\mathbb{Z})}\text{.}
\end{multline}
\end{proof}
\subsection{Reducing the oscillation to maximal estimates}
Applying Proposition~$\ref{freezethelen}$ with $h=1$, we see that to establish the estimate \eqref{goaleststfixed1} it suffices to show that
\begin{equation}\label{freezenst}
\sup_{J\in\mathbb{N}}\sup_{I\in\mathfrak{S}(\mathbb{N}_0)}\Big\|O_{I,J}^2\Big(T_{\mathbb{Z}}\big[\Pi_{s,t,\le m_{s,t}}^{1,\widetilde{V}_{\lambda^n;k}}1_{n\ge m_{s,t}}\big]f:\,n\in\mathbb{N}_0\Big)\Big\|_{\ell^2(\mathbb{Z})}\lesssim_{k,\lambda}(s^2+t^2+1)\|f\|_{\ell^2(\mathbb{Z})}\text{.}
\end{equation}
We will again exploit the projective nature of our operators to reduce to the maximal estimate. For this reason we fix $\psi^{(\lambda)}\in\mathcal{C}^{\infty}(\mathbb{R})$ such that
\[
1_{[-\frac{1}{4\lambda},\frac{1}{4\lambda}]}\le \psi^{(\lambda)}\le 1_{[-\frac{1}{4},\frac{1}{4}]}\text{,}
\]
and for all $n\in\mathbb{N}_0$ we define
\[
\psi^{(\lambda)}_n(\xi)\coloneqq \psi^{(\lambda)}(\lambda^{2n} \xi)\text{.}
\]
Since $\supp\psi^{(\lambda)}\subseteq [-1/2,1/2)$ and we have identified $\mathbb{T}$ with $[-1/2,1/2)$, we may also think of the function as a function on the torus. Here we unfortunately need to refine the notation $\Pi_{s,t,\le n}^{g,h}(\xi)$ since we would like to extend the property $\psi_n^{(\lambda)}\psi_m^{(\lambda)}=\psi_{\max(n,m)}^{(\lambda)}$ which clearly holds for $n\neq m$ to an analogous one for the multipliers $\Pi$. We introduce a slight refinement of the notation. For every $s,t,n\in\mathbb{N}_0$ and a pair of functions $g\colon \mathbb{Z}\times \mathbb{Z}\times \mathbb{N}$ and $h,\rho\colon \mathbb{T}\to\mathbb{C}$, we define the function $\Pi_{s,t,\le n}^{g,h,\rho}\colon \mathbb{T}\to\mathbb{C}$ by
\[
\Pi_{s,t,\le n}^{g,h,\rho}(\xi)\coloneqq \sum_{(a,b,q)\in\mathcal{P}_{s,t}}g(a,b,q)h(\xi-\alpha^{(k)}_{a,b,q})\rho(\lambda^{(2-\gamma')n}\|\xi-\alpha^{(k)}_{a,b,q}\|)\text{.}
\]
With this notation and keeping in mind that $\big(\eta(\lambda^{(2-\gamma')n}\|\xi-\alpha^{(k)}_{a,b,q}\|)\big)_{(a,b,q)\in\mathcal{P}_{s,t}}$ have mutually disjoint supports for $ m_{s,t}\gtrsim_{k,\lambda}1$, we have that
\begin{equation}\label{projforpsi}
\Pi_{s,t,\le m_{s,t}}^{1,\psi_n^{(\lambda)},\sqrt{\eta}}\Pi_{s,t,\le m_{s,t}}^{1,\psi_m^{(\lambda)},\sqrt{\eta}}=\Pi_{s,t,\le m_{s,t}}^{1,\psi_{\max(n,m)}^{(\lambda)},\eta}\text{.}
\end{equation}
Ultimately, $\eta$ and $\sqrt{\eta}$ have the same useful properties, namely they are smooth, have the same support and take values in $[0,1]$, so this will create no additional complications.  
\begin{proposition}\label{projpassagetomax}
Let $k\in\mathbb{Q}_{>0}$ with $\sqrt{k}\not\in\mathbb{Q}$ and $\lambda\in(1,2]$. Assume that there exists a positive constant $C=C(k,\lambda)$ such that for all $s,t\in\mathbb{N}_0$ with $m_{s,t}\ge C$ we have 
\begin{equation}\label{maxestmst}
\big\|\sup_{n\ge m_{s,t}}|T_{\mathbb{Z}}\big[\Pi_{s,t,\le m_{s,t}}^{1,\psi^{(\lambda)}_n,\sqrt{\eta}}\big]f|\big\|_{\ell^2(\mathbb{Z})}\le C(s^2+t^2+1)\|f\|_{\ell^2(\mathbb{Z})}\text{.}
\end{equation}
Then there exists a positive constant $C'=C'(k,\lambda)$ such that for all $s,t\in\mathbb{N}$ with $m_{s,t}\ge C'$ we have
\begin{equation}\label{osctomax}
\sup_{J\in\mathbb{N}}\sup_{I\in\mathfrak{S}(\mathbb{N}_0)}\Big\|O_{I,J}^2\Big(T_{\mathbb{Z}}\big[\Pi_{s,t,\le m_{s,t}}^{1,\widetilde{V}_{\lambda^n;k}}1_{n\ge m_{s,t}}\big]f:\,n\in\mathbb{N}_0\Big)\Big\|_{\ell^2(\mathbb{Z})}\le C'(s^2+t^2+1)\|f\|_{\ell^2(\mathbb{Z})}\text{.}
\end{equation}
\end{proposition}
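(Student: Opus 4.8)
Fix $k$ and $\lambda$ and suppress them as in \eqref{notationsimpler}; write $S_n\coloneqq T_{\mathbb Z}\big[\Pi^{1,\psi^{(\lambda)}_n,\sqrt{\eta}}_{s,t,\le m_{s,t}}\big]$ for the operators in the hypothesis \eqref{maxestmst}. The first move is to discard the parts of $T_{\mathbb Z}\big[\Pi_{s,t,\le m_{s,t}}^{1,\widetilde V_{\lambda^n;k}}\big]1_{n\ge m_{s,t}}$ that carry no oscillation. Splitting $\widetilde V_{\lambda^n;k}=1+(\widetilde V_{\lambda^n;k}-1)$ produces the $n$-independent operator $T_{\mathbb Z}\big[\Pi^{1,1,\eta}_{s,t,\le m_{s,t}}\big]$, bounded on $\ell^2(\mathbb Z)$ with an absolute constant because the bumps $\eta(\lambda^{(2-\gamma')m_{s,t}}\|\cdot-\alpha^{(k)}_{a,b,q}\|)$ have pairwise disjoint supports for $m_{s,t}\gtrsim 1$ by Lemma~\ref{seperationgeneral}; an $n$-independent family has zero oscillation, so it may be removed. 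The single partition block straddling the transition at $n=m_{s,t}$ is absorbed by the trivial bound $\big\|T_{\mathbb Z}\big[\Pi^{1,\widetilde V_{\lambda^n;k}-1,\eta}_{s,t,\le m_{s,t}}\big]\big\|_{\ell^2\to\ell^2}\lesssim 1$ (disjoint supports and $\|\widetilde V_{\lambda^n;k}\|_{L^\infty(\mathbb T)}\le 1$). Hence it suffices to bound the oscillation over $n\ge m_{s,t}$ of $T_{\mathbb Z}\big[\Pi^{1,\widetilde V_{\lambda^n;k}-1,\eta}_{s,t,\le m_{s,t}}\big]f$ by $C(s^2+t^2+1)\|f\|_{\ell^2(\mathbb Z)}$.

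Next I would run a Littlewood--Paley decomposition in the local frequency variable by telescoping in the dilation parameter. Put $\Delta_j\coloneqq\widetilde V_{\lambda^j;k}-\widetilde V_{\lambda^{j-1};k}$, $Q_j\coloneqq T_{\mathbb Z}\big[\Pi^{1,\Delta_j,\eta}_{s,t,\le m_{s,t}}\big]$ and $B_n\coloneqq\sum_{j=1}^n Q_j$. Since $\widetilde V_{\lambda^n;k}-1=\sum_{j=1}^n\Delta_j+(V_{1;k}-1)$, the last term being $n$-independent (and of norm $\lesssim_k\lambda^{-(2-\gamma')m_{s,t}}$ by the middle estimate of Lemma~\ref{frVXbounds} together with the arc-tightness cutoff), we are reduced, modulo a zero-oscillation term, to the oscillation of the partial sums $B_nf$ over $n\ge m_{s,t}$. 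By Lemma~\ref{frVXbounds} one has $|\Delta_j(t)|\lesssim_k\min\{\lambda^{2j}\|t\|,(\lambda^{2j}\|t\|)^{-1/2}\}$, so $\sum_j|\Delta_j(t)|^2\lesssim 1$ pointwise, and $\Delta_j$ is essentially a Littlewood--Paley piece at frequency scale $\lambda^{-2j}$ about the centers $\alpha^{(k)}_{a,b,q}$; combined with the separation of the centers (Lemma~\ref{seperationgeneral}, for $m_{s,t}\gtrsim 1$) this yields the square-function bound $\big\|\big(\sum_{j\ge 1}|Q_jf|^2\big)^{1/2}\big\|_{\ell^2}\lesssim\|f\|_{\ell^2}$. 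Moreover, since the cutoff confines the frequency to $\|\cdot-\alpha^{(k)}_{a,b,q}\|\lesssim\lambda^{-(2-\gamma')m_{s,t}}$, the scales $j$ below $l_0\approx(1-\gamma'/2)m_{s,t}$ have $\lambda^{2j}\|t\|\lesssim\lambda^{-2(l_0-j)}$ on the support of $\Pi^{1,\Delta_j,\eta}_{s,t,\le m_{s,t}}$, so $\sum_{j\le l_0}Q_j$ is a fixed bounded operator; as $l_0<m_{s,t}$ this sum is $n$-independent for $n\ge m_{s,t}$ and drops out of the oscillation, reducing us to the partial sums $\sum_{j=\lceil l_0\rceil+1}^n Q_j$.

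The heart of the matter is the oscillation of these partial sums. The comparison at block endpoints is immediate from the square function: since the index ranges $(I_j,I_{j+1}]$ are disjoint and the $\Delta_{j'}$ are acceptable Littlewood--Paley symbols, $\sum_j\|B_{I_{j+1}}f-B_{I_j}f\|_{\ell^2}^2=\sum_j\big\|\sum_{I_j<j'\le I_{j+1}}Q_{j'}f\big\|_{\ell^2}^2\lesssim\sum_j\sum_{I_j<j'\le I_{j+1}}\|Q_{j'}f\|_{\ell^2}^2=\sum_{j'}\|Q_{j'}f\|_{\ell^2}^2\lesssim\|f\|_{\ell^2}^2$. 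For the remaining within-block maximal partial sums $\sup_{I_j\le n<I_{j+1}}\big|\sum_{I_j<j'\le n}Q_{j'}f\big|$ I would invoke Bourgain's logarithmic lemma (Lemma~4.13 in \cite{Bourgain}), which reduces the required estimate to a maximal bound for the low-pass truncations $\sum_{j'\le n}Q_{j'}$; and this is exactly the content of \eqref{maxestmst}, because, up to the $n$-independent and fixed-bounded pieces already isolated and using the projective identity \eqref{projforpsi}, the truncation $\sum_{j'\le n}Q_{j'}$ agrees with $T_{\mathbb Z}\big[\Pi^{1,\widetilde V_{\lambda^n;k}\psi^{(\lambda)}_n,\sqrt{\eta}}_{s,t,\le m_{s,t}}\big]$, which in turn is comparable to $S_n$: on $\supp\psi^{(\lambda)}_n$ one has $|\widetilde V_{\lambda^n;k}-1|\lesssim_k\lambda^{2n}\|\cdot\|\lesssim 1$ by Lemma~\ref{frVXbounds}, so $\widetilde V_{\lambda^n;k}\psi^{(\lambda)}_n=\varrho(\lambda^{2n}\cdot)$ for a fixed smooth compactly supported bump $\varrho$. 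Tracking all constants, the only loss is the one built into \eqref{maxestmst}, and \eqref{osctomax} follows with a constant $\lesssim_{k,\lambda}s^2+t^2+1$.

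\textbf{Main obstacle.} The delicate step is the last one: one must route Bourgain's logarithmic lemma---a continuous-scale, single-frequency statement on $\mathbb R$---through the localized, multifrequency picture on $\ell^2(\mathbb Z)$, while simultaneously juggling the two scales present here, the frozen arc tightness $\lambda^{-(2-\gamma')m_{s,t}}$ (fixed by Lemma~\ref{freezethelen}) and the running dilation $\lambda^{-2n}$, and one must check that doing so incurs no loss beyond the $(s^2+t^2+1)$ already carried by the hypothesis \eqref{maxestmst}. The remaining ingredients---the two reductions, the telescoping, the square-function estimate, and the separation of the centers $\alpha^{(k)}_{a,b,q}$---are routine.
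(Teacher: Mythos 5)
Your reductions at the periphery are fine and broadly parallel the paper: subtracting the $n$-independent piece, absorbing the block straddling $n=m_{s,t}$, and comparing $\widetilde V_{\lambda^n;k}$ (or your telescoped pieces $\Delta_j$) with the compactly supported bumps via pointwise square-summability of the symbol differences is exactly the kind of square-function argument the paper runs (it does so more directly, via $\sum_n|\widetilde V_{\lambda^n;k}-\psi^{(\lambda)}_n|^2\lesssim1$, so your Littlewood--Paley telescoping is an unnecessary detour but not wrong). The genuine gap is at the step you yourself flag as the heart of the matter: the within-block suprema $\sup_{I_j\le n<I_{j+1}}|B_nf-B_{I_j}f|$. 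A maximal estimate such as \eqref{maxestmst} applied to $f$ block by block gives $(s^2+t^2+1)\|f\|_{\ell^2}$ \emph{per block}, and summing the squares over $j$ then loses a factor $\sqrt J$, which destroys the uniform-in-$J$ oscillation bound; so "this is exactly the content of \eqref{maxestmst}" cannot be the whole story. Invoking Bourgain's logarithmic lemma here is the wrong mechanism: that lemma is a multifrequency maximal inequality on $L^2(\mathbb R)$ which in this paper is used \emph{afterwards} (Propositions~\ref{projpassagetomaxnew} and \ref{lastestimate}) to prove the hypothesis \eqref{maxestmst} itself; it does not convert a block-based oscillation/supremum into a single maximal bound, and if it applied directly at this stage the hypothesis \eqref{maxestmst} would be superfluous.

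What is actually needed — and what the paper does — is the projection identity $\psi^{(\lambda)}_n\psi^{(\lambda)}_m=\psi^{(\lambda)}_{\max(n,m)}$ combined with the disjointness of the arcs (your \eqref{projforpsi}): for $I_j\le n<I_{j+1}$ one writes
\begin{equation*}
T_{\mathbb Z}\big[\Pi^{1,\psi^{(\lambda)}_n}_{s,t,\le m_{s,t}}\big]f-T_{\mathbb Z}\big[\Pi^{1,\psi^{(\lambda)}_{I_{j+1}}}_{s,t,\le m_{s,t}}\big]f
=T_{\mathbb Z}\big[\Pi^{1,\psi^{(\lambda)}_n,\sqrt{\eta}}_{s,t,\le m_{s,t}}\big]\,g_j\quad\text{with}\quad g_j\coloneqq T_{\mathbb Z}\big[\Pi^{1,\psi^{(\lambda)}_{I_j},\sqrt{\eta}}_{s,t,\le m_{s,t}}\big]f-T_{\mathbb Z}\big[\Pi^{1,\psi^{(\lambda)}_{I_{j+1}},\sqrt{\eta}}_{s,t,\le m_{s,t}}\big]f\text{,}
\end{equation*}
so the within-block supremum is the \emph{fixed} maximal operator of \eqref{maxestmst} applied to the block-dependent functions $g_j$, and one concludes because $\sum_j\|g_j\|_{\ell^2}^2\lesssim\|f\|_{\ell^2}^2$, which follows from the telescoping bound $\sum_n|\psi^{(\lambda)}_{n+1}-\psi^{(\lambda)}_n|\lesssim1$ and disjointness of the arcs. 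Your proposal never identifies these block-dependent inputs, and your substitute claim that the truncation $\sum_{j'\le n}Q_{j'}$ "agrees with" $T_{\mathbb Z}\big[\Pi^{1,\widetilde V_{\lambda^n;k}\psi^{(\lambda)}_n,\sqrt{\eta}}_{s,t,\le m_{s,t}}\big]$ is not an identity (the discrepancy $\widetilde V_{\lambda^n;k}(1-\psi^{(\lambda)}_n)$ is $n$-dependent and only controllable in a square-function sense), so even after patching those comparisons the block-wise application of the hypothesis is missing. As written the argument either loses $\sqrt J$ or silently presupposes the projection step it never carries out.
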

\begin{proof}
Suppressing the dependences similarly to \eqref{notationsimpler}, we may estimate as follows
\begin{multline}
\sup_{J\in\mathbb{N}}\sup_{I\in\mathfrak{S}(\mathbb{N}_0)}\Big\|O_{I,J}^2\Big(T_{\mathbb{Z}}\big[\Pi_{s,t,\le m_{s,t}}^{1,\widetilde{V}_{\lambda^n}}1_{n\ge m_{s,t}}\big]f:\,n\in\mathbb{N}_0\Big)\Big\|_{\ell^2(\mathbb{Z})}
\\
\le \sup_{J\in\mathbb{N}}\sup_{I\in\mathfrak{S}(\mathbb{N}_0)}\Big\|O_{I,J}^2\Big(T_{\mathbb{Z}}\big[\Pi_{s,t,\le m_{s,t}}^{1,\psi^{(\lambda)}_n}1_{n\ge m_{s,t}}\big]f:\,n\in\mathbb{N}_0\Big)\Big\|_{\ell^2(\mathbb{Z})}
\\
+\Big\|\Big(\sum_{n\ge m_{s,t}}\big|T_{\mathbb{Z}}\big[\Pi_{s,t,\le m_{s,t}}^{1,\widetilde{V}_{\lambda^n}}-\Pi_{s,t,\le m_{s,t}}^{1,\psi^{(\lambda)}_n}\big]f\big|^2\Big)^{1/2}\Big\|_{\ell^2(\mathbb{Z})}\text{.}
\end{multline}
This reduces the task of establishing \eqref{osctomax} to proving the following two estimates
\begin{equation}\label{goal2proj}
\sup_{J\in\mathbb{N}}\sup_{I\in\mathfrak{S}(\mathbb{N}_0)}\Big\|O_{I,J}^2\Big(T_{\mathbb{Z}}\big[\Pi_{s,t,\le m_{s,t}}^{1,\psi^{(\lambda)}_n}1_{n\ge m_{s,t}}\big]f:\,n\in\mathbb{N}_0\Big)\Big\|_{\ell^2(\mathbb{Z})}\lesssim(s^2+t^2+1) \|f\|_{\ell^2(\mathbb{Z})}
\end{equation}
and
\begin{equation}\label{goal1proj}
\Big\|\Big(\sum_{n\ge m_{s,t}}\big|T_{\mathbb{Z}}\big[\Pi_{s,t,\le m_{s,t}}^{1,\widetilde{V}_{\lambda^n}}-\Pi_{s,t,\le m_{s,t}}^{1,\psi^{(\lambda)}_n}\big]f\big|^2\Big)^{1/2}\Big\|_{\ell^2(\mathbb{Z})}\lesssim\|f\|_{\ell^2(\mathbb{Z})}\text{.}
\end{equation}
For the second one we note that
\begin{multline}\label{sqnice}
\Big\|\Big(\sum_{n\ge m_{s,t}}\big|T_{\mathbb{Z}}\big[\Pi_{s,t,\le m_{s,t}}^{1,\widetilde{V}_{\lambda^n}}-\Pi_{s,t,\le m_{s,t}}^{1,\psi^{(\lambda)}_n}\big]f\big|^2\Big)^{1/2}\Big\|_{\ell^2(\mathbb{Z})}^2
=\sum_{n\ge m_{s,t}}\|T_{\mathbb{Z}}\big[\Pi_{s,t,\le m_{s,t}}^{1,\widetilde{V}_{\lambda^n}}-\Pi_{s,t,\le m_{s,t}}^{1,\psi^{(\lambda)}_n}\big]f\big\|^2_{\ell^2(\mathbb{Z})}
\\
=\sum_{n\ge m_{s,t}}\int_{-1/2}^{1/2}\Big|\Pi_{s,t,\le m_{s,t}}^{1,\widetilde{V}_{\lambda^n}}-\Pi_{s,t,\le m_{s,t}}^{1,\psi^{(\lambda)}_n}\Big|^2|\hat{f}(\xi)|^2d\xi=\sum_{n\ge m_{s,t}}\int_{-1/2}^{1/2}\Big|\Pi_{s,t,\le m_{s,t}}^{1,\widetilde{V}_{\lambda^n}-\psi^{(\lambda)}_n}\Big|^2|\hat{f}(\xi)|^2d\xi\text{.}
\end{multline}
Observe that by the disjointness of the supports for $m_{s,t}\gtrsim 1$ only the diagonal terms will remain when expanding $\Big|\Pi_{s,t,\le m_{s,t}}^{1,\widetilde{V}_{\lambda^n}-\psi^{(\lambda)}_n}\Big|^2$ yielding
\[
\Big|\Pi_{s,t,\le m_{s,t}}^{1,\widetilde{V}_{\lambda^n}-\psi^{(\lambda)}_n}\Big|^2=\sum_{(a,b,q)\in\mathcal{P}_{s,t}}|\widetilde{V}_{\lambda^n}(\xi-\alpha_{a,b,q})-\psi^{(\lambda)}_n(\xi-\alpha_{a,b,q})|^2\eta^2(\lambda^{(2-\gamma')m_{s,t}}\|\xi-\alpha_{a,b,q}\|)\text{,}
\]
and thus
\begin{multline*}
\Big\|\Big(\sum_{n\ge m_{s,t}}\big|T_{\mathbb{Z}}\big[\Pi_{s,t,\le m_{s,t}}^{1,\widetilde{V}_{\lambda^n}}-\Pi_{s,t,\le m_{s,t}}^{1,\psi^{(\lambda)}_n}\big]f\big|^2\Big)^{1/2}\Big\|^2_{\ell^2(\mathbb{Z})}
\\
=\sum_{n\ge m_{s,t}}\int_{-1/2}^{1/2}\Big(\sum_{(a,b,q)\in\mathcal{P}_{s,t}}|\widetilde{V}_{\lambda^n}(\xi-\alpha_{a,b,q})-\psi^{(\lambda)}_n(\xi-\alpha_{a,b,q})|^2\eta^2(\lambda^{(2-\gamma')m_{s,t}}\|\xi-\alpha_{a,b,q}\|)\Big)|\hat{f}(\xi)|^2d\xi
\\
=\int_{-1/2}^{1/2}\bigg(\sum_{(a,b,q)\in\mathcal{P}_{s,t}}\Big(\sum_{n\ge m_{s,t}}|\widetilde{V}_{\lambda^n}(\xi-\alpha_{a,b,q})-\psi^{(\lambda)}_n(\xi-\alpha_{a,b,q})|^2\Big)\eta^2(\lambda^{(2-\gamma')m_{s,t}}\|\xi-\alpha_{a,b,q}\|)\bigg)|\hat{f}(\xi)|^2d\xi\text{.}
\end{multline*}
Assuming momentarily that for all $\xi\in\mathbb{R}$ we have
\begin{equation}\label{verynicefact}
\sum_{n\ge m_{s,t}}|\widetilde{V}_{\lambda^n}(\xi)-\psi^{(\lambda)}_n(\xi)|^2\lesssim 1\text{,}
\end{equation}
we see that one may immediately conclude using the disjointness of the supports by noting that 
\begin{multline}\label{sqnice2}
\Big\|\Big(\sum_{n\ge m_{s,t}}\big|T_{\mathbb{Z}}\big[\Pi_{s,t,\le m_{s,t}}^{1,\widetilde{V}_{\lambda^n}}-\Pi_{s,t,\le m_{s,t}}^{1,\psi^{(\lambda)}_n}\big]f\big|^2\Big)^{1/2}\Big\|^2_{\ell^2(\mathbb{Z})}
\\
\lesssim\int_{-1/2}^{1/2}\Big(\sum_{(a,b,q)\in\mathcal{P}_{s,t}}\eta(\lambda^{(2-\gamma')m_{s,t}}\|\xi-\alpha_{a,b,q}\|)\Big)|\hat{f}(\xi)|^2d\xi\lesssim \int_{-1/2}^{1/2}|\hat{f}(\xi)|^2d\xi=\|f\|^2_{\ell^2(\mathbb{Z})}\text{.}
\end{multline}
Thus \eqref{goal1proj} will be established once we prove \eqref{verynicefact}. To see why this estimate holds, we firstly note that for $\xi\in[-1/2,1/2)$ we have 
\[
|\widetilde{V}_{\lambda^n}(\xi)|\lesssim|\xi\lambda^{2n}|^{-1/2}\quad\text{and}\quad|\widetilde{V}_{\lambda^n}(\xi)-1|\lesssim|\xi\lambda^{2n}|
\]
by \eqref{defVtilde} and \eqref{frVXbounds}. Now note that for all $\xi\in[-1/2,1/2)\setminus\{0\}$ and $n\in\mathbb{N}_0$ such that $|\xi\lambda^{2n}|\ge 1$, we have
\[
|\widetilde{V}_{\lambda^n}(\xi)-\psi^{(\lambda)}_n(\xi)|=|\widetilde{V}_{\lambda^n}(\xi)|\lesssim|\xi\lambda^{2n}|^{-1/2}\text{,}
\]
and for all $\xi\in[-1/2,1/2)\setminus\{0\}$ and $n\in\mathbb{N}_0$ such that $|\xi\lambda^{2n}|< 1$ we have by the Mean Value Theorem
\[
|\widetilde{V}_{\lambda^n}(\xi)-\psi^{(\lambda)}_n(\xi)|\le |\widetilde{V}_{\lambda^n}(\xi)-1|+|\psi(\lambda^{2n}\xi)-1|\lesssim |\xi\lambda^{2n}|+\sup_{t\in\mathbb{R}}\big|\big(\psi^{(\lambda)}\big)'(t)\big||\xi\lambda^{2n}|\lesssim|\xi\lambda^{2n}|\text{.}
\] 
The estimate \eqref{verynicefact} clearly holds for $\xi=0$, and for every $\xi\neq 0$ we have
\[
|\xi\lambda^{2n}|<1\iff\lambda^{2n}<|\xi|^{-1}\iff 2n<\log_{\lambda}(|\xi|^{-1})\iff n<\frac{\log_{\lambda}(|\xi|^{-1})}{2}\text{,}
\] 
and using our previous estimates we may proceed as follows
\begin{multline}\label{finsteparppoxsum}
\sum_{n\ge m_{s,t}}|\widetilde{V}_{\lambda^n}(\xi)-\psi^{(\lambda)}_n(\xi)|^2
\le \sum_{n<\frac{\log_{\lambda}(|\xi|^{-1})}{2}}|\widetilde{V}_{\lambda^n}(\xi)-\psi^{(\lambda)}_n(\xi)|^2+\sum_{n\ge\frac{\log_{\lambda}(|\xi|^{-1})}{2}}|V_{\lambda^n}(\xi)-\psi^{(\lambda)}_n(\xi)|^2
\\
\lesssim_{k,\lambda}\sum_{n<\frac{\log_{\lambda}(|\xi|^{-1})}{2}}|\xi\lambda^{2n}|^2+\sum_{n\ge\frac{\log_{\lambda}(|\xi|^{-1})}{2}}|\xi\lambda^{2n}|^{-1}=|\xi|^2\sum_{n<\frac{\log_{\lambda}(|\xi|^{-1})}{2}}\lambda^{4n}+|\xi|^{-1}\sum_{n\ge\frac{\log_{\lambda}(|\xi|^{-1})}{2}}\lambda^{-2n}
\\
\lesssim|\xi|^2\lambda^{2\log_{\lambda}(|\xi|^{-1})}+|\xi|^{-1}\lambda^{-\log_{\lambda}(|\xi|^{-1})}\lesssim 1\text{,}
\end{multline}
and the proof of \eqref{verynicefact} is complete, and thus the second estimate \eqref{goal1proj} is established.

We now turn our attention to \eqref{goal2proj}. The key observation here is that for $n\neq m$ we have
\begin{equation}\label{projpsinew}
\psi^{(\lambda)}_n(\xi)\psi^{(\lambda)}_m(\xi)=\psi^{(\lambda)}_{\max(n,m)}(\xi)\text{.}
\end{equation}
To see this, assume without loss of generality that $n<m$ and note that if $\xi$ is such that $\psi^{(\lambda)}_m(\xi)=0$ then \eqref{projpsinew} clearly holds. On the other hand, if we assume that $\psi^{(\lambda)}_m(\xi)\neq 0$, then $|\lambda^{2m}\xi|\le 1/4$, and thus 
\[
|\lambda^{2n}\xi|\le |\lambda^{2(m-1)}\xi|\le |\lambda^{2m}\xi|\lambda^{-1} \le \frac{1}{4\lambda}\text{,}
\]
and thus $\psi^{(\lambda)}_n(\xi)=1$ making \eqref{projpsinew} true.  By additionally taking into account the disjointness of the supports in the summands of $\Pi_{s,t,\le m_{s,t}}^{1,\psi^{(\lambda)}_n}$, for $m_{s,t}\gtrsim 1$, \eqref{projpsinew} implies that
\[
\Pi_{s,t,\le m_{s,t}}^{1,\psi^{(\lambda)}_n,\sqrt{\eta}}\Pi_{s,t,\le m_{s,t}}^{1,\psi^{(\lambda)}_m,\sqrt{\eta}}=\Pi_{s,t,\le m_{s,t}}^{1,\psi^{(\lambda)}_{\max(n,m)},\eta}=\Pi_{s,t,\le m_{s,t}}^{1,\psi^{(\lambda)}_{\max(n,m)}}\text{.}
\]
For every $J\in\mathbb{N}$ and $I\in\mathfrak{S}(\mathbb{N}_0)$ we have that
\begin{multline}\label{difficultest}
\Big\|O_{I,J}^2\Big(T_{\mathbb{Z}}\big[\Pi_{s,t,\le m_{s,t}}^{1,\psi^{(\lambda)}_n}1_{n\ge m_{s,t}}\big]f:\,n\in\mathbb{N}_0\Big)\Big\|_{\ell^2(\mathbb{Z})}
\\
=\Big\|\Big(\sum_{j=0}^{J-1}\sup_{I_j< n< I_{j+1}}\big|T_{\mathbb{Z}}\big[\Pi_{s,t,\le m_{s,t}}^{1,\psi^{(\lambda)}_n}1_{n\ge m_{s,t}}\big]f-T_{\mathbb{Z}}\big[\Pi_{s,t,\le m_{s,t}}^{1,\psi^{(\lambda)}_{I_j}}1_{I_j\ge m_{s,t}}\big]f\big|^2\Big)^{1/2}\Big\|_{\ell^2(\mathbb{Z})}
\\
\lesssim\Big\|\Big(\sum_{j=0}^{J-1}\sup_{I_j< n< I_{j+1}}\big|T_{\mathbb{Z}}\big[\Pi_{s,t,\le m_{s,t}}^{1,\psi^{(\lambda)}_n}1_{n\ge m_{s,t}}\big]f-T_{\mathbb{Z}}\big[\Pi_{s,t,\le m_{s,t}}^{1,\psi^{(\lambda)}_{I_{j+1}}}1_{I_{j+1}\ge m_{s,t}}\big]f\big|^2\Big)^{1/2}\Big\|_{\ell^2(\mathbb{Z})}
\\
+\Big\|\Big(\sum_{j=0}^{J-1}\big|T_{\mathbb{Z}}\big[\Pi_{s,t,\le m_{s,t}}^{1,\psi^{(\lambda)}_{I_{j+1}}}1_{I_{j+1}\ge m_{s,t}}\big]f-T_{\mathbb{Z}}\big[\Pi_{s,t,\le m_{s,t}}^{1,\psi^{(\lambda)}_{I_j}}1_{I_{j}\ge m_{s,t}}\big]f\big|^2\Big)^{1/2}\Big\|_{\ell^2(\mathbb{Z})}
\\
\lesssim\Big\|\Big(\sum_{j=0}^{J-1}\sup_{I_j< n< I_{j+1}}\big|T_{\mathbb{Z}}\big[\Pi_{s,t,\le m_{s,t}}^{1,\psi^{(\lambda)}_n}1_{n\ge m_{s,t}}\big]f-T_{\mathbb{Z}}\big[\Pi_{s,t,\le m_{s,t}}^{1,\psi^{(\lambda)}_{I_{j+1}}}1_{n\ge m_{s,t}}\big]f\big|^2\Big)^{1/2}\Big\|_{\ell^2(\mathbb{Z})}
\\
+\Big\|\Big(\sum_{j=0}^{J-1}\sup_{I_j< n< I_{j+1}}\big|T_{\mathbb{Z}}\big[\Pi_{s,t,\le m_{s,t}}^{1,\psi^{(\lambda)}_{I_{j+1}}}1_{n\ge m_{s,t}}\big]f-T_{\mathbb{Z}}\big[\Pi_{s,t,\le m_{s,t}}^{1,\psi^{(\lambda)}_{I_{j+1}}}1_{I_{j+1}\ge m_{s,t}}\big]f\big|^2\Big)^{1/2}\Big\|_{\ell^2(\mathbb{Z})}
\\
+\Big\|\Big(\sum_{j=0}^{J-1}\big|T_{\mathbb{Z}}\big[\Pi_{s,t,\le m_{s,t}}^{1,\psi^{(\lambda)}_{I_{j+1}}}\big]f-T_{\mathbb{Z}}\big[\Pi_{s,t,\le m_{s,t}}^{1,\psi^{(\lambda)}_{I_j}}\big]f\big|^2\Big)^{1/2}\Big\|_{\ell^2(\mathbb{Z})}+\sup_{n\in\mathbb{N}_0}\big\|T_{\mathbb{Z}}\big[\Pi_{s,t,\le m_{s,t}}^{1,\psi^{(\lambda)}_{n}}1_{n\ge m_{s,t}}\big]f\big\|_{\ell^2(\mathbb{Z})}
\\
\lesssim\Big\|\Big(\sum_{j=0}^{J-1}\sup_{\substack{I_j< n< I_{j+1}\\n\ge m_{s,t}}}\big|T_{\mathbb{Z}}\big[\Pi_{s,t,\le m_{s,t}}^{1,\psi^{(\lambda)}_n,\sqrt{\eta}}\Pi_{s,t,\le m_{s,t}}^{1,\psi^{(\lambda)}_{I_j},\sqrt{\eta}}\big]f-T_{\mathbb{Z}}\big[\Pi_{s,t,\le m_{s,t}}^{1,\psi^{(\lambda)}_n,\sqrt{\eta}}\Pi_{s,t,\le m_{s,t}}^{1,\psi^{(\lambda)}_{I_{j+1}},\sqrt{\eta}}\big]f\big|^2\Big)^{1/2}\Big\|_{\ell^2(\mathbb{Z})}
\\
+\Big\|\Big(\sum_{j=0}^{J-1}\big|T_{\mathbb{Z}}\big[\Pi_{s,t,\le m_{s,t}}^{1,\psi^{(\lambda)}_{I_{j+1}}}1_{I_j<m_{s,t}\le I_{j+1}}\big]f\big|^2\Big)^{1/2}\Big\|_{\ell^2(\mathbb{Z})}
\\
+\Big\|\Big(\sum_{j=0}^{J-1}\big|T_{\mathbb{Z}}\big[\Pi_{s,t,\le m_{s,t}}^{1,\psi^{(\lambda)}_{I_{j+1}}}\big]f-T_{\mathbb{Z}}\big[\Pi_{s,t,\le m_{s,t}}^{1,\psi^{(\lambda)}_{I_j}}\big]f\big|^2\Big)^{1/2}\Big\|_{\ell^2(\mathbb{Z})}+\sup_{n\in\mathbb{N}_0}\big\|T_{\mathbb{Z}}\big[\Pi_{s,t,\le m_{s,t}}^{1,\psi^{(\lambda)}_{n}}1_{n\ge m_{s,t}}\big]f\big\|_{\ell^2(\mathbb{Z})}\text{.}
\end{multline}
For the second and the fourth summand here we note that
\begin{multline}
\Big\|\Big(\sum_{j=0}^{J-1}\big|T_{\mathbb{Z}}\big[\Pi_{s,t,\le m_{s,t}}^{1,\psi^{(\lambda)}_{I_{j+1}}}1_{I_j<m_{s,t}\le I_{j+1}}\big]f\big|^2\Big)^{1/2}\Big\|_{\ell^2(\mathbb{Z})}\le \sup_{n\in\mathbb{N}_0}\big\|T_{\mathbb{Z}}\big[\Pi_{s,t,\le m_{s,t}}^{1,\psi^{(\lambda)}_{n}}1_{n\ge m_{s,t}}\big]f\big\|_{\ell^2(\mathbb{Z})}
\\
\le \sup_{n\ge m_{s,t}}\big\|\Pi_{s,t,\le m_{s,t}}^{1,\psi^{(\lambda)}_{n}}\big\|_{L^{\infty}(\mathbb{T})}\|f\|_{\ell^2(\mathbb{Z})}\lesssim \|f\|_{\ell^2(\mathbb{Z})}\text{,}\end{multline}
by the disjointness of the supports. For the first summand in the last bound in \eqref{difficultest} we note that
\begin{multline}
\Big\|\Big(\sum_{j=0}^{J-1}\sup_{\substack{I_j< n< I_{j+1}\\n\ge m_{s,t}}}\big|T_{\mathbb{Z}}\big[\Pi_{s,t,\le m_{s,t}}^{1,\psi^{(\lambda)}_n,\sqrt{\eta}}\Pi_{s,t,\le m_{s,t}}^{1,\psi^{(\lambda)}_{I_j},\sqrt{\eta}}\big]f-T_{\mathbb{Z}}\big[\Pi_{s,t,\le m_{s,t}}^{1,\psi^{(\lambda)}_n,\sqrt{\eta}}\Pi_{s,t,\le m_{s,t}}^{1,\psi^{(\lambda)}_{I_{j+1}},\sqrt{\eta}}\big]f\big|^2\Big)^{1/2}\Big\|_{\ell^2(\mathbb{Z})}
\\
\le\Big\|\Big(\sum_{j=0}^{J-1}\sup_{n\ge m_{s,t}}\Big|T_{\mathbb{Z}}\big[\Pi_{s,t,\le m_{s,t}}^{1,\psi^{(\lambda)}_n,\sqrt{\eta}}\big]\Big(T_{\mathbb{Z}}\big[\Pi_{s,t,\le m_{s,t}}^{1,\psi^{(\lambda)}_{I_{j+1}},\sqrt{\eta}}\big]f-T_{\mathbb{Z}}\big[\Pi_{s,t,\le m_{s,t}}^{1,\psi^{(\lambda)}_{I_j},\sqrt{\eta}}\big]f\Big)\Big|^2\Big)^{1/2}\Big\|_{\ell^2(\mathbb{Z})}
\\
=\Big(\sum_{j=0}^{J-1}\Big\|\sup_{n\ge m_{s,t}}\big|T_{\mathbb{Z}}\big[\Pi_{s,t,\le m_{s,t}}^{1,\psi^{(\lambda)}_n,\sqrt{\eta}}\big]\Big(T_{\mathbb{Z}}\big[\Pi_{s,t,\le m_{s,t}}^{1,\psi^{(\lambda)}_{I_{j+1}},\sqrt{\eta}}\big]f-T_{\mathbb{Z}}\big[\Pi_{s,t,\le m_{s,t}}^{1,\psi^{(\lambda)}_{I_j},\sqrt{\eta}}\big]f\Big)\big|\Big\|^2_{\ell^2(\mathbb{Z})}\Big)^{1/2}
\\
\lesssim(s^2+t^2+1)\Big(\sum_{j=0}^{J-1}\Big\|T_{\mathbb{Z}}\big[\Pi_{s,t,\le m_{s,t}}^{1,\psi^{(\lambda)}_{I_{j+1}},\sqrt{\eta}}\big]f-T_{\mathbb{Z}}\big[\Pi_{s,t,\le m_{s,t}}^{1,\psi^{(\lambda)}_{I_j},\sqrt{\eta}}\big]f\Big\|^2_{\ell^2(\mathbb{Z})}\Big)^{1/2}
\\
=(s^2+t^2+1)\Big\|\Big(\sum_{j=0}^{J-1}\Big|T_{\mathbb{Z}}\big[\Pi_{s,t,\le m_{s,t}}^{1,\psi^{(\lambda)}_{I_{j+1}},\sqrt{\eta}}\big]f-T_{\mathbb{Z}}\big[\Pi_{s,t,\le m_{s,t}}^{1,\psi^{(\lambda)}_{I_j},\sqrt{\eta}}\big]f\Big|^2\Big)^{1/2}\Big\|_{\ell^2(\mathbb{Z})}\text{.}
\end{multline}
To conclude for the first and the third term it suffices to prove
\[
\Big\|\Big(\sum_{j=0}^{J-1}\big|T_{\mathbb{Z}}\big[\Pi_{s,t,\le m_{s,t}}^{1,\psi^{(\lambda)}_{I_{j+1}},\rho}\big]f-T_{\mathbb{Z}}\big[\Pi_{s,t,\le m_{s,t}}^{1,\psi^{(\lambda)}_{I_j},\rho}\big]f\big|^2\Big)^{1/2}\Big\|_{\ell^2(\mathbb{Z})}\lesssim\|f\|_{\ell^2(\mathbb{Z})}\text{,}
\]
for $\rho=\eta$ and $\rho=\sqrt{\eta}$. Note that for either choice of $\rho$ we have
\begin{multline}\label{returnhere}
\Big\|\Big(\sum_{j=0}^{J-1}\big|T_{\mathbb{Z}}\big[\Pi_{s,t,\le m_{s,t}}^{1,\psi^{(\lambda)}_{I_{j+1}},\rho}\big]f-T_{\mathbb{Z}}\big[\Pi_{s,t,\le m_{s,t}}^{1,\psi^{(\lambda)}_{I_j},\rho}\big]f\big|^2\Big)^{1/2}\Big\|_{\ell^2(\mathbb{Z})}^2
\\
=\sum_{j=0}^{J-1}\big\|T_{\mathbb{Z}}\big[\Pi_{s,t,\le m_{s,t}}^{1,\psi^{(\lambda)}_{I_{j+1}},\rho}\big]f-T_{\mathbb{Z}}\big[\Pi_{s,t,\le m_{s,t}}^{1,\psi^{(\lambda)}_{I_j},\rho}\big]f\big\|_{\ell^2(\mathbb{Z})}^2
=\sum_{j=0}^{J-1}\big\|\big(\Pi_{s,t,\le m_{s,t}}^{1,\psi^{(\lambda)}_{I_{j+1}},\rho}-\Pi_{s,t,\le m_{s,t}}^{1,\psi^{(\lambda)}_{I_j},\rho}\big)\hat{f}\big\|_{L^2(\mathbb{T})}^2
\\
=\sum_{j=0}^{J-1}\int_{-1/2}^{1/2}\big|\Pi_{s,t,\le m_{s,t}}^{1,\psi^{(\lambda),\rho}_{I_{j+1}}}(\xi)-\Pi_{s,t,\le m_{s,t}}^{1,\psi^{(\lambda)}_{I_j},\rho}(\xi)\big|^2|\hat{f}(\xi)|^2d\xi
\\
=\int_{-1/2}^{1/2}\Big(\sum_{j=0}^{J-1}\big|\Pi_{s,t,\le m_{s,t}}^{1,\psi^{(\lambda)}_{I_{j+1}}-\psi^{(\lambda)}_{I_{j}},\rho}(\xi)\big|^2\Big)|\hat{f}(\xi)|^2d\xi\text{.}
\end{multline}
Thus
\begin{multline}
\Pi_{s,t,\le m_{s,t}}^{1,\psi^{(\lambda)}_{I_{j+1}}-\psi^{(\lambda)}_{I_{j}},\rho}(\xi)=\sum_{(a,b,q)\in\mathcal{P}_{s,t}}\big(\psi^{(\lambda)}_{I_{j+1}}(\xi-\alpha_{a,b,q})-\psi^{(\lambda)}_{I_{j}}(\xi-\alpha_{a,b,q})\big)\rho(\lambda^{(2-\gamma')m_{s,t}}\|\xi-\alpha_{a,b,q}\|)
\\
=\sum_{(a,b,q)\in\mathcal{P}_{s,t}}\Big(\sum_{n=I_j}^{I_{j+1}-1}\psi^{(\lambda)}_{n+1}(\xi-\alpha_{a,b,q})-\psi^{(\lambda)}_{n}(\xi-\alpha_{a,b,q})\Big)\rho(\lambda^{(2-\gamma')m_{s,t}}\|\xi-\alpha_{a,b,q}\|)\text{.}
\end{multline}
Thus, by the disjointness of the supports for $m_{s,t}\gtrsim 1$ we have
\begin{multline}
\sum_{j=0}^{J-1}\big|\Pi_{s,t,\le m_{s,t}}^{1,\psi^{(\lambda)}_{I_{j}}-\psi^{(\lambda)}_{I_{j+1}},\rho}(\xi)\big|^2
\\
=\sum_{j=0}^{J-1}\sum_{(a,b,q)\in\mathcal{P}_{s,t}}\Big|\sum_{n=I_j}^{I_{j+1}-1}\psi^{(\lambda)}_{n+1}(\xi-\alpha_{a,b,q})-\psi^{(\lambda)}_{n}(\xi-\alpha_{a,b,q})\Big|^2\rho^2(\lambda^{(2-\gamma')m_{s,t}}\|\xi-\alpha_{a,b,q}\|)
\\
\le\sum_{(a,b,q)\in\mathcal{P}_{s,t}}\sum_{j=0}^{J-1}\Big(\sum_{n=I_j}^{I_{j+1}-1}\big|\psi^{(\lambda)}_{n+1}(\xi-\alpha_{a,b,q})-\psi^{(\lambda)}_{n}(\xi-\alpha_{a,b,q})\big|\Big)^2\rho^2(\lambda^{(2-\gamma')m_{s,t}}\|\xi-\alpha_{a,b,q}\|)\text{.}
\end{multline}
We will show that for all $\xi\in\mathbb{R}$ we have
\begin{equation}\label{keyesttoend}
\sum_{n\in\mathbb{N}_0}\big|\psi^{(\lambda)}_{n+1}(\xi)-\psi^{(\lambda)}_{n}(\xi)\big|\lesssim 1\text{.}
\end{equation}
Assuming momentarily that \eqref{keyesttoend} holds we may immediately conclude as follows.
\begin{multline}
\sum_{j=0}^{J-1}\big|\Pi_{s,t,\le m_{s,t}}^{1,\psi^{(\lambda)}_{I_{j}}-\psi^{(\lambda)}_{I_{j+1}},\rho}(\xi)\big|^2\lesssim \sum_{(a,b,q)\in\mathcal{P}_{s,t}}\sum_{j=0}^{J-1}\Big(\sum_{n=I_j}^{I_{j+1}-1}\big|\psi^{(\lambda)}_{n+1}(\xi-\alpha_{a,b,q})-\psi^{(\lambda)}_{n}(\xi-\alpha_{a,b,q})\big|\Big)\cdot
\\
\cdot\Big(\sum_{n=I_j}^{I_{j+1}-1}\big|\psi^{(\lambda)}_{n+1}(\xi-\alpha_{a,b,q})-\psi^{(\lambda)}_{n}(\xi-\alpha_{a,b,q})\big|\Big)\rho^2(\lambda^{(2-\gamma')m_{s,t}}\|\xi-\alpha_{a,b,q}\|)
\\
\lesssim \sum_{(a,b,q)\in\mathcal{P}_{s,t}}\sum_{j=0}^{J-1}\Big(\sum_{n=I_j}^{I_{j+1}-1}\big|\psi^{(\lambda)}_{n+1}(\xi-\alpha_{a,b,q})-\psi^{(\lambda)}_{n}(\xi-\alpha_{a,b,q})\big|\Big)\rho^2(\lambda^{(2-\gamma')m_{s,t}}\|\xi-\alpha_{a,b,q}\|)
\\
\lesssim\sum_{(a,b,q)\in\mathcal{P}_{s,t}}\Big(\sum_{n\in\mathbb{N}_0}\big|\psi^{(\lambda)}_{n+1}(\xi-\alpha_{a,b,q})-\psi^{(\lambda)}_{n}(\xi-\alpha_{a,b,q})\big|\Big)\rho^2(\lambda^{(2-\gamma')m_{s,t}}\|\xi-\alpha_{a,b,q}\|)
\\
\lesssim\sum_{(a,b,q)\in\mathcal{P}_{s,t}}\eta(\lambda^{(2-\gamma')m_{s,t}}\|\xi-\alpha_{a,b,q}\|)\le 1\text{,}
\end{multline}
and thus returning to \eqref{returnhere} we  obtain
\begin{multline}
\Big\|\Big(\sum_{j=0}^{J-1}\big|T_{\mathbb{Z}}\big[\Pi_{s,t,\le m_{s,t}}^{1,\psi^{(\lambda)}_{I_{j+1}},\rho}\big]f-T_{\mathbb{Z}}\big[\Pi_{s,t,\le m_{s,t}}^{1,\psi^{(\lambda)}_{I_j},\rho}\big]f\big|^2\Big)^{1/2}\Big\|_{\ell^2(\mathbb{Z})}^2
\\
=\int_{-1/2}^{1/2}\Big(\sum_{j=0}^{J-1}\big|\Pi_{s,t,\le m_{s,t}}^{1,\psi^{(\lambda)}_{I_{j+1}}-\psi^{(\lambda)}_{I_{j}},\rho}(\xi)\big|^2\Big)|\hat{f}(\xi)|^2d\xi\lesssim\int_{-1/2}^{1/2}|\hat{f}(\xi)|^2d\xi
\le\|f\|^2_{\ell^2(\mathbb{Z})}\text{,}
\end{multline}
as desired. It remains to establish \eqref{keyesttoend}. We note that for all $\xi,n$ such that $|\xi|\lambda^{2(n+1)}\ge 1$ we have 
\[
\big|\psi_{n+1}^{(\lambda)}(\xi)-\psi_n^{(\lambda)}(\xi)\big|=0\text{,}
\]
by taking into account the supports of $\psi_n^{(\lambda)}$. On the other hand, for all for all $\xi,n$ such that $|\xi|\lambda^{2(n+1)}< 1$, we have
\begin{multline*}
\big|\psi_{n+1}^{(\lambda)}(\xi)-\psi_n^{(\lambda)}(\xi)\big|\le \big|\psi_{n+1}^{(\lambda)}(\xi)-1\big|+\big|\psi_{n}^{(\lambda)}(\xi)-1\big|
\\
=\big|\psi^{(\lambda)}(\lambda^{2(n+1)}\xi)-\psi^{(\lambda)}(0)\big|+\big|\psi^{(\lambda)}(\lambda^{2n}\xi)-\psi^{(\lambda)}(0)\big|
\lesssim |\lambda^{2n+2}\xi|\sup_{t\in\mathbb{R}}\big|\big(\psi^{(\lambda)}\big)'(t)\big|\lesssim\lambda^{2n+2}|\xi|\text{.}
\end{multline*}
Note that
\[
|\xi|\lambda^{2(n+1)}< 1\iff\lambda^{2(n+1)}< |\xi|^{-1}\iff 2n+2<\log_{\lambda}(|\xi|^{-1})\iff n<\frac{\log_{\lambda}(|\xi|^{-1})}{2}-1\text{.}
\]
Finally, we get
\begin{multline}
\sum_{n\in\mathbb{N}_0}\big|\psi^{(\lambda)}_{n+1}(\xi)-\psi^{(\lambda)}_{n}(\xi)\big|
=\sum_{0\le n<\frac{\log_{\lambda}(|\xi|^{-1})}{2}-1}\big|\psi^{(\lambda)}_{n+1}(\xi)-\psi^{(\lambda)}_{n}(\xi)\big|+0
\\
\lesssim\sum_{0\le n<\frac{\log_{\lambda}(|\xi|^{-1})}{2}-1}\lambda^{2n+2}|\xi|\lesssim|\xi|\sum_{0\le n<\frac{\log_{\lambda}(|\xi|^{-1})}{2}-1}\lambda^{2n+2}\lesssim1\text{,}
\end{multline}
and the proof of \eqref{osctomax} is complete. 
\end{proof}
We remind the reader that to conclude it suffices to prove Proposition~$\ref{projpro}$, and the only difficult estimate is \eqref{goaleststfixed1}. We briefly discuss the second estimate \eqref{00case} of the aforementioned proposition in the final subsection. By Proposition~$\ref{freezethelen}$ and $\ref{projpassagetomax}$, to establish the estimate \eqref{goaleststfixed1}, it suffices to prove the estimate \eqref{maxestmst}, which we do in the next subsection.
\subsection{Employing Bourgain's logarithmic lemma} We are now almost ready to  address the multi-frequency nature of the problem by passing to $L^2(\mathbb{R})$ and using Bourgain's logarithmic lemma, see Lemma~4.13 in \cite{Bourgain}. Before doing so, we make a final approximation to make Bourgain's lemma directly applicable.

Fix $\phi\in\mathcal{C}^{\infty}(\mathbb{R}\to\mathbb{C})$ such that $0\le \phi\le 1$, $\supp\mathcal{F}_{\mathbb{R}}[\phi]\subseteq [-1/2,1/2)$ and $\int\phi\neq 0$, so we may treat $\mathcal{F}_{\mathbb{R}}[\phi]$ as a multiplier on $\mathbb{T}\equiv[-1/2,1/2)$. For every $n\in\mathbb{N}_0$ we let 
\[
\chi^{(\lambda)}_{n}(\xi)\coloneqq \frac{\mathcal{F}_{\mathbb{R}}[\phi](\lambda^{2n}\xi)}{\mathcal{F}_{\mathbb{R}}[\phi](0)}\text{.}
\]
\begin{proposition}\label{projpassagetomaxnew}
Let $k\in\mathbb{Q}_{>0}$ with $\sqrt{k}\not\in\mathbb{Q}$ and $\lambda\in(1,2]$. Then there exists a positive constant $C=C(k,\lambda)$ such that for all $s,t\in\mathbb{N}_0$ with $m_{s,t}\ge C$ we have
\begin{equation}\label{finapproxosc}
\big\|\sup_{n\ge m_{s,t}}|T_{\mathbb{Z}}\big[\Pi_{s,t,\le m_{s,t}}^{1,\psi^{(\lambda)}_n,\sqrt{\eta}}\big]f|\big\|_{\ell^2(\mathbb{Z})}\le\big\|\sup_{n\ge m_{s,t}}|T_{\mathbb{Z}}\big[\Pi_{s,t,\le m_{s,t}}^{1,\chi^{(\lambda)}_n,\sqrt{\eta}}\big]f|\big\|_{\ell^2(\mathbb{Z})}+C\|f\|_{\ell^2(\mathbb{Z})}\text{.}
\end{equation}
\end{proposition}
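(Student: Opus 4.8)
The plan is to split $\psi^{(\lambda)}_n=\chi^{(\lambda)}_n+\big(\psi^{(\lambda)}_n-\chi^{(\lambda)}_n\big)$ and to absorb the difference into the error term by a square-function argument identical in structure to the one already carried out for \eqref{goal1proj}. Suppressing the dependence on $k,\lambda$ as in \eqref{notationsimpler}, the triangle inequality together with the subadditivity of the supremum gives
\begin{multline*}
\big\|\sup_{n\ge m_{s,t}}|T_{\mathbb{Z}}[\Pi_{s,t,\le m_{s,t}}^{1,\psi^{(\lambda)}_n,\sqrt{\eta}}]f|\big\|_{\ell^2(\mathbb{Z})}\le\big\|\sup_{n\ge m_{s,t}}|T_{\mathbb{Z}}[\Pi_{s,t,\le m_{s,t}}^{1,\chi^{(\lambda)}_n,\sqrt{\eta}}]f|\big\|_{\ell^2(\mathbb{Z})}
\\
+\Big\|\Big(\sum_{n\ge m_{s,t}}|T_{\mathbb{Z}}[\Pi_{s,t,\le m_{s,t}}^{1,\psi^{(\lambda)}_n-\chi^{(\lambda)}_n,\sqrt{\eta}}]f|^2\Big)^{1/2}\Big\|_{\ell^2(\mathbb{Z})},
\end{multline*}
so the task reduces to dominating the last term by $C\|f\|_{\ell^2(\mathbb{Z})}$.

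For the error term I would repeat the computation in \eqref{sqnice}--\eqref{sqnice2}: Plancherel turns the square function into an integral, the disjointness of the supports of $\big(\sqrt{\eta}(\lambda^{(2-\gamma')m_{s,t}}\|\cdot-\alpha^{(k)}_{a,b,q}\|)\big)_{(a,b,q)\in\mathcal{P}_{s,t}}$ for $m_{s,t}\gtrsim1$ (which follows from Lemma~\ref{seperationverygeneral} exactly as in the proof of Lemma~\ref{factorizationlemma1}, the radius here being even smaller than at scale $m_{s,t}-1$) kills all off-diagonal terms, and, using $(\sqrt{\eta})^2=\eta$, $0\le\eta\le1$ and $\sum_{(a,b,q)\in\mathcal{P}_{s,t}}\eta(\lambda^{(2-\gamma')m_{s,t}}\|\xi-\alpha^{(k)}_{a,b,q}\|)\le1$, one is left to check the pointwise inequality
\begin{equation*}
\sum_{n\in\mathbb{N}_0}|\psi^{(\lambda)}_n(\xi)-\chi^{(\lambda)}_n(\xi)|^2\lesssim_{\lambda,\phi}1\qquad\text{for all }\xi\in\mathbb{R},
\end{equation*}
which is the exact analogue of \eqref{verynicefact}.

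To verify this, I would set $h\coloneqq\psi^{(\lambda)}-\mathcal{F}_{\mathbb{R}}[\phi]/\mathcal{F}_{\mathbb{R}}[\phi](0)$. By the choice of $\phi$, the function $\mathcal{F}_{\mathbb{R}}[\phi]/\mathcal{F}_{\mathbb{R}}[\phi](0)$ is smooth, supported in $[-1/2,1/2)$, and equals $1$ at the origin (recall $\mathcal{F}_{\mathbb{R}}[\phi](0)=\int\phi\neq0$), so $h$ is smooth, supported in $[-1/2,1/2)$, and $h(0)=\psi^{(\lambda)}(0)-1=0$; consequently $|h(\xi)|\lesssim|\xi|\,1_{[-1/2,1/2)}(\xi)$ by the mean value theorem. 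Since $\psi^{(\lambda)}_n(\xi)-\chi^{(\lambda)}_n(\xi)=h(\lambda^{2n}\xi)$, the compact support of $h$ restricts the sum over $n$ to those indices with $\lambda^{2n}|\xi|\le1/2$, and on that range $|h(\lambda^{2n}\xi)|\lesssim\lambda^{2n}|\xi|$, so a geometric-series estimate exactly as in \eqref{finsteparppoxsum} yields, for $\xi\neq0$,
\begin{equation*}
\sum_{n\in\mathbb{N}_0}|\psi^{(\lambda)}_n(\xi)-\chi^{(\lambda)}_n(\xi)|^2\lesssim|\xi|^2\sum_{n:\,\lambda^{2n}\le(2|\xi|)^{-1}}\lambda^{4n}\lesssim|\xi|^2\cdot|\xi|^{-2}\lesssim1,
\end{equation*}
the case $\xi=0$ being trivial. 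Combining this with the preceding reduction gives \eqref{finapproxosc}.

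I do not expect a genuine obstacle here: this is the same approximate-identity mechanism already used for \eqref{goal1proj}, and the only feature worth isolating is that $h$ is \emph{compactly supported} and vanishes at the origin, which is precisely what makes $\sum_n|h(\lambda^{2n}\xi)|^2$ bounded uniformly in $\xi$.
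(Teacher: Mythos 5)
Your proposal is correct and follows essentially the same route as the paper: triangle inequality to split off the $\chi^{(\lambda)}_n$ term, a square-function/Plancherel argument with the disjointness of the $\sqrt{\eta}$-supports as in \eqref{sqnice}--\eqref{sqnice2}, and the pointwise bound $\sum_{n}|\psi^{(\lambda)}_n(\xi)-\chi^{(\lambda)}_n(\xi)|^2\lesssim 1$ via vanishing of the difference for $|\lambda^{2n}\xi|$ large and a mean value theorem plus geometric series estimate for $|\lambda^{2n}\xi|$ small, exactly as in \eqref{verynicefact22} and \eqref{finsteparppoxsum}. Packaging the difference as the single compactly supported function $h$ with $h(0)=0$ is only a cosmetic repackaging of the paper's argument.
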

\begin{proof}
We note that
\begin{multline}
\big\|\sup_{n\ge m_{s,t}}|T_{\mathbb{Z}}\big[\Pi_{s,t,\le m_{s,t}}^{1,\psi^{(\lambda)}_n,\sqrt{\eta}}\big]f|\big\|_{\ell^2(\mathbb{Z})}
\\
\lesssim\big\|\sup_{n\ge m_{s,t}}|T_{\mathbb{Z}}\big[\Pi_{s,t,\le m_{s,t}}^{1,\chi^{(\lambda)}_n,\sqrt{\eta}}\big]f|\big\|_{\ell^2(\mathbb{Z})}+\big\|\sup_{n\ge m_{s,t}}|T_{\mathbb{Z}}\big[\Pi_{s,t,\le m_{s,t}}^{1,\chi^{(\lambda)}_n,\sqrt{\eta}}\big]f-T_{\mathbb{Z}}\big[\Pi_{s,t,\le m_{s,t}}^{1,\psi^{(\lambda)}_n,\sqrt{\eta}}\big]f|\big\|_{\ell^2(\mathbb{Z})}
\\
\lesssim\big\|\sup_{n\ge m_{s,t}}|T_{\mathbb{Z}}\big[\Pi_{s,t,\le m_{s,t}}^{1,\chi^{(\lambda)}_n,\sqrt{\eta}}\big]f|\big\|_{\ell^2(\mathbb{Z})}+\Big\|\Big(\sum_{n\ge m_{s,t}}|T_{\mathbb{Z}}\big[\Pi_{s,t,\le m_{s,t}}^{1,\chi^{(\lambda)}_n,\sqrt{\eta}}\big]f-T_{\mathbb{Z}}\big[\Pi_{s,t,\le m_{s,t}}^{1,\psi^{(\lambda)}_n,\sqrt{\eta}}\big]f|^2\Big)^{1/2}\Big\|_{\ell^2(\mathbb{Z})}\text{,}
\end{multline}
and one may conclude now exactly as in \eqref{sqnice}, by establishing
\begin{equation}\label{verynicefact22}
\sum_{n\ge m_{s,t}}|\chi_{n}^{(\lambda)}(\xi)-\psi^{(\lambda)}_n(\xi)|^2\lesssim 1\text{,}
\end{equation}
 and arguing as in \eqref{sqnice2}, so we focus on proving the estimate above. For all $\xi,n$ such that $|\xi\lambda^{2n}|\ge 1$, we have
\[
|\chi^{(\lambda)}_n(\xi)-\psi^{(\lambda)}_n(\xi)|=0\text{,}
\]
by taking into account the supports. For all $\xi,n$ such that $|\xi\lambda^{2n}|< 1$ we have by the Mean Value Theorem
\begin{multline}
|\chi^{(\lambda)}_n(\xi)-\psi^{(\lambda)}_n(\xi)|\le |\chi^{(\lambda)}_n(\xi)-1|+|\psi^{(\lambda)}_n(\xi)-1|=|\chi^{(\lambda)}_n(\xi)-\chi^{(\lambda)}_n(0)|+|\psi^{(\lambda)}_n(\xi)-\psi^{(\lambda)}_n(0)|
\\
\lesssim\sup_{t\in\mathbb{R}}\bigg|\bigg( \frac{\mathcal{F}_{\mathbb{R}}[\phi](t)}{\mathcal{F}_{\mathbb{R}}[\phi](0)}\bigg)'(t)\bigg||\xi\lambda^{2n}|+ \sup_{t\in\mathbb{R}}\big|\big(\psi^{(\lambda)}\big)'(t)\big||\xi\lambda^{2n}|\lesssim|\xi\lambda^{2n}|\text{.}
\end{multline}
Thus is a manner identical to \eqref{finsteparppoxsum} one may establish \eqref{verynicefact22} and conclude.
\end{proof}

\begin{proposition}\label{lastestimate}
Assume $k\in\mathbb{Q}_{>0}$ with $\sqrt{k}\not\in\mathbb{Q}$ and $\lambda\in(1,2]$. Then there exists a positive constant $C=C(k,\lambda)$ such that for every $s,t\in\mathbb{N}_0$ with $m_{s,t}\ge C$ we have
\begin{equation}\label{maxestmstnew}
\big\|\sup_{n\ge m_{s,t}}|T_{\mathbb{Z}}\big[\Pi_{s,t,\le m_{s,t}}^{1,\chi^{(\lambda)}_n,\sqrt{\eta}}\big]f|\big\|_{\ell^2(\mathbb{Z})}\le C(s^2+t^2+1)\|f\|_{\ell^2(\mathbb{Z})}\text{.}
\end{equation}
\end{proposition}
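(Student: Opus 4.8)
The plan is to strip off the frequency cutoff $\sqrt{\eta}$, transfer the maximal estimate from $\ell^2(\mathbb{Z})$ to $L^2(\mathbb{R})$, recognise the resulting operator as a multi-frequency maximal operator built from dilated convolutions with a fixed smooth kernel, and invoke Bourgain's logarithmic lemma (Lemma~4.13 in \cite{Bourgain}); the weight $s^2+t^2+1$ will then be produced by the $(\log R)^2$ gain in that lemma together with a cardinality bound for $\mathcal{P}_{s,t}$.

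\textbf{Step 1: discarding the cutoff.} First I would note that, since $\supp\chi^{(\lambda)}_n\subseteq\big[-\tfrac12\lambda^{-2n},\tfrac12\lambda^{-2n}\big)$, on the support of $\chi^{(\lambda)}_n(\xi-\alpha^{(k)}_{a,b,q})$ (for $n\ge m_{s,t}$) one has $\lambda^{(2-\gamma')m_{s,t}}\|\xi-\alpha^{(k)}_{a,b,q}\|<\tfrac12\lambda^{-\gamma'm_{s,t}}<1$, so $\sqrt{\eta}\big(\lambda^{(2-\gamma')m_{s,t}}\|\xi-\alpha^{(k)}_{a,b,q}\|\big)=1$ by \eqref{etalambda}. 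Hence for $n\ge m_{s,t}$ we get $\Pi^{1,\chi^{(\lambda)}_n,\sqrt{\eta}}_{s,t,\le m_{s,t}}(\xi)=\sum_{(a,b,q)\in\mathcal{P}_{s,t}}\chi^{(\lambda)}_n(\xi-\alpha^{(k)}_{a,b,q})$, and by Lemma~\ref{seperationverygeneral} together with the choice of $\gamma$ in \eqref{choicesgamma}, for $m_{s,t}$ larger than a constant depending on $k,\lambda$ the supports $\{\xi:\chi^{(\lambda)}_n(\xi-\alpha^{(k)}_{a,b,q})\neq0\}$ attached to distinct $(a,b,q)\in\mathcal{P}_{s,t}$ are disjoint, uniformly in $n\ge m_{s,t}$, and all contained in $[-1/2,1/2)$.

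\textbf{Step 2: transference to $L^2(\mathbb{R})$.} Given $f\in\ell^2(\mathbb{Z})$ I would set $g\in L^2(\mathbb{R})$ with $\mathcal{F}_{\mathbb{R}}[g]=\hat f\cdot1_{[-1/2,1/2)}$, so that $\|g\|_{L^2(\mathbb{R})}=\|f\|_{\ell^2(\mathbb{Z})}$; viewing $\chi^{(\lambda)}_n(\cdot-\alpha^{(k)}_{a,b,q})$ as a compactly supported multiplier on $\mathbb{R}$ one has, for $x\in\mathbb{Z}$,
\[
T_{\mathbb{Z}}\Big[\sum_{(a,b,q)\in\mathcal{P}_{s,t}}\chi^{(\lambda)}_n(\cdot-\alpha^{(k)}_{a,b,q})\Big]f(x)=G_n(x)\text{,}\qquad G_n\coloneqq T_{\mathbb{R}}\Big[\sum_{(a,b,q)\in\mathcal{P}_{s,t}}\chi^{(\lambda)}_n(\cdot-\alpha^{(k)}_{a,b,q})\Big]g\text{.}
\]
Since $\mathcal{F}_{\mathbb{R}}[G_n]$ is supported in $[-1/2,1/2)$ for every $n$, a standard sampling (reproducing-kernel) estimate gives
\[
\Big\|\big(\sup_{n\ge m_{s,t}}|G_n(x)|\big)_{x\in\mathbb{Z}}\Big\|_{\ell^2(\mathbb{Z})}\lesssim\big\|\sup_{n\ge m_{s,t}}|G_n|\big\|_{L^2(\mathbb{R})}\text{,}
\]
so it suffices to bound the right-hand side by $C(s^2+t^2+1)\|g\|_{L^2(\mathbb{R})}$. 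Writing $\chi^{(\lambda)}_n(\zeta)=\mathcal{F}_{\mathbb{R}}[\phi](\lambda^{2n}\zeta)/\mathcal{F}_{\mathbb{R}}[\phi](0)$, the operator $T_{\mathbb{R}}[\chi^{(\lambda)}_n(\cdot-\alpha^{(k)}_{a,b,q})]$ factors as a modulation by $\alpha^{(k)}_{a,b,q}$, followed by convolution with the $L^1$-normalised $\lambda^{2n}$-dilate of the fixed smooth kernel $\phi/\mathcal{F}_{\mathbb{R}}[\phi](0)$, followed by the inverse modulation; thus $\sup_{n\ge m_{s,t}}\big|\sum_{(a,b,q)\in\mathcal{P}_{s,t}}T_{\mathbb{R}}[\chi^{(\lambda)}_n(\cdot-\alpha^{(k)}_{a,b,q})]g\big|$ is precisely a multi-frequency maximal operator of dilated smooth convolutions attached to the frequency set $\{\alpha^{(k)}_{a,b,q}:(a,b,q)\in\mathcal{P}_{s,t}\}$. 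By Lemma~4.13 in \cite{Bourgain} its $L^2(\mathbb{R})$-norm is $\lesssim(1+\log R)^2$, where $R\coloneqq|\mathcal{P}_{s,t}|$. Finally, $(a,b,q)\in\mathcal{P}_{s,t}$ forces $q\le\lambda^s$, $|b|\le\lambda^t$ and $a\in\{0,\dots,q-1\}$, so $R\lesssim\lambda^{2s+t}$, whence $1+\log R\lesssim1+s+t$ and $(1+\log R)^2\lesssim s^2+t^2+1$; combined with Steps 1 and 2 this yields \eqref{maxestmstnew}.

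\textbf{Main obstacle.} The essential input is Bourgain's logarithmic lemma: it is exactly its $(\log R)^2$ loss that converts the exponentially large cardinality $|\mathcal{P}_{s,t}|\lesssim\lambda^{2s+t}$ into the (ultimately summable, by \eqref{phibounds}) weight $s^2+t^2+1$. The cutoff removal of Step 1, the $\mathbb{Z}\to\mathbb{R}$ transference with the supremum in Step 2, and the verification that the hypotheses imposed on $\phi$ are those required to apply the lemma are all routine and carry no real difficulty.
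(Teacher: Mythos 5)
Your route is the same as the paper's: transfer the band-limited maximal estimate from $\ell^2(\mathbb{Z})$ to $L^2(\mathbb{R})$ (your sampling argument is exactly the content of Lemma~4.4 in \cite{Bourgain}, which is what the paper cites), recognise a multi-frequency maximal operator of dilated convolutions with the fixed kernel $\phi$, apply Bourgain's Lemma~4.13, and convert $|\mathcal{P}_{s,t}|\lesssim\lambda^{2s+t}$ into the weight $s^2+t^2+1$; the separation input is Lemma~\ref{seperationverygeneral}, as in the paper. Your Step~1 identity (the factor $\sqrt{\eta}\big(\lambda^{(2-\gamma')m_{s,t}}\|\xi-\alpha^{(k)}_{a,b,q}\|\big)$ equals $1$ on the support of $\chi^{(\lambda)}_n(\xi-\alpha^{(k)}_{a,b,q})$ when $n\ge m_{s,t}$) is correct.

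The one step you should not wave through is the sentence ``by Lemma~4.13 its $L^2(\mathbb{R})$-norm is $\lesssim(1+\log R)^2$.'' Bourgain's lemma, in the form the paper uses it, is a vector-valued statement: it bounds $\big\|\sup_{t>\delta^{-1}}\big|\sum_k e(-\alpha_k x)(\phi_t*g_k)(x)\big|\big\|_{L^2}$ by $(\log K)^2\big(\sum_k\|g_k\|_{L^2}^2\big)^{1/2}$ for arbitrary frequency-localized $g_k$, not an operator-norm bound on a single input $g$. If you feed in the unlocalized modulates $g_k=e(\alpha_k\cdot)g$ you only get $K^{1/2}\|g\|_2$ on the right, which is fatally large. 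So after discarding the cutoff you must re-insert a frequency decomposition: since each $\chi^{(\lambda)}_n(\cdot-\alpha^{(k)}_{a,b,q})$ with $n\ge m_{s,t}$ is supported within $\tfrac12\lambda^{-2m_{s,t}}$ of $\alpha^{(k)}_{a,b,q}$ and these neighbourhoods are pairwise disjoint (your Step~1), you may replace $\hat g$ by its restrictions to these neighbourhoods, and then $\sum_{(a,b,q)}\|\hat g\,1_{\mathrm{near}\,\alpha}\|_{2}^2\le\|g\|_2^2$ gives the claimed bound. This is precisely the role of the $\sqrt{\eta}$ factor you removed: the paper keeps it and sets $G^{(s,t)}_{\alpha_{a,b,q}}(\xi)=\eta\big(\lambda^{(2-\gamma')m_{s,t}}|\xi|\big)^{1/2}\mathcal{F}_{\mathbb{R}}[f](\xi+\alpha_{a,b,q})$ so that $\sum_{(a,b,q)}\|G^{(s,t)}_{\alpha_{a,b,q}}\|_2^2\le\|f\|_2^2$ by disjointness of supports. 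Finally, the hypothesis that the scales in the supremum exceed the inverse separation should be recorded rather than called routine: the separation is $\gtrsim_k\lambda^{-3s-t}$ while the scales are $\lambda^{2n}\ge\lambda^{2m_{s,t}}$, and $m_{s,t}\ge\max(s,t)/\gamma\ge 20\max(s,t)>3s+t$ by \eqref{choicesgamma}, which is the check the paper performs explicitly. With these two points supplied, your argument coincides with the paper's proof.
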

\begin{proof}
We suppress the dependences similarly to \eqref{notationsimpler}. We begin by passing to convolution operators over $\mathbb{R}$. Firstly, we note that
\[
T_{\mathbb{Z}}\big[\Pi_{s,t,\le m_{s,t}}^{1,\chi^{(\lambda)}_n,\sqrt{\eta}}\big]f(x)=\mathcal{F}_{\mathbb{Z}}^{-1}\big[\Pi_{s,t,\le m_{s,t}}^{1,\chi^{(\lambda)}_n,\sqrt{\eta}}\cdot\mathcal{F}_{\mathbb{Z}}[f]\big](x)=\mathcal{F}_{\mathbb{Z}}^{-1}\big[\Pi_{s,t,\le m_{s,t}}^{1,\chi^{(\lambda)}_n,\sqrt{\eta}}\big]*_{\mathbb{Z}}f(x)\text{,}
\] 
and note that for $m_{s,t}\gtrsim 1$ we have $\big|\Pi_{s,t,\le m_{s,t}}^{1,\chi^{(\lambda)}_n,\sqrt{\eta}}\big|\le 1$. For $m_{s,t}\gtrsim 1$ we have that
\[
\Pi_{s,t,\le m_{s,t}}^{1,\chi^{(\lambda)}_n,\sqrt{\eta}}(\xi)=\frac{1}{\mathcal{F}_{\mathbb{R}}[\phi](0)}\sum_{(a,b,q)\in\mathcal{P}_{s,t}}\mathcal{F}_{\mathbb{R}}[\phi](\lambda^{2n}(\xi-\alpha_{a,b,q}))\eta(\lambda^{(2-\gamma')m_{s,t}}|\xi-\alpha_{a,b,q}|)^{1/2}\text{,}
\]
and it is supported in $[-1/2,1/2)$ when viewed as a function on $\mathbb{R}$. Thus by Lemma~4.4 in \cite{Bourgain}, to establish \eqref{maxestmstnew}, it suffices to prove that
\begin{equation}\label{maxestmstc}
\big\|\sup_{n\ge m_{s,t}}|\mathcal{F}_{\mathbb{R}}^{-1}\big[\Pi_{s,t,\le m_{s,t}}^{1,\chi^{(\lambda)}_n,\sqrt{\eta}}\big]*_{\mathbb{R}}f
|\big\|_{L^2(\mathbb{R})}\lesssim (s^2+t^2+1)\|f\|_{L^2(\mathbb{R})}\text{,}
\end{equation}
and note that
\[
\mathcal{F}_{\mathbb{R}}^{-1}\big[\Pi_{s,t,\le m_{s,t}}^{1,\chi^{(\lambda)}_n,\sqrt{\eta}}\big]*_{\mathbb{R}}f=T_{\mathbb{R}}\big[\Pi_{s,t,\le m_{s,t}}^{1,\chi^{(\lambda)}_n,\sqrt{\eta}}\big]f\text{.}
\]
We get
\begin{multline}
\mathcal{F}_{\mathbb{R}}^{-1}\big[\Pi_{s,t,\le m_{s,t}}^{1,\chi^{(\lambda)}_n,\sqrt{\eta}}\big]*_{\mathbb{R}}f(x)=\mathcal{F}_{\mathbb{R}}^{-1}\big[\Pi_{s,t,\le m_{s,t}}^{1,\chi^{(\lambda)}_n,\sqrt{\eta}}\mathcal{F}_{\mathbb{R}}[f]\big](x)
\\
=\mathcal{F}_{\mathbb{R}}^{-1}\Big[\sum_{(a,b,q)\in\mathcal{P}_{s,t}}\chi_n^{(\lambda)}(\cdot-\alpha_{a,b,q})\eta\big(\lambda^{(2-\gamma')m_{s,t}}|\cdot-\alpha_{a,b,q}|\big)^{1/2}\mathcal{F}_{\mathbb{R}}[f](\cdot)\Big](x)
\\
=\sum_{(a,b,q)\in\mathcal{P}_{s,t}}\mathcal{F}_{\mathbb{R}}^{-1}\Big[\chi_n^{(\lambda)}(\cdot-\alpha_{a,b,q})\eta\big(\lambda^{(2-\gamma')m_{s,t}}|\cdot-\alpha_{a,b,q}|\big)^{1/2}\mathcal{F}_{\mathbb{R}}[f](\cdot)\Big](x)
\\
=\sum_{(a,b,q)\in\mathcal{P}_{s,t}}\int_{\mathbb{R}}\chi_n^{(\lambda)}(\xi-\alpha_{a,b,q})\eta\big(\lambda^{(2-\gamma')m_{s,t}}|\xi-\alpha_{a,b,q}|\big)^{1/2}\mathcal{F}_{\mathbb{R}}[f](\xi)e(-x\xi)d\xi
\\
=\sum_{(a,b,q)\in\mathcal{P}_{s,t}}\int_{\mathbb{R}}\chi_n^{(\lambda)}(\zeta)\eta\big(\lambda^{(2-\gamma')m_{s,t}}|\zeta|\big)^{1/2}\mathcal{F}_{\mathbb{R}}[f](\zeta+\alpha_{a,b,q})e\big(-x(\zeta+\alpha_{a,b,q})\big)d\zeta
\\
=\sum_{(a,b,q)\in\mathcal{P}_{s,t}}\bigg(\int_{\mathbb{R}}\chi_n^{(\lambda)}(\xi)G^{(s,t)}_{\alpha_{a,b,q}}(\xi)e(-x\xi)d\xi\bigg) e(-\alpha_{a,b,q}x)\text{,}
\end{multline}
where 
\[
G^{(s,t)}_{\alpha_{a,b,q}}(\xi)\coloneqq \eta\big(\lambda^{(2-\gamma')m_{s,t}}|\xi|\big)^{1/2}\mathcal{F}_{\mathbb{R}}[f](\xi+\alpha_{a,b,q})\text{.}
\]
Thus
\begin{multline}
\mathcal{F}_{\mathbb{R}}^{-1}\big[\Pi_{s,t,\le m_{s,t}}^{1,\chi^{(\lambda)}_n,\sqrt{\eta}}\mathcal{F}_{\mathbb{R}}[f]\big](x)=\sum_{(a,b,q)\in\mathcal{P}_{s,t}}\bigg(\int_{\mathbb{R}}\chi_n^{(\lambda)}(\xi)G^{(s,t)}_{\alpha_{a,b,q}}(\xi)e(-x\xi)d\xi\bigg) e(-\alpha_{a,b,q}x)
\\
=\sum_{(a,b,q)\in\mathcal{P}_{s,t}}\mathcal{F}_{\mathbb{R}}^{-1}\big[\chi_n^{(\lambda)}G^{(s,t)}_{\alpha_{a,b,q}}\big](x)e(-\alpha_{a,b,q}x)=\sum_{(a,b,q)\in\mathcal{P}_{s,t}}\mathcal{F}_{\mathbb{R}}^{-1}\big[\chi_n^{(\lambda)}\big]*_{\mathbb{R}}\mathcal{F}^{-1}\big[G^{(s,t)}_{\alpha_{a,b,q}}\big](x)e(-\alpha_{a,b,q}x)\text{.}
\end{multline}
We note that
\begin{multline}
\mathcal{F}_{\mathbb{R}}^{-1}\big[\chi_n^{(\lambda)}\big](x)=\int_{\mathbb{R}}\chi_n^{(\lambda)}(\xi)e(-x\xi)d\xi=\frac{1}{\mathcal{F}_{\mathbb{R}}[\phi](0)}\int_{\mathbb{R}}\mathcal{F}_{\mathbb{R}}[\phi](\lambda^{2n}\xi)e(-x\xi)d\xi
\\
=\frac{1}{\mathcal{F}_{\mathbb{R}}[\phi](0)\lambda^{2n}}\int_{\mathbb{R}}\mathcal{F}_{\mathbb{R}}[\phi](\zeta)e(-x\zeta/\lambda^{2n})d\zeta=\frac{1}{\mathcal{F}_{\mathbb{R}}[\phi](0)}\cdot\frac{1}{\lambda^{2n}}\phi\Big(\frac{x}{\lambda^{2n}}\Big)\text{.}
\end{multline}
Thus
\begin{equation}\mathcal{F}_{\mathbb{R}}^{-1}\big[\Pi_{s,t,\le m_{s,t}}^{1,\chi^{(\lambda)}_n,\sqrt{\eta}}\mathcal{F}_{\mathbb{R}}[f]\big](x)=\frac{1}{\mathcal{F}_{\mathbb{R}}[\phi](0)}\sum_{(a,b,q)\in\mathcal{P}_{s,t}}\frac{1}{\lambda^{2n}}\phi\Big(\frac{\cdot}{\lambda^{2n}}\Big)*_{\mathbb{R}}\mathcal{F}^{-1}\big[G^{(s,t)}_{\alpha_{a,b,q}}\big](x)e(-\alpha_{a,b,q}x)\text{.}
\end{equation}
Finally, we have $|\mathcal{P}_{s,t}|\lesssim \lambda^{2s+t}$, and we note that distinct elements in $ \mathcal{P}_{s,t}$ have distance at least $\gtrsim \lambda^{-3s-t}$ by Lemma~$\ref{seperationverygeneral}$. We see that $m_{s,t}\ge 20\max(s,t)> 3s+t$. For $s,t\in\mathbb{N}_0$ with $m_{s,t}\gtrsim 1$, we have that all $\alpha_{a,b,q}\neq\alpha_{a',b',q'}\in\mathcal{P}_{s,t}$ are such that
\[
|\alpha_{a,b,q}-\alpha_{a',b',q'}|>\lambda^{-m_{s,t}}\text{.}
\] 
Thus by Lemma~4.13 in \cite{Bourgain} we get that
\begin{multline}\big\|\sup_{n\ge m_{s,t}}|\mathcal{F}_{\mathbb{R}}^{-1}\big[\Pi_{s,t,\le m_{s,t}}^{1,\chi^{(\lambda)}_n}\mathcal{F}_{\mathbb{R}}[f]\big]|\big\|_{L^2(\mathbb{R})}
\\
\lesssim_{\phi}\bigg\|\sup_{n\ge m_{s,t}}\Big|\sum_{(a,b,q)\in\mathcal{P}_{s,t}}\frac{1}{\lambda^{2n}}\phi\Big(\frac{\cdot}{\lambda^{2n}}\Big)*_{\mathbb{R}}\mathcal{F}^{-1}_{\mathbb{R}}\big[G^{(s,t)}_{\alpha_{a,b,q}}\big](x)e(-\alpha_{a,b,q}x)\Big|\bigg\|_{L_{dx}^2(\mathbb{R})}
\\
\lesssim\log(|\mathcal{P}_{s,t}|)^2\Big(\sum_{(a,b,q)\in\mathcal{P}_{s,t}}\Big\|\mathcal{F}^{-1}_{\mathbb{R}}\big[G^{(s,t)}_{\alpha_{a,b,q}}\big]\Big\|_{L^2(\mathbb{R})}^2\Big)^{1/2}\lesssim (s^2+t^2+1)\log^2(\lambda)\Big(\sum_{(a,b,q)\in\mathcal{P}_{s,t}}\Big\|G^{(s,t)}_{\alpha_{a,b,q}}\Big\|_{L^2(\mathbb{R})}^2\Big)^{1/2}
\\
\lesssim(s^2+t^2+1)\Big(\sum_{(a,b,q)\in\mathcal{P}_{s,t}}\int \eta\big(\lambda^{(2-\gamma')m_{s,t}}|\xi|\big)\big|\mathcal{F}_{\mathbb{R}}[f](\xi+\alpha_{a,b,q})\big|^2d\xi\Big)^{1/2}
\\
\le(s^2+t^2+1)\Big(\int\sum_{(a,b,q)\in\mathcal{P}_{s,t}}\eta\big(\lambda^{(2-\gamma')m_{s,t}}|\xi|\big)\big|\mathcal{F}_{\mathbb{R}}[f](\xi+\alpha_{a,b,q})\big|^2d\xi\Big)^{1/2}
\\
=(s^2+t^2+1)\Big(\int\sum_{(a,b,q)\in\mathcal{P}_{s,t}}\eta\big(\lambda^{(2-\gamma')m_{s,t}}|\xi-\alpha_{a,b,q}|\big)\big|\mathcal{F}_{\mathbb{R}}[f](\xi)\big|^2d\xi\Big)^{1/2}
\\
\le(s^2+t^2+1)\Big(\int\big|\mathcal{F}_{\mathbb{R}}[f](\xi)\big|^2d\xi\Big)^{1/2}=(s^2+t^2+1)\|f\|_{L^2(\mathbb{R})}\text{,}
\end{multline}
as desired and the proof is complete.
\end{proof}
This concludes the proof of the estimate \eqref{maxestmst}, which as we saw implies \eqref{goaleststfixed1}. 
\subsection{The estimate for small $s,t$.} To finish the proof of Proposition~$\ref{projpro}$, which implies Proposition~$\ref{goalosc}$, it remains to establish the estimate \eqref{00case}. This should be immediate with the techniques proposed since $C'$ is allowed to depend on $s,t$ and we only provide a sketch here. In all our considerations it was important to have $m_{s,t}\gtrsim 1$ to keep certain supports disjoint, if the constant can depend additionally on $s,t$ one may start trivially as follows.
\begin{multline}
\sup_{J\in\mathbb{N}}\sup_{I\in\mathfrak{S}(\mathbb{N}_0)}\Big\|O_{I,J}^2\Big(T_{\mathbb{Z}}\big[\Pi_{s,t,\le n}^{g,\widetilde{V}_{\lambda^n}}1_{n\ge m_{s,t}}\big]f:\,n\in\mathbb{N}_0\Big)\Big\|_{\ell^2(\mathbb{Z})}
\\
\lesssim \sum_{(a,b,q)\in\mathcal{P}_{s,t}}|g(a,b,q)|\Big\|O_{I,J}^2\Big(T_{\mathbb{Z}}\big[\widetilde{V}_{\lambda^n}(\cdot-\alpha_{a,b,q})\eta(\lambda^{(2-\gamma')n}\|\cdot-\alpha_{a,b,q}\|)1_{n\ge m_{s,t}}\big]f:\,n\in\mathbb{N}_0\Big)\Big\|_{\ell^2(\mathbb{Z})}
\\
\lesssim_{k,\lambda,s,t}\|f\|_{\ell^2(\mathbb{Z})}\text{,}
\end{multline}
where we have used $|\mathcal{P}_{s,t}|\lesssim \lambda^{2s+t}$. The last estimate above is immediate with the techniques used earlier, for example, one may argue in an identical manner we have in the previous sections with $\mathcal{P}_{s,t}$ replaced by $\{(a,b,q)\}$. Bourgain's logarithmic lemma is not needed here since the corresponding estimate in this case is a corollary of the $L^2$-boundedness of the Hardy--Littlewood maximal function.


\begin{thebibliography}{99}
\bibitem{frprimes}
E. Bahnson, L. Daskalakis, A. Dohadwala, I. Shah,
\emph{Pointwise Ergodic Theorems Along Fractional Powers of Primes.} International Mathematics Research Notices, Volume 2025, Issue 15, August 2025.

\bibitem{ncfull}
M. Boshernitzan, M. Wierdl,
\emph{Ergodic theorems along sequences and Hardy fields.} Proceedings of the National Academy of Sciences, 93 (1996), pp. 8205–8207. https://www.pnas.org/doi/abs/10.1073/pnas.93.16.8205.
		
\bibitem{bg2}
	       J. Bourgain,
            \emph{On the maximal ergodic theorem for certain subsets of the integers.} Israel J. Math. 61 (1988), pp. 39--72.
 
\bibitem{bg3}
            J. Bourgain,
            \emph{On the pointwise ergodic theorem on $L^p$ for arithmetic sets.} Israel J. Math. 61 (1988), pp. 73--84.

\bibitem{Bourgain}
J. Bourgain, 
\emph{Pointwise ergodic theorems for arithmetic sets.} Publications Mathématiques de L’Institut des Hautes Scientifiques 69, 5--41 (1989) https://doi.org/10.1007/BF02698838.

\bibitem{LDWT11}
L. Daskalakis,
\emph{Weak-Type (1,1) Inequality for Discrete Maximal Functions and Pointwise Ergodic Theorems Along Thin Arithmetic Sets.} J Fourier Anal Appl 30, 37 (2024).

\bibitem{CD}
C. Demeter,
\emph{On some maximal multipliers in $L^p$.} Rev. Mat. Iberoam. 26 (2010), no. 3, pp. 947--964.

\bibitem{refequid}
M. Drmota, R.F. Tichy, \emph{Sequences, Discrepancies and Applications.} Lecture Notes in Mathematics 1651, Springer-Verlag, Berlin, 1997.

\bibitem{JF}
J. Fornal,
\emph{Pointwise ergodic theorem along primes of the form $x^2 + ny^2$.} Preprint: arXiv:2508.15466.


\bibitem{Graf}
L. Grafakos,
\emph{Classical Fourier Analysis.} Vol. 249 of Graduate Texts in Mathematics, third edition, Springer.

\bibitem{BG}
B. J. Green, T. C. Tao,
\emph{The quantitative behaviour of polynomial orbits on nilmanifolds.} Ann. Math. 175, no. 2 (2012): 465--540.

\bibitem{IWT}
A. D. Ionescu, S. Wainger,
\emph{$L^p$ boundedness of discrete singular Radon transforms.}
J. Amer. Math. Soc. 19 (2005), no. 2, pp. 357--383.

\bibitem{Lb1}
A. Leibman,
\emph{Pointwise convergence of ergodic averages for polynomial sequences of translations on a nilmanifold.} Ergodic Theory and Dynamical Systems 25 (2005), 201--213.

\bibitem{Lb2}
A. Leibman,
\emph{Pointwise convergence of ergodic averages for polynomial actions of $\mathbb{Z}^d$ by
 translations on a nilmanifold.} Ergodic Theory and Dynamical Systems 25 (2005), 215--225.

\bibitem{Lb3}
A. Leibman,
\emph{A canonical form and the distribution of values of generalized polynomials.}  Israel Journal of Mathematics 188 (2012), 131--176.

\bibitem{LV}
J. Liouville,
\emph{Nouvelle demonstration d'un th\'eor\`eme sur les irrationnelles alg\'ebriques ins\'er\'e dans le compte rendu de la derni\`ere s\'eance.} C.R. Acad. Sci. Paris 18 (1844), 910--911.
	
\bibitem{mirek1}
M. Mirek,
\emph{Weak type (1, 1) inequalities for discrete rough maximal functions.} J. Anal. Mat. 127 (2015), 303–337.

\bibitem{MOE}
M. Mirek, T. Z. Szarek, J. Wright,
\emph{Oscillation inequalities in ergodic theory and analysis: one-parameter and multi-parameter perspectives.} Rev. Mat. Iberoam. 38 (2022), no. 7, 2249--2284.

\bibitem{primeevaluatedpolyn}
R. Nair,
\emph{On polynomials in primes and J. Bourgain's circle method approach to ergodic theorems II}
Studia Mathematica 105 (1993), 207--233.

\bibitem{NL}
	V. R. Neale,
	\emph{Bracket quadratics as asymptotic bases for the natural numbers.}  Dissertation, University of Cambridge, 2011.
\end{thebibliography}
\end{document}